\newtheorem*{rep@theorem}{\rep@title}
\newcommand{\newreptheorem}[2]{%
\newenvironment{rep#1}[1]{%
 \def\rep@title{#2 \ref{##1}}%
 \begin{rep@theorem}}%
 {\end{rep@theorem}}}
\newtheorem*{rep@cor}{\rep@title}
\newcommand{\newrepcor}[2]{%
\newenvironment{rep#1}[1]{%
 \def\rep@title{#2 \ref{##1}}%
 \begin{rep@cor}}%
 {\end{rep@cor}}}
\newtheorem*{rep@prop}{\rep@title}
\newcommand{\newrepprop}[2]{%
\newenvironment{rep#1}[1]{%
 \def\rep@title{#2 \ref{##1}}%
 \begin{rep@prop}}%
 {\end{rep@prop}}}
\newtheorem{cor}{Corollary}[section]
\newtheorem{claim}[cor]{Claim}
\newtheorem{theorem}[cor]{Theorem}
\newtheorem{prop}[cor]{Proposition}
\newtheorem{lemma}[cor]{Lemma}
\theoremstyle{definition}
\newtheorem{defi}[cor]{Definition}
\theoremstyle{remark}
\newtheorem{remark}[cor]{Remark}
\newtheorem*{remark*}{Remark}
\newtheorem{example}[cor]{Example}
\newtheorem*{notation*}{Notation}
\newlist{steps}{enumerate}{1}
\setlist[steps, 1]{itemsep=8pt,leftmargin=0cm,itemindent=.5cm,labelwidth=\itemindent,labelsep=0cm,align=left,label = \textbf{\emph{Step \arabic*}:\,}}
\newcommand{\myitem}[1]{%
\item[#1]\protected@edef\@currentlabel{#1}%
}
\newcommand{\A}{\mathrm{A}}
\newcommand{\psl}{\mathrm{P}\mathrm{S}\mathrm{L}(2,\mathbb{R})}
\newcommand{\ads}{\mathbb{A}\mathrm{d}\mathbb{S}^3}
\newcommand{\HP}{\mathbb{HP}^3}
\newcommand{\isom}{\mathrm{Isom}}
\newcommand{\R}{\mathrm{R}}
\newcommand{\X}{\mathrm{X}}
\newcommand{\B}{\mathrm{B}}
\begin{document}\raggedbottom

\setcounter{secnumdepth}{3}
\setcounter{tocdepth}{2}

\title[Transition of quasifuchsian structures]{TRANSITION OF CONVEX CORE DOUBLES FROM HYPERBOLIC TO ANTI-DE SITTER GEOMETRY}

\author[Farid Diaf]{Farid Diaf}
\address{Farid Diaf: Univ. Grenoble Alpes, CNRS, IF, 38000 Grenoble, France.} \email{farid.diaf@univ-grenoble-alpes.fr}

\thanks{}

\maketitle

\begin{abstract}
Let $\Sigma$ be a surface of negative Euler characteristic, homeomorphic to a closed surface, possibly with a finite number of points removed. In this paper, we present a construction method for a wide range of examples of geometric transition from hyperbolic to Anti-de Sitter structures via Half-pipe geometry on $\Sigma\times\mathbb{S}^1$, with cone singularities along a link. The main ingredient lies in studying the deformation of a convex core structure as the bending laminations of the upper and lower boundary components of the convex core uniformly collapse to zero. 
\end{abstract}
\tableofcontents
\section{Introduction}
In his notes \cite{thurstop}, Thurston introduced the concept of degeneracy of $(\mathrm{G},\mathrm{X})$ structures. Since then, important contributions have been made on this subject \cite{HOG86,POR98,HPS01,POR02,PW07,POR13,KOZ16}, and this idea was notably used to prove the famous orbifold theorem \cite{BLP05, CHK00}.

A geometric transition consists of a deformation of a $(\mathrm{G}, \X)$-structure on a manifold $M$ that degenerates but admits a limit in a different type of geometric structure after "stretching" in the direction of the collapse. It has been known since Klein's time \cite{Klein} that it is possible to transition from hyperbolic to spherical geometry through Euclidean geometry. In his PhD thesis, Danciger \cite{danciger_thesis} introduced a geometric transition between hyperbolic geometry and Anti-de Sitter (AdS) geometry, which is the analogue of hyperbolic space in Lorentzian geometry. To accomplish this, Danciger introduced a geometry known as the \textit{Half-pipe geometry} (HP), which is a limit geometry inside projective geometry of both hyperbolic and Anti-de Sitter geometry in the sense of \cite{CDW} (see also \cite{dancigerGT,dancigerJT,andreafrancois}). Specifically, a transition from hyperbolic to AdS geometry via HP geometry is a path $\mathcal{P}_t$ of real projective structures on a manifold $M$ such that $\mathcal{P}_t$ is conjugate to a hyperbolic structure if $t>0$, or to an AdS structure if $t < 0$, and to a Half-pipe structure when $t=0$.
Several examples of such transitions are given, see \cite{dancigerGT,dancigerGT} for examples in dimension three and \cite{riolo_seppi,riolo_seppi2} for examples in dimension four. The above results strengthen the similarity between hyperbolic and AdS geometry. The first motivation of this paper will be to provide more examples of such geometric transitions.

In dimension three, there are remarkable similarities between hyperbolic and AdS geometry illustrated by Bers' Simultaneous Uniformization Theorem \cite{Bers} for quasi-Fuchsian hyperbolic manifolds and Mess's classification of maximal globally hyperbolic AdS space-times \cite{Mess}. These classes of structures share many features, such as having a \textit{convex core}. An intriguing question for these structures is whether the geometry of the convex core contains all the information about the geometry of the global manifold. Several contributions have been made on this subject \cite{Labourie1992MtriquesP,bonahonotal,Lecuire,Bonsante2006AdsMW,Fixedpoint,Diallo2013PrescribingMO}. Later, Barbot and Fillastre \cite{barbotfillastre} introduced \textit{Quasi-Fuchsian co-Minkowski manifolds}, which are the analogues in Half-pipe geometry of quasi-Fuchsian and maximal globally hyperbolic AdS space-time manifolds. The second motivation of this paper is to provide a connection between the geometry of the convex core of quasi-Fuchsian hyperbolic manifolds and their analogues in AdS and HP geometry through the geometric transition.

\subsection{Examples of transition in dimension 3.}
Danciger provides two explicit (infinite) classes of examples of $3$-manifolds supporting a transition from hyperbolic geometry to AdS geometry through HP geometry. The first class, as stated in \cite[Theorem 1.1]{dancigerGT}, consists of the unit tangent bundle of the $(2, m, m)$ triangle orbifold. The second class \cite[Theorem 3]{dancigerJT} consists of a suspension of a punctured torus by an Anosov diffeomorphism.  It seems natural to ask whether there are another classes of $3$-manifolds supporting such transition. In this paper we provide a new large class of examples. Our main result is the following: 
\begin{theorem}\label{mainthm}
Let $\Sigma$ be a surface of negative Euler characteristic, homeomorphic to a closed surface, possibly with a finite number of points removed. Let $\lambda$ and $\mu$ be two weighted multicurves that fill $\Sigma$. Consider $L=\left(\vert \lambda\vert \times \{1\}\right)\cup\left(\vert \mu\vert \times \{-1\}\right)$ and $$M:=(\Sigma\times\mathbb{S}^1)\setminus L.$$ Then there exists a continuous path $\{\mathcal{P}_t\}_{(-\epsilon,\epsilon)}$ of real projective structures on $M$ with conical singularities on $L$ such that:
\begin{itemize}
\item If $t > 0$, $\mathcal{P}_t$ is conjugate to the hyperbolic doubled convex core structure with bending data $(\vert t\vert \lambda,\vert t\vert \mu)$,
\item If $t < 0$, $\mathcal{P}_t$ is conjugate to the Anti-de Sitter doubled convex core structure with bending data $(\vert t\vert \lambda,\vert t\vert \mu)$,
\item For $t=0$, $\mathcal{P}_0$ to corresponds to the Half-pipe doubled convex core structure with bending data $(\lambda,\mu)$.
\end{itemize}
The cone angle $\theta_t$ around the link $L$ is given by \begin{center}
    $\begin{cases}
\theta_t(\alpha\times\{1\})=2(\pi-\vert t\vert \lambda(\alpha)) &\text{if}\ t>0\ \mathrm{and}\ \alpha\in \vert \lambda\vert\\
\theta_t(\alpha\times\{-1\})=2(\pi-\vert t\vert \mu(\alpha))     &\text{if}\ t>0\ \mathrm{and}\ \alpha\in \vert \mu\vert\\
\theta_t(\alpha\times\{1\})=-2\vert t\vert \lambda(\alpha)      &\text{if}\ t<0\ \mathrm{and}\ \alpha\in \vert \lambda\vert\\
\theta_t(\alpha\times\{-1\})=-2\vert t\vert \mu(\alpha)          &\text{if}\ t<0\ \mathrm{and}\ \alpha\in \vert \mu\vert
\end{cases}$\end{center}
Moreover, if the surface $\Sigma$ has punctures, then for a neighborhood $V$ around a puncture, the structure of $\mathcal{P}_t$ on $V\times \mathbb{S}^1$ is conjugate to a cusp in $\mathbb{H}^3$ (if $t>0$), in $\ads$ (if $t<0$), or in $\HP$ (if $t=0$).
\end{theorem}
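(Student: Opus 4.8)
The plan is to realise each $\mathcal{P}_t$ as the \emph{double} of a convex core $(\mathrm{G}_t,\X_t)$-structure --- with $\X_t=\mathbb{H}^3$ for $t>0$, $\X_t=\ads$ for $t<0$, $\X_t=\HP$ for $t=0$, all realised as domains in $\mathbb{RP}^3$ --- to identify the resulting cone manifold with $M$, and to promote this pointwise-in-$t$ construction to a continuous path by transporting the convergence of convex cores supplied by the ``uniform collapse'' analysis. That collapse analysis is the main ingredient and the principal difficulty: it says that, after the Danciger rescaling $g_t\in\mathrm{PGL}(4,\mathbb{R})$ with $g_t^{-1}\isom(\X_t)g_t\to\isom(\HP)$, the holonomies of the convex cores with bending $(|t|\lambda,|t|\mu)$ converge as $t\to0^{\pm}$ to a half-pipe holonomy and the developing maps converge on compact sets. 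Granting this input, what remains is bookkeeping --- but bookkeeping that needs genuine care at the bending curves and at the cusps.

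\emph{The convex cores and their double.} For $|t|$ small the weighted multicurves $|t|\lambda,|t|\mu$ have all weights $<\pi$ and still fill $\Sigma$, so by Bonahon--Otal in the hyperbolic case and its AdS and half-pipe counterparts for multicurves --- established in the earlier sections --- there is for each such $t$ a quasi-Fuchsian (resp. maximal globally hyperbolic AdS, resp. quasi-Fuchsian co-Minkowski) manifold whose convex core $C_t\cong\Sigma\times[-1,1]$ has upper and lower pleated boundary bent along $|t|\lambda$ and $|t|\mu$ (along $\lambda,\mu$ when $t=0$), with the underlying hyperbolic structures on $\Sigma$ converging as $t\to0$ to that of $C_0$. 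Because the boundary is bent along \emph{multicurves}, the holonomy $\rho_t\colon\pi_1(\Sigma)\to\isom(\X_t)$ is an explicit product of elementary rotations of angles $|t|\lambda(\alpha),|t|\mu(\alpha)$ about the axes of the curves, which is precisely why the rescaled $t\to0$ limit --- a half-pipe ``shear'' --- is computable by hand, and is why one works with multicurves rather than general laminations. Now put $DC_t=C_t\cup_{\partial C_t}C_t$: topologically $DC_t\cong\Sigma\times\mathbb{S}^1$, the two bending loci descend to $L=(|\lambda|\times\{1\})\cup(|\mu|\times\{-1\})$, and $DC_t\setminus L\cong M$. A developing map $\mathrm{dev}_t\colon\widetilde M\to\mathbb{RP}^3$ is obtained by developing one lift of $C_t$ into $\X_t\subset\mathbb{RP}^3$ and, each time a face of a pleated boundary surface is crossed, post-composing with the reflection of $\X_t$ in the totally geodesic plane spanned by that face; this unfolding is consistent and exhibits $\mathcal{P}_t$ as a projective cone structure on $M$ with holonomy $\rho^{D}_t\colon\pi_1(M)\to\mathrm{PGL}(4,\mathbb{R})$ generated by $\rho_t$ and the reflections.

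\emph{Cone angles and the path.} Along a bending curve $\alpha$ of bending angle $\beta$ a neighbourhood in $C_t$ is a dihedral wedge of angle $\pi-\beta$ (by convexity of the core) times $\alpha$, so its double is a neighbourhood of a singular line of cone angle $2(\pi-\beta)$; with $\beta=|t|\lambda(\alpha)$ on the upper boundary and $\beta=|t|\mu(\alpha)$ on the lower this gives, for $t>0$, the angles $2(\pi-|t|\lambda(\alpha))$ and $2(\pi-|t|\mu(\alpha))$. The analogous computation in $\ads$, where the wedge along the spacelike bending curve $\alpha$ is bounded by spacelike planes meeting in a Lorentzian normal plane, produces the stated values $-2|t|\lambda(\alpha)$ and $-2|t|\mu(\alpha)$, and in $\HP$ the singular lines carry the degenerate limit of these. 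For continuity of $t\mapsto\mathcal{P}_t$: after conjugating by $g_t$, the developing maps of the $C_t$ converge on compact sets to that of $C_0$ and the boundary reflections vary continuously (their rescaled limits being reflections of $\HP$), so the unfolded developing maps of the doubled structures converge and $\{\mathcal{P}_t\}_{(-\epsilon,\epsilon)}$ is a continuous path of projective cone structures on $M$ with $\mathcal{P}_0$ a conjugate of the half-pipe doubled convex core with bending $(\lambda,\mu)$. Equivalently one may note $\rho^{D}_t\to\rho^{D}_0$ and apply the Ehresmann--Thurston principle for cone manifolds inside $\mathrm{PGL}(4,\mathbb{R})$, where nothing constrains the cone angles, then read off afterwards that the meridian holonomies give the cone angles above and that for $t\neq0$ the structure is, up to conjugation, the doubled convex core structure with the prescribed bending.

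\emph{Cusps.} If $p$ is a puncture of $\Sigma$, choose a neighbourhood $V$ of $p$ disjoint from $|\lambda|\cup|\mu|$. Over $V$ the pleated surfaces $\partial_\pm C_t$ are totally geodesic and invariant under the parabolic $\gamma$ fixing the cusp of $\Sigma_t$, hence lift to two disjoint geodesic planes through the fixed point $\xi$ of $\gamma$; a neighbourhood of the cusp in $C_t$ is the quotient by $\gamma$ of the region between these planes, and its double over the $\mathbb{S}^1$ direction is the quotient of $\X_t$ by the group generated by $\gamma$ and the composition $r_+r_-$ of the two reflections. As the two planes are asymptotically parallel at $\xi$, the element $r_+r_-$ is again parabolic fixing $\xi$ and transverse to $\gamma$, so the group is a rank-two group of parabolics with common fixed point and $\mathcal{P}_t$ on $V\times\mathbb{S}^1$ is a cusp of $\mathbb{H}^3$, of $\ads$, or of $\HP$ according to the sign of $t$. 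The delicate points are, as flagged, the convergence across $t=0$ --- the object of the collapse analysis --- and, within the present argument, fixing the Lorentzian conventions producing the negative AdS cone angles and verifying that $r_+r_-$ is parabolic near each cusp.
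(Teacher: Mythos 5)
Your proposal follows the same architecture as the paper's proof: double the convex core structure by unfolding the developing map through reflections in the faces of the pleated boundary, read off the cone angles from the meridian holonomies $r_{\mathrm{P}_t}r_{\mathrm{Q}_t}$ (giving $2(\pi-\vert t\vert\lambda(\alpha))$ in $\mathbb{H}^3$ and $-2\vert t\vert\lambda(\alpha)$ in $\ads$), and identify the ends as cusps using the fact that the two support planes at a puncture are tangent at the parabolic fixed point, so that the stabilizer of the corresponding horosphere is a rank-two group generated by the parabolic $\rho(\gamma_i)$ and $r^-r^+$. Granted the transition of convex core structures, this assembly is correct and is essentially what the paper does in its final section.

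Two caveats. First, the claim that, given the collapse analysis, ``what remains is bookkeeping'' understates what that input must contain, and you should be explicit about it since the theorem you are proving is the culmination of that analysis. Convergence of the rescaled holonomies and of the upper pleated boundary does follow ``by hand'' from the bending-cocycle description, but the lower boundary does not: one must locate $\tau_t\,\partial_-\mathrm{CH}(\Lambda_{\rho_{(t\lambda,t\mu)}})$ relative to the upper one, and this requires the quantitative width estimates of Series (hyperbolic) and Seppi (AdS) --- without them the rescaled lower surface could drift off in the degenerate fiber direction. Likewise, producing a single equivariant developing map of the convex core interpolating between the two bent boundaries is not automatic: the equidistant flow off a pleated surface is singular on the convex side in $\mathbb{H}^3$ but on the concave side in $\ads$, so one has to build a bespoke transverse vector field near the bending lines before any deformation argument can be run on a compact core. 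Second, your ``equivalently, apply the Ehresmann--Thurston principle for cone manifolds and read off the cone angles afterwards'' shortcut fails here: the principle applies to compact manifolds, whereas $M$ is non-compact when $\Sigma$ has punctures, and it gives no control over the geometry near the singular locus or the ends --- it would neither identify $\mathcal{P}_t$ for $t\neq 0$ with the doubled convex core structure of the prescribed bending data nor establish the cusp statement. Keep the explicit reflection construction as the actual argument and discard that alternative.
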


Let us explain some terminology used in the statement of Theorem \ref{mainthm}. Roughly speaking, a hyperbolic (resp. AdS or HP) \textit{convex core structure} on $\Sigma\times[0,1]$ is a hyperbolic (resp. AdS or HP) structure on $\Sigma\times [0,1]$ for which the holonomy representation $\rho$ can be deformed in a suitable sense to a Fuchsian representation. This structure makes $\Sigma\times[0,1]$ isometric to $\mathrm{CH}(\Lambda_{\rho})/\rho(\pi_1(\Sigma))$, where $\mathrm{CH}(\Lambda_{\rho})$ is the convex hull of the limit set $\Lambda_{\rho}$ of $\rho$. The boundary $\partial\mathrm{CH}(\Lambda_{\rho})$ has two connected components that we will denote by $\partial_+\mathrm{CH}(\Lambda_{\rho})$ and $\partial_-\mathrm{CH}(\Lambda_{\rho})$. This gives rise to an identification of $\Sigma\times\{1\}$ with $\partial_+\mathrm{CH}(\Lambda_{\rho})/\rho(\pi_1(\Sigma))$ and $\Sigma\times\{0\} $ with $\partial_-\mathrm{CH}(\Lambda_{\rho})/\rho(\pi_1(\Sigma))$. These components are almost everywhere totally geodesic, except on two measured geodesic laminations $\lambda$ and $\mu$ which are supported where the surface is bent. We refer to $(\lambda,\mu)$ as the \textit{bending data} of the convex core structure. It is well known that $\lambda$ and $\mu$ must satisfy the \textit{filling} condition, which means that every component of the complement of the support of $\lambda$ and $\mu$ in $\Sigma$ contains at most one puncture, and it is simply connected after adding the puncture if needed.

Now, let us consider a convex core structure with bending data $(\lambda,\mu)$ and holonomy $\rho$, we furthermore assume that $\lambda$ and $\mu$ are weighted multicurves (see Definition \ref{multicurve}) which are a particular case of geodesic laminations. In that case, the hyperbolic (resp. AdS or HP) \textit{doubled convex core structure} with bending data $(\lambda,\mu)$ is the singular hyperbolic (resp. AdS or HP) structure obtained by doubling the convex core $\mathrm{CH}(\Lambda_{\rho})$ along its faces. As a result, the doubled manifold is homeomorphic to $\Sigma\times\mathbb{S}^1$, and the singular locus is $L:=(\vert\lambda\vert\times\{1\})\cup(\vert\mu\vert\times\{-1\})$, where $\vert \cdot\vert$ denotes the support of a weighted multicurve. The holonomy of a meridian $\gamma\in \pi_1(M)$ that encircles a curve $\alpha\times\{*\}$ in $L$ is a rotation of angle $\theta$. This angle $\theta$ is called the \textit{cone angle} around $\alpha\times\{*\}$. When the surface $\Sigma$ has punctures, the ends of $M$ are \textit{cusps}, which are a well-known notion in hyperbolic geometry and have been extended by Riolo and Seppi \cite{riolo_seppi} in Anti-de Sitter and Half-Pipe geometry.

\subsection{The strategy of the proof.}
The idea of the proof is divided into four main steps. Before explaining a rough idea of each step, let us recall the philosophy behind transitioning from a hyperbolic to AdS structure through HP structure. The concept is that whenever we have a path of hyperbolic or AdS structures that collapse to a hyperbolic structure onto a co-dimension one totally geodesic surface, one can hope to produce a geometric transition via half-pipe geometry after a suitable rescaling along the direction of collapse. Now, the starting point of the proof is the following: let $\lambda$ and $\mu$ be two weighted multicurves that fill $\Sigma$, Kerckhoff established the following result concerning length functions defined on Teichm\"uller space which we will only state for weighted multicurves even it holds for general measured laminations:

\begin{theorem}[\cite{kerker}]
The length function $l_{\lambda}+l_{\mu}$ defined over the Teichm\"uller space of $\Sigma$ has a unique minimum $k_{\lambda,\mu}.$
\end{theorem}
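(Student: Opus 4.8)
The plan is to derive the statement from two structural facts about $F := l_{\lambda}+l_{\mu}$ on $\T$: that $F$ is \emph{proper}, and that $F$ is \emph{strictly convex along every nontrivial earthquake path}. Since Thurston's earthquake theorem provides, for any two points of $\T$, a unique earthquake path joining them, these two facts together force the existence and uniqueness of a minimum, which we then call $k_{\lambda,\mu}$.

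For properness I would argue as follows. Let $X_n$ be a sequence leaving every compact subset of $\T$. By the thick--thin decomposition (Mumford compactness), there exist essential simple closed curves $\gamma_n$ with $l_{\gamma_n}(X_n)\to 0$; the collar lemma then shows that the embedded collar about the $X_n$-geodesic representative of $\gamma_n$ has width tending to $+\infty$, so every essential arc crossing $\gamma_n$ has $X_n$-length tending to $+\infty$. Because $\lambda$ and $\mu$ fill, we have $i(\lambda,\gamma_n)+i(\mu,\gamma_n)\ge c_0$ for a constant $c_0>0$ depending only on the finitely many weights of $\lambda$ and $\mu$: each of $i(\lambda,\cdot)$ and $i(\mu,\cdot)$ takes values in the discrete set $c_0\mathbb{Z}_{\ge 0}$ on simple closed curves, and the two cannot vanish simultaneously on an essential curve. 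Hence $F(X_n)$ is bounded below by $c_0$ times the collar width of $\gamma_n$ and therefore diverges, so $F$ is proper and attains a minimum.

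For uniqueness I would invoke Kerckhoff's convexity theorem from \cite{kerker}: along any earthquake path with transverse measured lamination $\nu$, the function $t\mapsto l_{\eta}(E_{t\nu})$ is convex for every measured lamination $\eta$, and its second derivative vanishes identically only when $\nu$ has no transverse intersection with $\eta$. Suppose $X\ne Y$ both realize the minimum and let $E_{t\nu}$ be the unique earthquake path between them, with $\nu\ne 0$. Then $t\mapsto F(E_{t\nu})$ is convex on $[0,1]$ with equal endpoint values, both equal to $\min F$, hence constant, hence affine; being a sum of two convex functions whose sum is affine, each of $l_{\lambda}(E_{t\nu})$ and $l_{\mu}(E_{t\nu})$ is then affine as well, so $\nu$ has no transverse intersection with $\lambda$ nor with $\mu$. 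But a nonzero measured lamination disjoint from $|\lambda|\cup|\mu|$ would have to be supported in $\Sigma\setminus(|\lambda|\cup|\mu|)$, whose components are disks and once-punctured disks by the filling hypothesis and carry no geodesic lamination; and a closed leaf of $\nu$ coinciding with a component $\gamma_i$ of $\lambda$ (resp.\ $\mu$) would force $i(\mu,\gamma_i)=0$ (resp.\ $i(\lambda,\gamma_i)=0$), contradicting filling. Therefore $\nu=0$, contradicting $X\ne Y$, and the minimum is unique.

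The only genuinely hard ingredient is Kerckhoff's convexity estimate together with the sharp description of its degenerate case, which I would cite from \cite{kerker} rather than reprove; everything else is routine. The two points that require a little care are the uniform discreteness constant $c_0$ in the properness argument (needed because the short curves $\gamma_n$ may lie in infinitely many isotopy classes) and the verification that the filling hypothesis genuinely excludes a nontrivial earthquake that is infinitesimally length-preserving for both $l_{\lambda}$ and $l_{\mu}$.
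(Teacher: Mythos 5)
The paper does not prove this statement itself --- it is quoted from Kerckhoff --- and your overall strategy (properness for existence, strict convexity along earthquake paths for uniqueness) is precisely the strategy of the cited source. Your uniqueness half is sound: convexity of $t\mapsto l_{\eta}(E_{t\nu})$ along earthquakes, the forced affineness of each summand when the convex sum is constant between two minimizers, and the check that a nonzero $\nu$ meeting neither $\lambda$ nor $\mu$ transversely would have to be carried by the disk and once-punctured-disk components of $\Sigma\setminus(\vert\lambda\vert\cup\vert\mu\vert)$ or share a closed leaf with one of them, both of which the filling hypothesis forbids.

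The genuine gap is in the properness step. Mumford compactness is a statement about moduli space, not Teichm\"uller space: a sequence $X_n$ can leave every compact subset of $\T$ while remaining in the $\epsilon$-thick part, e.g.\ $X_n=\phi^n\cdot X_0$ for an infinite-order mapping class $\phi$. For such a sequence there are no curves $\gamma_n$ with $l_{\gamma_n}(X_n)\to 0$, so your argument produces no lower bound on $F(X_n)$ whatsoever, even though $F(X_n)\to\infty$ does in fact hold (since $l_{\lambda}(\phi^n X_0)=l_{\phi^{-n}\lambda}(X_0)$ grows). Your collar-width estimate only rules out degeneration into the thin part. The standard repair goes through the Thurston compactification: if $X_n\to[\nu]\in\mathcal{PML}$, then $l_{\eta}(X_n)\to\infty$ for every $\eta$ with $i(\eta,\nu)>0$, and filling gives $i(\lambda,\nu)+i(\mu,\nu)>0$ for all $\nu\neq 0$; compactness of $\T\cup\mathcal{PML}$ then makes the sublevel sets of $F$ compact. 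A smaller slip: $i(\lambda,\cdot)$ does not take values in $c_0\mathbb{Z}_{\ge 0}$ when the weights $a_i$ are incommensurable, but the fact you actually need --- that the nonzero values of $i(\lambda,\cdot)+i(\mu,\cdot)$ on simple closed curves are bounded below by the minimum of the finitely many weights --- is true and suffices for the thin-part estimate.
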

Later, Series \cite{limitefisch} proved that the path of holonomies of hyperbolic convex core structures on $\Sigma\times[0,1]$ with bending data $(t\lambda,t\mu)$ converges as $t\to 0^+$ to a holonomy of a complete hyperbolic structure on $\Sigma$ given by the \textit{Kerckhoff point} $k_{\lambda,\mu}$. An analogous result in AdS geometry was obtained by Bonsante and Schlenker \cite{Fixedpoint}, namely, the holonomy representation of the AdS convex core structure with bending data $(t\lambda,t\mu)$ converges to the Kerckhoff point as $t\to 0^+$.\\

The first step towards proving the main Theorem \ref{mainthm} is to establish the transition at the level of holonomy for the convex core structure (before doubling). Let $\rho_{(t\lambda,t\mu)}$ be the path of holonomies representations of a hyperbolic (if $t>0$) and AdS (if $t<0$) convex core structure with bending data $(\vert t\vert\lambda,\vert t\vert \mu)$. Then, we prove the following theorem, where a more detailed statement is given in Theorem \ref{H} in Section \ref{TROFHOL}:

\begin{theorem}[Transition of holonomy]\label{1.3}
After conjugating if needed, the rescaled holonomy $\tau_t\rho_{(t\lambda,t\mu)}\tau_t^{-1}$ converges as $t\to 0$ to $\rho_{(\lambda,\mu)}^{\HP}$, which is the holonomy of a Half-pipe convex core structure on $\Sigma\times[0,1]$ with bending data $(\lambda,\mu)$.
\end{theorem}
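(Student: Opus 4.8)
The plan is to reduce the statement to an explicit computation with the bending cocycle, taking as black boxes the convergence theorems of Series and of Bonsante--Schlenker quoted above, together with Danciger's realisation of $\HP$ as a limit of $\mathbb{H}^3$ and $\ads$ inside $\mathbb{RP}^3$.

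First I would fix the projective picture. Realise $\mathbb{H}^3$, $\ads$ and $\HP$ as the domains of $\mathbb{RP}^3$ attached to the family of quadratic forms $q_t=-x_1^2+x_2^2+x_3^2+t|t|\,x_4^2$ (so $t>0$ gives $\mathbb{H}^3$, $t<0$ gives $\ads$, $t=0$ gives $\HP$), all containing the fixed totally geodesic plane $\mathbb{H}^2=\{x_4=0\}$ with stabiliser $\mathrm{SO}(2,1)$. Let $\tau_t=\mathrm{diag}(1,1,1,1/|t|)$ be the rescaling in the $x_4$-direction; conjugation by $\tau_t$ identifies $\isom(\mathbb{H}^3)$, resp.\ $\isom(\ads)$, with $\mathrm{SO}(q_t)\subset\mathrm{PGL}(4,\mathbb{R})$, and, following \cite{CDW}, these groups degenerate as $t\to0$ to $\isom_0(\HP)\cong\mathrm{SO}(2,1)\ltimes\mathbb{R}^{2,1}$, realised as block-triangular matrices with $\mathrm{SO}(2,1)$-linear part and $\mathbb{R}^{2,1}$-translation part. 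I then normalise once and for all, fixing a Fuchsian representation $\rho_{k_{\lambda,\mu}}$ inside $\mathrm{SO}(2,1)$ realising the Kerckhoff point and, for each $t$, choosing the conjugate of $\rho_{(t\lambda,t\mu)}$ whose base pleated face develops onto $\mathbb{H}^2$ along a fixed frame; this is the content of ``after conjugating if needed''.

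Next I would write $\rho_{(t\lambda,t\mu)}$ explicitly. Since $\lambda$ and $\mu$ are weighted multicurves, $\rho_{(t\lambda,t\mu)}$ is described by the standard bending cocycle: if $m_+(t)$ denotes the hyperbolic metric induced on the upper pleated boundary of the convex core, then $\rho_{(t\lambda,t\mu)}(\gamma)$ is an alternating product, with a fixed number of factors governed by the intersection numbers $i(\gamma,\alpha_i)$, of Fuchsian holonomies of subarcs in the faces and of elementary bending isometries $B_{\alpha_i}(|t|w_i)$ about the lifted axes of the curves $\alpha_i$ of $|\lambda|$. Each $B_{\alpha_i}(|t|w_i)$ fixes pointwise the $2$-plane spanned by the axis of $\alpha_i$ and acts on the complementary $2$-plane $\langle f_i,e_4\rangle$ as a Euclidean rotation of angle $|t|w_i$ when $t>0$, and as a Lorentzian boost of parameter $|t|w_i$ when $t<0$. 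By Series' theorem \cite{limitefisch} ($t\to0^+$) and by Bonsante--Schlenker's \cite{Fixedpoint} ($t\to0^-$), $\rho_{(t\lambda,t\mu)}\to\rho_{k_{\lambda,\mu}}$; hence $m_+(t)\to k_{\lambda,\mu}$ and the axes of the $\alpha_i$ converge to their $k_{\lambda,\mu}$-geodesic representatives inside the fixed $\mathbb{H}^2$.

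Now I would conjugate by $\tau_t$ and let $t\to0$. The point is that $\tau_t$ fixes $\mathrm{SO}(2,1)$ pointwise, while a short $2\times2$ computation on $\langle f_i,e_4\rangle$ gives $\tau_tB_{\alpha_i}(|t|w_i)\tau_t^{-1}\to\exp(w_iN_{\alpha_i})$, where $N_{\alpha_i}\in\mathbb{R}^{2,1}$ is the infinitesimal rotation about the axis of $\alpha_i$ in $k_{\lambda,\mu}$; crucially this limit is the \emph{same} from the rotation side ($t>0$) and the boost side ($t<0$), since $\cos(|t|w_i),\cosh(|t|w_i)\to1$ while $\sin(|t|w_i)/|t|,\ \sinh(|t|w_i)/|t|\to w_i$. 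Tracking the finitely many ($t$-independent in number) factors of the $\tau_t$-conjugated product, one concludes that $\tau_t\rho_{(t\lambda,t\mu)}\tau_t^{-1}$ converges, on a generating set and hence as a representation, to $\gamma\mapsto(\rho_{k_{\lambda,\mu}}(\gamma),c_\lambda(\gamma))\in\mathrm{SO}(2,1)\ltimes\mathbb{R}^{2,1}$, where $c_\lambda$ is the infinitesimal bending cocycle of the surface $k_{\lambda,\mu}$ along $\lambda$. Finally I would recognise this as $\rho^{\HP}_{(\lambda,\mu)}$: the $\rho_{k_{\lambda,\mu}}$-equivariant plane in $\HP$ obtained by bending $\mathbb{H}^2$ upward along $\lambda$, together with its mirror, bounds a properly embedded convex set whose quotient is a Half-pipe convex core structure on $\Sigma\times[0,1]$ with bending data $(\lambda,\mu)$; the consistency of this with the description obtained by bending downward along $\mu$ is the first-order shadow of the balance condition characterising $k_{\lambda,\mu}$ in Kerckhoff's theorem, and the resulting objects are the $\HP$-analogues of the quasi-Fuchsian co-Minkowski manifolds of Barbot--Fillastre \cite{barbotfillastre}. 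I expect the main obstacle to be precisely this last identification --- exhibiting the convex set and matching the prescribed bending data --- together with the uniformity required by the detailed statement (in particular, when $\Sigma$ has punctures, checking that the parabolic ends rescale to $\HP$-cusps in the sense of Riolo--Seppi \cite{riolo_seppi}); the algebraic rescaling itself, and the passage from convergence on generators to convergence as representations, are routine once the bending description and the Series/Bonsante--Schlenker inputs are in hand.
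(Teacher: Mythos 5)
Your rescaling computation follows the paper's route essentially verbatim: write $\rho_{(t\lambda,t\mu)}$ via the bending cocycle of Propositions \ref{holHfuch} and \ref{Mess}, use that $\tau_t$ centralises $\isom(\mathbb{H}^2)$ to isolate the finitely many elementary bending factors, invoke Series and Bonsante--Schlenker to get convergence of the induced metric and of the bending axes to their $k_{\lambda,\mu}$-representatives, and then apply the $2\times 2$ computation showing that rescaled rotations (for $t>0$) and rescaled ``boosts'' (for $t<0$) have the same half-pipe limit --- this is exactly Proposition \ref{rotation} and the proof of Theorem \ref{H}(1). That half of your argument is correct and complete.

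The genuine gap is in the second half of the statement: you must show that the common limit is the holonomy of a Half-pipe \emph{convex core structure} with bending data $(\lambda,\mu)$, and you only gesture at this (``I expect the main obstacle to be precisely this last identification''). For closed $\Sigma$ the citation of Barbot--Fillastre does suffice, since there any affine deformation of a Fuchsian group is $\HP$-quasi-Fuchsian. But when $\Sigma$ has punctures this fails in general, and the paper has to prove Proposition \ref{half}: one must establish (i) that the limit representation admits an invariant continuous graph over $\partial\mathbb{D}^2$, i.e.\ is $\HP$-quasi-Fuchsian, and (ii) that the image of the bending map is exactly $\partial_+\mathrm{CH}(\Lambda)$. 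The paper does this by writing the bent plane as the graph of a concave, piecewise-affine function $\psi_\lambda$ on $\mathbb{D}^2$, smoothing it near the bending lines so that it becomes genuinely affine in lifted neighbourhoods of the punctures (Lemma \ref{4.9}), applying a convex-analysis extension criterion in the spirit of Nie--Seppi to get a continuous boundary value on $\mathbb{S}^1$ (Lemma \ref{4.8} and Proposition \ref{boundaryvalueextension}), and finally showing $\psi_\lambda$ equals the concave envelope of its boundary values so that its graph is the upper boundary of the convex hull. None of this is routine, and without it your limit $(\rho_{k_{\lambda,\mu}},c_\lambda)$ is merely an affine deformation of a Fuchsian group, not yet the holonomy of a convex core structure. (Your closing remark about the ``balance condition'' at the Kerckhoff point reconciling the $\lambda$- and $\mu$-descriptions is also not needed for this theorem --- the lower boundary component is handled separately, and much later, via the width estimates.)
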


By rescaling, we mean that we conjugate $\rho_{(t\lambda,t\mu)}$ by an appropriate projective transformation $\tau_t$ which is defined in \eqref{taut} in Section \ref{sec2}. The proof of Theorem \ref{1.3} uses the description of the holonomy $\rho_{(t\lambda,t\mu)}$ in terms of a \textit{bending cocycle}. The notion of bending cocycle was first introduced in the context of hyperbolic geometry by Thurston in \cite{thurstop} and later studied in detail in the Epstein-Marden paper \cite{Epstein}.

The procedure consists of bending a totally geodesic plane in $\mathbb{H}^3$ along a closed set of disjoint geodesics using rotations in $\mathbb{H}^3$. The closed subset corresponds to a lift of a weighted multicurve in $\Sigma$ to a totally geodesic plane $\mathbb{H}^2$ in $\mathbb{H}^3$. A similar construction of a bending cocycle in AdS geometry was initiated by Mess \cite{Mess} and further studied by Benedetti-Bonsante in \cite{canorot}. By combining these two constructions, one can prove that the rescaled holonomy $\tau_t\rho_{(t\lambda,t\mu)}\tau_t^{-1}$ converges to a representation $\rho_{(\lambda,\mu)}^{\HP}:\pi_1(\Sigma)\to \isom(\HP)$.

The remaining part of the proof consists of demonstrating that this representation is indeed the holonomy of a Half-pipe convex core structure with bending data $(\lambda,\mu)$. If the surface $\Sigma$ is closed, the result follows directly from results established in \cite{barbotfillastre} and \cite{affine}. However, when the surface $\Sigma$ has punctures, it becomes necessary to analyze the behavior of the holonomy and the developing map on the Half-pipe convex structure near the punctures. This issue will be addressed in Proposition \ref{half} of Section \ref{TROFHOL}.\\

The second step of the proof consists of studying the convergence of the rescaled pleated surfaces $\tau_t\partial_+\mathrm{CH}(\Lambda_{\rho_{(t\lambda,t\mu)}})$ and $\tau_t\partial_-\mathrm{CH}(\Lambda_{\rho_{(t\lambda,t\mu)}})$. By our construction, the pleated surface $\partial_+\mathrm{CH}(\Lambda_{\rho_{(t\lambda,t\mu)}})$ obtained by bending the totally geodesic plane $\mathbb{H}^2$ of $\mathbb{H}^3$ and $\ads$ will converge after rescaling. However, to prove the convergence of the rescaled lower boundary component $\tau_t\partial_-\mathrm{CH}(\Lambda_{\rho_{(t\lambda,t\mu)}})$, an important additional ingredient is required: we need to control the "distance" between $\partial_+\mathrm{CH}(\Lambda_{\rho_{(t\lambda,t\mu)}})$ and $\partial_-\mathrm{CH}(\Lambda_{\rho_{(t\lambda,t\mu)}})$. Quantitative estimates in this direction are proved by Series in \cite{limitefisch} in hyperbolic geometry and by Seppi in \cite{SEP19} in AdS geometry, using the notion of \textit{width} introduced in \cite{Maximalsurface}. This allows us to prove the convergence of the surfaces $\tau_t\partial_-\mathrm{CH}(\Lambda_{\rho_{(t\lambda,t\mu)}})$.
\begin{theorem}[Transition of pleated surfaces]\label{1.4}
After conjugating if needed, the rescaled pleated surfaces $\tau_t\partial_{\pm}\mathrm{CH}(\Lambda_{\rho_{(t\lambda,t\mu)}})$ converge as $t\to 0$ to the pleated surfaces $\partial_{\pm}\mathrm{CH}(\Lambda_{\rho^{\HP}_{(\lambda,\mu)}})$, where $\rho_{(\lambda,\mu)}^{\HP}$ is the holonomy of a Half-pipe convex core structure on $\Sigma\times[0,1]$ with bending data $(\lambda,\mu)$.
\end{theorem}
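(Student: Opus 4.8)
The plan is to prove the two statements ($+$ and $-$) separately, in each case by passing through the explicit bending picture underlying Theorem \ref{1.3} and, for the lower face, importing the width estimates. Throughout I use the same conjugation as in Theorem \ref{1.3}, so that the holonomies $\tau_t\rho_{(t\lambda,t\mu)}\tau_t^{-1}$ already converge to $\rho^{\HP}_{(\lambda,\mu)}$, and I run the argument separately for $t\to 0^+$ (ambient space $\mathbb{H}^3$) and for $t\to 0^-$ (ambient space $\ads$); in both cases the rescaled objects degenerate onto the plane carrying the Half-pipe structure, and since the limiting holonomy and limiting convex hull agree, the two one-sided limits coincide.

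The upper face is essentially a corollary of the bending-cocycle computation done for Theorem \ref{1.3}. Recall that $\partial_+\mathrm{CH}(\Lambda_{\rho_{(t\lambda,t\mu)}})$ is obtained by bending a fixed totally geodesic plane $\mathbb{H}^2$ (inside $\mathbb{H}^3$ or $\ads$) along the lift of the multicurve $|t|\lambda$ with bending angles $|t|\lambda(\alpha)$; equivalently it is the image of a fundamental domain in $\mathbb{H}^2$ under the bending cocycle, so the corresponding equivariant pleated map $f_t^+\colon\widetilde{\Sigma}\to\mathbb{H}^3$ (resp. $\ads$) is built explicitly from those rotations. The projective map $\tau_t$ of \eqref{taut} is designed precisely so that $\tau_t(\mathbb{H}^2)$ limits onto the degenerate plane; as $t\to 0$ the bending angles collapse, but the rotations conjugated by $\tau_t$ converge, by the same computation as in Theorem \ref{1.3}, to the Half-pipe isometries realizing bending angle $\lambda(\alpha)$. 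Hence $\tau_t\circ f_t^+$ converges, locally uniformly on $\widetilde{\Sigma}$ and Hausdorff on compact subsets of the image, to a $\rho^{\HP}_{(\lambda,\mu)}$-equivariant pleated map with pleating locus $(|\lambda|,\lambda)$, and by the description of the Half-pipe convex core in \cite{barbotfillastre,affine}, together with Proposition \ref{half} near the punctures, this limit parametrizes $\partial_+\mathrm{CH}(\Lambda_{\rho^{\HP}_{(\lambda,\mu)}})$.

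The lower face is the delicate point: $\partial_-\mathrm{CH}(\Lambda_{\rho_{(t\lambda,t\mu)}})$ is not the bending of a fixed plane but the opposite boundary component of the convex hull, and a priori $\tau_t$ could push it to infinity. This is exactly where the width estimates enter: by Series \cite{limitefisch} in the hyperbolic case and Seppi \cite{SEP19} in the AdS case, the width (in the sense of \cite{Maximalsurface}) between the two boundary components of $\mathrm{CH}(\Lambda_{\rho_{(t\lambda,t\mu)}})$ is bounded by a constant times $|t|$. Since $\tau_t$ stretches the direction transverse to the degenerating plane by a factor comparable to $1/|t|$, the rescaled surface $\tau_t\partial_-\mathrm{CH}(\Lambda_{\rho_{(t\lambda,t\mu)}})$ remains within bounded Half-pipe distance of $\tau_t\partial_+\mathrm{CH}(\Lambda_{\rho_{(t\lambda,t\mu)}})$, which we have just shown to converge. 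Combined with the holonomy convergence of Theorem \ref{1.3}, an Arzel\`a--Ascoli-type equivariance argument gives precompactness of the family $\{\tau_t\circ f_t^-\}$, so along any sequence $t_n\to 0$ a subsequence converges to a $\rho^{\HP}_{(\lambda,\mu)}$-equivariant pleated map $f_\infty^-$. The bending measured lamination of $\tau_t\circ f_t^-$ is $|t|\mu$ carried on $|\mu|$, and, exactly as for the upper face, the rescaling cancels the factor $|t|$, so the pleating data of $f_\infty^-$ is $(|\mu|,\mu)$.

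It remains to identify $f_\infty^-$ with $\partial_-\mathrm{CH}(\Lambda_{\rho^{\HP}_{(\lambda,\mu)}})$, which then upgrades subsequential convergence to full convergence. I would argue that the rescaled convex hulls $\tau_t\mathrm{CH}(\Lambda_{\rho_{(t\lambda,t\mu)}})$ converge (geometrically, and uniformly on compact sets) to a closed convex $\rho^{\HP}_{(\lambda,\mu)}$-invariant set which, by minimality and the classification of the Half-pipe convex core in \cite{barbotfillastre,affine}, must be $\mathrm{CH}(\Lambda_{\rho^{\HP}_{(\lambda,\mu)}})$; then $f_\infty^-$ parametrizes one of its two boundary components, and since its pleating locus is $\mu$ while the other component is pleated along $\lambda\neq\mu$, it is necessarily the lower one. (Equivalently, one can bypass convex hulls and pin $f_\infty^-$ down directly: it is a $\rho^{\HP}_{(\lambda,\mu)}$-equivariant surface pleated along $\mu$, lying on the concave side of $\partial_+\mathrm{CH}(\Lambda_{\rho^{\HP}_{(\lambda,\mu)}})$ at bounded distance, which by convexity in Half-pipe geometry forces it to be $\partial_-\mathrm{CH}(\Lambda_{\rho^{\HP}_{(\lambda,\mu)}})$.) Since every subsequential limit equals this same surface, $\tau_t\circ f_t^-$ converges. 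I expect the main obstacle to be precisely this lower-face analysis: making the width bound quantitative enough to survive the rescaling, verifying that the rescaled bending data does not degenerate, and, when $\Sigma$ has punctures, controlling everything uniformly on cusp neighbourhoods via Proposition \ref{half}; the upper face, by contrast, costs essentially nothing beyond Theorem \ref{1.3}.
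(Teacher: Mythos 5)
Your outline matches the paper's strategy: the upper face is handled exactly as you say (Proposition \ref{Sx1} is precisely the statement that $\tau_t\mathrm{b}^{\X}_{|t|\lambda,+}$ converges locally uniformly to $\mathrm{b}^{\HP}_{\lambda,+}$, by the same rotation computation as in Theorem \ref{H}), and the lower face is indeed the delicate point, attacked via the width estimates of Series and Seppi. Where you diverge is in the implementation of the lower-face step. The paper does not run an Arzel\`a--Ascoli argument on the maps $\tau_t\circ f_t^-$; instead it factors the lower pleated map as $\mathrm{A}_t^{\X}\circ\mathrm{b}^{\X}_{|t|\mu,-}$, where $\mathrm{A}_t^{\X}$ is an explicit isometry sending $\mathbb{H}^2$ to a support plane $\mathrm{P}_t$ of $\partial_-\mathrm{CH}$ and a base point $p_t$ to the point $x_t$ directly below it. The convergence of the negative bending cocycle is then the same computation as for the upper face, and all the analytic content is concentrated in showing that $\tau_t x_t$ stays bounded and that $\tau_t\mathrm{P}_t$ converges to a \emph{spacelike} plane (Propositions \ref{hyperbolicsurface} and \ref{adssurface}), whence $\tau_t\mathrm{A}_t^{\X}\tau_t^{-1}$ converges (Lemma \ref{rcompos}); subsequential limits are removed via the graph property of $\partial_-\mathrm{CH}(\Lambda_{\rho^{\HP}})$ together with Proposition \ref{half}, and the $t\to0^+$ and $t\to0^-$ normalizations are matched by a rotation fixing $p_t$ (Lemma \ref{adju}).

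Two points in your sketch need strengthening before it closes. First, in the hyperbolic case Series's estimate (Proposition \ref{suppH}) is \emph{not} a bound on the full width: it only controls the distance to $\partial_-\mathrm{CH}$ from points of $\partial_+\mathrm{CH}$ lying on the lifted bending curves. This is why the paper insists that the base point $p_t$ lie on a bending line; your blanket claim that ``the width is bounded by $C|t|$'' is only available as stated in the AdS case. Second, and more seriously, pointwise boundedness of $\tau_t\partial_-\mathrm{CH}$ does not by itself give equicontinuity of $\tau_t\circ f_t^-$: the map $\tau_t$ is anisotropic, so a $1$-Lipschitz pleated map composed with $\tau_t$ can degenerate unless one also controls the rescaled tangent data, i.e.\ shows that the rescaled support planes of $\partial_-\mathrm{CH}$ converge to spacelike (non-vertical) planes in $\HP$. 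That control is exactly what Propositions \ref{hyperbolicsurface} and \ref{adssurface} supply, and it is the ingredient your ``Arzel\`a--Ascoli-type'' step silently assumes. With those two points supplied, your identification of the subsequential limit (either via convergence of the rescaled convex hulls or via the convexity/pleating-data argument) is a legitimate alternative to the paper's fiber-and-graph argument in Proposition \ref{soussuite}.
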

Here, by convergence of the pleated surfaces, we mean the convergence of a suitable parametrization $\widetilde{\Sigma}\to \partial_{\pm}\mathrm{CH}(\Lambda_{\rho_{(t\lambda,t\mu)}})$ in the compact-open topology to a parametrization $\widetilde{\Sigma}\to \partial_{\pm}\mathrm{CH}(\Lambda_{\rho^{\HP}_{(\lambda,\mu)}})$, where $\widetilde{\Sigma}$ is the universal cover of $\Sigma$. A more precise statement of Theorem \ref{1.4} is given in Theorem \ref{Sx0}, which is the main result of Section \ref{sec5}.\\

Having established the transition at the level of holonomies and pleated surfaces, the next step is to promote it to convergence at the level of developing maps. More precisely, in Section \ref{TRNDEV}, we prove the following theorem:
\begin{theorem}[Transition of geometric structures]
    Let $\lambda$ and $\mu $ be two weighted multicurves which fill  $\Sigma$ and consider $\rho_{(t\lambda,t\mu)}$ the family of representations as above. Then there is a family of developing maps $\mathrm{Dev}_{(t\lambda,t\mu)}:\widetilde{\Sigma}\times[0,1]\to \X$ where $\X=\mathbb{H}^3$ if $t>0 $ and $\X=\ads$ if $t<0$ such that the convex core structure $(\tau_t\mathrm{Dev}_{(t\lambda,t\mu)},\tau_t\rho_{(t\lambda,t\mu)}\tau_t^{-1})$ converges as $t\to 0$ to the Half-pipe convex core structure  $(\mathrm{Dev}_{(\lambda,\mu)}^{\HP},\rho_{(\lambda,\mu)}^{\HP})$.
\end{theorem}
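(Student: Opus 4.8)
The plan is to assemble the developing maps $\mathrm{Dev}_{(t\lambda,t\mu)}$ out of the pleated--surface data produced in Theorem \ref{Sx0} and then transfer the convergence established there to the interior by a continuity argument. Recall first that, by the very definition of a convex core structure, for each $t$ the developing map of $(\Sigma\times[0,1],\mathcal P_t)$ is a homeomorphism $\widetilde\Sigma\times[0,1]\to\mathrm{CH}(\Lambda_{\rho_{(t\lambda,t\mu)}})$ equivariant for $\rho_{(t\lambda,t\mu)}$, and the only freedom is the choice of such a homeomorphism (together with the marking); so the real task is to choose these homeomorphisms coherently in $t$ so that the rescaled family converges. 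Theorem \ref{Sx0} supplies equivariant parametrizations $f^{\pm}_t\colon\widetilde\Sigma\to\partial_\pm\mathrm{CH}(\Lambda_{\rho_{(t\lambda,t\mu)}})$ of the two boundary components. Using that $\lambda$ and $\mu$ are weighted multicurves, $\widetilde\Sigma\times[0,1]$ carries a $\pi_1(\Sigma)$--invariant decomposition into finitely many types of \emph{blocks}, glued along totally geodesic faces: the ``flat'' blocks lying over the components of $\Sigma\setminus(|\lambda|\cup|\mu|)$, and the dihedral ``bending'' blocks (wedges) inserted along the lifts of $|\lambda|$ on the top face and of $|\mu|$ on the bottom face. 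On the flat blocks one defines $\mathrm{Dev}_{(t\lambda,t\mu)}$ by geodesically interpolating along arcs transverse to $\partial_-$ between $f^-_t$ and $f^+_t$; on the bending blocks one fills in the corresponding dihedral wedge in $\mathbb{H}^3$ (for $t>0$) or $\ads$ (for $t<0$). One then checks that these local definitions glue to a global equivariant homeomorphism onto $\mathrm{CH}(\Lambda_{\rho_{(t\lambda,t\mu)}})$, hence to a developing map for $\mathcal P_t$; for $t\neq 0$ the dependence on $t$ is manifestly continuous (indeed real--analytic, via the bending cocycle description of Theorem \ref{H}), so the content lies entirely in the limit $t\to0$. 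The same recipe applied to $\rho^{\HP}_{(\lambda,\mu)}$ and its convex core --- whose existence and convex core description are given by Theorem \ref{1.3} together with Proposition \ref{half} --- produces $\mathrm{Dev}^{\HP}_{(\lambda,\mu)}$.

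With the maps in hand, the convergence $\tau_t\,\mathrm{Dev}_{(t\lambda,t\mu)}\to\mathrm{Dev}^{\HP}_{(\lambda,\mu)}$ in the compact--open topology is reduced to data already shown to converge. Fix a fundamental domain for $\pi_1(\Sigma)$ acting on $\widetilde\Sigma\times[0,1]$; it meets only finitely many blocks, and on each of them $\tau_t\,\mathrm{Dev}_{(t\lambda,t\mu)}$ is an explicit function of: (i) the rescaled boundary parametrizations $\tau_t f^{\pm}_t$, which converge to $f^{\pm}_\infty$ by Theorem \ref{Sx0}; (ii) the rescaled holonomy $\tau_t\rho_{(t\lambda,t\mu)}\tau_t^{-1}$, which converges by Theorem \ref{1.3}; (iii) the transverse \emph{width} between the two pleated surfaces, which by the estimates of Series \cite{limitefisch} and Seppi \cite{SEP19} (via the width of \cite{Maximalsurface}) stays bounded after rescaling and converges to the half-pipe width; and (iv) the bending angles along $|\lambda|$ and $|\mu|$, which are $2|t|\lambda(\alpha)$ and $2|t|\mu(\alpha)$ and tend to $0$ while the rescaled wedges converge to the shear--bending wedges of $\HP$. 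Since geodesic segments, normal exponential maps, and dihedral wedges in $\mathbb{H}^3$ and $\ads$ depend continuously on their defining data, and since $\mathbb{H}^3$ and $\ads$ degenerate to $\HP$ inside $\mathbb{RP}^3$ in the sense of limits of geometries \cite{CDW} --- the very convergence already used in Theorems \ref{1.3} and \ref{1.4} --- the block maps converge uniformly on the fundamental domain to the corresponding half-pipe block maps. Equivariance then propagates this to compact--open convergence on all of $\widetilde\Sigma\times[0,1]$, and equivariance of the limit is automatic; hence $(\tau_t\mathrm{Dev}_{(t\lambda,t\mu)},\tau_t\rho_{(t\lambda,t\mu)}\tau_t^{-1})\to(\mathrm{Dev}^{\HP}_{(\lambda,\mu)},\rho^{\HP}_{(\lambda,\mu)})$ as geometric structures.

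When $\Sigma$ has punctures one treats a cusp neighborhood separately: over a small neighborhood $V$ of a puncture the multicurves $\lambda,\mu$ carry no leaves, so both boundary components are totally geodesic over $V$ and the convex core over $V$ is a standard cusp region in $\mathbb{H}^3$, $\ads$ or $\HP$. Proposition \ref{half} and the cusp analysis of Section \ref{TROFHOL} show that the parabolic/horospherical data cutting out these regions converge after rescaling, so $\tau_t\,\mathrm{Dev}_{(t\lambda,t\mu)}$ converges on $V\times[0,1]$ to the $\HP$ cusp, compatibly along $\partial V$ with the interior convergence of the previous paragraph.

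I expect the main obstacle to be exactly the interior continuity across the bending locus, i.e. steps (iii)--(iv) and the block gluing: the pleated surfaces are only piecewise totally geodesic, a transverse arc issued from the lower boundary can terminate on a bending line of the upper boundary, and the combinatorics of the wedge blocks depend on the fine configuration of the leaves of $\lambda$ relative to those of $\mu$. One must verify that the block decomposition can be chosen $\pi_1(\Sigma)$--invariantly and so that it varies continuously with $t$ --- in particular that no block degenerates for $t\neq0$ --- and that the estimates of \cite{limitefisch,SEP19} do yield a rescaled width bounded away from $0$ and $\infty$ uniformly on a fundamental domain. Once this uniform control over the finitely many block types is in place, the passage from convergence of holonomies and pleated surfaces to convergence of developing maps is a continuity statement.
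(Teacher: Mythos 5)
Your overall strategy (build the developing map by hand from the pleated-surface parametrizations and then pass convergence to the interior) differs from the paper's, and the point where it differs is exactly where the gap lies. You assert that the block maps --- geodesic interpolation between $f_t^-$ and $f_t^+$ over the flat pieces, dihedral wedges over the bending lines --- ``glue to a global equivariant homeomorphism onto $\mathrm{CH}(\Lambda_{\rho_{(t\lambda,t\mu)}})$'', and you defer the verification to a closing caveat. That verification is not a routine continuity check: the family of segments joining $f_t^-(x)$ to $f_t^+(x)$ need not foliate the convex core, and near a bending line the transverse segments develop caustics. Concretely, the equidistant/normal flow off a roof is singular on the convex side in $\mathbb{H}^3$ and on the concave side in $\ads$ (this is stated at the start of Section \ref{TRNDEV}), so no single naive choice of transverse direction gives local injectivity in both geometries simultaneously; moreover the ``block decomposition'' of the convex core is not a product decomposition, because a transversal issued from a totally geodesic face of $\partial_-$ can hit a bending wedge of $\partial_+$, so your flat and bending blocks do not have well-defined, $t$-continuous images in the target. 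You flag this as ``the main obstacle'' but do not resolve it, and resolving it is the substance of the theorem.

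The paper circumvents precisely this by never constructing an explicit interpolation across the whole interval: it builds a carefully smoothed transverse unit vector field along each pleated boundary (Lemmas \ref{AdsV} and \ref{H3V}, assembled equivariantly in Proposition \ref{vector}), uses its exponential only on thin collars $\widetilde{\Sigma_{\mathfrak c}}\times[0,\delta']$ and $\widetilde{\Sigma_{\mathfrak c}}\times[1-\delta',1]$ where local injectivity and disjointness of the two collars can be checked by explicit coordinate estimates (Lemma \ref{extensiondev}), and then fills in the interior softly via the Ehresmann--Thurston principle (Theorem \ref{ehresman}) combined with the fact that two nearby developing maps with the same holonomy differ by an embedding close to the identity, glued by Siebenmann's theorem (Theorem \ref{sib}); the cusps require an additional explicit extension along geodesics asymptotic to the common parabolic fixed point of the two support planes. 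If you want to keep your all-explicit route, you would have to prove the analogues of Lemmas \ref{AdsV}--\ref{H3V} for the full interpolation and show that your wedge combinatorics is $\pi_1$-invariant and non-degenerate uniformly in $t$; as written, the proposal assumes the conclusion of the hardest step.
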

A naive approach would be to apply the classical Ehresmann-Thurston Principle \cite{thurstop} (see Theorem \ref{ehresman}) to construct a developing map that is equivariant with respect to the holonomy representation $\rho_{(t\lambda,t\mu)}$. However, there are two main obstacles to applying directly such a construction. The first obstacle is that the Ehresmann-Thurston Principle only applies to compact manifolds (possibly with boundary), and our surface $\Sigma\times[0,1]$ is not compact when $\Sigma$ has punctures. The second problem is that the Ehresmann-Thurston Principle does not allow us to control the geometry of the boundaries $\Sigma\times\{1\}$ and $\Sigma\times\{0\}$. In our case, this information is crucial since we ultimately want to glue two copies of the convex core $\mathrm{CH}(\Lambda_{\rho_{(t\lambda,t\mu)}})$ along its boundary. To overcome these problems, the strategy is as follows: using the bending cocycle construction, we can construct two $\rho_{(t\lambda,t\mu)}$-equivariant maps: one from $\widetilde{\Sigma}\times\{1\}$ to $\partial_+\mathrm{CH}(\Lambda_{\rho_{(t\lambda,t\mu)}})$ and the other from $\widetilde{\Sigma}\times\{0\}$ to $\partial_-\mathrm{CH}(\Lambda_{\rho_{(t\lambda,t\mu)}})$. Then, we use a transverse vector field to $\partial\mathrm{CH}(\Lambda_{\rho_{t\lambda,t\mu}})$ to extend these two maps to small neighborhoods $\widetilde{\Sigma}\times[0,\delta]$ and $\widetilde{\Sigma}\times[1-\delta,1]$ for some $\delta>0$. By doing so, we can then apply the Ehresmann-Thurston Principle and classical results on the deformation of geometric structures to obtain a developing map on $\widetilde{\Sigma_{\mathfrak{c}}}\times[0,1]$, where $\Sigma_{\mathfrak{c}}$ is a compact subsurface in $\Sigma$ that is the complement of the union of small neighborhoods around the punctures. As a direct consequence of our methods, we can extend this developing map to $\widetilde{\Sigma}\times[0,1]$, achieving a satisfactory description of the geometry of the cusps.\\

Finally, in Section \ref{sec7}, we complete the proof of the Main Theorem \ref{mainthm} by describing the holonomy and developing map of the doubled convex core structure. This can be expressed explicitly in terms of the holonomy and developing map before doubling.

\subsection{Organization of the paper}
Section \ref{sec2} provides an overview of the different geometric structures discussed in this work. We will recall the geometric transition from hyperbolic to AdS geometry via the Half-pipe geometry. In Section \ref{sec3}, we introduce the concept of a convex core structure. Section \ref{TROFHOL} focuses on the bending map construction, which concludes the proof of Theorem \ref{H}. The proof of the transition of the pleated surfaces is presented in Section \ref{sec5}. In Section \ref{TRNDEV}, we construct the developing map and then prove the transition at the level of the developing map. In Section \ref{sec7}, we recall the notion of a cone singularity and the construction of the double convex core structure and then complete the proof of the main Theorem \ref{mainthm}.

\subsection{Acknowledgments}
This paper owes much to the conversations I had with my PHD advisor, Andrea Seppi. I would like to thank him for his invaluable comments, remarks, and continuous support. His help was crucial in improving the quality of this text with his feedback and suggestions. I am also grateful to Filippo Mazzoli for our fruitful conversations about the subject.

\section{ Geometric transition from \texorpdfstring{$\mathbb{H}^3$}{Lg} to \texorpdfstring{$\ads$}{Lg}}\label{sec2}
In this section we will recall the relevant notions of geometric transition. We will start by recalling the formalism of $(\mathrm{G},\X)$ structures on manifolds.

\subsection{\texorpdfstring{$(\mathrm{G},\X)$}{Lg}-structures}
Let $\X$ be a manifold and $\mathrm{G}$ be a Lie group that acts transitively on $\X$ by analytic diffeomorphisms.
\begin{defi}
A $(\mathrm{G},\X)$-structure on a manifold $M$ is a maximal collection $\{\phi_{i}: U_i\to \X\}$ where $\{U_i\}$ is an open cover of $M$ consisting of connected open sets, and each $\phi_i$ is a homeomorphism onto its image such that each transition map $$\phi_i\circ\phi_j^{-1}:\phi_j(U_i\cap U_j)\to \phi_i(U_i\cap U_j)$$ is the restriction of an element $g_{ij}\in \mathrm{G}.$
\end{defi}
Consider $M$ and $N$ two manifolds endowed with $(\mathrm{G},\X)$-structures and $f: M \to N$ a map. Then $f$ is a \textit{$(\mathrm{G},\X)$-map} if for every charts $(U_i,\phi_i)$, $(V_j,\psi_j)$ for $M$ and $N$ respectively, the composition
$$\psi_j\circ f\circ \phi_i^{-1}\vert_{\phi_i(U_i\cap f^{-1}(V_j))}$$
is the restriction of an element $g\in \mathrm{G}.$
Let $\Pi: \widetilde{M}\to M$ the universal cover of $M$, then there is a canonical $(\mathrm{G},\X)$-structure on $\widetilde{M}$ which makes the covering $\Pi$ a $(\mathrm{G},\X)$-map. An important consequence of the analiticity of the action of $\mathrm{G}$ on $\X$ is the existence of a "global" coordinate on $M$, called a \textit{developing map } $$\mathrm{dev}: \widetilde{M}\to \X, $$ which is a $(\mathrm{G},\X)$-map. It turns out that the map $\mathrm{dev}$ completely determines the $(\mathrm{G},\X)$-structure on $M$. Moreover this map is equivariant with respect to the \textit{holonomy representation} $\mathrm{hol}:\pi_1(M)\to \mathrm{G}$. The pair $(\mathrm{dev},\mathrm{hol})$ is defined up to the action of $\mathrm{G}$, where $\mathrm{G}$ acts by precomposition on the developing map while it acts by conjugacy on the holonomy representation. We finish this section by recalling a fundamental fact in the deformation theory of geometric structures. Before that, we need the following definition.
\begin{defi}
    Let $M$ be a compact manifold possibly with boundary. We say that a family $(\mathrm{dev}_t,\rho_t)$ of $(\mathrm{G},\X)$-structures on $M$ converges to $(\mathrm{dev},\rho)$ in the $\mathcal{C}^k$ topology if we have:
    \begin{itemize}
        \item For all $\gamma$ in $\pi_1(M)$, $\lim_{t \to 0}\rho_t(\gamma)=\rho(\gamma)$ and $t\to\rho_t(\gamma)$ is $\mathcal{C}^k$
        \item $\mathrm{dev}_t$ converges to $\mathrm{dev}$ as $t\to0$ in the $\mathcal{C}^{k}$ topology on any compact subset of $\widetilde{\mathrm{int}(M)}$.
    \end{itemize}
\end{defi}
The following theorem is due to Ehrsemann and Thurston (\cite{ehresman}, \cite{thurstop}). We refer the reader to 
\cite{bergeron} for a more detailed proof.

\begin{theorem}[The Ehresmann-Thurston Principle \cite{thurstop}]\label{ehresman}
Let $M$ be a compact manifold possibly with boundary and $(\mathrm{dev},\rho)$ a $(\mathrm{G},\X)$ structure on $M$. Consider $\rho_t$ a continuous family of representations such that 
$$\lim_{t \to 0}\rho_t(\gamma)=\rho(\gamma),$$ for all $\gamma\in \pi_1(M)$. Then for $t$ small enough, $\rho_t$ is the holonomy of a $(\mathrm{G},\X)$-structure on $M$ given by $(\mathrm{dev}_t,\rho_t)$. Moreover if $\mathrm{dev}$ is a $\mathcal{C}^k$ map, then one can assume that $\mathrm{dev}_t$ converges to $\mathrm{dev}$ in the $\mathcal{C}^k$ topology. 
\end{theorem}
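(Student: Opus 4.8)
The plan is to encode the given $(\mathrm{G},\X)$-structure combinatorially via a finite ``good'' atlas, to perturb only the transition data while keeping the combinatorics fixed, and then to reassemble a developing map; compactness of $M$ is exactly what makes the finiteness of the atlas and the uniform smallness of the perturbations available.

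First I would fix a finite open cover $\{U_i\}$ of $M$ by connected, simply connected sets all of whose nonempty multiple intersections are connected (a good cover; one exists because $M$ is compact), refining it enough that each $U_i$ lifts to $\widetilde{M}$ and that the developing map restricts on a chosen lift $\widetilde{U}_i$ to a homeomorphism onto an open subset $V_i\subset\X$ (possible since $\mathrm{dev}$ is a local homeomorphism). Writing $\phi_i=\mathrm{dev}\circ s_i$ for the section $s_i\colon U_i\to\widetilde{U}_i$, the transition maps are $\phi_j\circ\phi_i^{-1}=\rho(\delta_{ij})$, where $\delta_{ij}\in\pi_1(M)$ is the deck transformation with $s_j=\delta_{ij}\cdot s_i$ on $U_i\cap U_j$, and the identities $\delta_{ik}=\delta_{jk}\delta_{ij}$ hold because the triple intersections are connected.

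Next, given a continuous family $\rho_t\to\rho$, I keep the combinatorial data $\{\delta_{ij}\}$ fixed and set $g_{ij}^t:=\rho_t(\delta_{ij})\in\mathrm{G}$; these automatically satisfy the cocycle relations $g^t_{ik}=g^t_{jk}g^t_{ij}$ and converge to $g_{ij}=\rho(\delta_{ij})$. The heart of the proof is then to produce, for $t$ small, homeomorphisms $\phi_i^t\colon U_i'\to\X$ onto open sets, on a slightly shrunk subcover $U_i'\Subset U_i$ still covering $M$, with $\phi_j^t=g_{ij}^t\circ\phi_i^t$ on overlaps and $\phi_i^t\to\phi_i$ (uniformly on compacta, in $\mathcal{C}^k$ if $\mathrm{dev}$ is $\mathcal{C}^k$). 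This is the step I expect to be the main obstacle: since the perturbed transition $g_{ij}^t$ no longer carries $V_i$ exactly onto the matching piece of $V_j$, the charts cannot simply be left unchanged but must be simultaneously and consistently re-glued. I would carry this out by induction on the skeleton of the nerve of the cover, at each stage using a partition of unity subordinate to $\{U_i'\}$ together with the smoothness of the $\mathrm{G}$-action to interpolate in $\X$ between a chart and its $g_{ij}^t$-corrected neighbours; because each $\phi_i$ is an open embedding and all relevant intersections are contractible, the interpolations stay embeddings and stay mutually compatible once $t$ is small. (Equivalently, one may transport the developing section of the flat $\X$-bundle $\widetilde{M}\times_\rho\X$ across a bundle isomorphism onto $\widetilde{M}\times_{\rho_t}\X$ that is $\mathcal{C}^0$-close to the identity, the transversality condition characterizing a developing section being open; compactness of $M$ supplies both the isomorphism and its closeness to the identity.)

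Finally, the $\phi_i^t$ define a $(\mathrm{G},\X)$-structure on $M$; assembling the charts along the lifts produces a developing map $\mathrm{dev}_t\colon\widetilde{M}\to\X$, and expressing an arbitrary $\gamma\in\pi_1(M)$ as a composition of elementary chart transitions shows, via $g^t_{ij}=\rho_t(\delta_{ij})$ and the cocycle relation, that the holonomy of $(\mathrm{dev}_t,\rho_t)$ is exactly $\rho_t$. The asserted $\mathcal{C}^k$-convergence $\mathrm{dev}_t\to\mathrm{dev}$ on compact subsets of $\widetilde{\mathrm{int}(M)}$ is then automatic, since every interpolation was performed with smooth partitions of unity and the smooth action of $\mathrm{G}$, so the construction depends on $t$ as smoothly as the input data.
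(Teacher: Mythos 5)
The paper does not prove this statement: it is quoted as a classical result of Ehresmann and Thurston, with the reader referred to Bergeron--Gelander for a detailed proof, so there is no in-paper argument to compare yours against. Judged on its own, your outline is the standard one for this theorem (finite good cover, transition cocycle $g_{ij}^t=\rho_t(\delta_{ij})$, re-gluing of charts, holonomy read off from the cocycle), and the overall architecture is sound; you also correctly identify where the real work is.

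That said, the step you flag as the main obstacle is asserted rather than carried out, and one specific claim in it is not justified as stated: that because each $\phi_i$ is an open embedding, the partition-of-unity interpolations ``stay embeddings once $t$ is small.'' In the $\mathcal{C}^0$ category a small perturbation of an embedding between manifolds of the same dimension need not be injective, so closeness alone does not keep the modified charts embeddings; moreover, interpolating between maps into $\X$ with a partition of unity requires some auxiliary structure (local coordinates, an exponential map, or a convexity argument), since $\X$ is not affine. These are exactly the points that the careful published proofs (e.g.\ Bergeron--Gelander, or Canary--Epstein--Green) are designed to handle, typically by passing to your parenthetical reformulation --- sections of the flat bundle $\widetilde{M}\times_{\rho_t}\X$ transverse to the flat foliation --- and proving carefully that transversality is an open condition and that nearby flat bundles over a compact base are isomorphic by an isomorphism close to the identity. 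So your proposal is a correct road map, but the central gluing lemma would need to be proved, not interpolated into existence, before the argument is complete.
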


It is worth remarking that Theorem \ref{ehresman} may produce nearby $(\mathrm{G},\X)$-structures on $M$ with different behaviour in the boundary.  

\subsection{Real projective structures}
Real projective structures are an important class of $(\mathrm{G},\X)$-structures.  In this paper, we will be interested in the case where $\X$ is a domain of the projective space $\mathbb{RP}^3$ and $\mathrm{G}$ acts transitively on $\X$ via projective transformations that preserve $\X$. Consider the family of quadratic forms $q_t$ depending on the real parameter $t$ defined on $\mathbb{R}^4$ by: 
\begin{equation}\label{qt}
    q_t(x)=-x_0^2+x_1^2+x_2^2+t\lvert t  \rvert x_3^2
\end{equation}
This family of quadratic forms allows us to define in the next sections the three geometries that interest us.
\subsubsection{\textbf{Hyperbolic structures}}
The projective model of the \textit{hyperbolic space} is given by the negative lines with respect to the quadratic form $q_1$, namely 
$$\mathbb{H}^3 :=\{[x]\in \mathbb{RP}^3, \ q_1(x)<0\}.$$ It is well know that $\mathbb{H}^3$ equipped with the Riemannian metric induced by $q_1$ is the unique complete, simply connected
Riemannian manifold of constant sectional curvature $-1$ up to isometries.  Geodesics lines and totally geodesic planes in $\mathbb{H}^3$ are given by lines and planes in $\mathbb{RP}^3$ that intersect $\mathbb{H}^3$. An example of totally geodesic plane is the \textit{hyperbolic plane} $\mathbb{H}^2$ defined by 
\begin{equation}\label{H2}
    \mathbb{H}^2=\{ [x_0,x_1,x_2,x_3]\in \mathbb{H}^3, x_3=0    \}
\end{equation}
We denote by $\isom(\mathbb{H}^3)$ the group of orientation-preserving isometries of the hyperbolic space $\mathbb{H}^3$, it is identified with the identity component of the group $\mathrm{PO}(1,3)$, where $\mathrm{PO}(1,3)$ is the subgroup of the projective transformations that preserve $\mathbb{H}^3$. The boundary at infinity $\partial\mathbb{H}^3$ of $\mathbb{H}^3$ is given by 
$$\partial\mathbb{H}^3=\{[x]\in \mathbb{RP}^3, \ q_1(x)=0\},$$ which is homeomorphic the sphere $\mathbb{S}^2$. In conclusion we have the following definition.

\begin{defi}
  A hyperbolic structure on a three-manifold $M$ is an $(\isom(\mathbb{H}^3),\mathbb{H}^3)$-structure.  
\end{defi}

\subsubsection{\textbf{Anti-de Sitter structures}}
Anti-de Sitter geometry is the analog of the hyperbolic geometry in Lorentzian geometry. The projective model of \textit{Anti-de Sitter} $3$-space is defined as: $$\ads :=\{[x]\in \mathbb{RP}^3, \ q_{-1}(x)<0\},$$
endowed with the Lorentzian metric induced by the quadratic form $q_{-1}$. The boundary at infinity $\partial\ads$ of $\ads$ is given by 
$$\partial\ads=\{[x]\in \mathbb{RP}^3, \ q_{-1}(x)=0\}.$$
We denote by $\isom(\ads)$ the group of orientation-preserving and time-preserving isometries
of $\ads$. It is identified with the identity component of the group $\mathrm{PO}(2,2)$, where $\mathrm{PO}(2,2)$ is the subgroup of projective transformations that preserve $\ads$.
As for hyperbolic space, geodesics and totally geodesic planes of $\ads$ are obtained as the intersections of lines and planes of $\mathbb{RP}^3$ with $\ads.$ However we distinguish three types of totally geodesic submanifolds in $\ads.$

\begin{defi}
    Let $\mathrm{P}$ be a non-trivial totally geodesic submanifold of $\ads$, namely a geodesic or a plane. Then we say that: 
    \begin{itemize}
    \item $\mathrm{P}$ is \textit{lightlike} if the restriction of the Lorentzian metric to $\mathrm{P}$ is degenerate.
    \item $\mathrm{P}$ is \textit{spacelike} if the restriction of the Lorentzian metric to $\mathrm{P}$ is positive definite.
    \item $\mathrm{P}$ is \textit{timelike} if the restriction of the Lorentzian metric to $\mathrm{P}$ is non-degenerate and not positive definite.
         \end{itemize}
\end{defi}
Let $\mathrm{P}_1$, $\mathrm{P}_2$ two spacelike planes which intersect along a geodesic in $\ads$. For $i=1, 2$ let $\mathrm{H}_i$ be a hyperplane in $\mathbb{R}^4$ such that $\mathrm{P}_i$ is obtained as the intersection of $\ads$ with the projectivization of $\mathrm{H}_i$. Consider $\mathrm{n}_i$ the unit orthogonal of $\mathrm{H}_i$ with respects to the bilinear form $\langle \cdot, \cdot \rangle_{2,2}$ whose associated quadratic form is $q_{-1}.$ Then we define the \textit{angle} between $\mathrm{P}_1$ and $\mathrm{P}_2$ as the non-negative real number $\theta$ satisfying the following equation
\begin{equation}
    \cosh{\theta}=\vert\langle \mathrm{n}_1,\mathrm{n}_2 \rangle_{2,2}  \vert.
\end{equation}
We conclude the preliminaries on the Anti-de Sitter space by the following definition.   
\begin{defi}
An Anti-de Sitter structure on a three-manifold $M$ is an $(\isom(\ads),\ads)$-structure. 
\end{defi}

\subsubsection{\textbf{Half-pipe structures}}\label{halfpipe_structure}
Danciger \cite{danciger_thesis} introduced half-pipe geometry as a transitional geometry between hyperbolic geometry and Anti-de Sitter geometry. \textit{Half-pipe} space is defined as
$$\HP :=\{[x]\in\mathbb{RP}^3, \ q_0(x)<0\}.$$
The boundary at infinity $\partial\HP$ of $\HP$ is given by 
$$\partial\HP=\{[x]\in \mathbb{RP}^3, \ q_{0}(x)=0\}.$$ The Half-pipe space has a natural identification with the dual of Minkowski space, namely the space of spacelike planes of the Minkowski space. Recall that the Minkowski space $\mathbb{R}^{1,2}$ is the vector space $\mathbb{R}^3$ endowed with the Lorentzian metric $\langle,\rangle_{1,2}$ defined by
$$\langle (x_0,x_1,x_2),(y_0,y_1,y_2)\rangle_{1,2} =-x_0y_0+x_1y_1+x_2y_2.        $$
The group $\isom(\mathbb{R}^{1,2} )$ of orientation-preserving and time-preserving isometries of $\mathbb{R}^{1,2}$ is identified with $$\mathrm{O}_{0}(1,2)\ltimes\mathbb{R}^{1,2},$$ where $\mathrm{O}(1,2)$ is the linear transformation that preserve the bilinear form $\langle\cdot,\cdot\rangle_{1,2}$, $\mathrm{O}_0(1,2)$ the identity component of $\mathrm{O}_0(1,2)$, $\mathbb{R}^{1,2}$ acts by translation on itself.
The identification of $\HP$  with the the space of spacelike planes of $\mathbb{R}^{1,2}$ works as follow: for each $[(x,t)]$ in $\HP$ we associate the plane \begin{equation}\label{4}
    \mathrm{P}_{[(x,t)]}=\{  y \in \mathbb{R}^{1,2} : \langle x,y \rangle_{1,2} = t         \}
\end{equation}
The plane $\mathrm{P}_{[(x,t)]}$ is a spacelike plane of $\mathbb{R}^{1,2}$ since its normal vector $x$ is negative for $\langle , \rangle_{1,2}$. 
This gives a diffeomorphism $\HP\to\mathbb{H}^2\times\mathbb{R}$ defined by 
\begin{equation}\label{L}
[(x,t)]\to([x],\mathrm{L}([x,t])),\end{equation}
where $\mathrm{L}([x,t])$ is the signed distance of $\mathrm{P}_{[(x,t)]}$
to the origin along the future normal direction. By an elementary computation one can check that \begin{equation}\label{formule_L}
    \mathrm{L}([x,t])=\frac{t}{\sqrt{-\langle x,x\rangle}}_{1,2}.\end{equation}
Observe that each point $[x,t]$ in $ \partial\mathbb{H}^2\times\mathbb{R}$ corresponds under duality \eqref{4} to a \textit{lightlike} plane in Minokowski space. Therefore $\partial\mathbb{H}^2\times\mathbb{R}$ is identified with  $\partial\HP\setminus\{[0,0,0,1]\}$. Even though the quadratic form $q_0$ defines only a degenerate metric on $\HP$, we will call geodesics (resp. planes) of $\HP$ the intersection of lines (resp. planes) of $\mathbb{RP}^3$ with $\HP.$ We will also use the following terminology
\begin{itemize}
\item[\textbullet] A geodesic in $\HP$ of the form $\{*\}\times \mathbb{R}$ is called a \textit{fiber}.
\item[\textbullet] A geodesic in $\HP$ which is not a fiber is called a \textit{spacelike} geodesic.
\item[\textbullet] A plane in $\HP$ is \textit{spacelike} if it does not contain a fiber.
\end{itemize}
The duality works also in the opposite direction, namely any spacelike plane of $\HP$ corresponds to a point in $\mathbb{R}^{1,2}$. Thus we have the following proposition.
\begin{prop}\cite[Lemma 4.10]{riolo_seppi}\label{hpangle}
    Let $\mathrm{P}_1$, $\mathrm{P}_2$ be two spacelike planes in $\HP$ which correspond dually to $\mathrm{n}_1$ and $\mathrm{n}_2$ in $\mathbb{R}^{1,2}$. Then $\mathrm{P}_1$ and $\mathrm{P}_2$ intersect along a spacelike geodesic in $\HP$ if and only if $\mathrm{n_1}-\mathrm{n}_2$ is a spacelike segment in $\mathbb{R}^{1,2}$.\\ In this situation, we define the angle between $\mathrm{P}_1$ and $\mathrm{P}_2$ as the non-negative real number
    $$\theta=\sqrt{\langle \mathrm{n_1}-\mathrm{n_2}, \mathrm{n_1}-\mathrm{n_2}        \rangle_{1,2}}.$$
\end{prop}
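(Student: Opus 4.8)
The statement packages an elementary fact in $\mathbb{R}^{1,2}$ together with a definition, so the plan is to make the duality between spacelike planes of $\HP$ and points of $\mathbb{R}^{1,2}$ fully explicit and then read off both assertions. Recall from the duality preceding \eqref{4} that a point $[(x,t)]\in\HP$, with $x\in\mathbb{R}^{1,2}$ the first three coordinates and $t=x_3$, is dual to the spacelike plane $\{y:\langle x,y\rangle_{1,2}=t\}$ of $\mathbb{R}^{1,2}$. First I would record the reverse correspondence: a projective plane $\{a_0x_0+a_1x_1+a_2x_2+a_3x_3=0\}$ of $\mathbb{RP}^3$ meets $\HP$ in a nonempty plane that contains no fiber --- i.e.\ in a spacelike plane --- precisely when $a_3\neq0$, and after normalizing $a_3$ and matching signs this identifies the set of spacelike planes of $\HP$ with $\mathbb{R}^{1,2}$ through $\mathrm n\mapsto\mathrm P_{\mathrm n}:=\{[(x,t)]\in\HP : t=\langle\mathrm n,x\rangle_{1,2}\}$, a formula homogeneous in $(x,t)$ and hence well defined on $\mathbb{RP}^3$. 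This is exactly the duality for which $[(x,t)]\in\mathrm P_{\mathrm n}$ if and only if $\mathrm n\in\{y:\langle x,y\rangle_{1,2}=t\}$; in particular $\mathrm n\mapsto\mathrm P_{\mathrm n}$ is injective, so $\mathrm n_i$ is determined by $\mathrm P_i$.

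With this in hand the intersection is immediate: $\mathrm P_{\mathrm n_1}\cap\mathrm P_{\mathrm n_2}=\{[(x,t)]\in\HP : \langle\mathrm n_1-\mathrm n_2,x\rangle_{1,2}=0,\ t=\langle\mathrm n_1,x\rangle_{1,2}\}$, so it projects onto $\mathbb{H}^2\cap[(\mathrm n_1-\mathrm n_2)^{\perp}]$, with the fiber coordinate $t$ uniquely determined over each point of this set; here $\mathbb{H}^2$ is as in \eqref{H2}, viewed as a Klein disk in $\mathbb{RP}^2$, and $\perp$ is taken with respect to $\langle\cdot,\cdot\rangle_{1,2}$. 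Now I invoke the standard trichotomy in $\mathbb{R}^{1,2}$: for a nonzero $v$, the $2$-plane $v^{\perp}$ contains a timelike vector --- equivalently the projective line $[v^{\perp}]$ meets the disk $\mathbb{H}^2$, and then in a complete geodesic --- if and only if $v$ is spacelike, whereas a timelike or lightlike $v$ yields a spacelike or degenerate $v^{\perp}$ disjoint from $\mathbb{H}^2$. Applying this with $v=\mathrm n_1-\mathrm n_2$: if $\mathrm n_1=\mathrm n_2$ the planes coincide; if $\mathrm n_1-\mathrm n_2$ is timelike or lightlike the intersection is empty in $\HP$; and if $\mathrm n_1-\mathrm n_2$ is spacelike the intersection is a curve projecting bijectively onto a geodesic of $\mathbb{H}^2$, hence meeting each fiber at most once, which --- being at the same time the intersection of two projective planes, i.e.\ a projective line --- is a spacelike geodesic of $\HP$ in the sense fixed just before the statement. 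Since a spacelike geodesic is in particular nonempty and a proper subset of $\mathrm P_{\mathrm n_1}$, the converse is clear, and the ``if and only if'' is proved.

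For the final clause, $\theta$ is well defined because $\mathrm n_i$ is determined by $\mathrm P_i$ while an unordered pair affects $\mathrm n_1-\mathrm n_2$ only up to sign, so $\langle\mathrm n_1-\mathrm n_2,\mathrm n_1-\mathrm n_2\rangle_{1,2}$ --- positive exactly in the spacelike case --- depends only on $\{\mathrm P_1,\mathrm P_2\}$; one may further observe that it is $\isom(\HP)$-invariant, since the induced action on dual points is affine with linear part preserving $\langle\cdot,\cdot\rangle_{1,2}$ and the translation cancels in a difference. I would also add a one-line justification of the terminology ``angle'': rescaling families $\mathrm P_1^s,\mathrm P_2^s$ of spacelike planes of the $q_s$-geometry ($s>0$ hyperbolic, $s<0$ Anti-de Sitter) by the transformation $\tau_s$ of Section \ref{sec2} so that $\tau_s\mathrm P_i^s\to\mathrm P_i$, a first-order expansion of the dihedral-angle formulas $\cos\theta_s=\pm\langle\mathrm n_1^s,\mathrm n_2^s\rangle_{1,3}$ and $\cosh\theta_s=\lvert\langle\mathrm n_1^s,\mathrm n_2^s\rangle_{2,2}\rvert$, with the appropriate unit normalization of the $\mathrm n_i^s$, recovers $\theta$ as the suitably rescaled limit of $\theta_s$. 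The argument is otherwise routine; the only genuinely delicate point is bookkeeping --- pinning down the sign conventions in the duality, and in the last remark the normalizations of $\tau_s$ and of the unit normals $\mathrm n_i^s$.
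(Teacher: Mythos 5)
Your argument is correct. Note that the paper does not prove this statement at all — it is imported verbatim from Riolo--Seppi (Lemma 4.10 of \cite{riolo_seppi}) — so there is no internal proof to compare against; your direct computation via the duality $\mathrm{n}\mapsto \mathrm{P}_{\mathrm{n}}=\{[(x,t)]: \langle \mathrm{n},x\rangle_{1,2}=t\}$, reducing the intersection to $\mathbb{H}^2\cap[(\mathrm{n}_1-\mathrm{n}_2)^{\perp}]$ and invoking the signature trichotomy for $v^{\perp}$ in $\mathbb{R}^{1,2}$, is exactly the standard route taken in the cited reference, and your checks (that $a_3\neq 0$ characterizes spacelike planes, that the resulting intersection is a projective line meeting each fiber at most once, and that $\theta$ is well defined and $\isom(\HP)$-invariant) are all sound.
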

Now, let us denote by $\isom(\HP)$ the group of transformations given by 
$$\begin{bmatrix}
\mathrm{A} & 0  \\
\mathrm{v} & 1 
\end{bmatrix}$$
where $\mathrm{A}\in \mathrm{O}_{0}(1,2)$ and $\mathrm{v}\in \mathbb{R}^2$. Observe that the group $\isom(\HP)$ preserves the orientation of $\HP$ and the oriented fibers. This group has also a natural identification with $\isom(\mathbb{R}^{1,2})$ induced by duality. Indeed any isometry of $\mathbb{R}^{1,2}$ induces a transformation of the space of spacelike planes of $\mathbb{R}^{1,2}$. The isomorphism between the two groups is given by (See \cite[Section 2.8]{riolo_seppi})
\begin{equation}\label{groupeduality}  
\begin{array}{ccccc}
\mathrm{Is}: &  & \isom(\mathbb{R}^{1,2}) & \to & \isom(\HP) \\
 & & (\mathrm{A},\mathrm{v}) & \mapsto & \begin{bmatrix}
\mathrm{A} & 0  \\
^T\mathrm{v}\mathrm{J}\mathrm{A} & 1 
\end{bmatrix}, \\
\end{array}
\end{equation}
where $\mathrm{J}=\mathrm{diag}(-1,1,1).$ Using the affine chart $\{x_0=1\}$, we obtain the \textit{Klein Model} of the Half-pipe space which is identified with the cylinder $\mathbb{D}^2\times\mathbb{R}$, where $\mathbb{D}^2$ is the Klein model of the hyperbolic plane. This identification is given by: \begin{equation}\label{7}
    \begin{array}{ccccc}
&  & \HP & \to & \mathbb{D}^2\times\mathbb{R} \\
 & & [x_0,x_1,x_2,x_3] & \mapsto & (\frac{x_1}{x_0},\frac{x_2}{x_0},\frac{x_3}{x_0}). \\
\end{array}\end{equation}
For $y\in \mathbb{R}^{1,2}$, the spacelike plane in $\HP$ given by $\mathrm{P}_y:=\{[x,t]\in\HP \mid \langle x,y  \rangle_{1,2}=t  \}$ corresponds in the Klein model to the graph of the affine functions over $\mathbb{D}^2$ defined by 
$$\begin{array}{ccccc}
 &  & \mathbb{D}^2 & \to & \mathbb{R} \\
 & & z & \mapsto & \langle y,(1,z)\rangle_{1,2} \\
\end{array}.$$
Note that the group of linear transformations $\mathrm{O}_{0}(1,2)$ acts by isometry on the hyperboloid 
\begin{equation}\mathcal{H}^2:=\{(x_0,x_1,x_2)\in \mathbb{R}^{1,2}\mid -x_0^2+x_1^2+x_2^2=-1, \ x_0>0\}.\end{equation} This hyperboloid is an isometric copy of the hyperbolic plane embedded in Minkowski space. Let us consider the radial projection $\mathrm{Pr}: \mathcal{H}^2\to\mathbb{D}^2$ defined by
\begin{equation}\label{radial}
\mathrm{Pr}(x_0,x_1,x_2)=\left( \frac{x_1}{x_0},\frac{x_2}{x_0}   \right)\end{equation}
For each $z\in \mathbb{D}^2$ and $\mathrm{A}\in \mathrm{O}_0(1,2)$, we will denote by $\mathrm{A}\cdot z$ the image of $z$ by the isometry of $\mathbb{D}^2$ induced by $\mathrm{A}$. More precisely, let $x=\mathrm{Pr}^{-1}(z)$, then $\mathrm{A}\cdot z$ is the unique element in $\mathbb{D}^2$ such that 
$$\mathrm{Pr}(\mathrm{A}x)=\mathrm{A}\cdot\mathrm{Pr}(x).$$
Equivalently, we can check that $\mathrm{A}\cdot z$ satisfies the following equation:
\begin{equation} \label{projecvslinear}\begin{pmatrix}
1 \\
\mathrm{A}\cdot z 
\end{pmatrix}= \frac{\mathrm{A}\begin{pmatrix}
1 \\
z 
\end{pmatrix}}{-\langle \mathrm{A}\begin{pmatrix}
1 \\
z 
\end{pmatrix} ,(1,0,0)\rangle_{1,2}}.  \end{equation}
The following Lemma is an elementary computation about the action of linear transformations and translations on the Klein model of the Half-pipe space. (see \cite[Lemma 2.26]{barbotfillastre}).
\begin{lemma}\label{rotationHP}
    Let $(z, t)\in \mathbb{D}^2\times\mathbb{R}$, $\mathrm{A}\in \mathrm{O}_{0}(1,2)$ and $\mathrm{v}\in \mathbb{R}^{1,2}$. Then the isometry of
Half-pipe space defined by $\mathrm{Is}(\mathrm{A},0)$ and $\mathrm{Is}(\mathrm{Id},\mathrm{v})$ act on the Klein model $\mathbb{D}^2\times\mathbb{R}$ as follows:
$$\mathrm{Is}(\mathrm{A},0)\cdot(z,t)=\left( \mathrm{A}\cdot z, \frac{h}{-\langle \mathrm{A}\begin{pmatrix}
1 \\
z 
\end{pmatrix} ,(1,0,0)\rangle_{1,2}} \right).$$

$$\mathrm{Is}(\mathrm{Id},\mathrm{v})\cdot(z,t)=\left(z,t+ \langle \mathrm{v},(1,z)\rangle_{1,2}    \right).$$
Moreover, if $\mathrm{v}$ is a spacelike vector in $\mathbb{R}^{1,2}$ then $\mathrm{Is}(\mathrm{Id},\mathrm{v})$ is a Half-pipe rotation that fixes pointwise the geodesic $\mathrm{P}_{\mathrm{v}}\cap\mathbb{H}^2$.
\end{lemma}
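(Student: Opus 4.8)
The plan is a direct computation in homogeneous coordinates, using the identification \eqref{7} of $\HP$ with the Klein cylinder $\mathbb{D}^2\times\mathbb{R}$ together with the explicit isomorphism \eqref{groupeduality}. Throughout I would identify $(z,t)\in\mathbb{D}^2\times\mathbb{R}$ with the projective class $[(1,z,t)]\in\mathbb{RP}^3$, writing $(1,z)\in\mathbb{R}^{1,2}$ for the triple formed by its first three homogeneous coordinates, so that passing to the affine chart $\{x_0=1\}$ reproduces \eqref{7}; and I would record once and for all that any $w=(w_0,w_1,w_2)\in\mathbb{R}^{1,2}$ satisfies $w_0=-\langle w,(1,0,0)\rangle_{1,2}$, so that renormalizing a nonzero vector back into the chart $\{x_0=1\}$ amounts to dividing it by $-\langle w,(1,0,0)\rangle_{1,2}$.

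For the linear part, \eqref{groupeduality} represents $\mathrm{Is}(\mathrm{A},0)$ by the block matrix $\bigl[\begin{smallmatrix}\mathrm{A}&0\\0&1\end{smallmatrix}\bigr]$, which sends $[(1,z,t)]$ to $[(\mathrm{A}(1,z),t)]$; renormalizing, the spatial part becomes $\mathrm{A}(1,z)\big/\bigl(-\langle\mathrm{A}(1,z),(1,0,0)\rangle_{1,2}\bigr)$, which by the defining relation \eqref{projecvslinear} is exactly $(1,\mathrm{A}\cdot z)$, while the fiber coordinate becomes $t\big/\bigl(-\langle\mathrm{A}(1,z),(1,0,0)\rangle_{1,2}\bigr)$; this is the first displayed formula. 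For the translation part, \eqref{groupeduality} with $\mathrm{A}=\mathrm{Id}$ gives the matrix $\bigl[\begin{smallmatrix}\mathrm{Id}&0\\{}^T\mathrm{v}\mathrm{J}&1\end{smallmatrix}\bigr]$ with $\mathrm{J}=\mathrm{diag}(-1,1,1)$, which fixes the first three coordinates and sends the last to $\bigl({}^T\mathrm{v}\mathrm{J}\bigr)(1,z)^{T}+t$. Since ${}^T\mathrm{v}\mathrm{J}=(-v_0,v_1,v_2)$, this equals $-v_0+v_1z_1+v_2z_2+t=t+\langle\mathrm{v},(1,z)\rangle_{1,2}$, and as the $x_0$-coordinate stays equal to $1$ no renormalization is needed; this is the second displayed formula.

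For the last assertion I would read the fixed locus off the second formula: $(z,t)$ is fixed by $\mathrm{Is}(\mathrm{Id},\mathrm{v})$ if and only if $\langle\mathrm{v},(1,z)\rangle_{1,2}=0$, a condition on $z$ alone. Since $\mathrm{P}_{\mathrm{v}}$ is the graph over $\mathbb{D}^2$ of $z\mapsto\langle\mathrm{v},(1,z)\rangle_{1,2}$ and the totally geodesic copy of $\mathbb{H}^2$ in $\HP$ (the plane $\{x_3=0\}$ of \eqref{H2}) corresponds under \eqref{7} to $\mathbb{D}^2\times\{0\}=\mathrm{P}_0$, the fixed points of $\mathrm{Is}(\mathrm{Id},\mathrm{v})$ that lie in $\mathbb{H}^2$ are exactly $\mathrm{P}_{\mathrm{v}}\cap\mathbb{H}^2=\{(z,0):\langle\mathrm{v},(1,z)\rangle_{1,2}=0\}$, the trace of an affine line on $\mathbb{D}^2\times\{0\}$ and hence, when nonempty, a spacelike geodesic of $\HP$. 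It remains to note that it is nonempty exactly when $\mathrm{v}$ is spacelike: the line $\{v_1z_1+v_2z_2=v_0\}$ has Euclidean distance $|v_0|\big/\sqrt{v_1^2+v_2^2}$ from the origin, which is $<1$ iff $-v_0^2+v_1^2+v_2^2>0$; equivalently, since then $\mathrm{v}-0$ is a spacelike segment, Proposition \ref{hpangle} applied to $\mathrm{P}_{\mathrm{v}}$ and $\mathbb{H}^2=\mathrm{P}_0$ says that $\mathrm{P}_{\mathrm{v}}\cap\mathbb{H}^2$ is a genuine spacelike geodesic, fixed pointwise by $\mathrm{Is}(\mathrm{Id},\mathrm{v})$ — an isometry of $\HP$ of this shape being what one calls a Half-pipe rotation (the transitional limit of hyperbolic and AdS rotations; cf. \cite{danciger_thesis,barbotfillastre}).

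There is no real obstacle in this lemma; the proof is pure bookkeeping. The only points demanding care are the row/column conventions for $\mathrm{v}$ together with the sign conventions built into $\mathrm{J}$ and $\langle\cdot,\cdot\rangle_{1,2}$, the use of \eqref{projecvslinear} to recognise the projectivized linear action as the induced isometry $z\mapsto\mathrm{A}\cdot z$ of the Klein disk, and the nonemptiness of $\mathrm{P}_{\mathrm{v}}\cap\mathbb{H}^2$ — the last being the one step where the hypothesis that $\mathrm{v}$ is spacelike is genuinely used, and precisely what makes $\mathrm{Is}(\mathrm{Id},\mathrm{v})$ a rotation of Half-pipe space rather than a fixed-point-free isometry.
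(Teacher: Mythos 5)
Your computation is correct and is exactly the elementary verification the paper has in mind (the paper itself gives no proof, deferring to \cite[Lemma 2.26]{barbotfillastre}): the matrix of $\mathrm{Is}(\mathrm{A},0)$ acts as $[(1,z,t)]\mapsto[(\mathrm{A}(1,z),t)]$ followed by the renormalization \eqref{projecvslinear}, the matrix of $\mathrm{Is}(\mathrm{Id},\mathrm{v})$ adds ${}^T\mathrm{v}\mathrm{J}(1,z)^T=\langle\mathrm{v},(1,z)\rangle_{1,2}$ to the fiber coordinate, and your spacelikeness argument correctly identifies the (nonempty) pointwise-fixed spacelike geodesic $\mathrm{P}_{\mathrm{v}}\cap\mathbb{H}^2$. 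You also implicitly and correctly read the ``$h$'' in the statement as a typo for $t$.
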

As Corollary, we obtain the following fact used in Section \ref{TROFHOL}.
\begin{cor}\label{invariantgraph}
    Let $f:\mathbb{D}^2\to\mathbb{R}$ be a function and $\mathrm{A}\in \mathrm{O}_{0}(1,2)$. Then the graph $\mathrm{graph}(f)\subset \mathbb{D}^2\times\mathbb{R}$ is preserved by $\mathrm{Is}(\mathrm{A},0)$ if and only if $f$ satisfies the following $$f(\mathrm{A}\cdot z)=\frac{f(z)}{-\langle \mathrm{A}\begin{pmatrix}
1 \\
z 
\end{pmatrix} ,(1,0,0)\rangle_{1,2}}.$$ 
\end{cor}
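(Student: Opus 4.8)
The plan is to reduce Corollary \ref{invariantgraph} directly to the first formula of Lemma \ref{rotationHP}, which describes how $\mathrm{Is}(\mathrm{A},0)$ acts on the Klein model $\mathbb{D}^2\times\mathbb{R}$. Recall from that lemma that
\[
\mathrm{Is}(\mathrm{A},0)\cdot(z,t)=\left( \mathrm{A}\cdot z,\ \frac{t}{-\langle \mathrm{A}\left(\begin{smallmatrix}1\\ z\end{smallmatrix}\right),(1,0,0)\rangle_{1,2}} \right),
\]
so the second coordinate of the image depends only on the first coordinate of the source point and on the original height $t$. First I would unwind the definition: the graph $\mathrm{graph}(f)=\{(z,f(z)):z\in\mathbb{D}^2\}$ is preserved by $\mathrm{Is}(\mathrm{A},0)$ precisely when, for every $z\in\mathbb{D}^2$, the image point $\mathrm{Is}(\mathrm{A},0)\cdot(z,f(z))$ again lies on $\mathrm{graph}(f)$; since $\mathrm{A}\cdot(\cdot)$ is a bijection of $\mathbb{D}^2$, as $z$ ranges over $\mathbb{D}^2$ so does $\mathrm{A}\cdot z$, so invariance of the graph is equivalent to the single pointwise condition that the second coordinate of $\mathrm{Is}(\mathrm{A},0)\cdot(z,f(z))$ equals $f$ evaluated at its first coordinate.

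Next I would substitute $t=f(z)$ into the displayed action formula. The image point is
\[
\left(\mathrm{A}\cdot z,\ \frac{f(z)}{-\langle \mathrm{A}\left(\begin{smallmatrix}1\\ z\end{smallmatrix}\right),(1,0,0)\rangle_{1,2}}\right),
\]
and requiring this to lie on $\mathrm{graph}(f)$ means exactly that its second coordinate equals $f(\mathrm{A}\cdot z)$, i.e.
\[
f(\mathrm{A}\cdot z)=\frac{f(z)}{-\langle \mathrm{A}\left(\begin{smallmatrix}1\\ z\end{smallmatrix}\right),(1,0,0)\rangle_{1,2}},
\]
which is precisely the asserted identity. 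Conversely, if $f$ satisfies this identity for all $z$, then running the computation backwards shows every point of $\mathrm{graph}(f)$ is sent into $\mathrm{graph}(f)$; applying the same to $\mathrm{A}^{-1}$ (or noting that $\mathrm{Is}(\mathrm{A},0)$ is a homeomorphism and the graph is a closed embedded submanifold) upgrades this to full invariance.

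There is essentially no obstacle here: the statement is a direct transcription of Lemma \ref{rotationHP} once one observes that the height of the image is governed only by the base point $z$. The one point deserving a line of care is the "if and only if" — one should make sure $z\mapsto \mathrm{A}\cdot z$ is a bijection of $\mathbb{D}^2$ (it is, being the isometry of the Klein disc induced by $\mathrm{A}\in\mathrm{O}_0(1,2)$, as recalled around \eqref{projecvslinear}), so that the pointwise functional equation and global graph-invariance are genuinely equivalent rather than one-directional. Beyond that, the proof is a one-line substitution and can be written in two or three sentences.
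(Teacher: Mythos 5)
Your proof is correct and matches the paper's (implicit) argument: the corollary is stated as an immediate consequence of the first formula in Lemma \ref{rotationHP}, and your substitution $t=f(z)$ together with the observation that $z\mapsto \mathrm{A}\cdot z$ is a bijection of $\mathbb{D}^2$ is exactly the intended one-line derivation. No gaps.
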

We conclude this preliminary discussion by introducing the third type of projective structure that is of interest to us:
\begin{defi}
A Half-pipe structure on a three-manifold $M$ is a $(\isom(\HP),\HP)$-structure.
\end{defi}
\subsection{Geometric transition}\label{GT}
Let us now recall the description of the transition between hyperbolic and Anti-de Sitter geometry through the half-pipe geometry.

We consider the projective transformation $\tau_t$ depending on the real parameter $t\neq0$ defined by: 
\begin{equation}\label{taut}
\tau_t=\begin{bmatrix}
1 & 0 & 0 & 0\\
0 & 1 & 0& 0 \\
0 & 0 & 1 &0 \\
0&0&0&\frac{1}{\vert t\vert}
\end{bmatrix}.\end{equation}
The map $\tau_t$ will be called the \textit{rescaling map}. It satisfies two main properties:
    
\begin{itemize}
    \item[\textbullet] $\tau_t$ fixes pointwise the hyperbolic plane $\mathbb{H}^2$ defined by $x_3=0$ as in \eqref{H2} which is a totally geodesic plane in our three spaces $\mathbb{H}^3$, $\ads$ and $\HP.$
    \item[\textbullet] $\tau_t$ commutes with $\isom(\mathbb{H}^2) $, where $\isom(\mathbb{H}^2)$ is a subgroup of $\isom(\mathbb{H}^3)$, $\isom(\ads)$, $\isom(\HP)$ which is identified with projective transformations of the form
    $$\begin{bmatrix}
\mathrm{A} & 0  \\
0 & 1 
\end{bmatrix},$$
where $\mathrm{A}\in \mathrm{O}_0(1,2)$.
\end{itemize}
One can show that when $t\to0$, the closure of $\tau_t\mathbb{H}^3$ and $\tau_t\ads$ in $\mathbb{RP}^3$ converge to the closure of the half-pipe space $\mathbb{HP}^3$ in the Hausdorff topology. Moreover the groups $\tau_t\mathrm{Isom}(\mathbb{H}^3)\tau_t^{-1}$ and $\tau_t\mathrm{Isom}(\ads)\tau_t^{-1}$ converge in the Chabauty topology to the group $\mathrm{Isom}(\mathbb{HP}^3)$. See \cite{danciger_thesis}, \cite{andreafrancois} for more details. We can now state the transition phenomena that interests us. 
\begin{defi}\cite{danciger_thesis}
 A geometric transition on a three manifold $M$ from hyperbolic to Anti-de Sitter geometry, through half-pipe geometry, is a continuous path of real projective structures $\mathcal{P}_t$ on $M$, defined for $t \in (-\epsilon, \epsilon )$,
which is conjugate to
\begin{enumerate}
\item Hyperbolic structures for $t > 0$;
\item Half-pipe structures for $t = 0$;
\item Anti-de Sitter structures for $t < 0$.
\end{enumerate}
\end{defi}
\begin{remark}
In fact we are interested in the projective structures $\mathcal{P}_t$ which are obtained by this way: we take a family of projective structures $(\mathrm{dev}, \rho_t)$ on $M$ such that  

\begin{enumerate}
    \item For $t>0$, $\mathrm{dev}_t$ takes values in $\mathbb{H}^3$ and $\rho_t $ in $\isom(\mathbb{H}^3)$;
    \item For $t<0$, $\mathrm{dev}_t$ takes values in $\ads$ and $\rho_t$ in $\isom(\ads)$;
    \item When $t\to 0$, the representation $\rho_t$ converges to a representation $\sigma_0$ with value in $\mathrm{Isom}(\mathbb{H}^2)$ and  the developing map $\mathrm{dev}_t$ converges to a submersion $\mathrm{dev}_0$ with values in $\mathbb{H}^2$ which is $\sigma_0$-equivariant.
\end{enumerate}
So if the family $(\tau_{ t}\circ\mathrm{dev}_t, \tau_{t}\rho_t\tau_{t}^{-1})$ converges to a Half-pipe structure $(\mathrm{dev}_0,\rho_0)$, then this produces a geometric transition from $\mathbb{H}^3$ to $\ads$ through $\HP$.
\end{remark}

\subsection{Horospheres and cusps}
In this subsection, we recall the notions of horospheres and cusps. These concepts have a well-established definition in hyperbolic geometry and have also been extended to Anti-de Sitter (AdS) and Half-pipe geometry by Riolo and Seppi in their work \cite{riolo_seppi}. See also \cite[Section 6]{AndreaEnrico} for a detailed exposition on horospheres in pseudo-hyperbolic spaces.

\begin{defi}
A horosphere in $\mathbb{H}^3$ centred at $p\in \partial\mathbb{H}^3$ is a smooth surface $\mathrm{H}$ in $\mathbb{H}^3$ that is orthogonal to all geodesics having the same endpoint $p$. In $\ads$, a horosphere centred at $p\in \partial\ads$ is a smooth timelike surface $\mathrm{H}$ that is orthogonal to all spacelike geodesics with the same endpoint $p$.
\end{defi}

\begin{defi}
A horosphere in $\HP$ is the union of all the fibers passing through a hyperbolic horosphere $\widehat{H}$ contained in a spacelike plane $\mathrm{P}$ in $\HP$.
\end{defi}

Let $\langle\cdot,\cdot \rangle_{1,3}$ (resp. $\langle\cdot,\cdot \rangle_{2,2}$, $\langle,\cdot \rangle_{1,2,0}$) be the bilinear form associated to the quadratic form $q_1$ (resp. $q_{-1}$, $q_0$). Denote by $\mathcal{H}^3$, $\mathcal{A}\mathrm{d}\mathcal{S}^3$ and $\mathcal{HP}^3$ the lift in $\mathbb{R}^4$ of $\mathbb{H}^3$, $\ads$ and $\HP$ respectively, more precisely 
$$\mathcal{H}^3=\{x\in \mathbb{R}^4, \ \mid \ \langle x,x\rangle_{1,3}=-1, \, x_0>0\}.$$
$$\mathcal{A}\mathrm{d}\mathcal{S}^3=\{x\in \mathbb{R}^4, \ \mid \ \langle x,x\rangle_{2,2}=-1\}.$$
$$\mathcal{HP}^3=\{x\in \mathbb{R}^4, \ \mid \ \langle x,x\rangle_{1,2,0}=-1, \, x_0>0\}.$$

The construction of horospheres in different spaces can be described as follows:

\begin{itemize}
\item For a null vector $p$ with respect to $q_{1}$, a horosphere $H$ in hyperbolic space is given by
\begin{equation}
H=\mathbb{P}\left(\{x\in \mathcal{H}^3 \ \mid \ \langle x,p\rangle_{1,3}=a \}\right).\end{equation}

\item For a null vector $p$ with respect to $q_{-1}$, a horosphere $H$ in Anti-de Sitter space is given by
\begin{equation}H=\mathbb{P}\left(\{x\in \mathcal{A}\mathrm{d}\mathcal{S}^3 \ \mid \ \langle x,p\rangle_{2,2}=a \}\right).\end{equation}

\item For a null vector $p$ with respect to $q_0$ that is not collinear to $(0,0,0,1)$ (i.e., the point $p$ corresponds to a lightlike plane in $\mathbb{R}^{1,2}$), a horosphere $H$ in Half-pipe space is given by
\begin{equation}H=\mathbb{P}\left(\{x\in \mathcal{HP}^3 \ \mid \ \langle x,p\rangle_{1,2,0}=a \}\right).\end{equation}
\end{itemize}
where $a$ is a negative real number. We now proceed to define a cusp in our three geometries. Before that, let us recall that for a given horosphere $H$ in $\mathbb{H}^3$ (resp. $\ads$, $\HP$), we denote:
\begin{itemize}
\item $\mathrm{P}^{\mathbb{H}^3}:=\mathrm{Stab}_{\mathbb{H}^3}(H)$ as the subgroup of $\isom(\mathbb{H}^3)$ that stabilizes the horosphere $H$.
\item $\mathrm{P}^{\ads}:=\mathrm{Stab}_{\ads}(H)$ as the subgroup of $\isom(\ads)$ that stabilizes the horosphere $H$.
\item $\mathrm{P}^{\HP}:=\mathrm{Stab}_{\HP}(H)\cap\mathrm{Stab}(p)$ as the subgroup of $\isom(\HP)$ that stabilizes the horosphere $H$ and a point $p\in \overline{H}\setminus H$, where $\overline{H}$ denotes the closure of $H$ in $\mathbb{RP}^3$.
\end{itemize}
It turns out that $\mathrm{P}^{\mathbb{H}^3}$ and $\mathrm{P}^{\ads}$ are isomorphic to the isometry group of the Euclidean and Minkowski planes, respectively. Moreover, $\mathrm{P}^{\HP}$ can be obtained as the limit of $\tau_t\mathrm{P}^{\mathbb{H}^3}\tau_t^{-1}$ and $\tau_t\mathrm{P}^{\ads}\tau_t^{-1}$. For a more detailed explanation, we refer the reader to \cite[Section 3]{riolo_seppi}.

\begin{defi}
Let $p$ be a null vector for either $q_1$, $q_{-1}$, or $q_0$.
\begin{enumerate}
    \item A \textit{cusp} in a hyperbolic manifold is a region isometric to the quotient of $$\mathbb{P}\left(\{x\in \mathcal{H}^3 \ \mid \  \langle x,p\rangle_{1,3}>-1  \}\right)$$ by a subgroup $\Gamma$ of $\mathrm{P}^{\mathbb{H}^3}$ acting freely, properly and co-compactly on $\mathbb{P}\left(\{ \langle x,p\rangle=-1  \}\right)$.
\item A \textit{cusp} in a Anti-de Sitter manifold is a region isometric to the quotient of $$\mathbb{P}\left(\{x\in \mathcal{A}\mathrm{d}\mathcal{S}^3\ \mid \ \langle x,p\rangle_{2,2}>-1  \}\right)$$ by a subgroup $\Gamma$ of $\mathrm{P}^{\ads}$ acting freely, properly and co-compactly on $\mathbb{P}\left(\{ \langle x,p\rangle=-1  \}\right)$.
\item A \textit{cusp} in a Half-pipe manifold is a region isometric to the quotient of $$\mathbb{P}\left(\{ x\in \mathcal{HP}^3 \ \mid\ \langle x,p\rangle_{1,2,0}>-1  \}\right)$$ by a subgroup $\Gamma$ of $\mathrm{P}^{\HP}$ acting freely, properly and co-compactly on $\mathbb{P}\left(\{ \langle x,p\rangle=-1  \}\right)$.
\end{enumerate}
\end{defi}

\section{Convex core structures}\label{sec3}
We fix once and for all an oriented surface $\Sigma$ of negative Euler characteristic homeomorphic to a closed surface with a finite number of points removed. 
\subsection{Preliminaries }
\begin{defi}
A representation $\sigma:\pi_1(\Sigma)\to \mathrm{Isom}(\mathbb{H}^2) $ is $\textit{Fuchsian}$ if $\mathbb{H}^2/\sigma(\pi_1(\Sigma))$ is a hyperbolic surface of finite area. When $\Sigma$ has punctures, we further assume that $\mathbb{H}^2/\sigma(\pi_1(\Sigma))$ is homeomorphic to $\Sigma.$
 
\end{defi}
It is well know that a representation $\sigma$ is Fuchsian if and only if $\sigma$ is a discrete and faithful representation and $\sigma$ sends every loop around punctures to parabolic isometries of $\mathbb{H}^2$. We recall now the definition of the Teichm\"uller space of $\Sigma$.
\begin{defi}
    The \textit{Teichm\"uller space} of $\Sigma$ denoted $\mathcal{T}(\Sigma)$
is the set of Fuchsian representations modulo conjugacy by elements of $\mathrm{Isom}(\mathbb{H}^2)$.
\end{defi}
We move on to defining the notion of weighted multicurves and their length. 
\begin{defi}\label{multicurve}
We say that $\lambda=\sum_{i=1}^{k}a_i\alpha_i$ is a weighted multicurve if $\alpha_i$ 
are homotopy classes of non peripheral, and non-trivial simple closed curves which are pairwise non-homotopic and $a_i>0$ for $0\leq i\leq k$.
\end{defi}
The \textit{support} of $\lambda $, denoted $\vert\lambda\vert$, is the collection of the curves $\alpha_i$. If we further assume that $\alpha_i$ are closed simple geodesics with respect to a complete hyperbolic metric of finite volume on $\Sigma$, then we say that $\lambda$ is a \textit{geodesic weighted multicurve}. 
A weighted multicurve is a particular example of \textit{measured lamination} on $\Sigma$. We will not recall here the general definition of measured laminations since we only need to deal with weighted multicurves. For a detailed description we refer to \cite[Section II.1.11]{Epstein}. We say that two weighted multicurves $\lambda$, $\mu$ fill $\Sigma$ if every component of $\Sigma\setminus \vert\lambda\vert \cup \vert\mu\vert$ contains at most one puncture, and it is simply connected after adding the puncture if needed. Now we state the definition of the length of a weighted multicurve.
 
\begin{defi}\label{length}
Given a weighted multicurve $\lambda=\sum_{i=0}^{k}a_i\alpha_i$ and $h$ a hyperbolic metric on $\Sigma$, the length of $\lambda$ with respect to $h$ is 

$$l_{\lambda}(h)=\sum_{i=0}^{k}a_il_{\alpha_i}(h),$$
where $l_{\alpha_i}(h)$ denotes the length of the $h$-geodesic representative in the homotopy class of $\alpha_i$.
\end{defi}

\subsection{Hyperbolic convex core structures}
Let $\rho:\pi_1(\Sigma) \to \mathrm{Isom}(\mathbb{H}^3)$ be a representation such that $\rho(\pi_1(\Sigma))$ acts freely on $\mathbb{H}^3$. When $\Sigma$ has punctures we assume that $\rho$ sends a loop around punctures to a parabolic isometry. Then the \textit{limit set} $\Lambda_{\rho}$ is the set of accumulation points of the orbits of $\rho(\pi_1(\Sigma))$ in $\mathbb{H}^3$. 
\begin{defi}
A representation $\rho: \pi_1(\Sigma)\to\isom(\mathbb{H}^3)$ is called \textit{$\mathbb{H}^3$-quasi-Fuchsian} if the limit set of $\Lambda_{\rho}$ is a quasi-circle. If $\Sigma$ has punctures, we require that $\mathbb{H}^3/\rho(\pi_1(\Sigma))$ is homeomorphic to $\Sigma\times\mathbb{R}.$
In that case the \textit{convex core} $\mathcal{C}_{\mathbb{H}^3}(\rho)$ of $\rho$ is defined by:
$$\mathcal{C}_{\mathbb{H}^3}(\rho):=\mathrm{CH}(\Lambda_{\rho})/\rho(\pi_1(\Sigma)),$$
where $\mathrm{CH}(\Lambda_{\rho})$ is the convex hull of $\Lambda_{\rho}$ in $\mathbb{H}^3$.
\end{defi}
\begin{remark}
    If $\rho$ is a $\mathbb{H}^3$-quasi-Fuchsian representation then $\mathbb{H}^3/\rho(\pi_1(\Sigma))$ is known as \textit{quasi-Fuchsian hyperbolic manifold}. The convex core $\mathcal{C}_{\mathbb{H}^3}(\rho)$ is the smallest non-empty geodesically convex subset in $\mathbb{H}^3/\rho(\pi_1(\Sigma))$. Note that if we only assume that $\Lambda_{\rho}$ is a quasi-circle, then the quotient $\mathbb{H}^3/\rho(\pi_1(\Sigma))$ is always homeomorphic to the product of a punctured surface and $\mathbb{R}$. However, we need to assume that $\mathbb{H}^3/\rho(\pi_1(\Sigma))$ is homeomorphic to $\Sigma\times\mathbb{R}$ because for punctured surfaces, the fundamental group does not determine the topology of $\Sigma$.
\end{remark}
We say that $\rho$ is \textit{Fuchsian} if $\mathrm{CH}(\Lambda_{\rho})$ is a totally geodesic plane in $\mathbb{H}^3$, this occur precisely when $\rho$ is conjugate in $\isom(\mathbb{H}^3)$ to a Fuchsian representation $\rho_0:\pi_1(\Sigma)\to \isom(\mathbb{H}^2)$. Hence the Teichm\"uller space of $\mathcal{T}(\Sigma)$ can be identified with the Fuchsian representations in $\isom(\mathbb{H}^3)$. If $\rho$ is not Fuchsian, then $\mathrm{CH}(\Lambda_{\rho})$ has non-empty interior in $\mathbb{H}^3$ and its boundary is the disjoint union of two components; the \textit{upper boundary} component $\partial_+\mathrm{CH}(\Lambda_{\rho})$ and the \textit{lower boundary component} $\partial_-\mathrm{CH}(\Lambda_{\rho})$. Each gives in the quotient two surfaces $\partial_{\pm}\mathcal{C}_{\mathbb{H}^3}(\rho):=\partial_{\pm}\mathrm{CH}(\Lambda_{\rho})/\rho(\pi_1(\Sigma))$
homeomorphic to $\Sigma$. Moreover the convex core $\mathcal{C}_{\mathbb{H}^3}(\rho)$ is homeomorphic to $\Sigma\times[0,1]$. This suggest the following definition.  
\begin{defi}
    A \textit{hyperbolic convex core structure} on $\Sigma\times[0,1]$ is a hyperbolic structure on $\Sigma\times[0,1] $ such that the associated developing map $\mathrm{dev}$ and holonomy representation $\rho$ satisfy the following:
    \begin{itemize}
        \item[\textbullet] $\rho:\pi_1(\Sigma)\to\isom(\mathbb{H}^3) $ is a $\mathbb{H}^3$-quasi-Fuchsian representation.
        \item[\textbullet] The developing map $\mathrm{dev}:\widetilde{\Sigma}\times[0,1]\to\mathbb{H}^3$ is a homeomorphism onto $\mathrm{CH}(\Lambda_{\rho})$.
        
    \end{itemize} 
\end{defi}
Now if $\rho$ is not Fuchsian, the geometry of the boundary of the convex core was studied by Thurston in \cite[Chapter 8]{thurstop}. He proved that the components $\partial_{\pm}\mathcal{C}_{\mathbb{H}^3}(\rho)$ are \textit{pleated} surfaces. Namely $\partial_{\pm}\mathrm{CH}(\Lambda_{\rho})$ is the union of totally geodesic pieces which match together to give a complete hyperbolic metric $m_{\pm}$ on  $\partial_{\pm}\mathcal{C}_{\mathbb{H}^3}(\rho)$ which will be called \textit{the induced metric}.
The locus where $\partial_+\mathrm{CH}(\Lambda_{\rho})$ is not totally geodesic defines a geodesic lamination $\lambda_{\pm}$ on $\partial_{\pm}\mathcal{C}_{\mathbb{H}^3}(\rho)\cong \Sigma$. Furthermore these geodesic laminations support a transverse measure called \textit{bending measure}. The description of these measure is simple when the support of $\lambda^{\pm}$ is a collection of finite disjoint non homotopic curves. Indeed in that case, the measure of an arc $c$ transverse to $\lambda^{\pm}$ consists of a sum of the exterior dihedral angles along the leaves
that $c$ meets. For a general description of the bending measure we refer the reader to \cite[Section II.1.11]{Epstein}.

Thurston conjectured that a hyperbolic convex core structure on $\Sigma\times [0,1]$ is uniquely determined by the bending lamination of the boundary. The existence part of this conjecture was proved by Bonahon and Otal. 
\begin{theorem}\cite{bonahonotal}
Let $\lambda$, $\mu$ be two measured geodesic laminations which fill up $\Sigma$ with no closed leaf of weight at least equal to $\pi$. Then there is a hyperbolic convex core structure on $\Sigma\times[0,1]$ for which the 
bending lamination on the upper (resp. lower) boundary component of its convex core is isotopic to $\lambda$ (resp. $\mu$). If $\lambda$, $\mu$ are weighted multicurves, then the hyperbolic convex core structure is unique up to isometry isotopic to the identity.
\end{theorem}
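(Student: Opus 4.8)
The plan is to study the \emph{bending map}
\[
b\colon \mathcal{QF}(\Sigma)\longrightarrow \mathcal{ML}(\Sigma)\times\mathcal{ML}(\Sigma),\qquad \rho\longmapsto\bigl(\lambda_+(\rho),\lambda_-(\rho)\bigr),
\]
which assigns to a quasi-Fuchsian holonomy the pair of bending laminations of the upper and lower boundary components of its convex core. By Bers' simultaneous uniformization theorem $\mathcal{QF}(\Sigma)$ is a connected real-analytic manifold homeomorphic to $\mathcal{T}(\Sigma)\times\mathcal{T}(\Sigma)$, of real dimension $2\dim_{\mathbb{R}}\mathcal{ML}(\Sigma)$. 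Thurston's conjecture asserts that $b$ is a homeomorphism onto the open set $\mathcal{F}$ of pairs $(\lambda,\mu)$ that fill $\Sigma$ and have every closed leaf of weight $<\pi$; the theorem is the surjectivity of $b$ onto $\mathcal{F}$ (existence) together with its injectivity along the sublocus where $\lambda,\mu$ are weighted multicurves (uniqueness). I would first check that $b$ is well defined and continuous, using Thurston's description of $\partial\mathrm{CH}(\Lambda_\rho)$ as a pleated surface and the continuous dependence of the support planes of the convex hull on $\rho$; that its image lies in $\mathcal{F}$ is then almost formal — exterior dihedral angles are $<\pi$, and the two bending laminations of the convex core of a non-Fuchsian quasi-Fuchsian group always fill (a simple closed curve disjoint from both supports would have its axis lying in a shared support plane, forcing the convex hull to be two-dimensional, i.e.\ Fuchsian). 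In the punctured case one also records that parabolicity of the peripheral holonomy is preserved, so the boundary pleated surfaces are genuinely finite area and $\lambda_\pm$ omit the punctures.

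The analytic core is the \textbf{properness of $b$ as a map onto $\mathcal{F}$}. Given $\rho_n$ with $b(\rho_n)=(\lambda_n,\mu_n)\to(\lambda,\mu)\in\mathcal{F}$, I want a subsequence of $\rho_n$ converging inside $\mathcal{QF}(\Sigma)$. The upper and lower pleated boundaries have induced hyperbolic metrics $m_\pm(\rho_n)$ realising $\lambda_n$, resp.\ $\mu_n$, as measured geodesic laminations; since a neighbourhood of $(\lambda,\mu)$ in $\mathcal{F}$ consists of filling pairs with uniformly bounded weights, Thurston's compactness criterion for pleated surfaces realising laminations of definite filling strength keeps $m_\pm(\rho_n)$ in a compact part of $\mathcal{T}(\Sigma)$ (after truncating the cusps) and keeps the holonomies of the two pleated surfaces — hence $\rho_n$ — in a compact part of the character variety. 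One must separately rule out escape to the boundary of quasi-Fuchsian space: a Fuchsian limit has zero bending, contradicting that the filling pair $(\lambda,\mu)$ has positive total length; a limit with an accidental parabolic along a curve $\gamma$ would force the transverse measure of $\lambda_n$ or $\mu_n$ near $\gamma$ to degenerate, incompatible with convergence to a point of $\mathcal{F}$. This yields properness.

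With properness established, existence follows from a \emph{mapping-degree} argument: $b\colon\mathcal{QF}(\Sigma)\to\mathcal{F}$ is a proper map between connected manifolds of equal dimension, and it is enough to exhibit one regular value in $\mathcal{F}$ whose algebraic preimage count is nonzero. I would do this near a self-dual configuration $\lambda=\mu$ equal to a small-weight filling multicurve: for small bending the pleated boundary stays $C^1$-close to a totally geodesic plane, the bending and earthquake vector fields along the curves of $|\lambda|$ span $T\mathcal{QF}(\Sigma)$ there (Kerckhoff's formulas for the derivatives of length under bending and twisting, using that $|\lambda|$ fills), so $b$ is a local diffeomorphism, and the orientation-reversing involution of $\mathcal{QF}(\Sigma)$ exchanging the two boundary components pins down the local degree; hence $\deg b\ne 0$ and $b$ is onto $\mathcal{F}$. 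For uniqueness when $\lambda=\sum a_i\alpha_i$ and $\mu=\sum b_j\beta_j$ are weighted multicurves with $a_i,b_j\in(0,\pi)$, I would argue that $db$ is an isomorphism at every such $\rho$: the tangent space at $\rho$ is spanned by the complex-twist (simultaneous bend-plus-earthquake) deformations along $|\lambda|\cup|\mu|$, and the matrix pairing the bending deformations against the lengths $l_{\alpha_i},l_{\beta_j}$ is nondegenerate exactly because the multicurves fill (Kerckhoff's cosine formula together with the filling hypothesis); thus $b$ is a proper local diffeomorphism on the corresponding sublocus, and the degree-one computation makes it injective there. Equivalently, doubling the convex core produces a hyperbolic cone-manifold structure on $(\Sigma\times\mathbb{S}^1,L)$ with cone angles $2(\pi-a_i),2(\pi-b_j)\in(0,2\pi)$ along the link $L$, and Hodgson--Kerckhoff local rigidity of hyperbolic cone-manifolds with cone angles $\le 2\pi$, combined with the compactness just proved, gives uniqueness up to isometry isotopic to the identity.

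The step I expect to be the real obstacle is the \textbf{properness of the bending map}: converting control of the pair of bending laminations into control of the quasi-Fuchsian structure requires the full geometry of pleated surfaces and convex hulls in $\mathbb{H}^3$ — in particular the precise relationship between the combinatorial "size" of a filling measured lamination and the diameter of a surface that realises it, together with a careful ruling-out of every type of degeneration of $\mathcal{QF}(\Sigma)$. A secondary difficulty, underlying both the degree computation for existence and the uniqueness statement, is the nondegeneracy of $db$ along the multicurve locus (equivalently, the local rigidity input for the doubled cone-manifold).
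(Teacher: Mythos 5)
This statement is quoted in the paper as an external result of Bonahon--Otal; the paper offers no proof of it, so there is nothing internal to compare your argument against. Judged on its own terms, your outline does follow the broad architecture of the actual Bonahon--Otal proof (continuity and properness of the bending map, existence by a topological argument, uniqueness for multicurves via Hodgson--Kerckhoff rigidity of the doubled cone-manifold), and you correctly identify properness and the nondegeneracy of the derivative as the two hard points.

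There is, however, a genuine gap in the existence step as you set it up. A mapping-degree argument needs $b$ to be (at least generically) a smooth map between smooth manifolds with well-defined regular values, but $\mathcal{ML}(\Sigma)$ is only a piecewise-linear manifold and the bending map $\rho\mapsto(\lambda_+,\lambda_-)$ is not known to be differentiable on all of $\mathcal{QF}(\Sigma)$ -- only continuity (and, by work of Bonahon, one-sided tangentiability) is available. So the phrases ``regular value,'' ``$db$ is an isomorphism,'' and ``$\deg b\neq 0$'' do not have a direct meaning on the full space, and the local-diffeomorphism claim near a small-weight self-dual multicurve cannot be promoted to a global degree count over $\mathcal{F}$. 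The actual proof avoids this: for a \emph{fixed} pair of filling multicurves one works in the finite-dimensional space of weight/angle assignments, where the doubled convex core is a hyperbolic cone-manifold and Hodgson--Kerckhoff local rigidity gives both openness and local injectivity; closedness (your properness) then yields the full rational case by a connectedness argument, and general filling laminations are reached by approximating with multicurves and invoking the compactness estimates again. Your uniqueness paragraph is essentially correct, but note that it is also the engine of the existence proof, not a separate afterthought. Finally, the properness step really does rest on quantitative inputs you only name in passing (uniform bounds on the length of the bending lamination of a convex core boundary, and control of the induced metrics $m_\pm$ in terms of the filling pair), so as written that step is a statement of intent rather than a proof.
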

Bonahon proved in \cite{almostfuchsian} that for $t$ small enough the measured laminations $t\lambda$, $t\mu$ can be uniquely obtained as the bending measured laminations of a hyperbolic convex core structure on $\Sigma\times[0,1]$ up to isometry. Series then proved the following degeneration result:

\begin{theorem}[\cite{limitefisch}]\label{series}
For $t>0$ small enough, let $\rho_t$ be the holonomy representation of the unique (up to isometry isotopic to the identity) hyperbolic convex core structure on $\Sigma\times[0,1]$ for which the bending lamination on the upper (resp. lower) boundary component is isotopic $t\lambda$ (resp. $t\mu$). Then after conjugating if needed 
$$ \lim_{t \to 0}\rho_t=k_{\lambda,\mu},$$
where $k_{\lambda,\mu}$ is the Kerckhoff point.
\end{theorem}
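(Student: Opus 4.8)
The plan is to run the argument of Series \cite{limitefisch}, organized around the \emph{bending cocycle} description of the holonomy (Thurston \cite{thurstop}, Epstein--Marden \cite{Epstein}). For $t>0$ small, Bonahon's existence and uniqueness result \cite{almostfuchsian} provides the convex core structure with bending data $(t\lambda,t\mu)$; I would record its two boundary pleated surfaces, with induced hyperbolic metrics $m_t^+,m_t^-\in\mathcal{T}(\Sigma)$ and Fuchsian holonomies $\sigma_t^\pm:\pi_1(\Sigma)\to\isom(\mathbb H^2)\subset\isom(\mathbb H^3)$, and write, after fixing a normalization of the conjugacy class of $\rho_t$,
$$\rho_t(\gamma)=\beta^+_{t\lambda}(\gamma)\,\sigma_t^+(\gamma)=\beta^-_{t\mu}(\gamma)\,\sigma_t^-(\gamma),\qquad\gamma\in\pi_1(\Sigma),$$
where $\beta^+_{t\lambda}(\gamma)$ (resp.\ $\beta^-_{t\mu}(\gamma)$) is a product of rotations of $\mathbb H^3$ about the lifts of the leaves of $\lambda$ (resp.\ $\mu$) met by a path from the basepoint to $\gamma\!\cdot\!(\mathrm{basepoint})$, by the prescribed weights, rotating the totally geodesic plane $\mathbb{H}^2$ ``upward'' (resp.\ ``downward'').

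The first and hardest step is to control the degeneration: I would show that $\{\rho_t\}_{t\in(0,\epsilon)}$ is precompact in the character variety and that every accumulation point as $t\to0^+$ is the holonomy $\rho_0$ of a Fuchsian representation with underlying metric $h_0\in\mathcal{T}(\Sigma)$, and that $m_t^+\to h_0$ and $m_t^-\to h_0$. This is exactly where the quantitative \emph{width} estimates of \cite{limitefisch} are indispensable (the AdS counterpart being Seppi \cite{SEP19}): as $t\to0^+$ the width of the convex core tends to $0$, so the two boundary pleated surfaces become close (even $C^1$-close) and converge to a common totally geodesic plane, while the filling hypothesis on $(\lambda,\mu)$ --- which makes $l_\lambda+l_\mu$ proper on $\mathcal{T}(\Sigma)$ --- is used to rule out escape to the boundary of Teichm\"uller space, so that no degenerate limit occurs. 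Once this is in place, $\beta^+_{t\lambda},\beta^-_{t\mu}\to\mathrm{Id}$ forces $\rho_0=\lim\sigma_t^+=\lim\sigma_t^-$, which is the Fuchsian holonomy of $h_0=\lim m_t^+=\lim m_t^-$.

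Granting this, I would next establish a first-order balance identity. Since $t\mapsto\rho_t$ and $t\mapsto m_t^\pm$ are differentiable at $0$ (analyticity of the bending cocycle in the weights), differentiating the two factorizations above at $t=0$ yields one cocycle $\dot\rho$ with two expressions, $\dot\rho=\dot\beta^+_\lambda+\dot\sigma^+=\dot\beta^-_\mu+\dot\sigma^-$, valued in $\mathfrak{so}(1,3)$ with the $\pi_1(\Sigma)$-module structure given by $h_0$. Decomposing $\mathfrak{so}(1,3)=\mathfrak{so}(1,2)\oplus\mathfrak m$ along $\mathbb H^2$, with $\mathfrak m\cong\mathbb R^{1,2}$ the normal summand: the Teichm\"uller variations $\dot\sigma^\pm$ lie in $\mathfrak{so}(1,2)$, while an infinitesimal rotation about a geodesic of $\mathbb H^2$ lies in $\mathfrak m$, so the $\mathfrak m$-component of $\dot\rho$ equals the infinitesimal bending cocycle $c_\lambda$ of $\lambda$ and equally equals $-c_\mu$. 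Hence $[c_\lambda]+[c_\mu]=0$ in $H^1(\pi_1(\Sigma),\mathbb R^{1,2}_{h_0})$. I would then appeal to the classical identification $H^1(\pi_1(\Sigma),\mathbb R^{1,2}_{h_0})\cong T^*_{h_0}\mathcal{T}(\Sigma)$ (via cup product, the Lorentz form and the fundamental class, using $\mathbb R^{1,2}\cong\mathfrak{so}(1,2)$ as $\pi_1(\Sigma)$-modules, with the appropriate parabolic conditions at the punctures when $\Sigma$ is not closed) under which $[c_\nu]$ is a fixed nonzero multiple of $d(l_\nu)_{h_0}$ (``bending is dual to length'', a computation going back to \cite{thurstop,Epstein}); thus $d(l_\lambda+l_\mu)_{h_0}=0$, so $h_0$ is a critical point of $l_\lambda+l_\mu$. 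By Kerckhoff's theorem \cite{kerker} --- properness and strict convexity along earthquake paths, both consequences of the filling condition --- the only critical point is the minimum, whence $h_0=k_{\lambda,\mu}$.

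As every accumulation point of $\{\rho_t\}$ as $t\to0^+$ is then the Fuchsian representation with metric $k_{\lambda,\mu}$, which is unique under the fixed normalization, I would conclude $\lim_{t\to0^+}\rho_t=k_{\lambda,\mu}$. The main obstacle is the non-degeneration step: the first-order balance identity and the dictionary ``bending $\leftrightarrow$ gradient of length'' are essentially formal once set up, but preventing the quasi-Fuchsian groups from running off in the character variety (or collapsing onto a degenerate representation) as the bending tends to zero is precisely the content of the quantitative width estimates of \cite{limitefisch}, and it is there that the filling hypothesis does its real work.
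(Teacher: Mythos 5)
First, note that the paper does not prove this statement: Theorem \ref{series} is quoted as an external result of Series \cite{limitefisch} (with the Anti-de Sitter analogue, Theorem \ref{ads}, quoted from \cite{Fixedpoint}), and no argument for it appears anywhere in the text. So there is no ``paper's own proof'' to compare against; what follows is an assessment of your proposal on its own terms. Your architecture is the standard and correct one: (i) precompactness and non-degeneration of $\{\rho_t\}$, with every accumulation point Fuchsian; (ii) a first-order balance identity forcing the limiting point $h_0\in\mathcal{T}(\Sigma)$ to be a critical point of $l_\lambda+l_\mu$; (iii) Kerckhoff's theorem \cite{kerker} to identify the critical point with the minimum. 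Step (ii) as you set it up --- the two bending-cocycle factorizations of $\rho_t$, the splitting $\mathfrak{so}(1,3)=\mathfrak{so}(1,2)\oplus\mathfrak m$, and Goldman--Wolpert duality identifying the infinitesimal bending class with $dl_\nu$ --- is sound in substance; the isometry relating the two base support planes (the analogue of $\mathrm{A}_t^{\X}$ in Section \ref{sec5}) only contributes a coboundary, so the cohomological identity $[c_\lambda]+[c_\mu]=0$ survives.

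The genuine gap is the sentence asserting that $t\mapsto\rho_t$ and $t\mapsto m_t^{\pm}$ are differentiable at $t=0$ ``by analyticity of the bending cocycle in the weights.'' The cocycle is analytic in the weights \emph{for a fixed underlying hyperbolic structure}; here the underlying structure $m_t^{+}$ moves with $t$, and its regularity at $t=0$ is precisely part of what must be proved. What the two factorizations give for free is only $d_{\mathcal{T}(\Sigma)}(m_t^{+},m_t^{-})=O(t)$, since $\sigma_t^{+}(\gamma)\,\sigma_t^{-}(\gamma)^{-1}=(\beta^{+}_{t\lambda}(\gamma))^{-1}\beta^{-}_{t\mu}(\gamma)\cdot(\text{coboundary})=\mathrm{Id}+O(t)$; they do \emph{not} give $d_{\mathcal{T}(\Sigma)}(m_t^{\pm},h_0)=O(t)$. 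This matters: if $\epsilon_t:=d_{\mathcal{T}(\Sigma)}(m_t^{+},h_0)$ satisfies $t/\epsilon_t\to 0$ along a sequence, then the difference quotients must be renormalized by $\epsilon_t$ rather than $t$, the bending terms vanish in the limit, and the balance identity degenerates to the vacuous statement that the renormalized tangent vectors $[\dot\sigma^{+}]$ and $[\dot\sigma^{-}]$ agree --- no information about $d(l_\lambda+l_\mu)_{h_0}$ survives. One must therefore first establish the Lipschitz bound $d_{\mathcal{T}(\Sigma)}(m_t^{\pm},h_0)=O(t)$ (or replace the infinitesimal argument by a non-infinitesimal one, e.g.\ length comparisons between the two boundary components combined with convexity of $l_\lambda+l_\mu$ along earthquake paths, which is closer to what \cite{limitefisch} and \cite{Fixedpoint} actually do). The width estimates you invoke control the extrinsic distance between the two pleated surfaces inside the $3$-manifold, not the rate of convergence of $m_t^{\pm}$ in Teichm\"uller space, so they do not close this gap by themselves; the compactness step (i) is likewise only gestured at, though you correctly flag it as the hard analytic content.
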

Recall that the Kerckhoff point is the unique minimum of the function $l_{\lambda}+l_{\mu}$ over $\mathcal{T}(\Sigma)$ (see \cite{kerker}). 

\subsection{Anti-de Sitter convex core structures}\label{conj}
Let us now move on to define the \textit{Anti-de Sitter convex core structures}, which can be seen as the Lorentzian analogue of hyperbolic convex core structures. Mess \cite{Mess} observed that $\ads$ has a Lie group model, in which $\ads$ is identified with the Lie group $\psl$. This group can be seen as the orientation-preserving isometries of the half plane model of the hyperbolic plane $\mathbb{H}^2$. In this model, the isometry group $\isom(\ads)$ is identified with $\psl\times\psl$ and the boundary at infinity $\partial\ads$ with $\mathbb{RP}^1\times\mathbb{RP}^1$. For a more detailed description of this model, we refer the reader to \cite[Section 3]{adsarticle}.\\
Let us consider $\sigma_l, \sigma_r:\pi_1(\Sigma)\to\isom(\mathbb{H}^2)$
a pair of Fuchsian representations. It is known that there is a unique orientation-preserving homeomorphism $\phi:\mathbb{RP}^1\to \mathbb{RP}^1$ which is $(\sigma_l,\sigma_r)$-equivariant, namely for every $\gamma\in \pi_1(\Sigma)$
\begin{equation} 
\phi\circ\sigma_l(\gamma)=\sigma_r(\gamma)\circ\phi.\end{equation}

\begin{defi} We say that a representation $\rho:\pi_1(\Sigma)\to\isom(\ads)\cong\isom(\mathbb{H}^2)\times \isom(\mathbb{H}^2)$ is an \textit{$\ads$-quasi-Fuchsian representation} if $\rho=(\sigma_l,\sigma_r)$ for some Fuchsian representations $\sigma_l$, $\sigma_r$. Define $\Lambda_{\rho}$ to be the graph in $\mathbb{RP}^1\times\mathbb{RP}^1\cong\partial\ads$ of the unique $(\sigma_l,\sigma_r)$-equivariant
orientation-preserving homeomorphism of $\mathbb{RP}^1$.
Then the \textit{convex core} $\mathcal{C}_{\ads}(\rho)$ of $\rho$ is defined by:
$$\mathcal{C}_{\ads}(\rho):=\mathrm{CH}(\Lambda_{\rho})/\rho(\pi_1(\Sigma)),$$
where $\mathrm{CH}(\Lambda_{\rho})$ is the convex hull of $\Lambda_{\rho}$ in $\ads$.
\end{defi}
\begin{remark}
    Notice that $\ads$ is not convex in $\mathbb{RP}^3$, so the fact that $\mathrm{CH}(\Lambda_{\rho})$ is convex and contained in $\ads$ is not obvious. We refer the reader to \cite[Lemma 5]{Mess} or \cite[Proposition 4.6.1]{adsarticle} for a detailed proof.
\end{remark}
Again, as in hyperbolic geometry, there is a particular case where is $\Lambda_{\rho}$ is the boundary of a totally geodesic copy of $\mathbb{H}^2$. This precisely occurs when $\sigma_l$ and $\sigma_r$ are conjugate in $\isom(\mathbb{H}^2)$ and we say that $\rho$ is a \textit{Fuchsian} representation. Hence the Teichm\"uller space of $\mathcal{T}(\Sigma)$ can be also identified with the Fuchsian representations in $\isom(\ads).$   
If $\mathrm{CH}(\Lambda_{\rho})$  has non empty interior, then its boundary is the disjoint union of two topological disks; the \textit{upper boundary} component $\partial_+\mathrm{CH}(\Lambda_{\rho})$ and the \textit{lower boundary component} $\partial_-\mathrm{CH}(\Lambda_{\rho})$. Each gives in the quotient two surfaces $\partial_{\pm}\mathcal{C}_{\ads}(\rho):=\partial_{\pm}\mathrm{CH}(\Lambda_{\rho})/\rho(\pi_1(\Sigma))$
homeomorphic to $\Sigma$. As in the hyperbolic case, Mess \cite{Mess} (see also \cite{canorot}) proved that the surfaces $\partial_{\pm}\mathcal{C}_{\ads}(\rho)$ are pleated along a \textit{bending lamination} $\lambda_{\pm}$ with hyperbolic \textit{induced metric} $m_{\pm}$.
\begin{remark} In contrast to the hyperbolic case, the representation $\rho$ does not acts properly on $\ads$. However Mess \cite{Mess} (for $\Sigma$ closed) and Barbot \cite{barbot} (for $\Sigma$ possibly with punctures) showed that there is a convex domain $\Omega(\Lambda_{\rho})$ in $\ads$ on which the action of $\rho$ is proper. The quotient $\Omega(\Lambda_{\rho})/\rho(\pi_1(\Sigma))$ is  a particular case of maximal globally hyperbolic Anti-de Sitter manifold. By a theorem of \cite{domain}, global hyperbolicity has a strong consequence on the topology of $\Omega(\Lambda_{\rho})/\rho(\pi_1(\Sigma))$. In fact $\Omega(\Lambda_{\rho})/\rho(\pi_1(\Sigma))$ is homeomorphic to $\Sigma\times \mathbb{R}$. Moreover in that case $\mathcal{C}_{\ads}(\rho)$ is homeomorphic to $\Sigma\times[0,1]$ unless $\rho$ is Fuchsian. 

When the surface $\Sigma$ is closed, Mess \cite{Mess} observed that every maximal globally hyperbolic Anti-de Sitter manifold with Cauchy surface $\Sigma$ is of the form $\Omega(\Lambda_{\rho})/\rho(\pi_1(\Sigma))$. Furthermore the convex core $\mathcal{C}_{\ads}(\rho)$ is dual to $\Omega(\Lambda_{\rho}).$ For a more detailed description of Mess work, we refer the reader to \cite[Section 4]{adsarticle}. It may be true that the Mess's approach might be extended to the case of surfaces with punctures, but we do not consider that question here and this is one of the reasons for choosing the terminology of convex core structure. 
\end{remark}
We give now the following definition.
\begin{defi}
    An \textit{Anti-de Sitter convex core structure} on $\Sigma\times[0,1]$ is an Anti-de Sitter structure on $\Sigma\times[0,1] $ such that the associated developing map $\mathrm{dev}$ and holonomy representation $\rho$ satisfy the following:
    \begin{itemize}
        \item[\textbullet] $\rho:\pi_1(\Sigma)\to\isom(\ads) $ is an $\ads$-quasi-Fuchsian representation.
        \item[\textbullet] The developing map $\mathrm{dev}:\widetilde{\Sigma}\times[0,1]\to\ads$ is a homeomorphism onto $\mathrm{CH}(\Lambda_{\rho})$.
    \end{itemize} 
\end{defi}
Mess asked whether an Anti-de Sitter convex core structure on $\Sigma\times[0,1]$ is uniquely
determined by the bending laminations of the boundary of the convex core. This is the analog of Thurston's conjecture for hyperbolic convex core structures. The existence part of this conjecture was established by Bonsante and Schlenker.
\begin{theorem}[Theorem $1.4$ and Lemma $1.6$]\cite{Fixedpoint}\label{Fixedpoint}
Let $\lambda$, $\mu$ be measured geodesic laminations which fill up $\Sigma$. Then there is an Anti-de Sitter convex core structure on $\Sigma\times[0,1]$ for which the bending lamination on the upper (resp. lower) boundary component is isotopic to $\lambda$ (resp. $\mu$). Moreover there exists $\epsilon>0$ such that the structure is unique for laminations of the form $(t\lambda,t\mu)$ with $t\in(0,\epsilon)$. 
\end{theorem}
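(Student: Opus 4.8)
\emph{Step 1 (reduction to a fixed point of a composition of earthquakes).} The plan is to translate the problem into a fixed-point problem on $\mathcal{T}(\Sigma)$ and then to analyse it using Thurston's earthquake theorem, the filling hypothesis, and Kerckhoff's convexity of length functions. By the Mess parametrization recalled in Section \ref{conj}, an $\ads$-quasi-Fuchsian representation is the same as a pair $\rho=(\sigma_l,\sigma_r)$ of points of $\mathcal{T}(\Sigma)$. Mess's description of the convex core (\cite{Mess}, see also \cite{canorot}) expresses $\sigma_l$ and $\sigma_r$ through the induced metrics $m_{\pm}$ and the bending laminations $\lambda_{\pm}$ of the two boundary components of $\mathcal{C}_{\ads}(\rho)$ by earthquakes: up to the usual normalisation, $\sigma_l$ is the left earthquake of $m_+$ along $\lambda_+$ and the right earthquake of $m_-$ along $\lambda_-$, while $\sigma_r$ is obtained by exchanging the words ``left'' and ``right'' (the orientation reversal between the two faces). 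Imposing $\lambda_+=\lambda$, $\lambda_-=\mu$ and eliminating $m_\pm,\sigma_l,\sigma_r$, one is left with the following: find $m\in\mathcal{T}(\Sigma)$ such that the right earthquake of $m$ along $\lambda$ and the left earthquake of $m$ along $\mu$ reach the same point of $\mathcal{T}(\Sigma)$; equivalently, a fixed point of the composition $\Phi_{\lambda,\mu}:=(E^{\lambda}_R)^{-1}\circ E^{\mu}_L$, where $E^{\lambda}_R,E^{\mu}_L$ are the right (resp. left) earthquake homeomorphisms of $\mathcal{T}(\Sigma)$. Conversely, a fixed point of $\Phi_{\lambda,\mu}$ reconstructs $m_\pm$, hence $\rho$, and $\mathcal{C}_{\ads}(\rho)$ is then the sought $\ads$-convex core structure on $\Sigma\times[0,1]$ with bending data $(\lambda,\mu)$ (the filling hypothesis forces $\rho$ non-Fuchsian, so that the convex core is a genuine product).

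\emph{Step 2 (existence of the fixed point).} This is where the filling condition does the real work. Thurston's earthquake theorem makes $E^{\lambda}_R$ and $E^{\mu}_L$ homeomorphisms of the contractible manifold $\mathcal{T}(\Sigma)$, so $\Phi_{\lambda,\mu}$ is a homeomorphism of a ball, and one runs a degree (Brouwer-type) argument. The key point is that no solution escapes to infinity: a fixed point ``at the boundary'' of $\mathcal{T}(\Sigma)$ would be represented by a measured lamination $\nu$ with $i(\nu,\lambda)=i(\nu,\mu)=0$, which is impossible precisely because $\lambda$ and $\mu$ fill $\Sigma$. Concretely this can be implemented either by extending $\Phi_{\lambda,\mu}$ continuously to the Thurston compactification $\overline{\mathcal{T}(\Sigma)}\cong\overline{\mathbb{B}}$ and checking it has no fixed point on $\mathcal{PML}(\Sigma)$, or by a direct properness estimate for $\Phi_{\lambda,\mu}-\mathrm{Id}$; this is the technical heart of \cite{Fixedpoint}, and it parallels Kerckhoff's argument that $l_{\lambda}+l_{\mu}$ is a proper function exactly when $\lambda,\mu$ fill. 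Either way one obtains a fixed point $m\in\mathcal{T}(\Sigma)$.

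\emph{Step 3 (uniqueness for $(t\lambda,t\mu)$ with $t$ small).} For the rescaled laminations the map $\Phi_{t\lambda,t\mu}$ is a small perturbation of the identity. Differentiating the earthquake flows at $t=0$, the equation $\Phi_{t\lambda,t\mu}(m)=m$ becomes to leading order $X_{\lambda}(m)+X_{\mu}(m)=0$, where $X_{\lambda}$ is the $\lambda$-earthquake vector field; by Kerckhoff's first-variation formula $X_{\lambda}$ is the Weil--Petersson symplectic gradient of a multiple of $l_{\lambda}$, so this says $m$ is a critical point of $l_{\lambda}+l_{\mu}$, i.e. $m=k_{\lambda,\mu}$. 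Setting $G(m,t):=\tfrac1t(\Phi_{t\lambda,t\mu}(m)-m)$ for $t\neq0$ and $G(m,0):=X_{\lambda}(m)+X_{\mu}(m)$, one has $G(k_{\lambda,\mu},0)=0$, while $D_mG(k_{\lambda,\mu},0)$ is the symplectic gradient of the Hessian of $l_{\lambda}+l_{\mu}$ at $k_{\lambda,\mu}$, which is invertible because that Hessian is positive definite (Kerckhoff's strict convexity along earthquake paths, once more using that $\lambda,\mu$ fill). The real-analytic implicit function theorem then produces, for $t\in(0,\epsilon)$, a unique fixed point $m(t)$ near $k_{\lambda,\mu}$ depending analytically on $t$; combined with the a priori fact that all fixed points of $\Phi_{t\lambda,t\mu}$ stay in a compact set and converge to $k_{\lambda,\mu}$ as $t\to0^+$ (properness from Step 2 plus the limiting equation), this gives honest uniqueness of the $\ads$-convex core structure with bending data $(t\lambda,t\mu)$ for small $t$.

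\emph{Expected difficulty and the punctured case.} The hard part is Step 2: the global existence of the fixed point is a genuinely non-trivial statement about Teichm\"uller space, the delicate issue being the control of compositions of earthquakes near the boundary of $\mathcal{T}(\Sigma)$; Steps 1 and 3 are essentially formal once Mess's formulas and Kerckhoff's variational identities are in hand. Finally, when $\Sigma$ has punctures one must check that all the ingredients — the earthquake theorem, Mess's convex-core picture, and the properness of $l_{\lambda}+l_{\mu}$ — survive for finite-area hyperbolic surfaces; for the Mess correspondence this is Barbot's extension \cite{barbot}, and the resulting bending laminations are automatically compactly supported on $\Sigma$ because $\lambda,\mu$ fill.
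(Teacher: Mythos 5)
This statement is quoted in the paper as an external result (Theorem 1.4 and Lemma 1.6 of \cite{Fixedpoint}); the paper supplies no proof of it, only a remark that the small-$t$ part adapts to punctured surfaces. So there is no internal proof to compare against, and the right question is whether your sketch would actually establish the theorem. Your Step 1 (Mess's earthquake description of the convex core reduces the problem to a fixed point of a composition of earthquakes, up to factors of $2$ in the laminations which you suppress) and Step 3 (implicit function theorem at the Kerckhoff point, using that the earthquake vector field is the Weil--Petersson symplectic gradient of the length function and that the Hessian of $l_\lambda+l_\mu$ is positive definite there, plus an a priori statement that all fixed points of $\Phi_{t\lambda,t\mu}$ accumulate only at $k_{\lambda,\mu}$) are faithful to the actual argument in \cite{Fixedpoint} and would go through.

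The genuine gap is in Step 2. Your primary mechanism --- extend $\Phi_{\lambda,\mu}$ continuously to the Thurston compactification and apply a Brouwer/Lefschetz argument after excluding boundary fixed points via $i(\nu,\lambda)=i(\nu,\mu)=0$ --- is not available as stated: earthquake maps are not known to extend continuously to all of $\mathcal{PML}(\Sigma)$ (the extension fails, or is at best delicate, near the projective class of the earthquake lamination itself), so the compactified map you want to apply Brouwer to may not exist. The fallback you mention, a properness estimate for $\Phi_{\lambda,\mu}-\mathrm{Id}$ followed by a degree argument, is closer to what is actually needed, but it is not a one-line parallel of Kerckhoff's properness of $l_\lambda+l_\mu$: to run a degree argument you need a path of maps connecting $\Phi_{\lambda,\mu}$ to one with a known nonzero degree, together with properness \emph{uniform along the path}, and establishing that uniform properness from the fixed-point equation (which only directly bounds quantities like $l_\lambda+l_\mu$ at a putative fixed point) is the substantive content of \cite{Fixedpoint}. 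As written, Step 2 names the right obstruction (filling) but does not contain an argument; since existence is the main assertion of the theorem, the sketch is incomplete there even as an outline. The punctured-case paragraph is fine in spirit but inherits the same gap, which is presumably why the paper only claims the small-$t$ part in that generality.
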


\begin{remark}
Bonsante and Schlenker's proof deals with the case of closed surfaces. However, the second part of the theorem concerning the existence and uniqueness of the realization for small bending laminations can be adapted to surfaces with punctures using the same argument.
\end{remark}
 We have also the following Theorem which is similar to Series's Theorem \ref{series} in hyperbolic geometry. 

\begin{theorem}[Lemma $3.6$ in \cite{Fixedpoint}]\label{ads}
For $t>0$ small enough, let $\rho_t$ be the holonomy representation of the unique (up to isometry isotopic to the identity) Anti-de Sitter convex core structure on $\Sigma\times[0,1]$ for which the bending lamination on the upper (resp. lower) boundary component is isotopic to $t\lambda$ (resp. $t\mu$). Then after conjugating if needed 
$$ \lim_{t \to 0}\rho_t=k_{\lambda,\mu},$$
where $k_{\lambda,\mu}$ is the Kerckhoff point.     
\end{theorem}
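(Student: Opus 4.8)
The plan is to combine Mess's earthquake parametrization of Anti-de Sitter quasi-Fuchsian representations with Kerckhoff's variational characterization of the point $k_{\lambda,\mu}$. Write $\rho_t=(\sigma_l^t,\sigma_r^t)$ for the holonomy of the (unique, by Theorem \ref{Fixedpoint}) Anti-de Sitter convex core structure on $\Sigma\times[0,1]$ with bending data $(t\lambda,t\mu)$, where $\sigma_l^t,\sigma_r^t$ are Fuchsian representations, viewed as points of $\mathcal{T}(\Sigma)$. Recall from the work of Mess \cite{Mess} and Benedetti--Bonsante \cite{canorot} (see also \cite{adsarticle}) that $\sigma_l$ and $\sigma_r$ are obtained from the induced metrics $m_\pm$ on the two boundary components of the convex core by left and right earthquakes along the bending laminations $\lambda_\pm$; eliminating $m_+$ and $m_-$ from these relations, one finds that $\sigma_l^t$ is a fixed point of a composition $\Phi_t\colon\mathcal{T}(\Sigma)\to\mathcal{T}(\Sigma)$ of two earthquakes of magnitudes $t\lambda$ and $t\mu$, while $\sigma_r^t$ differs from $\sigma_l^t$ by an earthquake of magnitude $O(t)$. (The earthquake machinery applies verbatim to the finite-area hyperbolic surfaces occurring when $\Sigma$ has punctures.) For $t\in(0,\epsilon)$ this fixed point is the only one --- this is precisely the content of \cite{Fixedpoint}, and is what forces uniqueness of the convex core structure.

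The first genuine step is a compactness statement: as $t\to 0^+$, the family $\{\sigma_l^t\}$ remains in a compact subset of $\mathcal{T}(\Sigma)$. The natural way to see this is through the width estimates for Anti-de Sitter convex cores due to Seppi \cite{SEP19} (the Lorentzian analogue of Series' estimates \cite{limitefisch}), based on the notion of width of \cite{Maximalsurface}: as the bending measures $t\lambda$ and $t\mu$ collapse, the convex core becomes uniformly thin, so its two boundary components have induced metrics $m_\pm^t$ lying in a fixed compact subset of $\mathcal{T}(\Sigma)$; since $\sigma_l^t$ differs from each $m_\pm^t$ by an earthquake of total mass $O(t)$, the same bound holds for $\sigma_l^t$. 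I expect this to be the most delicate point to make fully rigorous; everything afterwards is formal.

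Next, let $m_\infty$ be any accumulation point, $m_\infty=\lim_n\sigma_l^{t_n}$ with $t_n\to 0^+$. For weighted multicurves the earthquake flow is concretely a composition of Fenchel--Nielsen twists depending real-analytically on the twist parameters, so $\Phi_t$ admits an expansion $\Phi_t(m)=m+t\,W(m)+o(t)$, locally uniformly in $m$, where $W=-(X_\lambda+X_\mu)$ and $X_\nu$ is the infinitesimal left earthquake (Thurston--Kerckhoff) vector field of $\nu$ on $\mathcal{T}(\Sigma)$; the two bending laminations enter with the same sign because one is read off from the upper boundary and the other from the lower. Dividing the relation $\Phi_{t_n}(\sigma_l^{t_n})=\sigma_l^{t_n}$ by $t_n$ and letting $n\to\infty$ gives $W(m_\infty)=0$, i.e. $X_\lambda(m_\infty)+X_\mu(m_\infty)=0$.

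It remains to identify this zero with the Kerckhoff point. By the duality between earthquake vector fields and length differentials (Kerckhoff \cite{kerker}, Wolpert), the left earthquake vector field $X_\nu$ is, up to a universal constant, the Weil--Petersson symplectic gradient of $l_\nu$; hence $X_\lambda+X_\mu$ vanishes at $m$ exactly when $d_m(l_\lambda+l_\mu)=0$. Since $\lambda$ and $\mu$ fill $\Sigma$, the function $l_\lambda+l_\mu$ is proper on $\mathcal{T}(\Sigma)$ and strictly convex along earthquake paths \cite{kerker}, so it has a single critical point, namely its minimum $k_{\lambda,\mu}$. Therefore $m_\infty=k_{\lambda,\mu}$; as the accumulation point was arbitrary and $\{\sigma_l^t\}$ is precompact, $\sigma_l^t\to k_{\lambda,\mu}$ in $\mathcal{T}(\Sigma)$, and then $\sigma_r^t\to k_{\lambda,\mu}$ as well since it differs from $\sigma_l^t$ by an earthquake of magnitude $O(t)$. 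Conjugating the two factors independently to normalize, we conclude $\lim_{t\to 0^+}\rho_t=(k_{\lambda,\mu},k_{\lambda,\mu})$, which is precisely the Fuchsian representation $k_{\lambda,\mu}$ inside $\isom(\ads)\cong\isom(\mathbb{H}^2)\times\isom(\mathbb{H}^2)$.
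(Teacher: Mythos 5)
The paper does not actually prove this statement: it is quoted from Bonsante--Schlenker \cite{Fixedpoint} (their Lemma~3.6), so there is no internal proof to compare against. Your overall strategy --- realize $\sigma_l^t$ as a fixed point of a composition of earthquakes of magnitude $O(t)$, expand that composition to first order in $t$, and combine the Wolpert/Kerckhoff duality between earthquake vector fields and length differentials with the uniqueness of the critical point of $l_\lambda+l_\mu$ --- is exactly the mechanism of the cited proof, and those steps are sound (the factor of $2$ and the left/right conventions in Mess's relations are harmless).

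The genuine gap is your precompactness argument for $\{\sigma_l^t\}$. The width estimates of \cite{SEP19} control the timelike distance between the two boundary components of the convex core, i.e.\ how far $\rho_t$ is from being Fuchsian; they say nothing about where the induced metrics $m_\pm^t$, or $\sigma_l^t$, sit in $\mathcal{T}(\Sigma)$. A sequence of Fuchsian representations leaving every compact subset of Teichm\"uller space has width zero, so ``the convex core is uniformly thin'' cannot imply ``the boundary metrics lie in a fixed compact set''. Tellingly, your compactness step never uses the hypothesis that $\lambda$ and $\mu$ fill, whereas without filling the conclusion is false ($l_\lambda+l_\mu$ is not proper and the fixed points can escape to infinity). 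The boundedness must instead be extracted from the fixed-point equation itself: using convexity of length functions along earthquake paths and the first-derivative (cosine) formula, one bounds $l_\lambda(\sigma_l^t)+l_\mu(\sigma_l^t)$ uniformly in $t$, and then properness of $l_\lambda+l_\mu$ --- this is precisely where filling enters --- confines $\sigma_l^t$ to a compact set. With that step repaired, the remainder of your argument goes through.
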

\subsection{Half-pipe convex core structures}
We finish this section by introducing the half-pipe convex core structures on $\Sigma\times[0,1]$. First we fix a Fuchsian representation $\sigma:\pi_1(\Sigma)\to\isom(\mathbb{H}^2)$. Given a map $\tau:\pi_1(\Sigma)\to \mathbb{R}^{1,2}$, we say that $\tau$ is a $\sigma$\textit{-cocycle} if \begin{equation}\label{affine}
    \tau(\alpha\beta)=\tau(\alpha)+\sigma(\alpha)\cdot\tau(\beta)
\end{equation}
The condition \eqref{affine} allows us to define an isometric action on the Minkowski space given by $(\sigma,\tau)$ where $\sigma$ acts by linear isometry on $\mathbb{R}^{1,2}$ and $\tau$ by translation. Hence by duality \eqref{groupeduality}, it induces a representation $\rho:\pi_1(\Sigma)\to\isom(\HP)$ defined by \begin{equation}\label{affineminkowski}  
\rho(\gamma):=\left [
   \begin{array}{c c c}
      \sigma(\gamma) & 0 \\
      
      ^T\tau(\gamma)\mathrm{J}\sigma(\gamma) & 1\\ 
   \end{array}
\right].
\end{equation}
The representation $\sigma$ is called the \textit{linear} part of $\rho$.
\begin{defi}
    We say that a representation $\rho:\pi_1(\Sigma)\to \isom(\HP) $ is a $\HP$-quasi-Fuchsian representation if there is a continuous function $\phi:\partial\mathbb{D}^2\to\mathbb{R}$ with graph $\Lambda_{\rho}$ invariant by $\rho$. 
\end{defi}
\begin{remark}\label{HPqausi}
    Barbot and Fillastre \cite{barbotfillastre} have shown that when the surface $\Sigma$ is closed, then any affine deformation of a Fuchsian representation defines by duality a $\HP$-quasi-Fuchsian representation. Moreover $\phi$ is unique. When $\Sigma$ has punctures Nie and Seppi showed in \cite{affine} that the same fact holds if and only if the representation $\rho$ given in \eqref{affineminkowski} sends every loop around puncture to a parabolic isometry in $\isom(\HP)$, that is an isometry of $\HP$ which has a fixed point in $\partial\HP$.   
\end{remark}
\begin{defi}
   Let $\rho:\pi_1(\Sigma)\to \isom(\HP) $ be a $\HP$-quasi-Fuchsian representation and $\phi:\partial\mathbb{D}^2\to\mathbb{R}$ the unique continuous function with graph $\Lambda_{\rho}$ invariant by $\rho$. Then we define the \textit{convex core} $\mathcal{C}_{\HP}(\rho)$ of $\rho$  by:
$$\mathcal{C}_{\HP}(\rho):=\mathrm{CH}(\Lambda_{\rho})/\rho(\pi_1(\Sigma)),$$
where $\mathrm{CH}(\Lambda_{\rho})$ is the convex hull of $\Lambda_{\rho}$ in $\HP$.
\end{defi}

A particular case of $\HP$-quasi-Fuchsian representation is given when the affine deformation $\tau$ is conjugated to $0$ through a translation in $\isom(\mathbb{R}^{1,2})$. In that case, one can prove that after conjugation if needed, the function $\phi:=0$ is the unique continuous function with graph invariant under $\rho$. (See \cite[proposition 5.3]{affine} for a proof). Thus $\mathrm{CH}(\Lambda_{\rho})$ is a spacelike plane in $\HP$ isometric to $\mathbb{H}^2$. In all other cases the boundary of $\mathcal{C}_{\HP}(\rho)$ is the union of two surfaces $\partial_{\pm}\mathcal{C}_{\HP}(\rho)$ homeomorphic to $\Sigma$ and bent along measured laminations $\lambda_{\pm}$, and here also $\mathcal{C}_{\HP}(\rho)$ is homeomorphic to $\Sigma\times[0,1]$ (see Lemma \ref{HP3_convexcore}). This yields the following definition.
\begin{defi}
    A \textit{Half-pipe convex core structure} on $\Sigma\times[0,1]$ is a Half-pipe structure on $\Sigma\times[0,1] $ such that the associated developing map $\mathrm{dev}$ and holonomy representation $\rho$ satisfy the following:
    \begin{itemize}
        \item[\textbullet] $\rho:\pi_1(\Sigma)\to\isom(\HP) $ is a $\HP$-quasi-Fuchsian representation.
        \item[\textbullet] The developing map $\mathrm{dev}:\widetilde{\Sigma}\times[0,1]\to\mathbb{HP}^3$ is a homeomorphism onto $\mathrm{CH}(\Lambda_{\rho})$.   
    \end{itemize} 
\end{defi}
\section{Transition of holonomy}\label{TROFHOL}
\subsection{Rotations in hyperbolic space, Anti-de Sitter space and Half-pipe space}
Let us first recall the notion of rotations in $\mathbb{H}^3$, $\ads$ and $\HP.$
\begin{defi}
A rotation in $\mathbb{H}^3$ (resp. in $\ads$ or in $\HP$) is an element of $\isom(\mathbb{H}^3)$ (resp. of $\isom(\ads)$ or $\isom(\HP)$) which fixes pointwise a geodesic in $\mathbb{H}^3$ (resp. in $\ads$ or $\HP$). Here we will only consider rotations that fix a spacelike geodesic in $\ads$ or $\HP.$

\end{defi}
\begin{figure}[htb]
\centering
\includegraphics[width=.8\textwidth]{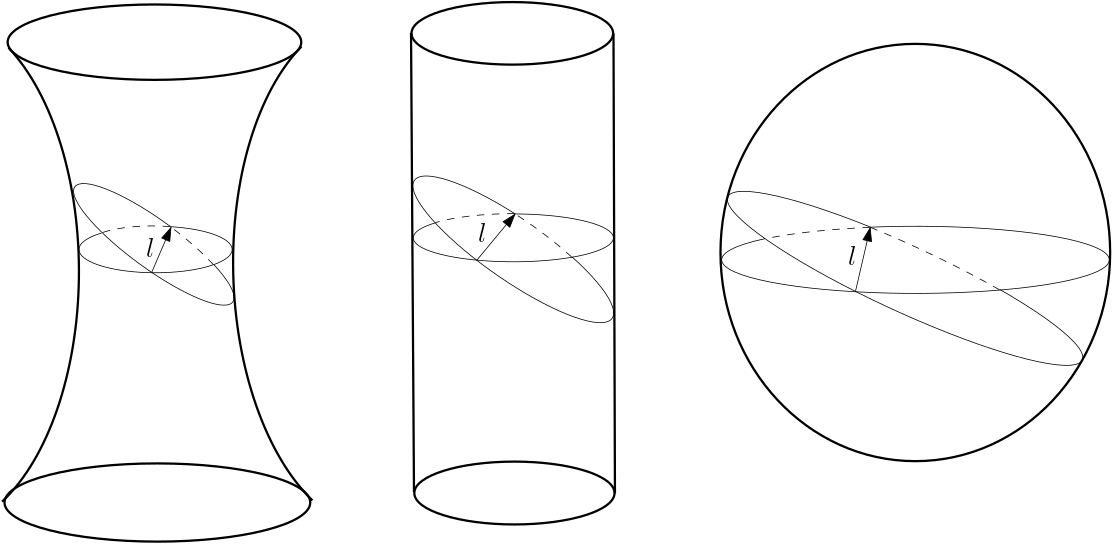}
\caption{The effect of rotations in $\ads$, $\HP$, and $\mathbb{H}^3$ on a plane containing the geodesic fixed by the rotation.}\label{Rotation}
\end{figure}
Fix an orientation on $\mathrm{L}:=\mathrm{Fix}(\mathrm{R})$ which is the geodesic fixed by the rotation $\mathrm{R}$. Consider $\phi $ in $\isom(\mathbb{H}^3)$ (or $\isom(\ads)$, $\isom(\HP)$) such that, as an oriented geodesic 
\begin{equation}   
\phi(\mathrm{L})=\{[\cosh{t},\sinh{t},0,0], \ t\in\mathbb{R}    \},
\end{equation} then we define the \textit{angle} of $\mathrm{R}$ as follows:
\begin{itemize}
    \item In the hyperbolic space: The unique number $\theta\in [-\pi,\pi[$ such that 
    $$\phi\mathrm{R}\phi^{-1}=\begin{bmatrix}
1 & 0 & 0&0 \\
0 & 1 & 0 &0\\
0 & 0 & \cos{\theta}&   \sin{\theta}\\
0&0&   -\sin{\theta}& \cos{\theta}
\end{bmatrix}.$$

 \item In the Anti-de Sitter space: The unique number $\theta\in \mathbb{R}$ such that
    $$\phi\mathrm{R}\phi^{-1}=\begin{bmatrix}
1 & 0 & 0&0 \\
0 & 1 & 0 &0\\
0 & 0 & \cosh{\theta}&   \sinh{\theta}\\
0&0&   \sinh{\theta}& \cosh{\theta}
\end{bmatrix}.$$

 \item In the half-pipe space: The unique number $\theta\in \mathbb{R}$ such that 
 $$\phi\mathrm{R}\phi^{-1}=
 \begin{bmatrix}
1 & 0 & 0&0 \\
0 & 1 & 0 &0\\
0 & 0 & 1&   0\\
0&0&   -\theta & 1
\end{bmatrix}.$$
Note that the last identity represents a half-pipe rotation that fixes the geodesic defined by $x_2=x_3=0$ and sends the spacelike plane $\mathrm{P}$ defined by $x_3=0$ to the spacelike plane $\mathrm{Q}$ defined by $x_3=-\theta x_2$. An elementary computation shows that the angle between $\mathrm{P}$ and $\mathrm{Q}$, in the sense of Proposition \ref{hpangle}, is equal to $\vert \theta\vert.$ 
\end{itemize}

Let us also emphasize the following facts about rotations on $\mathbb{H}^3$, $\ads$ and $\HP$:
\begin{enumerate}
\item The definition of the angle does not depend on the choice of the isometry $\phi$.
    \item The sign of the angle of $\mathrm{R}$ depends not only on $\mathrm{R}$ but also on the choice of an orientation on $\mathrm{L}$.
    \item The rotation $\mathrm{R}$ is uniquely determined by the oriented geodesic $\mathrm{L}$ and the angle of rotation with respect to $\mathrm{L}$.
\end{enumerate}

Rotations in $\HP$ can be considered as infinitesimal rotations on $\mathbb{H}^3$ and $\ads$. More precisely, we have the following property:
\begin{prop}\label{rotation}
Let $l_t$ be a family of oriented geodesics in $\mathbb{H}^2$ such that $\lim_{\vert t\vert \to 0}l_t=l_0 $ where $l_0$ is an oriented geodesic in $\mathbb{H}^2$. Let $\R_t$ be a family of rotations in $\mathbb{H}^3$ (resp. in $\ads$) defined for $t>0$ (resp. $t<0$) of angle $\theta(t)$ around the geodesic $l_t$.  Assume that $\theta$ is a smooth function of $t$ and $\theta(0)=0$. Then

$$\lim_{t \to 0^+} \tau_t \R_t\tau_t^{-1}=\lim_{t \to 0^-} \tau_t \R_t\tau_t^{-1} =\R_0.$$
Where $\R_0$ is the half-pipe rotation of angle $\theta^{'}(0)$ around $l_0$.

\end{prop}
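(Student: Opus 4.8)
The strategy is to reduce to the normal form computations already recorded in the statement of the definition of rotation angle, and then to compute the limit $\lim_{t\to 0}\tau_t\R_t\tau_t^{-1}$ directly in coordinates. First I would choose, for each $t$, an isometry $\phi_t\in\isom(\mathbb{H}^2)$ (acting as a common subgroup of $\isom(\mathbb{H}^3)$, $\isom(\ads)$ and $\isom(\HP)$) sending the oriented geodesic $l_t$ to the fixed oriented geodesic $l_\ast=\{[\cosh s,\sinh s,0,0]\}$. Since $l_t\to l_0$, these can be chosen to depend continuously on $t$ with $\phi_t\to\phi_0$. By the first bullet in the list of facts about rotations, $\R_t$ is conjugate by $\phi_t$ to the block-diagonal rotation in normal form with parameter $\theta(t)$ (of hyperbolic or trigonometric type according to the sign of $t$), and $\R_0$ is conjugate by $\phi_0$ to the half-pipe normal form with parameter $\theta'(0)$. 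Because $\tau_t$ commutes with $\isom(\mathbb{H}^2)$ (second bullet in Section \ref{GT}), we have $\tau_t\R_t\tau_t^{-1}=\phi_t\bigl(\tau_t\R_t^{\mathrm{nf}}\tau_t^{-1}\bigr)\phi_t^{-1}$, where $\R_t^{\mathrm{nf}}$ is the normal form; so it suffices to prove the statement for $l_t\equiv l_\ast$, i.e. for the normal forms, and then conjugate back using $\phi_t\to\phi_0$.

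The core computation is then the following. For $t>0$, writing $\theta=\theta(t)$,
\begin{equation*}
\tau_t\R_t^{\mathrm{nf}}\tau_t^{-1}=\begin{bmatrix}
1 & 0 & 0&0 \\
0 & 1 & 0 &0\\
0 & 0 & \cos\theta& \frac{1}{|t|}\sin\theta\\
0&0& -|t|\sin\theta& \cos\theta
\end{bmatrix}.
\end{equation*}
Since $\theta(0)=0$ and $\theta$ is smooth, $\theta(t)=\theta'(0)\,t+o(t)$, hence $\cos\theta(t)\to 1$, $|t|^{-1}\sin\theta(t)\to 0$ (as $t\to 0^+$, $|t|^{-1}\sin\theta(t)=|t|^{-1}(\theta'(0)t+o(t))\to \theta'(0)$ when $t=|t|$; one must keep track of the sign, but for $t>0$ we have $|t|=t$), and $|t|\sin\theta(t)\to 0$. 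Thus the limit as $t\to0^+$ is
\begin{equation*}
\begin{bmatrix}
1 & 0 & 0&0 \\
0 & 1 & 0 &0\\
0 & 0 & 1& \theta'(0)\\
0&0& 0& 1
\end{bmatrix},
\end{equation*}
which is conjugate (by the permutation swapping the last two coordinates, or directly by recognizing it as the transpose form) to the half-pipe normal form $\mathrm{diag}$-block with lower-left entry $-\theta'(0)$ appearing in the definition; I would verify that this is precisely $\R_0^{\mathrm{nf}}$ with angle $\theta'(0)$, being careful that the relevant sign conventions match the definition's $\begin{bmatrix}1&0\\-\theta&1\end{bmatrix}$ block. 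For $t<0$ one runs the same computation with $\cosh\theta(t)$, $\sinh\theta(t)$ in place of $\cos$, $\sin$: $\cosh\theta(t)\to1$, $|t|\sinh\theta(t)\to0$, and $|t|^{-1}\sinh\theta(t)=|t|^{-1}(\theta'(0)t+o(t))$; since $t<0$, $|t|=-t$, so this tends to $-\theta'(0)$, but the off-diagonal entries of the AdS normal form are both $+\sinh\theta$ (symmetric, not antisymmetric), so the resulting limit matrix has both the $(3,4)$ and $(4,3)$ entries tending to $-\theta'(0)$ and $0$ respectively after rescaling — here I need to recheck which entry carries the $|t|^{-1}$ and which the $|t|$, concluding again with the half-pipe normal form of angle $\theta'(0)$. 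The upshot is that both one-sided limits coincide with $\R_0^{\mathrm{nf}}$.

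Finally I would assemble the two pieces: conjugating the normal-form limits by $\phi_0=\lim_t\phi_t$ and using continuity of matrix multiplication gives $\lim_{t\to0^\pm}\tau_t\R_t\tau_t^{-1}=\phi_0\R_0^{\mathrm{nf}}\phi_0^{-1}=\R_0$, the half-pipe rotation of angle $\theta'(0)$ around $l_0$, as claimed. The main obstacle I anticipate is purely bookkeeping: matching the sign conventions of the three normal forms (note the $\mathbb{H}^3$ block is antisymmetric, the $\ads$ block symmetric, and the $\HP$ block lower-triangular) with the factor $|t|$ versus $t$ coming from $\tau_t$, so that the limiting lower-left entry is exactly $-\theta'(0)$ with the correct sign; a clean way to handle this is to treat $t>0$ and $t<0$ separately from the start and to compare against the explicit remark in the definition that the $\HP$ rotation with block $\begin{bmatrix}1&0\\-\theta&1\end{bmatrix}$ has angle $|\theta|$, which pins down the correspondence unambiguously.
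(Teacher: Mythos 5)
Your strategy is exactly the paper's: reduce to the normal form by conjugating with a convergent family of isometries in $\isom(\mathbb{H}^2)$ (which commute with $\tau_t$), compute $\tau_t\R_t^{\mathrm{nf}}\tau_t^{-1}$ entrywise, and use $\theta(t)=\theta'(0)t+o(t)$. The one slip is in the conjugation itself: with $\tau_t=\mathrm{diag}(1,1,1,\tfrac{1}{|t|})$, conjugation multiplies the fourth \emph{row} by $\tfrac{1}{|t|}$ and the fourth \emph{column} by $|t|$, so the $(4,3)$ entry becomes $-\tfrac{1}{|t|}\sin\theta(t)\to-\theta'(0)$ and the $(3,4)$ entry becomes $|t|\sin\theta(t)\to 0$ — the transpose of what you wrote. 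With the factors placed correctly the limit is \emph{literally} the half-pipe normal form with lower-left block entry $-\theta'(0)$; no coordinate swap is needed (and none would be legitimate, since the swap is not an allowed conjugation and the limit must equal $\R_0$ on the nose, not merely be conjugate to it). The same correction applies to the AdS case, where $\sinh\theta(t)/|t|\to-\theta'(0)$ because $|t|=-t$ for $t<0$, so both one-sided limits agree as claimed.
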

\begin{proof}
By assumption, the family of geodesics $l_t$ converges to $l_0$, so we can choose a family of isometries $\A_{t}\in \isom(\mathbb{H}^2)$ such that $\A_t(l_t)=l_0$ and $\lim_{\vert t\vert\to 0} \A_t=\mathrm{Id}$. Hence $\tau_t\mathrm{A}_t\R_t\mathrm{A}_t^{-1}\tau_t^{-1}$ and $\tau_t\R_t\tau_t^{-1}$ have the same limit since $\tau_t$ commutes with $\isom(\mathbb{H}^2)$.

Let us now focus on the case $t>0$. The isometry $\mathrm{I}_t:=\A_t\R_t\A_t^{-1}$ is the rotation in $\mathbb{H}^3$ with angle $\theta(t)$ around the fixed geodesic $l_0$. Hence we may assume up to the action of the isometry group of $\mathbb{H}^3$ by conjugacy on $\mathrm{I}_t$ that:  
\begin{equation}\label{eq1}
    \mathrm{I_t}=\begin{bmatrix}
1 & 0 & 0&0 \\
0 & 1 & 0 &0\\
0 & 0 & \cos{\theta(t)}&   \sin{\theta(t)}\\
0&0&   -\sin{\theta(t)}& \cos{\theta(t)}
\end{bmatrix}.
\end{equation}By a direct computation one can see that 
$$\lim_{t \to 0^+}\tau_t\mathrm{I}_t\tau_t^{-1}=\begin{bmatrix}
1 & 0 & 0&0 \\
0 & 1 & 0 &0\\
0 & 0 & 1&   0\\
0&0&   -\theta^{'}(0)& 1
\end{bmatrix},$$ which is the half-pipe rotation of angle $\theta^{'}(0)$ around $l_0.$ The case $t<0 $ can be done in similar way by changing the formula \eqref{eq1} by $$\mathrm{I_t}=\begin{bmatrix}
1 & 0 & 0&0 \\
0 & 1 & 0 &0\\
0 & 0 & \cosh{\theta(t)}&   \sinh{\theta(t)}\\
0&0&   \sinh{\theta(t)}& \cosh{\theta(t)}
\end{bmatrix}.$$
\end{proof}
\subsection{Bending cocyle}\label{section4.2}
In this section we describe a recipe to construct $\mathbb{H}^3$/$\ads$/$\HP$-quasi-Fuchsian representations. Throughout the rest of this paper, $\X$ will denote one of the spaces $\mathbb{H}^3$, $\ads$ or $\HP$. We fix $(\mathrm{dev},\sigma)$ a complete hyperbolic structure of finite volume on $\Sigma.$ 
Let $\lambda$ be a weighted multicurve on $\Sigma$. Here we consider $\lambda$ as a geodesic weighted multicurve with respect to the hyperbolic metric $(\mathrm{dev},\sigma)$ and let $\widetilde{\lambda}$ be its lift to $\widetilde{\Sigma}$. Let $x$, $y$ two points in $\widetilde{\Sigma}$ and choose an oriented arc $c$ from $x$ to $y$ transverse to $\widetilde{\lambda}$. Denote by $l_1$, $\cdots l_n$ the geodesics intersecting $c$ among the leaves of $\widetilde{\lambda}$ and let $a_1,\cdots a_n$ be their weights.
For each $k$, denote by $\mathrm{R}^{\X}(a_k,l_k) $ the rotation in $\mathrm{Isom}(\X)$ of angle $a_k$ along the geodesic $\mathrm{dev}(l_k)$ of $\mathbb{H}^2$. Here we orient the geodesic $\mathrm{dev}(l_k)$ in such way that at the intersections points, the orientations of $\mathrm{dev}(c)$ and $\mathrm{dev}(l_i)$ induce the natural orientation of $\mathbb{H}^2$. A simple argument shows that the isometry
$\mathrm{R}^{\X}(a_1,l_1)\circ \cdots \mathrm{R}^{\X}(a_n,l_n)$ depends only on the endpoints $x$, $y$ of $c$. We define
$$\mathrm{B}^{\X}_{\lambda,+}(x,y):=\mathrm{R}^{\X}(a_1,l_1)\circ \cdots \mathrm{R}^{\X}(a_n,l_n).$$
The map $\mathrm{B}^{\X}_{\lambda,+}:\pi_1(\Sigma)\to \isom(\mathrm{X}) $ is the \textit{positive bending cocycle} associated to the weighted multicurve $\lambda$ and the hyperbolic structure $(\mathrm{dev},\rho)$. It is a $\pi_1(\Sigma)$-invariant $\mathrm{Isom}(\mathrm{X})$-valued cocycle. That is
$$ \B^{\X}_{\lambda,+}(x,y)\circ \B^{\X}_{\lambda,+}(y,z)=\B^{\X}_{\lambda,+} (x,z) \ \ \ \ \mathrm{Cocyle \ condition}     $$
$$\B^{\X}_{\lambda,+}(\gamma x,\gamma y)=\sigma(\gamma)\B^{\X}_{\lambda,+}(x,y)\sigma(\gamma)^{-1}\ \ \ \ \mathrm{\pi_1(\Sigma)-invariance},$$ this yields the following proposition. 
\begin{prop}\label{rep}
Let us fix $x_0\in \widetilde{\Sigma}\setminus \widetilde{\lambda}$. Then 
\begin{enumerate}
  \item The map $\rho_{\lambda,+}^{X}: \pi_1(\Sigma)\to \mathrm{Isom}(\X) $, defined by $\rho_{\lambda,+}^{\X}(\gamma)=\B^{\X}_{\lambda,+}(x_0, \gamma x_0)\circ \sigma(\gamma) $ is a representation.
    \item The \textit{positive bending map} $\mathrm{b}_{\lambda,+}^{\X}:\widetilde{\Sigma}\to \X$ defined by 
    $$\mathrm{b}_{\lambda,+}^{\X}(x):=\B^{\X}_{\lambda,+}(x_0,x)(\mathrm{dev}(x)) $$
    is $\pi_1(\Sigma)$-equivariant, that is $\mathrm{b}_{\lambda,+}^{\X}(\gamma x)=\rho_{\lambda,+}^{\X}(\gamma)\mathrm{b}_{\lambda,+}^{\X}(x).$
\end{enumerate}
\end{prop}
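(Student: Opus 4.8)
The plan is to derive both assertions purely formally from the two properties of the bending cocycle displayed just above, namely the \emph{cocycle condition} $\mathrm{B}^{\X}_{\lambda,+}(x,y)\circ\mathrm{B}^{\X}_{\lambda,+}(y,z)=\mathrm{B}^{\X}_{\lambda,+}(x,z)$ and the \emph{$\pi_1(\Sigma)$-invariance} $\mathrm{B}^{\X}_{\lambda,+}(\gamma x,\gamma y)=\sigma(\gamma)\,\mathrm{B}^{\X}_{\lambda,+}(x,y)\,\sigma(\gamma)^{-1}$, together with the $\sigma$-equivariance of the fixed developing map, $\mathrm{dev}(\gamma x)=\sigma(\gamma)\,\mathrm{dev}(x)$. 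No geometric input beyond these identities is needed: the proof is a bookkeeping of cancellations of the form $\sigma(\gamma)^{-1}\sigma(\gamma)=\mathrm{Id}$.

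For part (1), I would compute $\rho_{\lambda,+}^{\X}(\gamma_1\gamma_2)=\mathrm{B}^{\X}_{\lambda,+}(x_0,\gamma_1\gamma_2 x_0)\circ\sigma(\gamma_1)\sigma(\gamma_2)$ directly. Inserting the intermediate point $\gamma_1 x_0$, the cocycle condition gives $\mathrm{B}^{\X}_{\lambda,+}(x_0,\gamma_1\gamma_2 x_0)=\mathrm{B}^{\X}_{\lambda,+}(x_0,\gamma_1 x_0)\circ\mathrm{B}^{\X}_{\lambda,+}(\gamma_1 x_0,\gamma_1(\gamma_2 x_0))$, and invariance rewrites the second factor as $\sigma(\gamma_1)\,\mathrm{B}^{\X}_{\lambda,+}(x_0,\gamma_2 x_0)\,\sigma(\gamma_1)^{-1}$. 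Substituting and cancelling $\sigma(\gamma_1)^{-1}\sigma(\gamma_1)$, the expression collapses to $\bigl(\mathrm{B}^{\X}_{\lambda,+}(x_0,\gamma_1 x_0)\circ\sigma(\gamma_1)\bigr)\circ\bigl(\mathrm{B}^{\X}_{\lambda,+}(x_0,\gamma_2 x_0)\circ\sigma(\gamma_2)\bigr)=\rho_{\lambda,+}^{\X}(\gamma_1)\,\rho_{\lambda,+}^{\X}(\gamma_2)$. One should also record $\rho_{\lambda,+}^{\X}(1)=\mathrm{Id}$, which follows from $\mathrm{B}^{\X}_{\lambda,+}(x_0,x_0)=\mathrm{Id}$ (itself a consequence of the cocycle condition), so that $\rho_{\lambda,+}^{\X}$ is indeed a homomorphism into $\isom(\X)$.

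For part (2), the same manipulation applies with $x$ in place of $\gamma_2 x_0$. Starting from $\mathrm{b}_{\lambda,+}^{\X}(\gamma x)=\mathrm{B}^{\X}_{\lambda,+}(x_0,\gamma x)\bigl(\mathrm{dev}(\gamma x)\bigr)$, I would split the cocycle at $\gamma x_0$, use invariance to get $\mathrm{B}^{\X}_{\lambda,+}(\gamma x_0,\gamma x)=\sigma(\gamma)\,\mathrm{B}^{\X}_{\lambda,+}(x_0,x)\,\sigma(\gamma)^{-1}$, and replace $\mathrm{dev}(\gamma x)$ by $\sigma(\gamma)\,\mathrm{dev}(x)$. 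The factor $\sigma(\gamma)^{-1}$ then cancels the $\sigma(\gamma)$ coming from equivariance of $\mathrm{dev}$, leaving $\mathrm{B}^{\X}_{\lambda,+}(x_0,\gamma x_0)\circ\sigma(\gamma)\bigl(\mathrm{B}^{\X}_{\lambda,+}(x_0,x)(\mathrm{dev}(x))\bigr)=\rho_{\lambda,+}^{\X}(\gamma)\,\mathrm{b}_{\lambda,+}^{\X}(x)$, which is exactly the asserted equivariance.

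I do not expect a genuine obstacle here: the whole substance of the proposition is already packaged into the two cocycle identities, whose verification — independence of $\mathrm{B}^{\X}_{\lambda,+}(x,y)$ from the choice of transverse arc $c$, and consistency of the orientation conventions on the leaves of $\widetilde{\lambda}$ — was carried out just before the statement. The only mild point needing care is that the basepoint is taken in $\widetilde{\Sigma}\setminus\widetilde{\lambda}$ and, if one wants $\mathrm{b}_{\lambda,+}^{\X}$ defined on all of $\widetilde{\Sigma}$ including points on leaves, a convention there must be fixed; this is immaterial to the equivariance computation, which is insensitive to such choices.
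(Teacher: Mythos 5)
Your proof is correct and is essentially the same computation as the paper's: the paper verifies the homomorphism property by expanding $\rho_{\lambda,+}^{\X}(\gamma_1)\rho_{\lambda,+}^{\X}(\gamma_2)$, inserting $\sigma(\gamma_1)^{-1}\sigma(\gamma_1)$, and applying the invariance and cocycle identities (your derivation runs the same chain in the opposite direction), and it likewise disposes of the equivariance of $\mathrm{b}_{\lambda,+}^{\X}$ by the analogous manipulation. Your added remarks on $\rho_{\lambda,+}^{\X}(1)=\mathrm{Id}$ and on points of $\widetilde{\lambda}$ are fine but not substantively different from the paper's treatment.
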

\begin{proof}
    Let us check that $\rho_{\lambda,+}^{\X}$ is a homomorphism. We compute
\begin{align*}
   \rho_{\lambda,+}^{\X}(\gamma_1)\rho_{\lambda,+}^{\X}(\gamma_2) & =\B^{\X}_{\lambda,+}(x_0, \gamma_1 x_0)\sigma(\gamma_1)    \B^{\X}_{\lambda,+}(x_0, \gamma_2 x_0)\sigma(\gamma_2)  \\
   & = \B^{\X}_{\lambda,+}(x_0, \gamma_1 x_0)\sigma(\gamma_1)    \B^{\X}_{\lambda,+}(x_0, \gamma_2 x_0)\sigma(\gamma_1)^{-1}\sigma(\gamma_1\gamma_2)  \\
   & =  \B^{\X}_{\lambda,+}(x_0, \gamma_1 x_0)\B^{\X}_{\lambda,+}(\gamma_1 x_0, \gamma_1 \gamma_2 x_0)          \\
   &=   \B^{\X}_{\lambda,+}(x_0, \gamma_1\gamma_2 x_0)=\rho_{\lambda,+}^{\X}(\gamma_1\gamma_2)     
\end{align*}
We used the cocyle condition in the third equation and the $\pi_1(\Sigma)-$invariance in the fourth equation. In the same way we can prove that the bending map is equivariant.
\end{proof}
In the same way, one can define the \textit{negative bending cocycle} $\mathrm{B}^{\X}_{\lambda,-}:\pi_1(\Sigma)\to \isom(\mathrm{X}) $ associated to the weighted multicurve $\lambda$ and the complete hyperbolic structure $(\mathrm{dev},\rho)$ by changing the sign of the rotations:
$$\mathrm{B}^{\X}_{\lambda,-}(x,y):=\mathrm{R}^{\X}(-a_1,l_1)\circ \cdots \mathrm{R}^{\X}(-a_n,l_n).$$
We define also the \textit{negative bending map} $\mathrm{b}_{\lambda,-}^{\X}(x):=\B^{\X}_{\lambda,-}(x_0,x)(\mathrm{dev}(x))$  which is equivariant with respect to the representation $\rho_{\lambda,-}^{X}(\gamma):=\B^{\X}_{\lambda,-}(x_0, \gamma x_0)\circ \sigma(\gamma)$.\\ 
Note that for simplicity we use the notation $\B_{\lambda,\pm}^{\X}$ despite the fact that the cocycle $\B_{\lambda,\pm}^{\X}$.  
depends not only on the weighted multicurve $\lambda$ but also on the hyperbolic structure $(\mathrm{dev}, \sigma)$.

The following proposition gives a relation between the bending cocycle and the holonomy of hyperbolic convex core structures on $\Sigma\times[0,1]$ with small bending lamination.

\begin{prop}\cite[Proposition 2.3]{limitefisch}\label{holHfuch}
Let $\lambda$ be a weighted multicurve and $(\mathrm{dev},\sigma)$ a complete hyperbolic structure of finite volume on $\Sigma$. Consider $\B_{t\lambda,\pm}^{\mathbb{H}^3}$ the associated bending cocyle. Then if $t$ is sufficiently small, the representation $\rho_{t\lambda,\pm}^{\mathbb{H} ^3}: \pi_1(\Sigma)\to \mathrm{Isom}(\mathbb{H}^3)$ is $\mathbb{H}^3-$quasi-Fuchsian. Moreover we have: 
\begin{itemize}
    \item The surface $\partial_{\pm}\mathcal{C}(\rho_{t\lambda,\pm}^{\mathbb{H}^3})$ is pleated along $t\lambda$ and the holonomy of the induced metric is $\sigma.$
    \item The image of the bending map $\mathrm{b}_{t\lambda,\pm}^{\mathbb{H}^3}$ is $\partial_{\pm}\mathrm{CH}(\Lambda_{\rho_{t\lambda,\pm}^{\mathbb{H}^3}})$. 
 
\end{itemize}

\end{prop}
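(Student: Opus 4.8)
The plan is to verify the three assertions in turn, with the smallness of $t$ entering only through the quake-bend estimates of Epstein-Marden \cite{Epstein}.

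The first point is that for \emph{every} $t$ the bending map $\mathrm{b}_{t\lambda,+}^{\mathbb{H}^3}$ is a pleated surface whose induced path metric is the pullback of $(\mathrm{dev},\sigma)$ and whose pleating locus is $t\lambda$. This is built into the construction: on each component $P$ of $\widetilde{\Sigma}\setminus\widetilde{\lambda}$ the map $x\mapsto\B_{t\lambda,+}^{\mathbb{H}^3}(x_0,x)$ is constant, equal to some $g_P\in\isom(\mathbb{H}^3)$, so $\mathrm{b}_{t\lambda,+}^{\mathbb{H}^3}$ sends $P$ isometrically onto the totally geodesic polygon $g_P(\mathrm{dev}(P))$; if $P$ and $P'$ are separated by a leaf $l$ of weight $a_l$, then $g_{P'}g_P^{-1}=\mathrm{R}^{\mathbb{H}^3}(ta_l,l)$ and fixes the common edge $\mathrm{dev}(l)$. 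Hence the image is a union of totally geodesic faces glued isometrically along geodesics, the induced metric is exactly $(\mathrm{dev},\sigma)$, and by Proposition~\ref{rep} this is all $\rho_{t\lambda,+}^{\mathbb{H}^3}$-equivariant; in the quotient the induced metric therefore has holonomy $\sigma$ and the bending measure assigns $ta_l$ to each leaf, i.e.\ the bending lamination is $t\lambda$. The case of $\B_{t\lambda,-}^{\mathbb{H}^3}$ is identical with all rotations reversed.

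Next I would show that for $t$ small enough $\mathrm{b}_{t\lambda,+}^{\mathbb{H}^3}$ is a quasi-isometric embedding $\widetilde{\Sigma}\hookrightarrow\mathbb{H}^3$. The transverse mass of $t\lambda$ on any geodesic segment of bounded length is $O(t)$ — the relevant bounds are finite because $\lambda$ is a multicurve on a finite-area surface — so for $t$ below the universal threshold of \cite{Epstein} the bending map distorts distances by a bounded factor. A quasi-isometric embedding of $\mathbb{H}^2$ into $\mathbb{H}^3$ extends continuously to the boundary, taking $\partial\widetilde{\Sigma}\cong\partial\mathbb{H}^2$ homeomorphically onto a quasi-circle in $\partial\mathbb{H}^3$, and by equivariance this quasi-circle is $\Lambda_{\rho_{t\lambda,+}^{\mathbb{H}^3}}$. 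When $\Sigma$ has punctures one runs the same argument on the thick part and uses that $\lambda$ has no peripheral leaf, so $\B_{t\lambda,+}^{\mathbb{H}^3}$ is trivial on a horoball neighbourhood of each cusp; then $\rho_{t\lambda,+}^{\mathbb{H}^3}$ sends peripheral loops to the same parabolics as $\sigma$, the ends are cusps, and $\mathbb{H}^3/\rho_{t\lambda,+}^{\mathbb{H}^3}$ is homeomorphic to $\Sigma\times\mathbb{R}$. This establishes that $\rho_{t\lambda,+}^{\mathbb{H}^3}$ is $\mathbb{H}^3$-quasi-Fuchsian.

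Finally I would identify $\mathrm{b}_{t\lambda,+}^{\mathbb{H}^3}(\widetilde{\Sigma})$ with $\partial_+\mathrm{CH}(\Lambda_{\rho_{t\lambda,+}^{\mathbb{H}^3}})$. All the rotations are performed with the same orientation convention and the bending angles $ta_l$ are small, so the pleated surface is locally convex and bends consistently to one side: through each of its points there is a totally geodesic plane of $\mathbb{H}^3$ that locally leaves the surface on a single side. Since the surface is asymptotic to all of $\Lambda_{\rho_{t\lambda,+}^{\mathbb{H}^3}}$, such a plane is a support plane for $\mathrm{CH}(\Lambda_{\rho_{t\lambda,+}^{\mathbb{H}^3}})$, so the image lies in $\partial\mathrm{CH}(\Lambda_{\rho_{t\lambda,+}^{\mathbb{H}^3}})$; being a properly embedded copy of $\Sigma$ asymptotic to the whole limit set, it is one of the two boundary components, and the sign of the rotations identifies it as $\partial_+$ (reversing the rotations gives $\partial_-$). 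Together with the first paragraph this also yields the assertions about the bending lamination and the induced metric of $\partial_\pm\mathcal{C}(\rho_{t\lambda,\pm}^{\mathbb{H}^3})$. The main obstacle in this program is making the Epstein-Marden smallness quantitative enough to guarantee simultaneously a quasi-isometric (hence embedded) bending map and the local convexity of its image, and, in the punctured case, controlling the bending map uniformly near the cusps; once the image is known to be embedded, locally convex, and asymptotic to a quasi-circle, recognising it as the convex-hull boundary is soft.
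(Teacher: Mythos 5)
This proposition is not proved in the paper: it is imported verbatim from Series \cite{limitefisch}, whose argument rests on the Epstein--Marden quake-bend theory in \cite{Epstein}, and your sketch reconstructs exactly that route (pleated-surface structure from the local constancy of the cocycle, bi-Lipschitz embedding for small transverse measure, boundary extension to a quasi-circle, local convexity identifying the image with $\partial_\pm\mathrm{CH}$). The outline is correct and the smallness hypothesis enters where it should. Two points are left soft: first, a plane that locally supports the pleated surface is not automatically a global support plane of $\mathrm{CH}(\Lambda_{\rho_{t\lambda,\pm}^{\mathbb{H}^3}})$ --- the local-to-global convexity step is precisely what Epstein--Marden's quantitative bending estimates deliver, so it is not entirely ``soft'' even after embeddedness is known, though you do flag this yourself; second, the claim that $\rho_{t\lambda,\pm}^{\mathbb{H}^3}$ sends peripheral loops to ``the same parabolics as $\sigma$'' should read ``to conjugates of the parabolics of $\sigma$,'' since $\B_{t\lambda,\pm}^{\mathbb{H}^3}(x_0,\gamma x_0)$ need not be trivial unless $x_0$ is taken inside the relevant horoball; the cocycle relation still forces the image to be parabolic, which is all that is needed.
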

We need also an analogue of Proposition \ref{holHfuch} in the Anti-de Sitter geometry. This is given by Mess \cite{Mess} in the co-compact case and by Benedetti-Bonsante \cite{canorot} in the general case. 
\begin{prop}\cite[Section 6]{canorot}  \label{Mess} Let $\lambda$ be a weighted multicurve and $(\mathrm{dev},\sigma)$ a complete hyperbolic structure of finite volume on $\Sigma$. Consider $\B_{t\lambda,\pm}^{\ads}$ the associated bending cocyle. Then for all $t>0$, the representation $\rho_{t\lambda,\pm}^{\ads}: \pi_1(\Sigma)\to \mathrm{Isom}(\ads)$ is $\ads-$quasi-Fuchsian. Moreover we have: 
\begin{itemize}
    \item The surface $\partial_{\pm}\mathcal{C}(\rho_{t\lambda,\pm}^{\ads})$ is pleated along $t\lambda$ and the holonomy of the induced metric is $\sigma.$
    \item The image of the bending map $\mathrm{b}_{t\lambda,\pm}^{\ads}$ is $\partial_{\pm}\mathrm{CH}(\Lambda_{\rho_{t\lambda,\pm}^{\ads}})$.
\end{itemize}

\begin{remark}
    We emphasize that from now on the component of the upper (resp. lower) boundary component of the convex core of an $\mathbb{H}^3$, $\ads$ or $\HP$-quasi-Fuchsian representation corresponds to a positive (resp.negative) bending map. 
\end{remark}
\end{prop}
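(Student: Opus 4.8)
The plan is to treat the positive case $\rho^{\ads}_{t\lambda,+}$ (the case of $\rho^{\ads}_{t\lambda,-}$ is identical after replacing every weight $a_k$ by $-a_k$ and swapping ``upper'' with ``lower''), working in the Mess model $\isom(\ads)\cong\psl\times\psl$, in which $\isom(\mathbb{H}^2)$ embeds diagonally and preserves a totally geodesic spacelike copy of $\mathbb{H}^2$ with boundary the diagonal of $\partial\ads\cong\mathbb{RP}^1\times\mathbb{RP}^1$. I would organize the argument in four steps: (i) show the bending map $\mathrm{b}^{\ads}_{t\lambda,+}$ is, for every $t>0$, a proper embedding of $\widetilde{\Sigma}$ onto a convex achronal spacelike pleated surface $S_+$; (ii) read off from $S_+$ that $\rho^{\ads}_{t\lambda,+}$ is $\ads$-quasi-Fuchsian with $\Lambda_{\rho^{\ads}_{t\lambda,+}}=\partial S_+$ and that $S_+=\partial_+\mathrm{CH}(\Lambda_{\rho^{\ads}_{t\lambda,+}})$; (iii) identify the induced metric with $\sigma$ and the bending lamination with $t\lambda$; (iv) deal with punctures.

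For step (i), on each complementary region of $\widetilde{t\lambda}$ the map $\mathrm{b}^{\ads}_{t\lambda,+}$ is $\B^{\ads}_{t\lambda,+}(x_0,\cdot)$ applied to $\mathrm{dev}$, hence totally geodesic with spacelike image, since $\mathrm{dev}$ takes values in the spacelike plane $\mathbb{H}^2$ and isometries of $\ads$ preserve the causal type of a totally geodesic plane; two adjacent regions across a leaf $l_k$ of weight $ta_k$ differ by $\mathrm{R}^{\ads}(ta_k,l_k)$, which by the definition of the angle of a rotation in Section~\ref{TROFHOL} meet along the spacelike geodesic $\mathrm{dev}(l_k)$ with exterior dihedral angle $ta_k$. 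The crucial point --- and the reason why, in contrast with the hyperbolic Proposition~\ref{holHfuch}, no smallness of $t$ is needed --- is that bending a spacelike plane of $\ads$ along a spacelike geodesic by an arbitrary angle keeps the plane spacelike and the pleated surface achronal and locally convex; this is Mess's observation \cite{Mess}, and the way I would prove it (following \cite[Section~6]{canorot}) is through earthquakes: the two $\psl$-components of the cocycle $\B^{\ads}_{t\lambda,+}$ are, up to the usual factor, the left and the right earthquake cocycle of $\sigma$ along $t\lambda$, which are globally defined for any measured lamination. Granting this, $S_+:=\mathrm{b}^{\ads}_{t\lambda,+}(\widetilde{\Sigma})$ is a convex achronal spacelike pleated surface and $\mathrm{b}^{\ads}_{t\lambda,+}$ is an embedding onto it.

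For steps (ii) and (iii), the two $\psl$-components of $\rho^{\ads}_{t\lambda,+}$ are, by the same earthquake description, a left and a right earthquake of the Fuchsian $\sigma$ along $t\lambda$, hence again Fuchsian; so $\rho^{\ads}_{t\lambda,+}=(\sigma_l,\sigma_r)$ is $\ads$-quasi-Fuchsian, and $\Lambda_{\rho^{\ads}_{t\lambda,+}}$ is the graph of the unique $(\sigma_l,\sigma_r)$-equivariant orientation-preserving homeomorphism of $\mathbb{RP}^1$, which is exactly $\partial S_+$ (an achronal circle in $\partial\ads$, hence such a graph by \cite[Lemma 5]{Mess}). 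Then $S_+$ is a convex spacelike support surface of $\mathrm{CH}(\Lambda_{\rho^{\ads}_{t\lambda,+}})$, asymptotic to its limit set and bent with exterior angles pointing into the hull, so by the description of the convex core boundary in \cite[Section~6]{canorot} (see also \cite[Lemma 5]{Mess}) it coincides with $\partial_+\mathrm{CH}(\Lambda_{\rho^{\ads}_{t\lambda,+}})$ and $\mathrm{b}^{\ads}_{t\lambda,+}$ is the asserted parametrization. Its totally geodesic pieces are glued along the $\mathrm{dev}(l_k)$ exactly as the corresponding pieces of $\mathbb{H}^2$ are glued to form $\mathbb{H}^2/\sigma(\pi_1(\Sigma))$, so the induced path metric on $\partial_+\mathcal{C}(\rho^{\ads}_{t\lambda,+})$ has holonomy $\sigma$; and the bending measure of an arc crossing $l_k$ equals the dihedral angle $ta_k$, so the bending lamination is $t\lambda$.

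For step (iv), if $\Sigma$ has punctures I would pick for each puncture $p$ a loop $\gamma_p$ bounding a once-punctured disk disjoint from $\widetilde{t\lambda}$ (possible since $\lambda$ is non-peripheral); then $\B^{\ads}_{t\lambda,+}(x_0,\gamma_p x_0)=\mathrm{Id}$, so $\rho^{\ads}_{t\lambda,+}(\gamma_p)=\sigma(\gamma_p)$ is parabolic in $\isom(\mathbb{H}^2)$ and hence parabolic in each $\psl$-factor --- the condition (cf.\ Barbot \cite{barbot}) under which the punctured notion of $\ads$-quasi-Fuchsian applies and $\mathrm{CH}(\Lambda_{\rho^{\ads}_{t\lambda,+}})/\rho^{\ads}_{t\lambda,+}(\pi_1(\Sigma))\cong\Sigma\times[0,1]$. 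I expect the main obstacle to be step (i): proving that $S_+$ stays embedded, spacelike and locally convex for \emph{all} $t>0$, with no smallness hypothesis --- the genuinely AdS-specific input --- and the cleanest route, as in \cite[Section~6]{canorot} (for closed $\Sigma$ already in \cite{Mess}), is to transport it to the always-defined earthquake flow.
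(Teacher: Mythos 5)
The paper does not prove this proposition: it is quoted verbatim from the literature (Mess for closed surfaces, Benedetti--Bonsante \cite[Section 6]{canorot} in general), so there is no in-paper argument to compare yours against. Your outline reconstructs exactly the argument of those cited sources --- in particular you correctly isolate the genuinely AdS-specific point, namely that the left and right $\psl$-components of $\B^{\ads}_{t\lambda,\pm}$ are earthquake cocycles, which is what makes the conclusion hold for all $t>0$ with no smallness hypothesis --- so the approach is the standard one and the sketch is sound.
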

Now we will prove a Half-pipe version of Propositions \ref{holHfuch} and \ref{Mess}.
\begin{prop}\label{half}
    Let $\lambda$ be a weighted multicurve and $(\mathrm{dev},\sigma)$ a complete hyperbolic structure of finite volume on $\Sigma$. For $t>0$, let $\B_{\lambda,\pm}^{\HP}$ be the associated bending cocyle. Then 
    \begin{itemize}
        \item The representation $\rho_{\lambda,\pm}^{\HP}$ defined in Proposition \ref{rep} is $\HP-$ quasi-Fuchsian with linear part $\sigma$.
        \item The image of the bending map $\mathrm{b}_{\lambda,\pm}^{\HP}$ is $\partial_{\pm}\mathrm{CH}(\Lambda_{\rho_{\lambda,\pm}^{\HP}})$.
    \end{itemize}
\end{prop}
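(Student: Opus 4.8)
The strategy is to import the hyperbolic/AdS picture of Propositions \ref{holHfuch} and \ref{Mess} into Half-pipe geometry, either by a direct argument or by a limiting argument via Proposition \ref{rotation}. I will sketch the direct route, which also makes the punctured case transparent. Recall that by duality \eqref{groupeduality} the representation $\rho_{\lambda,+}^{\HP}$ is of the form \eqref{affineminkowski} for a suitable $\sigma$-cocycle $\tau_{\lambda,+}:\pi_1(\Sigma)\to\mathbb{R}^{1,2}$; indeed, by Lemma \ref{rotationHP} each Half-pipe rotation $\mathrm{R}^{\HP}(a_k,l_k)$ is of the form $\mathrm{Is}(\mathrm{Id},\mathrm{v}_k)$ with $\mathrm{v}_k$ a spacelike vector whose dual spacelike plane contains the geodesic $\mathrm{dev}(l_k)$, and the weight $a_k$ is exactly $\langle \mathrm{v}_k,\mathrm{v}_k\rangle_{1,2}^{1/2}$ by the angle computation recorded after Proposition \ref{hpangle}. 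Composing these along the arc $c$ builds the cocycle value $\tau_{\lambda,+}(x,y)=\sum_k \sigma(\gamma_{<k})\cdot\mathrm{v}_k$; setting $\tau_{\lambda,+}(\gamma):=\tau_{\lambda,+}(x_0,\gamma x_0)$ gives the $\sigma$-cocycle whose induced Half-pipe representation is exactly $\rho_{\lambda,+}^{\HP}$ of Proposition \ref{rep}. So the linear part of $\rho_{\lambda,+}^{\HP}$ is $\sigma$, which is the first half of the first bullet.

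**Main steps.** First I would verify that $\rho_{\lambda,+}^{\HP}$ is $\HP$-quasi-Fuchsian, i.e.\ produce the invariant curve $\Lambda_{\rho}$, which is the graph of a continuous $\phi:\partial\mathbb{D}^2\to\mathbb{R}$. Here is where I would invoke Remark \ref{HPqausi}: by Barbot--Fillastre (closed case) and Nie--Seppi (punctured case), an affine deformation $\tau$ of a Fuchsian $\sigma$ defines by duality a $\HP$-quasi-Fuchsian representation provided $\rho$ sends each loop around a puncture to a parabolic isometry of $\HP$. So the task reduces to a \emph{local} check at the punctures: if $\gamma$ is a loop around a puncture, then $\sigma(\gamma)$ is parabolic in $\isom(\mathbb{H}^2)$ and I must show $\tau_{\lambda,+}(\gamma)$ lies in the image of $(\mathrm{Id}-\sigma(\gamma))$, equivalently that $\rho_{\lambda,+}^{\HP}(\gamma)$ fixes a point of $\partial\HP$. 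Since $\lambda$ is a multicurve of \emph{non-peripheral} curves, a peripheral loop $\gamma$ can be realized by a curve transverse to $\widetilde\lambda$ that crosses no leaf (it can be pushed into a cusp neighborhood), so $\tau_{\lambda,+}(\gamma)=0$; hence $\rho_{\lambda,+}^{\HP}(\gamma)=\mathrm{Is}(\sigma(\gamma),0)$, and one checks directly from \eqref{groupeduality} that this is parabolic in $\isom(\HP)$ with a fixed point on $\partial\HP$. (Alternatively, one realizes $\gamma$ by a small horocyclic loop and sees it crosses no leaf of the geodesic lamination.) This gives $\HP$-quasi-Fuchsianity and the uniqueness of $\phi$.

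**Identifying the bending image with $\partial_+\mathrm{CH}(\Lambda_\rho)$.** For the second bullet I would argue that $\mathrm{b}_{\lambda,+}^{\HP}(\widetilde\Sigma)$ is a properly embedded, convex, pleated spacelike surface whose boundary at infinity is $\Lambda_\rho$. By construction $\mathrm{b}_{\lambda,+}^{\HP}$ is obtained from the totally geodesic plane $\mathbb{H}^2\subset\HP$ by applying the bending cocycle, so its image is a union of totally geodesic (spacelike) pieces glued along spacelike geodesics, and the exterior angle across $\mathrm{dev}(l_k)$ is $a_k$ in the sense of Proposition \ref{hpangle} (again by the computation after Proposition \ref{hpangle}). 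Convexity on the correct side follows because all bending rotations are taken with the consistent (positive) sign, just as in \cite{Epstein}; properness and the fact that the asymptotic boundary is the graph $\Lambda_\rho$ then follow, for the closed case, from \cite{barbotfillastre,affine}, and in the punctured case from the cusp analysis above together with the description of $\mathrm{CH}(\Lambda_\rho)$ near a parabolic fixed point. Since $\mathrm{CH}(\Lambda_\rho)$ is bounded by two such surfaces and the one containing $\mathbb{H}^2$-type positive bending is on the upper side by the convention in the Remark after Proposition \ref{Mess}, we conclude $\mathrm{b}_{\lambda,+}^{\HP}(\widetilde\Sigma)=\partial_+\mathrm{CH}(\Lambda_{\rho_{\lambda,+}^{\HP}})$. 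The negative case is identical with signs reversed.

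**Expected obstacle.** The routine part is the algebra identifying $\rho_{\lambda,\pm}^{\HP}$ with a dual affine deformation and reading off the linear part. The genuine work — and the step I expect to be the main obstacle — is the behavior at the punctures: showing the peripheral holonomy is parabolic in $\isom(\HP)$ (so that Nie--Seppi applies and $\phi$ exists and is unique), and then checking that near a cusp the bending surface $\mathrm{b}_{\lambda,\pm}^{\HP}$ genuinely coincides with the boundary of $\mathrm{CH}(\Lambda_\rho)$, which requires understanding the convex hull of a graph with a parabolic fixed point in $\partial\HP$. I would handle this by choosing horocyclic representatives of peripheral loops to make the no-crossing statement manifest, and by a local model computation in the Klein model $\mathbb{D}^2\times\mathbb{R}$ using Lemma \ref{rotationHP} and Corollary \ref{invariantgraph}. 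This is also exactly the point that will later feed into the cusp statement of the main Theorem \ref{mainthm}.
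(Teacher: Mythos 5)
Your treatment of the first bullet is a legitimate alternative to the paper's. You reduce $\HP$-quasi-Fuchsianity to the Nie--Seppi criterion of Remark \ref{HPqausi} (peripheral holonomy parabolic in $\isom(\HP)$) and verify it by observing that the leaves of $\widetilde\lambda$ avoid small cusp neighborhoods; the paper instead never invokes that criterion and constructs the invariant continuous function on $\mathbb{S}^1$ by hand, as the boundary value of the height function $\psi_{\lambda}$ of the bending surface (Lemmas \ref{4.8}, \ref{4.9} and Corollary \ref{exx}). Your route buys existence and uniqueness of $\phi$ essentially for free; one small imprecision is that $\tau_{\lambda,+}(\gamma)=0$ for a peripheral $\gamma$ only after conjugating so that the basepoint lies in the cusp region --- with the fixed basepoint $x_0$ the translation part of $\rho_{\lambda,+}^{\HP}(\gamma)$ is a coboundary $\mathrm{v}-\sigma(\gamma)\cdot\mathrm{v}$, so $\rho_{\lambda,+}^{\HP}(\gamma)$ is conjugate to $\mathrm{Is}(\sigma(\gamma),0)$ and the parabolicity conclusion still stands. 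The cost of this shortcut is that it gives you no description of $\phi$, which is precisely what the second bullet requires.

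That second bullet is where the paper's proof does its real work, and your proposal leaves a genuine gap there. To conclude $\mathrm{b}_{\lambda,+}^{\HP}(\widetilde\Sigma)=\partial_+\mathrm{CH}(\Lambda_{\rho_{\lambda,+}^{\HP}})$ you need two facts that you assert but do not prove. First, the height function $\psi_{\lambda}$ of the bending surface in the Klein model must extend continuously to $\partial\mathbb{D}^2$ with boundary value equal to the invariant function $\phi$; this is nontrivial because $\psi_{\lambda}$ is only piecewise affine (not $\mathcal{C}^1$ along the bending lines), and the paper handles it by smoothing $\psi_{\lambda}$ near $\widetilde\lambda$ (Lemma \ref{4.9}) and then running a convex-analysis argument with the auxiliary functions $\psi_{\lambda}\pm C\omega_{\mathbb{D}^2}$ to get semicontinuity from both sides (Lemma \ref{4.8}). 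Without this step you cannot even assert that the closure of the bending surface meets $\partial\HP$ in $\Lambda_{\rho}$. Second, you must show that the graph of $\psi_{\lambda}$ over $\mathbb{D}^2$ equals the concave envelope $\mathrm{H}$ of its boundary values, which by \cite[Lemma 2.41]{barbotfillastre} is $\partial_+\mathrm{CH}(\Lambda_{\rho})$; concavity alone only gives $\mathrm{H}\leq\psi_{\lambda}$, and the reverse inequality uses that $\psi_{\lambda}$ is affine on each complementary component and that every supporting affine function touches $\psi_{\lambda}$ exactly on the convex hull of its boundary contact set (Remark \ref{pleated}). You correctly flag the cusp behavior as the main obstacle, but the argument that would close it --- the continuous boundary extension and the envelope identity --- is exactly what is missing from the proposal.
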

The proof of Proposition \ref{half} is already known in the case where $\Sigma$ is a closed surface. This was observed in the work of Barbot and Fillastre \cite{barbotfillastre}. They proved that any representation of $\pi_1(\Sigma)$ in $\isom(\HP)$ with Fuchsian linear part is a $\HP$-quasi-Fuchsian.   
For the case where $\Sigma$ has punctures, the proof of Proposition \ref{half} needs some preparation. Therefore, the remaining part of this section will be devoted to proving Proposition \ref{half}. We only focus on the positive bending map since the other can be proved in the same way. To do so, some relevant notions should be recalled. First let us recall that for a bounded convex (resp. concave) function $\phi:\mathbb{D}^2\to\mathbb{R}$, we can define the \textit{boundary value} of $\phi$ to be the extension of $\phi$ to the unit circle $\mathbb{S}^1=\partial\mathbb{D}^2$ by the formula:

$$\phi(z) = \lim_{s \to 0^+}\phi((1-s)z+sx),$$ for some $x\in \mathbb{D}^2$. We note that this is independent of the choice of $x\in \mathbb{D}^2$. Moreover the boundary value of $\phi$ is lower (resp. upper) semi continuous. We will need the following basic fact in convex analysis:
\begin{prop}\cite[Proposition 4.2]{ConvexAnal}\label{boundaryvalueextension}
    Let $\phi: \mathbb{D}^2\to\mathbb{R}$ be a convex function (or concave). Then the boundary value of $\phi$ is a continuous function on $\partial{\mathbb{D}^2}$ if and only if $\phi$ has a continuous extension to $\overline{\mathbb{D}^2}$.   
\end{prop}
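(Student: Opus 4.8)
The plan is to work throughout with the canonical extension $\bar\phi\colon\overline{\mathbb{D}^2}\to\mathbb{R}$ equal to $\phi$ on $\mathbb{D}^2$ and to the boundary value of $\phi$ on $\partial\mathbb{D}^2$. The implication ``$\Leftarrow$'' is the easy half: if $\phi$ admits \emph{some} continuous extension $\widehat\phi$ to $\overline{\mathbb{D}^2}$, then for $z\in\partial\mathbb{D}^2$ and $x\in\mathbb{D}^2$ the points $(1-s)z+sx$ lie in $\mathbb{D}^2$ for $s\in(0,1]$ and tend to $z$, so the boundary value at $z$ equals $\lim_{s\to0^+}\widehat\phi((1-s)z+sx)=\widehat\phi(z)$; hence the boundary value is $\widehat\phi|_{\partial\mathbb{D}^2}$, which is continuous (and in fact $\widehat\phi=\bar\phi$). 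So the substance is ``$\Rightarrow$'': assuming the boundary value is continuous, show $\bar\phi$ is continuous on $\overline{\mathbb{D}^2}$. Since a finite convex function on an open convex set is automatically continuous, $\bar\phi$ is continuous on $\mathbb{D}^2$, and it suffices to prove for every $z_0\in\partial\mathbb{D}^2$ that $\liminf_{w\to z_0}\bar\phi(w)\ge\phi(z_0)\ge\limsup_{w\to z_0}\bar\phi(w)$, with limits over $w\in\overline{\mathbb{D}^2}$.

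For the lower bound (lower semicontinuity of $\bar\phi$), I would first check that $\bar\phi$ is convex on $\overline{\mathbb{D}^2}$: for $z_1,z_2\in\overline{\mathbb{D}^2}$ and $\alpha\in[0,1]$, apply convexity of $\phi$ to the interior points $\alpha(rz_1)+(1-\alpha)(rz_2)=r(\alpha z_1+(1-\alpha)z_2)$ and let $r\to1^-$, using that $\bar\phi(z)=\lim_{r\to1^-}\phi(rz)$ for every $z\in\overline{\mathbb{D}^2}$ (by continuity of $\phi$ if $z$ is interior, by the definition of the boundary value with base point $0$ if $z\in\partial\mathbb{D}^2$). Then invoke the standard formula for the closure (lower semicontinuous hull) of a convex function along segments: for $x\in\mathbb{D}^2$ and any $z\in\overline{\mathbb{D}^2}$ one has $(\mathrm{cl}\,\bar\phi)(z)=\lim_{t\to1^-}\bar\phi((1-t)x+tz)$. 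For interior $z$ this is $\phi(z)$; for $z=z_0\in\partial\mathbb{D}^2$ the segment $(1-t)x+tz_0$ stays in $\mathbb{D}^2$ and tends to $z_0$, so by the very definition of the boundary value the limit is $\phi(z_0)$. Hence $\mathrm{cl}\,\bar\phi=\bar\phi$, i.e. $\bar\phi$ is lower semicontinuous on $\overline{\mathbb{D}^2}$ (this also re-derives the remark, preceding the statement, that the boundary value is lower semicontinuous), which gives $\liminf_{w\to z_0}\bar\phi(w)\ge\phi(z_0)$ with no use of the continuity hypothesis.

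The upper bound is where continuity of the boundary value enters, and it is the main point. Fix $z_0\in\partial\mathbb{D}^2$ and $\varepsilon>0$; by continuity pick a short open sub-arc $A\ni z_0$ with boundary value $<\phi(z_0)+\varepsilon$ on $A$, and let $[p,q]$ be the chord joining the endpoints of $A$, cutting off the circular cap $K\ni z_0$ (with $A$ short, the center $0$ lies on the opposite side of the chord). For $w\in K\cap\mathbb{D}^2$ near $z_0$, the Euclidean ray from $0$ through $w$ meets $[p,q]$ at an interior point $c$ and meets $\partial\mathbb{D}^2$ at $z:=w/|w|\in A$, so $w=(1-t)c+tz$ with $t\in(0,1)$; approximating $z$ by $rz$ and using convexity of $\phi$ together with continuity of $\phi$ on $\mathbb{D}^2$ and $\phi(rz)\to\phi(z)$ yields $\phi(w)\le(1-t)\phi(c)+t\,\phi(z)\le(1-t)\phi(c)+t\,(\phi(z_0)+\varepsilon)$. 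Since $\phi$ is bounded on $\mathbb{D}^2$ (the standing assumption under which the boundary value was defined) and $\bar\phi$ is bounded on the compact chord $[p,q]$, and since $t\to1$ as $w\to z_0$ (then $z\to z_0$, $c$ tends to a fixed interior point of $[p,q]$, and $|w|\to1$), letting $w\to z_0$ gives $\limsup_{w\to z_0,\,w\in\mathbb{D}^2}\phi(w)\le\phi(z_0)+\varepsilon$, hence $\le\phi(z_0)$ as $\varepsilon\to0$. Together with continuity of the boundary value (which covers sequences $w\to z_0$ with $w\in\partial\mathbb{D}^2$), this gives $\limsup_{w\to z_0}\bar\phi(w)\le\phi(z_0)$, and with the lower bound $\bar\phi$ is continuous at $z_0$, hence on all of $\overline{\mathbb{D}^2}$.

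The delicate step is this last one: the $\liminf$ inequality is essentially automatic from convexity plus the identification of the boundary value with the radial limit, whereas the $\limsup$ inequality genuinely needs the continuity hypothesis and the geometric decomposition of cap points as convex combinations of a chord point and a boundary point. Everything here is purely convex-analytic and is unaffected by whether $\Sigma$ has punctures.
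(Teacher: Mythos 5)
The paper does not prove this proposition: it is imported as an external result from \cite[Proposition 4.2]{ConvexAnal}, so there is no internal argument to compare yours against, and your proof has to stand on its own. It does. The easy direction is handled correctly, and the two halves of the substantive direction are cleanly separated. For the lower bound, checking convexity of the canonical extension $\bar\phi$ on $\overline{\mathbb{D}^2}$ by dilating to $rz_1,rz_2$ and then invoking the segment formula for the closure of a proper convex function (Rockafellar, Theorem 7.5) is legitimate; it simultaneously identifies the boundary value with the radial limit, shows independence of the base point, and yields lower semicontinuity of $\bar\phi$ on all of $\overline{\mathbb{D}^2}$ — and you are right that this half never uses the continuity hypothesis. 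For the upper bound, the cap decomposition $w=(1-t)c+tz$ with $c$ on the chord and $z=w/\vert w\vert$ in the short arc is sound: since $c$ and $z$ lie on the same ray through the origin, $t=(\vert w\vert-\vert c\vert)/(1-\vert c\vert)\to 1$ as $w\to z_0$ because $c$ converges to an interior point of the chord, $\phi(c)$ stays bounded by continuity of $\phi$ at that interior point, and $\bar\phi(z)<\phi(z_0)+\varepsilon$ on the arc by the hypothesis; the passage from convexity along $[c,rz]$ to the inequality $\phi(w)\le(1-t)\phi(c)+t\,\bar\phi(z)$ with a boundary endpoint is correctly done by letting $r\to1^-$. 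The only point I would make explicit in a written-out version is the standing boundedness assumption on $\phi$ (under which the paper defines the boundary value), since it is what guarantees the radial limits exist and are finite; you invoke it, but it belongs in the setup rather than mid-argument.
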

For a more detailed exposition on convex analysis, we refer the reader to \cite{Rockconvex} or \cite[section 4.1]{ConvexAnal}. The next Lemma provides a sufficient condition for a real function defined over $\mathbb{D}^2$ to be extended to $\overline{\mathbb{D}^2}$. 
 
\begin{lemma}\label{4.8}
Let $\rho:\pi_1(\Sigma)\to\isom(\HP)$ be a representation whose linear part is a Fuchsian representation $\sigma$. Assume that $\phi: \mathbb{D}^2\to \mathbb{R}$ is a $\mathcal{C}^2$ function such that 
\begin{itemize}
    \item The graph of $\phi$ is invariant by $\rho(\pi_1(\Sigma))$. 
    \item  For each puncture of $\Sigma$, there is  a neighborhood $V$ such that the restriction of $\phi$ to any lift $\widetilde{V}$ in $\widetilde{\Sigma}\cong \mathbb{D}^2$ of $V$ is an affine map.
\end{itemize}
Then $\phi$ extends to a continuous map on $\overline{\mathbb{D}^2}$.
\end{lemma}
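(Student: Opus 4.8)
The strategy is to reduce the claim to the basic fact (Proposition \ref{boundaryvalueextension}) that a convex or concave function on $\mathbb{D}^2$ extends continuously to $\overline{\mathbb{D}^2}$ precisely when its boundary value is continuous. Since $\phi$ is built here as a bending map — the graph $\mathrm{graph}(\phi)$ is the image of a (positive or negative) bending map — it is, after possibly exchanging $+$ and $-$, a convex (resp.\ concave) function; this is the content of the bending construction (Propositions \ref{holHfuch}, \ref{Mess}, \ref{half}) together with Corollary \ref{invariantgraph}, which says that the $\rho(\pi_1(\Sigma))$-invariance of the graph is exactly the equation governing how $\phi$ transforms under the linear part $\sigma$. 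So it suffices to prove that the boundary value $\phi\colon \partial\mathbb{D}^2\to\mathbb{R}$ (which a priori is only lower, resp.\ upper, semicontinuous) is in fact continuous.

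First I would set up the picture near a puncture. Fix a puncture $p$ of $\Sigma$; let $\gamma_p\in\pi_1(\Sigma)$ be the corresponding peripheral element. Since $\sigma$ is Fuchsian, $\sigma(\gamma_p)$ is parabolic, fixing a unique point $\xi_p\in\partial\mathbb{D}^2$. The hypothesis says that on a lift $\widetilde V$ of a cusp neighborhood $V$ of $p$, $\phi$ restricts to an affine function $z\mapsto \langle \mathrm{v}_p, (1,z)\rangle_{1,2}$ for some $\mathrm{v}_p\in\mathbb{R}^{1,2}$ — equivalently, on $\widetilde V$ the graph of $\phi$ is a piece of the spacelike plane $\mathrm{P}_{\mathrm{v}_p}$ dual to $\mathrm{v}_p$. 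The horoball $\widetilde V$ accumulates exactly at $\xi_p$. Consequently, near $\xi_p$ the boundary value of $\phi$ agrees with the boundary value of a single affine function, which is manifestly continuous at $\xi_p$; moreover $\phi(\xi_p)=\langle \mathrm{v}_p,(1,\xi_p)\rangle_{1,2}$. The point that needs care is compatibility with the parabolic: the translate $\sigma(\gamma_p)^k\widetilde V$ is another horoball at $\xi_p$, and the invariance equation from Corollary \ref{invariantgraph} forces the affine datum on each translate to be determined by $\mathrm{v}_p$; one checks that $\mathrm{v}_p$ must be fixed (up to the induced action) by the parabolic, which is automatic since a parabolic in $\mathrm{O}_0(1,2)$ fixes a unique null direction and this is consistent with a cusp. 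This handles continuity of the boundary value at each of the finitely many parabolic fixed points $\xi_p$.

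Next, away from the $\xi_p$: the complement $\partial\mathbb{D}^2\setminus\{\xi_p : p \text{ puncture}\}$ is the limit set of $\sigma(\pi_1(\Sigma))$ minus the parabolic points, i.e.\ the radial/conical limit points. On this set I would use equivariance to propagate a single modulus of continuity. Concretely: pick a compact fundamental domain $K\subset\mathbb{D}^2$ for the $\sigma$-action on the complement of the cusp neighborhoods; on $K$ the $\mathcal{C}^2$ function $\phi$ is bounded with bounded derivatives. For a conical limit point $\eta$, there is a sequence $\gamma_n$ with $\sigma(\gamma_n)^{-1}(\eta)\to$ an interior point and $\sigma(\gamma_n)$ contracting a neighborhood of $\eta$; applying the transformation rule of Corollary \ref{invariantgraph} and the fact that the Jacobian factor $-\langle \sigma(\gamma_n)(1,z),(1,0,0)\rangle_{1,2}$ is comparable to the derivative of the boundary Möbius map, one transports the interior $\mathcal{C}^1$-bound on $\phi$ near $\sigma(\gamma_n)^{-1}(\eta)$ to a continuity estimate for the boundary value of $\phi$ near $\eta$, with constants controlled uniformly on $K$. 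Combining the two regimes — uniform control at conical points from equivariance, and the explicit affine description at parabolic points — gives that the boundary value $\phi\colon\partial\mathbb{D}^2\to\mathbb{R}$ is continuous everywhere. Then Proposition \ref{boundaryvalueextension} yields the continuous extension of $\phi$ to $\overline{\mathbb{D}^2}$.

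The main obstacle I anticipate is the uniformity at the conical limit points: one must show that the distortion estimates coming from the cocycle relation in Corollary \ref{invariantgraph} degrade in a controlled way as $z\to\partial\mathbb{D}^2$, and that the bounds obtained on a fundamental domain genuinely glue across the countably many translates without blowing up — in particular that the affine pieces inserted at the cusps match the conical estimate along the boundary of each cusp neighborhood. A convenient way to sidestep part of this is to first establish that $\phi$ is globally bounded on $\mathbb{D}^2$ (using the affine description at the cusps, which pins down $\phi$ near $\partial\mathbb{D}^2$ except on conically approached arcs where boundedness follows from properness of the cocycle), and then invoke semicontinuity of the boundary value together with continuity at the parabolic points and a Baire/compactness argument on $\partial\mathbb{D}^2$ to upgrade semicontinuity to continuity. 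Either route reduces the whole statement to Proposition \ref{boundaryvalueextension}.
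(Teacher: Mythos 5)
There is a genuine gap at the very first step. You reduce everything to Proposition \ref{boundaryvalueextension} by asserting that $\phi$ ``is, after possibly exchanging $+$ and $-$, a convex (resp.\ concave) function'' because it arises from a bending map. But the lemma makes no such assumption: its hypotheses are only that $\phi$ is $\mathcal{C}^2$, has $\rho$-invariant graph, and is affine in the cusps. In the paper's actual application the lemma is applied to $\widetilde{\psi_\lambda}$, the \emph{smoothed} version of the concave bending function $\psi_\lambda$ (Lemma \ref{4.9}); the smoothing near the bending lines destroys concavity, and indeed the whole reason Lemma \ref{4.8} is stated for general $\mathcal{C}^2$ functions is that $\psi_\lambda$ itself is not $\mathcal{C}^1$ while $\widetilde{\psi_\lambda}$ is not concave. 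Without convexity or concavity the ``boundary value'' via radial limits is not even defined a priori, so your subsequent analysis of its continuity at parabolic and conical points does not get off the ground, and Proposition \ref{boundaryvalueextension} cannot be invoked for $\phi$ directly.

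The paper's proof supplies exactly the missing idea: take $\omega_{\mathbb{D}^2}(x)=-\sqrt{1-x^2}$, which is smooth, strictly convex, vanishes on $\partial\mathbb{D}^2$, and has $\isom(\mathbb{H}^2)$-invariant graph. On a compact fundamental domain $K$ for the action on the complement of the cusp lifts, the $\mathcal{C}^2$ bounds on $\phi$ let one choose $C>0$ so large that $\phi_-=\phi+C\omega_{\mathbb{D}^2}$ and $\phi_+=\phi-C\omega_{\mathbb{D}^2}$ are strictly convex and strictly concave on $K$; by equivariance of all three graphs and affineness of $\phi$ in the cusps, this holds on all of $\mathbb{D}^2$. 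Since $\phi_+-\phi_-=2C\omega_{\mathbb{D}^2}\to 0$ at the boundary, $\phi_\pm$ share a common boundary value, which is then both lower and upper semicontinuous, hence continuous; Proposition \ref{boundaryvalueextension} gives continuous extensions of $\phi_\pm$ and therefore of $\phi$. Note that this sandwich also makes your entire second regime (the distortion/modulus-of-continuity argument at conical limit points, which you yourself flag as the main obstacle and do not carry out) unnecessary: semicontinuity from both sides is obtained for free from convexity and concavity of the two auxiliary functions.
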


\begin{proof}
The proof is inspired by Proposition $5.2$ in \cite{affine}.
Let $U$ be the union of pairwise disjoint punctured disks such that the restriction of $\phi$ to any connected component of $\widetilde{U}$ in $\mathbb{D}^2$ is an affine map. We take $K\subset \mathbb{D}^2\setminus \widetilde{U}$ to be a compact set such that $$\mathbb{D}^2\setminus \widetilde{U}=\bigcup_{\gamma\in \pi_1(\Sigma)}\sigma(\gamma)\cdot K.$$ Consider the function  $\omega_{\mathbb{D}^2}(x)=-\sqrt{1-x^2}$ which is smooth, strictly convex and vanishes on the boundary of the disk. One can prove that the graph of $\omega_{\mathbb{H}^2}$ is invariant by $\isom(\mathbb{H}^2)$ \cite[section 3.2]{affine}. By compactness of $K$, we can take a sufficiently large constant $C>0$ such that the smooth functions
$$\phi_- := \phi + C\omega_{\mathbb{D}^2}, \ \ \phi_+=\phi - C\omega_{\mathbb{D}^2}$$
are strictly convex and strictly concave in $K$, respectively. Moreover it is not hard to check that the graphs of $\phi_-$ and $\phi_+$ are invariant by $\rho(\pi_1(\Sigma))$ \cite[Lemma 2.8]{affine}. Since $\phi$ restricts to an affine map in $\widetilde{U} $, then $\phi_-$ (resp. $\phi_+ $) are strictly convex (resp. strictly concave) on $\mathbb{D}^2$. It is easy to see that the boundary values of $\phi_-$ and $\phi_+$ coincide, this follow from the fact that $\omega_{\mathbb{D}^2}$ vanishes in the boundary and $\phi_+-\phi_-=2C\omega_{\mathbb{D}^2}.$

Hence the common boundary value of $\phi_-$ and $\phi_-$ is a continuous function of $\mathbb{S}^1$, as it is both lower and upper semicontinuous. Therefore $\phi_+$ and $\phi_-$ are continuous functions on $\overline{\mathbb{D}^2}$ by Proposition \ref{boundaryvalueextension}. This implies that $\phi$ extends continuously to $\overline{\mathbb{D}^2}$.
\end{proof} 
Let's now come back to our representation $\rho_{\lambda,+}^{\HP}$. From this point and until the proof of Proposition \ref{half}, we will use the Klein model of the Half-pipe space. Let $(\sigma,\mathrm{dev})$ be a complete hyperbolic metric of finite volume on $\Sigma$.
We consider the positive bending map $\mathrm{b}_{\lambda,+}^{\HP}:\widetilde{\Sigma}\to \HP$ defined by 
    \begin{equation}\mathrm{b}_{\lambda,+}^{\HP}(y)=\B_{\lambda,+}^{\HP}(y_0,y) \label{forme_de_rotation}(\mathrm{dev}(y)),
    \end{equation}
where $y_0$ is a fixed point on $\widetilde{\Sigma}\setminus\widetilde{\lambda}$. Here the map $\mathrm{dev}$ takes value in $\mathbb{D}^2$.  By construction, $\B_{\lambda,+}^{\HP}$ is a composition of Half-pipe rotations. Since rotation in $\HP$ has the form $\mathrm{Is}(\mathrm{Id},\mathrm{v})$ for some $\mathrm{v}\in \mathbb{R}^{1,2}$, then the map $\mathrm{b}_{\lambda,+}^{\HP}$ has the form $(\mathrm{dev},\phi_{\lambda})$ for some function $\phi_{\lambda}:\widetilde{\Sigma}\to \mathbb{R}$. Consider the function $\psi_{\lambda}$ defined on $\mathbb{D}^2$ by 
\begin{equation}
    \psi_{\lambda}(z):=\phi_{\lambda}\circ \mathrm{dev}^{-1}(z)
\end{equation}
The concrete description of $\psi_{\lambda} $ is as follows:
Consider $\overline{\mathrm{dev}}:\widetilde{\Sigma}\to\mathcal{H}^2$ defined by $\overline{\mathrm{dev}}=\mathrm{Pr}^{-1}\circ\mathrm{dev}$ where $\mathrm{Pr}$ is the radial projection defined in \eqref{radial}.

Let $x_0:=\overline{\mathrm{dev}}(y_0)$ and pick $z$ in $ \mathbb{D}^2$. Denote by $x=\mathrm{Pr}^{-1}(z)\in\mathcal{H}^2$. Now consider the
oriented geodesic interval $[x_0, x]$ in $\mathcal{H}^2$. We define a map $\eta : \mathcal{G}[x_0, x] \to \mathbb{R}^{1,2}$ where $\mathcal{G}[x_0, x]$ is the space of geodesics in $\mathcal{H}^2$ intersecting $[x_0,x]$. This map assigns to each geodesic $l$ in $\mathcal{G}[x_0, x]$  the corresponding point in $d\mathbb{S}^2$, namely, the spacelike unit vector in $\mathbb{R}^{1,2}$ orthogonal to $l$ for the Minkowski product, pointing outward with respect to
the direction from $x_0$ to $x$. Assume that $l_1,\cdots l_n$ are the images by $\overline{\mathrm{dev}}$ of the set of the leaves of $\widetilde{\lambda} $ that intersect the oriented segment $[x_0, x]$, then 
\begin{equation}\label{psil}
\psi_{\lambda}(z)=\sum_{i=1}^{n}-\lambda(l_i)\langle \eta(l_i),(1,z)\rangle_{1,2}
\end{equation}

By the choice of the orientation that we have made on $\eta(l_k)$, one can prove that $\psi_{\lambda}$ is a concave function on $\mathbb{D}^2$ (further details can be found in \cite[Section 3.5.1]{canorot}). Additionally  $\psi_{\lambda}$ is smooth on $\mathbb{D}^2\setminus \mathrm{dev}(\widetilde{\lambda})$ because on each connected component of $\mathbb{D}^2\setminus \mathrm{dev}(\widetilde{\lambda})$, $\psi_{\lambda}$ is the restriction of an affine map defined over $\mathbb{R}^2$. However $\psi_{\lambda}$ is not $\mathcal{C}^1$ in $\mathbb{D}^2$ and therefore Lemma \ref{4.8} cannot be applied directly to extend the map $\psi_{\lambda}$ to $\mathbb{S}^1$. The next Lemma treats this issue by smoothing the map $\psi_{\lambda}$ in a neighborhood of $\widetilde{\lambda}$.
\begin{lemma}\label{4.9}
Let $\lambda$ be a weighted multicurve on $\Sigma$ and consider the representation $\rho_{\lambda,+}^{\HP}:\pi_1(\Sigma)\to\isom(\HP)$ (see Proposition \ref{rep}). 
Then there exists a smooth map $\widetilde{\psi_{\lambda}}:\mathbb{D}^2\to \mathbb{R}$ such that 
\begin{itemize}
    \item The graph of $\widetilde{\psi_{\lambda}}$ is invariant by $\rho_{\lambda,+}^{\HP}(\pi_1(\Sigma))$.
    \item  For each puncture of $\Sigma$, there is  a neighborhood $V$ such that the restriction of $\widetilde{\psi_{\lambda}}$ to any lift $\widetilde{V}$ in $\widetilde{\Sigma}\cong \mathbb{D}^2$ of $V$ is an affine map. 
\end{itemize} 
 In particular by Lemma \ref{4.8}, the map $\widetilde{\psi_{\lambda}}$ has a continuous extension to $\mathbb{S}^1$. Moreover the boundary value of the concave map $\psi_{\lambda}$ coincides with the extension of $\widetilde{\psi_{\lambda}}$ to $\mathbb{S}^1$.  
\end{lemma}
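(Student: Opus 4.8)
The idea is to replace the non-smooth concave function $\psi_\lambda$ by a genuinely smooth function $\widetilde{\psi_\lambda}$ that agrees with $\psi_\lambda$ outside a controlled neighborhood of $\mathrm{dev}(\widetilde\lambda)$, so that the two hypotheses of Lemma \ref{4.8} are verified and the last sentence (equality of boundary values) follows automatically from the fact that the modification is supported away from $\mathbb S^1$. First I would fix, for each leaf $l$ of $\widetilde\lambda$, a small tubular neighborhood $N_\varepsilon(l)$ in $\mathbb D^2$ (say of radius $\varepsilon$ measured in the hyperbolic metric of $\mathbb D^2$); by discreteness of $\widetilde\lambda$ and local finiteness one can take these neighborhoods pairwise disjoint, and since $\widetilde\lambda$ is the lift of a multicurve one can moreover choose the whole family $\{N_\varepsilon(l)\}_{l}$ to be $\pi_1(\Sigma)$-invariant and disjoint from a fixed $\pi_1(\Sigma)$-invariant family $\widetilde U$ of horoball neighborhoods of the punctures (shrinking $\varepsilon$ if necessary, using that the support of $\lambda$ consists of non-peripheral curves). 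Recall from \eqref{psil} that across each leaf $l$ the function $\psi_\lambda$ changes by adding the affine function $z\mapsto -\lambda(l)\langle \eta(l),(1,z)\rangle_{1,2}$; thus on $N_\varepsilon(l)$ the function $\psi_\lambda$ is the max (or min, depending on the sign convention making it concave) of two affine functions $A_l^-$ and $A_l^+$ whose difference is a nonzero affine function vanishing exactly on $\mathrm{dev}(l)$.

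The core construction is then a local smoothing of $\max(A_l^-,A_l^+)$. I would take a fixed smooth function $S:\mathbb R\to\mathbb R$ with $S(s)=0$ for $s\le -1$, $S(s)=s$ for $s\ge 1$, and $S$ convex (a standard mollified ramp), and on $N_\varepsilon(l)$ replace $\psi_\lambda$ by
\begin{equation*}
\widetilde{\psi_\lambda} := A_l^- + \kappa\, S\!\left(\frac{A_l^+-A_l^-}{\kappa}\right),
\end{equation*}
where $\kappa=\kappa(\varepsilon)>0$ is chosen small enough that $\{|A_l^+-A_l^-|\le\kappa\}\subset N_{\varepsilon/2}(l)$ (possible because $A_l^+-A_l^-$ is a nonzero affine form, hence proper and transverse to the level set $\mathrm{dev}(l)$). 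Outside $\bigcup_l N_{\varepsilon/2}(l)$ one has $\widetilde{\psi_\lambda}=\psi_\lambda$, so the two definitions patch to a globally smooth function on $\mathbb D^2$. The $\rho_{\lambda,+}^{\HP}(\pi_1(\Sigma))$-invariance of the graph of $\widetilde{\psi_\lambda}$ is the point where one must be slightly careful: by Corollary \ref{invariantgraph} and Lemma \ref{rotationHP}, the action of $\rho_{\lambda,+}^{\HP}(\gamma)$ on graphs over $\mathbb D^2$ permutes affine functions among themselves and permutes the leaves of $\widetilde\lambda$ compatibly with the cocycle $\B^{\HP}_{\lambda,+}$; since our smoothing recipe is defined by a universal formula in terms of the two local affine branches $A_l^\pm$ of $\psi_\lambda$ and these branches transform equivariantly (this is exactly the cocycle identity used in Proposition \ref{rep}), the smoothed function inherits invariance of its graph. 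Concretely: $\psi_\lambda$ already has $\rho_{\lambda,+}^{\HP}$-invariant graph by construction of the bending map, and $\widetilde{\psi_\lambda}-\psi_\lambda$ is a $\pi_1(\Sigma)$-equivariant assignment because the $N_\varepsilon(l)$ and the branches $A_l^\pm$ are, so the sum is equivariant.

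The second bullet of the Lemma is then immediate: near each puncture, $\widetilde{V}$ can be taken inside the chosen horoball family $\widetilde U$, where $\psi_\lambda$ is already affine (no leaf of $\widetilde\lambda$ enters $\widetilde U$, since $\lambda$ is non-peripheral) and untouched by the smoothing, so $\widetilde{\psi_\lambda}|_{\widetilde V}=\psi_\lambda|_{\widetilde V}$ is affine. Applying Lemma \ref{4.8} gives a continuous extension of $\widetilde{\psi_\lambda}$ to $\overline{\mathbb D^2}$. Finally, for the last assertion: $\psi_\lambda$ is concave, so by Proposition \ref{boundaryvalueextension} its boundary value is the radial limit; and $\widetilde{\psi_\lambda}-\psi_\lambda$ is supported inside $\bigcup_l N_{\varepsilon/2}(l)$, which — since every leaf of $\widetilde\lambda$ has its endpoints on $\mathbb S^1$ but is at positive distance from $\mathbb S^1$ except near those endpoints — has the property that for each $\xi\in\mathbb S^1$ the radial segment to $\xi$ eventually leaves $\bigcup_l N_{\varepsilon/2}(l)$; here one uses that only finitely many leaves can accumulate at a given $\xi$ and that along a radial ray the neighborhood $N_{\varepsilon/2}(l)$ of a single geodesic $l$ has bounded intersection. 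Hence $\lim_{s\to 0^+}\bigl(\widetilde{\psi_\lambda}-\psi_\lambda\bigr)((1-s)z+s\xi)=0$, so the two boundary values coincide.

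\textbf{Main obstacle.} The delicate point is the last one — controlling the smoothing region near $\mathbb S^1$, where infinitely many leaves of $\widetilde\lambda$ accumulate — and, relatedly, choosing the tubular neighborhoods $N_\varepsilon(l)$ shrinking fast enough toward the boundary so that their union is $\pi_1(\Sigma)$-invariant yet still "thin at infinity"; this is exactly where the geometry of the Fuchsian group $\sigma$ (bounded geometry of $\Sigma$ away from cusps, and the affine/parabolic behavior in the cusps handled separately) has to be used, rather than being a purely formal manipulation.
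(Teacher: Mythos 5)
Your overall strategy matches the paper's: smooth $\psi_{\lambda}$ in disjoint, equivariantly chosen tubular neighborhoods of the leaves, verify invariance of the graph via Corollary \ref{invariantgraph}, note that nothing happens in the cusp regions, apply Lemma \ref{4.8}, and then compare boundary values. The first two bullets are fine. The gap is in the last step, and it is twofold. First, your claim that for every $\xi\in\mathbb{S}^1$ the radial segment to $\xi$ eventually leaves $\bigcup_l N_{\varepsilon/2}(l)$ is false precisely when $\xi$ is an endpoint of a leaf $l$: both the radius and the geodesic $l$ meet $\mathbb{S}^1$ perpendicularly at $\xi$, and the hypercycle neighborhood $N_{\varepsilon/2}(l)$ is a wedge at $\xi$ opening around that perpendicular direction, so the tail of the radial segment stays \emph{inside} $N_{\varepsilon/2}(l)$. (These $\xi$ are exactly the points the paper must treat as a separate case, approaching along the leaf itself and using that the boundary value is independent of the interior base point.) Second, and more seriously, your particular smoothing $A_l^-+\kappa\,S\bigl((A_l^+-A_l^-)/\kappa\bigr)$ is of Moreau--Yosida type and necessarily shifts the value on the crease: on the leaf $l$ one has $A_l^+-A_l^-=0$, so $\widetilde{\psi_\lambda}-\psi_\lambda=\kappa S(0)$ there, and $\kappa S(0)>0$ for any convex smooth ramp with transition region containing $0$. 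Since $A_l^+-A_l^-$ also vanishes at the two endpoints of $l$ (it is a multiple of $\langle\eta(l),(1,\cdot)\rangle_{1,2}$, and $(1,\xi)$ is a null vector in the plane orthogonal to $\eta(l)$), the difference $\widetilde{\psi_\lambda}-\psi_\lambda$ tends to $\kappa S(0)\neq 0$ along \emph{any} approach to an endpoint of $l$. So with your construction the final assertion of the lemma is actually false at those boundary points, not merely unproved.

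The paper's smoothing avoids this by interpolating the coefficient rather than the value: it replaces the last term $-\lambda(l)\langle\eta(l),(1,z)\rangle_{1,2}$ by $-f(d_{\mathbb{H}^2}(z,l))\lambda(l)\langle\eta(l),(1,z)\rangle_{1,2}$, so the perturbation $\widetilde{\psi_\lambda}-\psi_\lambda$ is a bounded multiple of the affine form $\langle\eta(l),(1,\cdot)\rangle_{1,2}$, which vanishes identically on $l$ and hence at its endpoints; the boundary value at such an endpoint is then computed along the leaf, where the two functions literally coincide, and at all other $\xi$ along a segment avoiding the neighborhoods. To repair your argument you would need a smoothing whose deviation from $\psi_\lambda$ vanishes on each leaf (and to replace the radial-ray argument at leaf endpoints by an approach along the leaf, or by the uniform estimate $\vert\langle\eta(l),(1,w)\rangle_{1,2}\vert\le\sinh(\varepsilon)\sqrt{1-\vert w\vert^2}$ valid on $N_\varepsilon(l)$, which shows the paper's perturbation tends to $0$ uniformly near $\mathbb{S}^1$).
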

\begin{proof}
Let $\alpha_1, \cdots \alpha_n$ be the support of the weighted multicurve $\lambda$. We can find $\epsilon>0$ such that the $\epsilon-$neighborhoods $U_{\alpha_k}(\epsilon)$ of each curve $\alpha_i$ are pairwise disjoint. Let $W$ be the union of $U_{\alpha_i}(\epsilon)$ and let $\widetilde{W}$ be its lift to $\mathbb{D}^2$. We then define $\widetilde{\psi_{\lambda}}$ as follows (see Figure \ref{Smoothing}):
\begin{itemize}
\item For $z\in \mathbb{D}^2 \setminus \widetilde{W}$, we define $\widetilde{\psi_{\lambda}}(z)$ to be equal to $\psi_{\lambda}(z)$. 
\item For $z\in \widetilde{U_{\alpha_i}(\epsilon)}$, Assume that $l_1,\cdots l_{n_i}$ is the images by $\overline{\mathrm{dev}}$ of the set of the leaves of $\widetilde{\lambda} $ that intersect the oriented segment $[x_0, x]$ with $x=\mathrm{Pr}^{-1}(z)$, then: 

$$\psi_{\lambda}(z)=\sum_{j=1}^{n_i}-\lambda(l_j)\langle \eta(l_j),(1,z)\rangle_{1,2}.$$
We choose a smooth real function $f:\mathbb{R}\to[0,1]$ such that
$$\begin{cases}
f=0 &\text{ if}\ \vert t\vert <\frac{\epsilon}{2}\\
f=1 &\text{ if }\ \vert t\vert \geq\epsilon
\end{cases}$$
Then we  associate the following map:

\begin{equation}\label{lissage}     
\widetilde{\psi_{\lambda}}(z):=-\langle \sum_{j=1}^{n_i-1}\lambda(l_j) \eta(l_j)+f(d_{\mathbb{H}^2}(z,l_{n_i}))\lambda(l_{n_i})\eta(l_{n_i}),(1,z)\rangle_{1,2}.\end{equation}
\end{itemize}
Notice that $\widetilde{\psi_{\lambda}}$ coincides with $\psi_{\lambda}$ outside $U_{l_j}(\epsilon)$, where $U_{l_j}(\epsilon)$ is the $\epsilon-$neighborhood of $\mathrm{Pr}(l_j)$ in $\mathbb{D}^2$. Therefore, by our choice of $f$, it is not difficult to see that $\widetilde{\psi_{\lambda}}$ is smooth on $\mathbb{D}^2$.

\begin{figure}[htb]
\centering
\includegraphics[width=.7\textwidth]{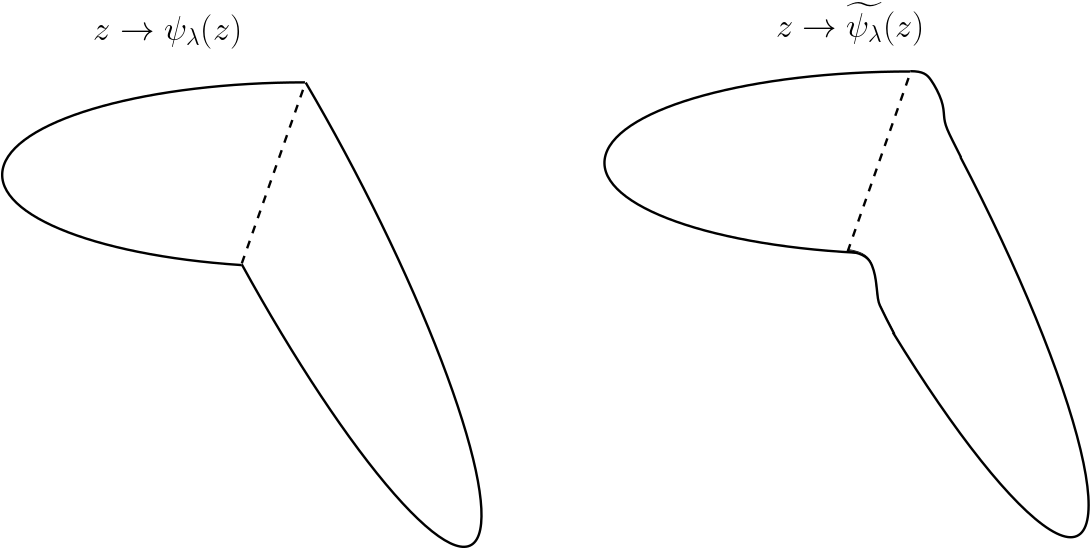}
\caption{In a neighborhood of a bending line, the left picture illustrates the graph of the function $\psi_{\lambda}$, while the right picture illustrates the graph of the function $\widetilde{\psi_{\lambda}}$. Outside a neighborhood of the bending lines, the two functions coincide.}\label{Smoothing}
\end{figure}

Now we will show that the graph of $\widetilde{\psi_{\lambda}}$ is preserved under the action of $\rho_{\lambda,+}^{\HP}(\pi_{1}(\Sigma))$. Since the graph of $\psi_{\lambda}$ invariant by such action, then it is equivalent to show that the graph of $\widetilde{\psi_{\lambda}}-\psi_{\lambda}$ is preserved by $\sigma(\pi_1(\Sigma))$, where $\sigma$ is the affine part of $\rho_{\lambda,+}^{\HP}$ (see \cite[Lemma 2.8]{affine}). Since for all $z$ in $U_{l_j}(\epsilon)$ and $g$ in $\sigma(\pi_1(\Sigma))$, we have $gU_{l_j}(\epsilon)=U_{g(l_j)}(\epsilon)$. Then from Corollary \ref{invariantgraph}, it is enough to show that for all $z\in U_{l_j}(\epsilon)$ and $g\in\sigma(\pi_1(\Sigma))$ we have $$(\widetilde{\psi_{\lambda}}-\psi_{\lambda})(g\cdot z)=\frac{-1}{\langle g (1,z)^T ,(1,0,0)\rangle_{1,2}}(\widetilde{\psi_{\lambda}}-\psi_{\lambda})(z).$$ Now, we compute
\begin{align*}
(\widetilde{\psi_{\lambda}}-\psi_{\lambda})(g\cdot z)&=f(d_{\mathbb{H}^2}(g z,l_j))\langle \lambda(l_j)\eta(l_j),(1,g \cdot z)\rangle_{1,2}\\
&=\frac{-1}{\langle g (1,z)^T ,(1,0,0)\rangle_{1,2}}f(d_{\mathbb{H}^2}(g z,l_k))\langle \lambda(l_j)\eta(l_j),g(1, z)^T\rangle_{1,2}\\
&=\frac{-1}{\langle g (1,z)^T ,(1,0,0)\rangle_{1,2}}f(d_{\mathbb{H}^2}(z,g^{-1}(l_j)))\langle \lambda(g^{-1}l_j)\eta(g^{-1}l_j),(1,z)\rangle_{1,2}\\
&=\frac{-1}{\langle g (1,z)^T ,(1,0,0)\rangle_{1,2}}f(d_{\mathbb{H}^2}(z,l_j))\langle \lambda(l_j)\eta(l_j),(1,z)\rangle_{1,2}\\
&=\frac{-1}{\langle g (1,z)^T ,(1,0,0)\rangle_{1,2}}(\widetilde{\psi_{\lambda}}-\psi_{\lambda})(z).
\end{align*}
We used in the second equality the relation between the linear action $g(1,z)^T$ and the projective action $g\cdot z$ established in \eqref{projecvslinear}. In the third equality, we used the fact that $\sigma$ acts by isometry on $\mathbb{H}^2$ and the map $\eta$ which associates to each geodesic of $\mathcal{H}^2$ its orthogonal unit vector is equivariant under the action of $\mathrm{O}_{0}(1,2)$. \\

The only part remaining to prove the Lemma is to show that the boundary value of the concave function $\psi_{\lambda}$ coincides with the continuous extension of $\widetilde{\psi_{\lambda}}$ to $\mathbb{S}^1$. Note that such extension exists because by construction, $\widetilde{\psi_{\lambda}}$ satisfies the assumptions of Lemma \ref{4.8}. We will distinguish two situations, see Figure \ref{emaray}.
\begin{enumerate}
    \item If $z$ is an endpoint of a geodesic $\mathrm{Pr}(l_j)$, then we observe that for any point $w$ lying on the line $\mathrm{Pr}(l_j)\subset\mathbb{D}^2$, we have $\widetilde{\psi_{\lambda}}(w) = \psi_{\lambda}(w)$ because $\langle\eta(l_{j}),(1,w)\rangle_{1,2}=0$, for $w\in\mathrm{Pr}(l_j)$. This implies that the limit of $\psi_{\lambda}((1-s)z+sw)$ as $s\to0^+$ coincides with $\widetilde{\psi_{\lambda}}(z)$.
    \item If $z$ is not an endpoint of any geodesic $\mathrm{Pr}(l_j)$, then we can find a point $w$ in $\mathbb{D}^2$ such that the interval $[w,z[$ is disjoint from $U_{l_j}(\epsilon)$. Consequently, the boundary value of $\psi_{\lambda}$ at $z$ coincides with $\widetilde{\psi_{\lambda}}(z)$ because $\psi_{\lambda}$ and $\widetilde{\psi_{\lambda}}$ coincide on the $[w,z[$.
\end{enumerate}\end{proof}

\begin{figure}[htb]
\centering
\includegraphics[width=.8\textwidth]{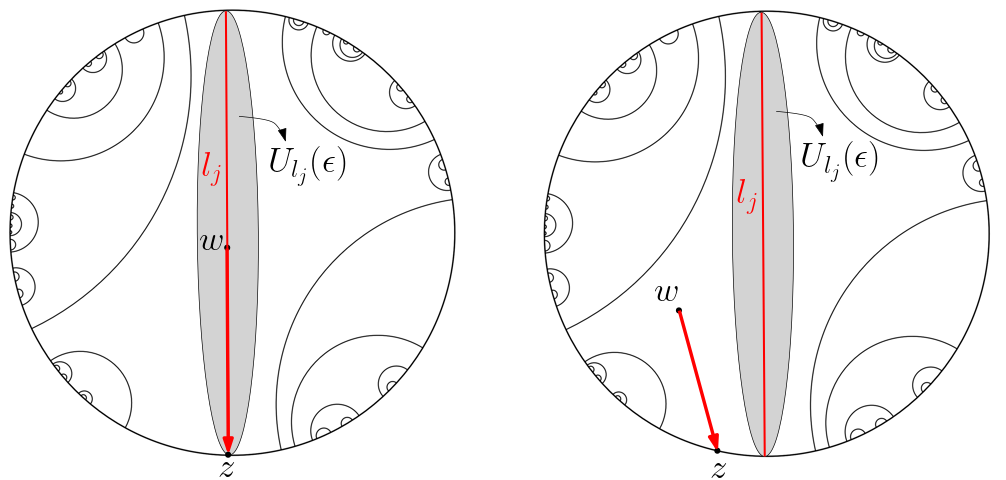}
\caption{The proof that the boundary value $\psi_{\lambda}$ coincides with the continuous extension of $\widetilde{\psi_{\lambda}}$ to $\mathbb{S}^1$ depends on whether $z$ is an endpoint of the lift of the lamination $\lambda$ or not. On the left, if $z$ is an endpoint of a geodesic $l_j$ of $\widetilde{\lambda}$, then we extend $\psi_{\lambda}$ through the interval $[w,z[$ where $w$ is a point on $l_j$. If not, on the right picture, we choose $w$ such that $[w,z[$ does not intersect any $\epsilon-$neighborhood of the lift of $\lambda$.}\label{emaray}
\end{figure}

As corollary of Lemma \ref{4.9} we get the following.
\begin{cor}\label{exx}
The map $\psi_{\lambda}:\mathbb{D}^2\to \mathbb{R}$ defined in \eqref{psil} extends continuously to a function on $\mathbb{S}^1$ with graph invariant by $\rho_{\lambda,+}^{\HP}(\pi(\Sigma)).$
\end{cor}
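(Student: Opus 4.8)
The plan is to read off the corollary from Lemma \ref{4.9} together with the convex‐analytic Proposition \ref{boundaryvalueextension}. Recall that, as observed just after \eqref{psil}, the function $\psi_{\lambda}$ is concave on $\mathbb{D}^2$. Hence by Proposition \ref{boundaryvalueextension}, $\psi_{\lambda}$ admits a continuous extension to $\overline{\mathbb{D}^2}$ precisely when its boundary value on $\mathbb{S}^1$ is continuous. So the only analytic point to verify is the continuity of this boundary value.

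But this is exactly the last assertion of Lemma \ref{4.9}: the smoothed function $\widetilde{\psi_{\lambda}}$ constructed there satisfies the hypotheses of Lemma \ref{4.8}, hence extends continuously to $\mathbb{S}^1$, and the boundary value of the concave function $\psi_{\lambda}$ agrees with that continuous extension. Therefore the boundary value of $\psi_{\lambda}$ is continuous, and Proposition \ref{boundaryvalueextension} provides a continuous extension $\psi_{\lambda}\colon\overline{\mathbb{D}^2}\to\mathbb{R}$. For the invariance statement, note that by Proposition \ref{rep} the bending map $\mathrm{b}_{\lambda,+}^{\HP}=(\mathrm{dev},\phi_{\lambda})$ is $\pi_1(\Sigma)$-equivariant, so the graph of $\psi_{\lambda}$ over $\mathbb{D}^2$ is invariant under $\rho_{\lambda,+}^{\HP}(\pi_1(\Sigma))$. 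Since every element of $\isom(\HP)$ is the restriction of a projective transformation of $\mathbb{RP}^3$, it acts continuously on the closure of $\HP$ and preserves $\partial\HP$; passing to closures, the closure in $\overline{\HP}$ of the graph of $\psi_{\lambda}$ is $\rho_{\lambda,+}^{\HP}(\pi_1(\Sigma))$-invariant, and because $\psi_{\lambda}$ now extends continuously to $\overline{\mathbb{D}^2}$ the part of this closure lying over $\mathbb{S}^1$ is exactly the graph of the boundary extension, i.e.\ $\Lambda_{\rho_{\lambda,+}^{\HP}}$. This gives the desired invariance.

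Since essentially all of the work has already been carried out in Lemma \ref{4.9} (and, before it, in Lemma \ref{4.8}), there is no genuine obstacle at this stage; the only mildly delicate point is the passage from equivariance of the graph in the interior $\mathbb{D}^2$ to equivariance of the boundary extension, which is handled by the fact that isometries of $\HP$ extend to the projective closure $\overline{\HP}\subset\mathbb{RP}^3$ together with the continuity of $\psi_{\lambda}$ on $\overline{\mathbb{D}^2}$, which identifies the boundary graph with the relevant portion of the closure of the interior graph.
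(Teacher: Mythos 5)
Your proof is correct and follows exactly the route the paper intends: the paper states this as an immediate consequence of Lemma \ref{4.9} (continuity of the boundary value of the concave function $\psi_{\lambda}$) combined with Proposition \ref{boundaryvalueextension}, and your closure argument for the $\rho_{\lambda,+}^{\HP}$-invariance of the boundary graph correctly fills in the one detail the paper leaves implicit.
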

\begin{remark}\label{pleated}
    It is important to remark that since the concave function $\psi_{\lambda}$ extends to $\mathbb{S}^1$ and it is affine on each connected component of $\mathbb{D}^2\setminus\mathrm{dev}(\widetilde{\lambda})$, then
    $\psi_{\lambda}$ satisfies the following property: 
    For each $z_0$ in $\mathbb{D}^2$, we take $a$ an affine map such that $\psi_{\lambda}(z_0)=a(z_0)$. Then 
    $$\{z\in \mathbb{D}^2, \ a(z)=\psi_{\lambda}(z)\}=\mathrm{Convex}\ \mathrm{hull}\{z\in \mathbb{S}^1,\ a(z)=\psi_{\lambda}(z)\}.$$
\end{remark}
We can now prove Proposition \ref{half}.
\begin{proof}[Proof of Proposition \ref{half}]
By Corollary \ref{exx}, the graph of $\psi_{\lambda}:\mathbb{S}^1\to \mathbb{R}$ is invariant by $\rho_{\lambda}^{\HP}$,
hence the representation $\rho_{\lambda,+}^{\HP}$ is a $\HP$-quasi-Fuchsian representation, this concludes the proof of first part of the proposition.\\
Now we want to prove that the image of the bending map $\mathrm{b}_{\lambda,+}^{\HP}$ is $\partial_+\mathrm{CH}(\Lambda_{\rho_{\lambda,+}^{\HP}})$. For $z\in \mathbb{D}^2$ we define the function
$$ \mathrm{H}(z)=\inf\{  a(z), \ \psi_{\lambda}\leq a\ \mathrm{on } \ \mathbb{S}^1\}.$$
It is proven in \cite[Lemma 2.41]{barbotfillastre} that the graph of the concave function $\mathrm{H}$ is $\partial_+\mathrm{CH}(\Lambda_{\rho_{\lambda,+}^{\HP}})$. Therefore it is enough to prove that 

\begin{equation}
    \mathrm{H}=\psi_{\lambda}.
\end{equation}
First, since $\psi_{\lambda}$ is a concave function, then $\mathrm{H}\leq \psi_{\lambda}$ because $\mathrm{H}$ is pointwise no greater than any concave function with boundary value $\mathrm{\psi}_{\lambda}$ (see \cite[Corollary 4.5]{ConvexAnal}). Now we claim that $\psi_{\lambda}\leq\mathrm{H}$. Suppose by contradiction that $\psi_{\lambda}(z_0)>\mathrm{H}(z_0)$ for some $z_0\in \mathbb{H}^2$. We take $a$ an affine map such that $\psi_{\lambda}(z_0)=a(z_0)$. Then by Remark \ref{pleated}
$$\{z\in \mathbb{D}^2, \ a(z)=\psi_{\lambda}(z)\}=\mathrm{Convex}\ \mathrm{hull}\{z\in \mathbb{S}^1,\ a(z)=\psi_{\lambda}(z)\}$$
But by construction, $\psi_{\lambda}$ and $\mathrm{H}$ agree on $\mathbb{S}^1$, so $z_0$ is contained in the convex hull of $\{z\in \mathbb{S}^1,\ a(z)=\mathrm{H}(z)\} $ which is equal to $\{z\in \mathbb{D}^2, \ a(z)=\mathrm{H}(z) \}$ by \cite[Lemma 4.9]{ConvexAnal}, hence $\psi_{\lambda}(z_0)=\mathrm{H}(z_0)$ contradicting the assumption $\psi_{\lambda}(z_0)>\mathrm{H}(z_0)$. Therefore, we conclude that $\psi_{\lambda}=\mathrm{H}$, and so the image of the positive bending map $\mathrm{b}_{\lambda,+}^{\HP}$ which is the the graph of $\psi_{\lambda}$ is exactly $\partial_+\mathrm{CH}(\Lambda_{\rho_{\lambda,+}^{\HP}})$, as desired.\end{proof}

\subsection{Proof of Theorem \ref{H} (Transition of holonomy) }
The aim of this section is to prove the following Theorem.

\begin{theorem}[Transition of holonomy]\label{H}
  Let $\lambda$, $\mu$ be two weighted multicurves which fill  $\Sigma$. Consider $\rho_{(t\lambda,t\mu)}:\pi_1(\Sigma)\to\isom(\X)$, a family of representations such that 
  \begin{itemize}
      \item For $t>0$, $\rho_{(t\lambda,t\mu)}$ is the  holonomy representation of the hyperbolic convex core structure on $\Sigma\times[0,1]$ for which the bending lamination on the upper (resp. lower) boundary component is $\vert t\vert\lambda$ (resp. $\vert t\vert \mu$).
      \item For $t<0$, $\rho_{(t\lambda,t\mu)}$ is the  holonomy representation of the Anti-de Sitter convex core structure on $\Sigma\times[0,1]$ for which the bending lamination on the upper (resp. lower) boundary component is $\vert t\vert\lambda$ (resp. $\vert t\vert \mu$).
  \end{itemize}
Then after conjugating if needed we have:
  \begin{enumerate}
      \item \begin{equation}\label{reshol}
          \lim_{t \to 0^+} \tau_t \rho_{(t\lambda,t\mu)}\tau_t^{-1}=\lim_{t \to 0^-} \tau_t \rho_{(t\lambda, t\mu)}\tau_t^{-1}.\end{equation}
          \item The common limit in \eqref{reshol} is a $\HP$-quasi-Fuchsian representation whose linear part is given by the Kerckhoff point $k_{\lambda,\mu}$ and the upper boundary component is pleated along $\lambda.$
  \end{enumerate}
\end{theorem}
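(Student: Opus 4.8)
The plan is to reduce the statement to the bending--cocycle description of $\rho_{(t\lambda,t\mu)}$ and then to let $\tau_t$ act rotation by rotation, invoking Proposition~\ref{rotation}. By Propositions~\ref{holHfuch} and \ref{Mess}, for $|t|$ small the representation $\rho_{(t\lambda,t\mu)}$ is conjugate in $\isom(\X)$ to the positive bending representation built from the pair $(t\lambda,\sigma^t_+)$, where $\sigma^t_+\in\mathcal{T}(\Sigma)$ is the holonomy of the induced metric on the upper boundary component of the convex core; explicitly $\rho^{\X}_{t\lambda,+}(\gamma)=\B^{\X}_{t\lambda,+}(x_0,\gamma x_0)\,\sigma^t_+(\gamma)$, where $\B^{\X}_{t\lambda,+}(x_0,\gamma x_0)$ is a product of a $t$--independent number $n_\gamma$ of rotations of $\X$ about the oriented $\sigma^t_+$--geodesics lifting $|\lambda|$. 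So I would first conjugate to assume $\rho_{(t\lambda,t\mu)}=\rho^{\X}_{t\lambda,+}$ for such a $\sigma^t_+$. Theorem~\ref{series} (for $t>0$) and Theorem~\ref{ads} (for $t<0$) give that the conjugacy class of $\rho_{(t\lambda,t\mu)}$ converges to the Kerckhoff point $k_{\lambda,\mu}$; since $\B^{\X}_{t\lambda,+}(x_0,\gamma x_0)$ tends to the identity as $t\to0$ (a bounded product of rotations whose angles tend to $0$ about axes staying in a compact family, once the $\sigma^t_+$ are known to lie in a compact part of Teichm\"uller space), this forces, after normalising in $\isom(\mathbb{H}^2)$, that $\sigma^t_+\to k_{\lambda,\mu}$ and $\rho_{(t\lambda,t\mu)}\to k_{\lambda,\mu}$ from either side. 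The compactness of $\{\sigma^t_+\}$ — and in fact the convergence of the induced boundary metrics to $k_{\lambda,\mu}$ — is exactly what the quantitative estimates of Series \cite{limitefisch} and Bonsante--Schlenker \cite{Fixedpoint} provide.

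Next I would rescale. Fix $\gamma\in\pi_1(\Sigma)$ and write $\B^{\X}_{t\lambda,+}(x_0,\gamma x_0)=\R^{\X}(\theta_1(t),l^t_1)\cdots\R^{\X}(\theta_{n_\gamma}(t),l^t_{n_\gamma})$, where the $l^t_k$ are the oriented $\sigma^t_+$--geodesics crossed by a fixed transverse arc from $x_0$ to $\gamma x_0$ and $\theta_k(t)$ is the signed bending angle along the $k$-th leaf. The point where the two sides interact is that, because the bending measure on the upper boundary is $|t|\lambda$ for $t>0$ (hyperbolic) and for $t<0$ (Anti-de Sitter), the function $\theta_k$ extends smoothly through $t=0$ with $\theta_k(0)=0$ and $\theta_k'(0)=a_k$ (the weight of the $k$-th leaf). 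Since $\sigma^t_+\to k_{\lambda,\mu}$, the geodesics $l^t_k$ converge to the oriented geodesics $l^0_k$ of $k_{\lambda,\mu}$; and since conjugation by $\tau_t$ is a group homomorphism,
$$\tau_t\,\B^{\X}_{t\lambda,+}(x_0,\gamma x_0)\,\tau_t^{-1}=\prod_{k=1}^{n_\gamma}\tau_t\,\R^{\X}(\theta_k(t),l^t_k)\,\tau_t^{-1}.$$
Proposition~\ref{rotation} then gives $\tau_t\,\R^{\X}(\theta_k(t),l^t_k)\,\tau_t^{-1}\to\R^{\HP}(a_k,l^0_k)$, with the \emph{same} limit whether $t\to0^+$ (so $\X=\mathbb{H}^3$) or $t\to0^-$ (so $\X=\ads$); multiplying, $\tau_t\,\B^{\X}_{t\lambda,+}(x_0,\gamma x_0)\,\tau_t^{-1}\to\B^{\HP}_{\lambda,+}(x_0,\gamma x_0)$, the half-pipe bending cocycle of $\lambda$ relative to $k_{\lambda,\mu}$, independently of the side.

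Finally, since $\sigma^t_+(\gamma)\in\isom(\mathbb{H}^2)$ commutes with $\tau_t$ one has $\tau_t\,\sigma^t_+(\gamma)\,\tau_t^{-1}=\sigma^t_+(\gamma)\to k_{\lambda,\mu}(\gamma)$, so
$$\lim_{t\to0^{\pm}}\tau_t\,\rho_{(t\lambda,t\mu)}(\gamma)\,\tau_t^{-1}=\B^{\HP}_{\lambda,+}(x_0,\gamma x_0)\,k_{\lambda,\mu}(\gamma)=\rho^{\HP}_{\lambda,+}(\gamma),$$
the positive bending representation of Proposition~\ref{rep} built from $k_{\lambda,\mu}$. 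In particular the two one-sided limits in \eqref{reshol} coincide, giving (1). For (2), Proposition~\ref{half} shows that $\rho^{\HP}_{\lambda,+}$ is $\HP$-quasi-Fuchsian with linear part $k_{\lambda,\mu}$, and that the image of the positive bending map $\mathrm{b}^{\HP}_{\lambda,+}$ — bent along $\lambda$ by its very construction — is $\partial_+\mathrm{CH}(\Lambda_{\rho^{\HP}_{\lambda,+}})$; hence the upper boundary of the limiting convex core is pleated along $\lambda$.

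The steps after the first paragraph are bookkeeping: distributing $\tau_t$-conjugation across the finitely many rotations of each $\B^{\X}_{t\lambda,+}(x_0,\gamma x_0)$, quoting Proposition~\ref{rotation} (and keeping the orientation conventions of Section~\ref{section4.2} straight so that the bending angle is genuinely one smooth function of $t$), then applying Proposition~\ref{half}. The real obstacle is the input of the first paragraph: establishing that the induced boundary metrics $\sigma^t_+$ stay in a compact part of $\mathcal{T}(\Sigma)$ and converge to $k_{\lambda,\mu}$ uniformly as $t\to0$ from both sides — this rests on the work of Series and of Bonsante--Schlenker — and, when $\Sigma$ has punctures, on the analysis near the cusps in Proposition~\ref{half} (via Lemmas~\ref{4.8} and \ref{4.9}) that the limiting half-pipe object is a bona fide convex core structure.
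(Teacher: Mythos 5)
Your proposal is correct and follows essentially the same route as the paper: express $\rho_{(t\lambda,t\mu)}$ via the positive bending cocycle (Propositions \ref{holHfuch} and \ref{Mess}), use Theorems \ref{series} and \ref{ads} to send the boundary data to the Kerckhoff point, distribute the $\tau_t$-conjugation over the finitely many rotations (using that $\tau_t$ commutes with $\isom(\mathbb{H}^2)$) and apply Proposition \ref{rotation} leaf by leaf, then invoke Proposition \ref{half} for item (2). The only cosmetic difference is how you justify $\sigma^t_+\to k_{\lambda,\mu}$ (the paper cites continuity of the map from a convex core structure to its boundary metric, whereas you argue it directly from the cocycle tending to the identity).
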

Before proving Theorem \ref{H}, we need to fix some notations. Denote by $(\mathrm{dev}^+_{(t\lambda,t\mu)},\sigma^+_{(t\lambda,t\mu)})$ the family of complete hyperbolic structures induced on the upper boundary component of the convex core of $\rho_{(t\lambda,t\mu)}.$ 

\begin{proof}[Proof of Theorem \ref{H}]
By Theorem \ref{series} and \ref{ads}, the family $\rho_{(t\lambda,t\mu)}$ converges to $k_{\lambda,\mu}$ up to conjugacy. Therefore, the family $\sigma_{(t\lambda,t\mu)}^+$ converges also to $k_{\lambda,\mu}$. This follows from the fact that the map which associates a hyperbolic or Anti-de Sitter convex core structure the hyperbolic structure on the boundary of the convex core is continuous. As consequence, by the Ehresmann-Thurston Principle \ref{ehresman}, we may assume that the family $\mathrm{dev}^+_{(t\lambda,t\mu)}$ converges uniformly to the developing map associated to $k_{\lambda,\mu}$ that we will note $\mathrm{dev}_{(\lambda,\mu)}^0$. Next by Proposition \ref{holHfuch} and \ref{Mess}, we necessary have for $t$ small enough 
\begin{equation}\label{holonomyformula}  
\rho_{(t\lambda,t\mu)}(\gamma)=
\mathrm{B}^{\X}_{\vert t\vert \lambda,+}(x_0,\gamma x_0)\circ \sigma_{(t\lambda,t\mu)}^+(\gamma).\end{equation} Here the cocycles are constructed from the hyperbolic structure $(\mathrm{dev}_{(t\lambda, t\mu)}^+, \sigma_{(t\lambda,t\mu)}^+)$. Now for each loop $\gamma$, we consider  $l_1^t,\cdots l_n^t$ the geodesics in the support of $\widetilde{\lambda}$ meeting the segment $[x_0,\gamma\cdot x_0]$. Denoting by $\R^{\X}(\vert t\vert a_i,l_i^t)$ the rotation in $\X$ of angle $\vert t\vert a_i$ along the geodesic $\mathrm{dev}^+_{(t\lambda,t\mu)}(l_i^t)$ ($\vert t\vert a_i$ is the weight of $l_i^t$), since $\tau_t$ commutes with $\isom(\mathbb{H}^2)$, we obtain
$$\tau_t \rho_{(t\lambda,t\mu)}\tau_t^{-1}=
\tau_t       \R^{\X}(\vert t\vert a_1,l_1^t)\tau_t^{-1}\circ \cdots   \tau_t \R^{\X}(\vert t\vert a_p,l_p^t)         \tau_t^{-1}\sigma_{(t\lambda,t\mu)}^+(\gamma).$$
Hence it is enough to show that \begin{equation}\label{6}
    \lim_{t \to 0^+}\tau_t \R^{\mathbb{H}^3 }(\vert t\vert  a_i,l_i^t)\tau_t^{-1}=\lim_{t \to 0^-}\tau_t \R^{\ads }(\vert t\vert a_i,l_i^t)\tau_t^{-1}
    \end{equation}
Note that $\mathrm{dev}^+_{(t\lambda,t\mu)}(l_i^t)$ is the axis of the hyperbolic isometry $\sigma_{(t\lambda,t\mu)}^+(\gamma_i)$ for some loop $\gamma_i$ in the support of the weighted multicurve $\lambda$. Since $(\mathrm{dev}_{(t\lambda, t\mu)}^+, \sigma_{(t\lambda,t\mu)}^+)$ converges to $(\mathrm{dev}_{(\lambda, \mu)}^0, k_{\lambda,\mu})$, then $\mathrm{dev}^+_{(t\lambda,t\mu)}(l_i^t)$ converges to the geodesic $\mathrm{dev}^0_{(\lambda,\mu})(l_i^0)$ which is the axis of the hyperbolic isometry $k_{\lambda,\mu}(\gamma_i)$. This concludes the proof of the identity \eqref{6} by Proposition \ref{rotation} and so the first item of the statement. The second item of the statement follows directly from Proposition \ref{half}.\end{proof}

\section{Transition of the convex core}\label{sec5}
We have seen in the previous section that we have a transition at the level of representations. The goal of this section is to prove that the rescaled convex core of both hyperbolic and Anti-de Sitter convex core structures converges to a convex core structure in half pipe geometry.
 This will be done in $2$ steps:
\begin{enumerate}
    \item 
    First we study the convergence of the upper component of the convex core. This is straightforward since by our normalisation the upper component corresponds to the image of the bending map. See Section \ref{6.1} below.

    \item In Section \ref{6.2}, we turn to the convergence of the other boundary component. Here we use some estimates about the width of the convex core obtained by \cite{limitefisch} in the hyperbolic geometry and \cite{SEP19} in the Anti-de Sitter geometry.
    
\end{enumerate}
\begin{notation*}
From now on, we will use the same notation of Theorem \ref{H}, that is:
\begin{itemize}\label{notation}
    \item $\rho_{(t\lambda,t\mu)}$ is the  holonomy representation of the hyperbolic/Anti-de Sitter convex core structure on $\Sigma\times[0,1]$ for which the bending lamination on the upper (resp. lower) boundary component is $\vert t\vert\lambda$ (resp. $\vert t\vert \mu$).
\item $(\mathrm{dev}_{(t\lambda, t\mu)}^{\pm}, \sigma_{(t\lambda,t\mu)}^{\pm})$ is the hyperbolic structure on $\Sigma$ induced on the upper or lower components of $\mathcal{C}_{\X}(\rho_{(t\lambda,t\mu)})$.
\item $(\mathrm{dev}_{(\lambda, \mu)}^0, k_{\lambda,\mu})$ is the hyperbolic structure on $\Sigma$ which is the limit of $(\mathrm{dev}_{(t\lambda, t\mu)}^{\pm}, \sigma_{(t\lambda,t\mu)}^{\pm})$ as $t$ goes to $0.$ 
\item Up to conjugacy by $\isom(\X)$, the representation $\rho_{(t\lambda,t\mu)}$ is given by:
\begin{equation}\label{holonomyformulaa}  
\rho_{(t\lambda,t\mu)}(\gamma)=
\mathrm{B}^{\X}_{\vert t\vert \lambda,+}(x_0,\gamma x_0)\circ \sigma_{(t\lambda,t\mu)}^+(\gamma),\end{equation} where 
$\mathrm{B}^{\X}_{\vert t\vert \lambda,+}$ is the positive bending cocycle associated to the complete hyperbolic structure $(\mathrm{dev}_{(t\lambda, t\mu)}^+, \sigma_{(t\lambda,t\mu)}^+)$.
\item Finally, we denote by $\rho_{(\lambda,\mu)}^{\HP}$ the $\HP$-quasi-Fuchsian representation given by: $$\rho_{(\lambda,\mu)}^{\HP}:=\lim_{t \to 0} \tau_t \rho_{(t\lambda,t\mu)}\tau_t^{-1}.$$

\end{itemize}
\end{notation*}
\subsection{Transition of the upper boundary component}\label{6.1}
The main ingredient to prove that the rescaled limit of the upper boundary component of $\mathcal{C}_X(\rho_{(t\lambda,t\mu)})$ converges is the following Proposition.
\begin{prop}\label{Sx1}
Let $\mathrm{b}_{\vert t\vert\lambda,+}^{\X}$ and $\mathrm{b}_{\lambda,+}^{\HP}$ be the positive bending maps associated to the complete hyperbolic structures $(\mathrm{dev}_{(t\lambda, t\mu)}^+, \sigma_{(t\lambda,t\mu)}^+)$ and $(\mathrm{dev}_{(\lambda, \mu)}^0, k_{\lambda,\mu})$ respectively. Then when $t$ goes to $0$,  $\tau_t\mathrm{b}_{\vert t\vert\lambda,+}^{\X}$ converges uniformly to $\mathrm{b}_{\lambda,+}^{\HP}$ on compact sets of $\widetilde{\Sigma}$.

\end{prop}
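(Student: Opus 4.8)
The plan is to write the rescaled bending map as a product of a rescaled bending cocycle and a rescaled developing map, to pass to the limit in each factor separately, and to combine them using that all maps in sight take values in $\mathbb{RP}^3$, on which $\mathrm{PGL}(4,\mathbb{R})$ acts continuously. With the normalisations of Theorem~\ref{H}, for $x\in\widetilde{\Sigma}$ one has by definition
$$\tau_t\,\mathrm{b}^{\X}_{|t|\lambda,+}(x)=\bigl(\tau_t\,\B^{\X}_{|t|\lambda,+}(x_0,x)\,\tau_t^{-1}\bigr)\bigl(\tau_t\,\mathrm{dev}^{+}_{(t\lambda,t\mu)}(x)\bigr),$$
so it suffices to prove: $(i)$ $\tau_t\,\mathrm{dev}^{+}_{(t\lambda,t\mu)}\to\mathrm{dev}^{0}_{(\lambda,\mu)}$ uniformly on compacta of $\widetilde{\Sigma}$; and $(ii)$ $\tau_t\,\B^{\X}_{|t|\lambda,+}(x_0,\cdot)\,\tau_t^{-1}\to\B^{\HP}_{\lambda,+}(x_0,\cdot)$ uniformly on compacta.

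For $(i)$: as $\mathrm{dev}^{+}_{(t\lambda,t\mu)}$ takes values in the totally geodesic copy of $\mathbb{H}^2$ fixed pointwise by $\tau_t$, one has $\tau_t\,\mathrm{dev}^{+}_{(t\lambda,t\mu)}=\mathrm{dev}^{+}_{(t\lambda,t\mu)}$, and it remains to invoke the convergence $(\mathrm{dev}^{+}_{(t\lambda,t\mu)},\sigma^{+}_{(t\lambda,t\mu)})\to(\mathrm{dev}^{0}_{(\lambda,\mu)},k_{\lambda,\mu})$ already established in the proof of Theorem~\ref{H} --- obtained from continuity of the assignment sending a convex core structure to the hyperbolic structure induced on its upper boundary, together with the Ehresmann--Thurston Principle~\ref{ehresman} and, in the punctured case, the control of the developing map near the cusps.

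For $(ii)$: recall $\B^{\X}_{|t|\lambda,+}(x_0,x)=\R^{\X}(|t|a_1,l^{t}_1)\circ\cdots\circ\R^{\X}(|t|a_n,l^{t}_n)$ with $l^{t}_i=\mathrm{dev}^{+}_{(t\lambda,t\mu)}(\ell_i)$, where $\ell_1,\dots,\ell_n$ are the leaves of $\widetilde{\lambda}$ met by an arc from $x_0$ to $x$ and $a_i$ their weights. Fix a compact $K\subset\widetilde{\Sigma}$, with the connecting arcs chosen inside a fixed compact set. Since $\widetilde{\lambda}$ is a locally finite union of complete geodesics, only finitely many leaves occur over all $x\in K$ at once, and $x\mapsto\B^{\X}_{|t|\lambda,+}(x_0,x)$ is constant on each of the finitely many components of $K\setminus\widetilde{\lambda}$; moreover, because $\sigma^{+}_{(t\lambda,t\mu)}\to k_{\lambda,\mu}$ the geodesic realisations of $\lambda$ vary continuously, so for $t$ small the ordered list of leaves crossed --- hence the product $g^{\X}_t:=\B^{\X}_{|t|\lambda,+}(x_0,\cdot)$ of at most $N=N(K)$ rotations $\R^{\X}(|t|a_i,l^{t}_i)$ --- depends combinatorially only on the component, not on $t$. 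Now $l^{t}_i\to l^{0}_i:=\mathrm{dev}^{0}_{(\lambda,\mu)}(\ell_i)$ as oriented geodesics and $|t|a_i\to0$, so Proposition~\ref{rotation}, applied to the one-sided limits $t\to0^{\pm}$ exactly as in the derivation of~\eqref{6} in the proof of Theorem~\ref{H}, yields
$$\lim_{t\to0}\tau_t\,\R^{\X}(|t|a_i,l^{t}_i)\,\tau_t^{-1}=\R^{\HP}(a_i,l^{0}_i).$$
As there are boundedly many factors and multiplication in $\mathrm{PGL}(4,\mathbb{R})$ is continuous, $\tau_t\,g^{\X}_t\,\tau_t^{-1}\to g^{\HP}$, and $g^{\HP}$ is precisely the value of $\B^{\HP}_{\lambda,+}(x_0,\cdot)$ on the corresponding component; since there are finitely many components, $(ii)$ follows on $K\setminus\widetilde{\lambda}$.

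Combining $(i)$ and $(ii)$, on each component $\tau_t\,\mathrm{b}^{\X}_{|t|\lambda,+}=(\tau_t g^{\X}_t\tau_t^{-1})\cdot\mathrm{dev}^{+}_{(t\lambda,t\mu)}$ converges uniformly to $g^{\HP}\cdot\mathrm{dev}^{0}_{(\lambda,\mu)}=\mathrm{b}^{\HP}_{\lambda,+}$; the limiting maps agree on the overlaps, since each of $\mathrm{b}^{\X}_{|t|\lambda,+}$ and $\mathrm{b}^{\HP}_{\lambda,+}$ is globally continuous --- no jump occurs across a leaf $\ell$ because $\R^{\X}(|t|a,l^{t})$ fixes $l^{t}=\mathrm{dev}^{+}_{(t\lambda,t\mu)}(\ell)$ pointwise --- so the uniform convergence extends from $K\setminus\widetilde{\lambda}$ to all of $K$, and $K$ was arbitrary. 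I expect the two delicate points to be: the passage from pointwise to uniform-on-compacta convergence, i.e. the control, uniform in $t$, of the number and combinatorial arrangement of the bending lines met over a fixed compact set while the geodesic realisations themselves move with $t$; and the input $(i)$ for a punctured $\Sigma$, which is exactly where the analysis of the developing maps near the cusps enters.
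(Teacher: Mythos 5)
Your proof is correct and follows essentially the same route as the paper's: both rest on the convergence of the induced hyperbolic structures $(\mathrm{dev}^{+}_{(t\lambda,t\mu)},\sigma^{+}_{(t\lambda,t\mu)})\to(\mathrm{dev}^{0}_{(\lambda,\mu)},k_{\lambda,\mu})$ together with Proposition~\ref{rotation} applied factor by factor to the finitely many rescaled rotations. The only cosmetic difference is that the paper phrases uniform convergence via sequences $x_t\to x$ and treats the case where $x$ lies on a leaf as a separate case, whereas you decompose a compact set into the finitely many components of its complement in $\widetilde{\lambda}$ and glue along the leaves using that each rotation fixes its leaf pointwise; these are the same argument.
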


\begin{proof}
Let $x_t$ be a family of points in $\widetilde{\Sigma}$ that converges to $x\in \widetilde{\Sigma}$ as $t\to 0$. We claim that $\mathrm{b}_{\vert t\vert \lambda,+}^{\X}(x_t)$ converges to $\mathrm{b}_{\lambda,+}^{\HP}(x)$ and hence we get the uniform convergence on compact set of $\widetilde{\Sigma}$. To prove this, we first observe that since $\mathrm{dev}_{(t\lambda, t\mu)}^+$ converges uniformly to $\mathrm{dev}_{(\lambda, \mu)}^0$ on a compact sets of $\widetilde{\Sigma}$, it follows that $\mathrm{dev}_{(t\lambda, t\mu)}^+(x_t)$ converges to $\mathrm{dev}_{(\lambda, \mu)}^0(x)$. We consider $l_1^t,\dots,l_n^t$ to be the oriented geodesics in the support of $\widetilde{\lambda}$ that intersect the segment $[x_0,x_t]$ (the orientation is explained in Section \ref{section4.2}). Denoting by $\R^{\X}(\vert t\vert a_i,l_i^t)$ the rotation in $\X$ along the geodesic $\mathrm{dev}^+_{(t\lambda,t\mu)}(l_i^t)$ of angle equal to the weight $\vert t\vert a_i$ of $l_i^t$, we also consider $l_1^0,\cdots l_n^0$ the leaves of $\widetilde{\lambda}$ such that $\mathrm{dev}^+_{(t\lambda,t\mu)}(l_i^t)$ converges to $\mathrm{dev}^+_{(\lambda,\mu)}(l_i^0)$. In particular, $l_1^0,\cdots l_n^0$ are leaves that intersect the segment $[x_0,x]$. However, we may have a situation where $x$ lies on some other leaf of $\widetilde{\lambda}$ that we will denote $l_{n+1}^0$. Thus, we distinguish two cases.

\begin{itemize}
    \item If $x$ is not in a new leaf of $\widetilde{\lambda}$, then for $t$ small enough $\B^{\X}_{\vert t\vert\lambda,+}(x_0,x_t)=\B^{\X}_{\vert t\vert\lambda,+}(x_0,x)$, since $\tau_t\mathrm{dev}^+_{(t\lambda,t\mu)}=\mathrm{dev}^+_{(t\lambda,t\mu)}$ then we have
$$\tau_t\mathrm{b}_{\vert t\vert\lambda,+}^{\X}(x_t)=\tau_t \B^{\X}_{\vert t\vert\lambda,+}(x_0,x)\tau_t^{-1}\circ\mathrm{dev}_{(t\lambda,t\mu)}^+(x_t).$$
A computation similar to that in the proof of Theorem \ref{H} shows that  $$ \lim_{t \to 0} \tau_t \B_{\vert t\vert \lambda,+}^{\X}(x_0,x)\tau_t^{-1}=\B_{\lambda,+}^{\HP}(x_0,x),$$ hence $\tau_t\mathrm{b}_{\vert t\vert\lambda,+}^{\X}(x_t)\to \mathrm{b}_{\lambda,+}^{\HP}(x).$
\item If $x$ is on the leaf $l_{n+1}^0$, then for $t$ small enough we have 
$$\tau_t \mathrm{b}_{\vert t\vert \lambda,+}^{\X}(x_t)=
\tau_t       \R^{\X}(\vert t\vert a_1,l_1^t)\tau_t^{-1}\circ \cdots   \tau_t \R^{\X}(\vert t\vert a_n,l_n^t)  \tau_t^{-1}\mathrm{dev}_{(t\lambda,t\mu)}^+(x_t).$$
Passing to the limit we obtain
$$\lim_{t \to 0}\tau_t \mathrm{b}_{\vert t\vert \lambda,+}^{\X}(x_t)= \R^{\HP}(a_1,l_1^0)\circ \cdots  \R^{\HP}( a_n,l_n^0) \mathrm{dev}_{(\lambda,\mu)}^0(x).$$
In the other hand, we have 
$$\mathrm{b}_{\lambda,+}^{\HP}(x)=\R^{\HP}(a_1,l_1^0)\circ \cdots  \R^{\HP}( a_{n+1},l_{n+1}^0) \mathrm{dev}_{(\lambda,\mu)}^0(x).$$
But since $x\in l_{n+1}^0$, then $\R^{\HP}(a_{n+1},l_{n+1}^0)\mathrm{dev}_{\lambda,\mu}^0(x)=\mathrm{dev}_{\lambda,\mu}^0(x).$ 
As consequence $$ \lim_{t \to 0} \tau_t \mathrm{b}_{\vert t\vert\lambda,+}^{\X}(x_t) =\R^{\HP}(a_1,l_1^0)\circ \cdots  \R^{\HP}( a_{n+1},l_{n+1}^0) \mathrm{dev}_{(\lambda,\mu)}^0(x)= \mathrm{b}_{\lambda,+}^{\HP}(x),$$ which concludes the proof.
\end{itemize}\end{proof}
The Proposition \ref{Sx1} implies in particular that the pleated surface $\partial_+\mathrm{CH}(\Lambda_{\rho_{(t\lambda,t\mu)}})$ converges after rescaling by $\tau_t$ to the pleated surface $\partial_+\mathrm{CH}(\Lambda_{\rho_{(\lambda,\mu)}^{\HP}})$ since $\mathrm{b}_{\vert t\vert\lambda,+}^{\X}(\widetilde{\Sigma})=\partial_+\mathrm{CH}(\Lambda_{\rho_{ (t\lambda,t\mu)}})$.

\subsection{Transition of the lower boundary component}\label{6.2}
Let's start this section by recalling some results on the width of convex core in hyperbolic and Anti-de Sitter geometry.

\subsubsection{Width of convex core in Anti-de Sitter space.}
Let $\phi:\mathbb{RP}^1\to \mathbb{RP} ^1$ an orientation preserving homeomorphism of $\mathbb{RP}^1$. Then its graph $\Lambda_{\phi}$ in contained in $\mathbb{RP} ^1\times \mathbb{RP}^1\cong\partial\ads$. Consider $\mathrm{CH}(\Lambda_{\phi})$ the convex hull of $\Lambda_{\phi}$ in $\ads$, then the \textit{width} of $\mathrm{CH}(\Lambda_{\phi})$ is defined as 

$$w(\mathrm{CH}(\Lambda_{\phi})):=\underset{x \in \partial_- \mathrm{CH}(\Lambda_{\phi}), \ y\in \partial_+\mathrm{CH}(\Lambda_{\phi})}{\sup} d_{\ads}(x,y),$$
where $\partial_- \mathrm{CH}(\Lambda_{\phi})$ (resp. $\partial_+ \mathrm{CH}(\Lambda_{\phi})$) denote the two connected components of $\partial \mathrm{CH}(\Lambda_{\phi})$ and $d_{\ads}(x,y)$ is the supremum of the length of timelike paths containing $x $ and $y$. 
In \cite{SEP19}, Seppi gives an upper bound of the width of the convex hull, which only depends on the cross ratio norm of $\phi.$ Recall that given an orientation-preserving homeomorphism $\phi: \mathbb{RP}^1\to \mathbb{RP}^1$, The \textit{cross- ratio  norm} is defined as 
 $$\lVert  \phi \rVert=\underset{\mathrm{cr}(Q)=1\ \ \ \ \ \ \ \ \ \ \ \ \ }{\sup \vert\ln{\mathrm{cr}(h(Q))}    \vert },$$ where $Q=[a,b,c,d]$ is a quadruple of points on $\mathbb{RP}^1$ and $$\mathrm{cr}(Q)=\frac{  (b-a)(d-c) }{ (c-b)(d-a)   }$$ is the cross ratio of $Q$. We say that $\phi$ is \textit{quasisymmetric} if $\lVert \phi \rVert$ is finite. We have the following estimate.
\begin{theorem}[Proposition 3.A \cite{SEP19}]\label{widthads}
Given any quasisymmetric homeomorphism $\phi$ of  $\mathbb{RP}^1$, let $w(\mathrm{CH}(\Lambda_{\phi}))$ be the width
of the convex hull of the graph of $\phi$. Then
$$w(\mathrm{CH}(\Lambda_{\phi}))\leq \arctan\left(\sinh{\frac{  \lVert \phi\rVert_{cr}  }{2}}\right). $$
\end{theorem}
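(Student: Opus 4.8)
The plan is to dominate the width by the timelike thickness of a causal slab bounded by two spacelike support planes of $\mathrm{CH}(\Lambda_\phi)$, and then to control that thickness by $\lVert\phi\rVert_{cr}$ through a cross-ratio computation; this is the Anti-de Sitter counterpart of the classical estimates for convex hulls of quasicircles in $\mathbb{H}^3$. \textbf{Step 1 (reduction to support planes).} Since $\phi$ is quasisymmetric, $\Lambda_\phi$ is a uniformly acausal curve in $\partial\ads$, so the two components $\partial_\pm\mathrm{CH}(\Lambda_\phi)$ are spacelike and admit a spacelike support plane at every point. Fix $x\in\partial_-\mathrm{CH}(\Lambda_\phi)$ and $y\in\partial_+\mathrm{CH}(\Lambda_\phi)$, and choose spacelike support planes $P_-$ at $x$ and $P_+$ at $y$, supporting $\mathrm{CH}(\Lambda_\phi)$ from the past and from the future respectively. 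Then $\mathrm{CH}(\Lambda_\phi)$, and in particular the pair $x,y$, is contained in the causal slab bounded by $P_-$ and $P_+$, so $d_{\ads}(x,y)$ is at most the timelike width of that slab, i.e. the length of the common timelike perpendicular from $P_-$ to $P_+$. Taking the supremum over $x,y$, it suffices to bound this perpendicular length over all admissible pairs $(P_-,P_+)$.

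\textbf{Step 2 (slab width versus boundary M\"obius maps).} A spacelike plane of $\ads$ meets $\partial\ads\cong\mathbb{RP}^1\times\mathbb{RP}^1$ in the graph of a M\"obius transformation; write $\partial_\infty P_\pm=\mathrm{graph}(m_\pm)$ with $m_\pm\in\psl$. Because $P_\pm$ support $\mathrm{CH}(\Lambda_\phi)$, the graph of $m_-$ lies weakly below and the graph of $m_+$ weakly above $\Lambda_\phi=\mathrm{graph}(\phi)$, each agreeing with $\phi$ at the (at least two) contact points; in particular $m_-^{-1}m_+$ is fixed-point-free on $\mathbb{RP}^1$, i.e. elliptic. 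If $n_\pm$ denotes the normalized normal of $P_\pm$ in $\mathbb{R}^4$, then the planes are disjoint exactly when $\lvert\langle n_-,n_+\rangle_{2,2}\rvert<1$, and the timelike width of the slab is an explicit increasing function of $\lvert\langle n_-,n_+\rangle_{2,2}\rvert$ — equivalently, an explicit function of the rotation parameter of the elliptic element $m_-^{-1}m_+$ — vanishing when $m_-=m_+$ and approaching $\pi/2$ at the other extreme.

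\textbf{Step 3 (the cross-ratio estimate, the crux).} It remains to show this width is at most $\arctan(\sinh(\lVert\phi\rVert_{cr}/2))$. The left and right $\psl$-actions are isometries of $\ads$ preserving both the width and the cross-ratio norm, so we may normalize $m_-=\mathrm{id}$; then $m_+=g$ is elliptic, $\mathrm{id}\le\phi\le g$, with $\phi$ equal to $\mathrm{id}$ at contact points $p_1,p_2$ and to $g$ at contact points $q_1,q_2$, occurring in the cyclic order $p_1,q_1,p_2,q_2$. Since $\phi(p_i)=p_i$ and $\phi(q_j)=g(q_j)$, one computes $\mathrm{cr}(\phi(Q))$ for the quadruple $Q=(p_1,q_1,p_2,q_2)$ — and for nearby quadruples normalized to cross-ratio $1$ to fit the definition of $\lVert\cdot\rVert_{cr}$ — and checks that $\lvert\ln\mathrm{cr}(\phi(Q))\rvert$ equals a monotone function of the rotation parameter of $g$, the extremal configuration being the symmetric one where $\{p_1,p_2\}$ and $\{q_1,q_2\}$ are each antipodal and $g$ is the associated rotation. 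Comparing with Step 2 and unwinding the trigonometry (using $\cos\arctan\sinh u=1/\cosh u$) yields $\lvert\langle n_-,n_+\rangle_{2,2}\rvert\ge 1/\cosh(\lVert\phi\rVert_{cr}/2)$, which is exactly the claim; combining Steps 1--3 proves the theorem.

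\textbf{Main obstacle.} The heart of the argument is Step 3: producing the precise identity linking the Lorentzian width of the slab, the rotation angle of $m_-^{-1}m_+$, and the cross-ratio distortion of $\phi$ on an optimally placed quadruple of contact points, together with the verification of sharpness in the symmetric case. Two further technical points must be handled carefully: that quasisymmetry of $\phi$ genuinely forces the support planes to be spacelike rather than lightlike, which is what keeps $\lvert\langle n_-,n_+\rangle_{2,2}\rvert$ in the relevant range; and the degenerate configurations where contact points coincide or a support plane meets $\Lambda_\phi$ along an entire geodesic, which are disposed of by a density or limiting argument.
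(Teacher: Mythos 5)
First, note that the paper itself offers no proof of this statement: it is imported verbatim from \cite{SEP19} (Proposition 3.A there), so your proposal has to be measured against Seppi's argument rather than anything internal to this text. Your Steps 1 and 2 do reproduce the skeleton of that argument: reduction to a pair of disjoint spacelike support planes, identification of their ideal boundaries with graphs of M\"obius maps $m_\pm$, and translation of the timelike thickness of the slab into the rotation parameter of $m_-^{-1}m_+$, equivalently $\lvert\langle n_-,n_+\rangle_{2,2}\rvert=\cos w$ (so the width is a \emph{decreasing}, not increasing, function of the inner product, as your own boundary cases $m_-=m_+$ and the $\pi/2$ extreme already show). One caveat in Step 1: for an arbitrary pair $(x,y)$ the two support planes need not be disjoint and the contact points need not alternate. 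You must restrict to pairs joined by a timelike geodesic (the only pairs contributing to the width, given the definition of $d_{\ads}$), because it is precisely the existence of that timelike segment that forces the two contact faces to link in $\partial\ads$; this linking is what yields the cyclic order $p_1,q_1,p_2,q_2$ and the ellipticity of $m_-^{-1}m_+$ that Steps 2--3 assert without justification.

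The genuine gap is Step 3, which carries the entire quantitative content of the theorem and which you describe rather than prove. ``One computes $\mathrm{cr}(\phi(Q))$ \dots and checks that $\lvert\ln\mathrm{cr}(\phi(Q))\rvert$ equals a monotone function of the rotation parameter of $g$, the extremal configuration being the symmetric one'' is exactly the inequality $\lVert\phi\rVert_{cr}\ge 2\arcsinh(\tan w)$ that has to be established; nothing in the proposal identifies which quadruple of cross-ratio $1$ witnesses it, why the distortion is monotone in the positions of the $p_i,q_j$, or why the symmetric configuration is extremal. In particular the contact quadruple itself generally has $\mathrm{cr}(Q)\neq 1$, so the definition of $\lVert\cdot\rVert_{cr}$ does not apply to it directly, and the passage to ``nearby quadruples normalized to cross-ratio $1$'' is where the real work lives: an explicit optimization over configurations of four alternating points with $\phi=\mathrm{id}$ at two of them and $\phi=g$ at the other two. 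As it stands, the proposal is a correct and well-organized plan that matches the strategy of \cite{SEP19}, but its crux is stated, not proved.
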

By Thurston's earthquake theorem  any quasisymmetric homeomorphism is the extension to $\mathbb{RP}^1$ of an \textit{earthquake} $\mathrm{E}^{\lambda}$ with $\lambda$ bounded measured lamination (see \cite{Thurston}, \cite{Saric}). Here bounded means with respects to the Thurston norm.  

\begin{defi}
Given a measured geodesic lamination $\lambda$ on $\mathbb{H}^2$, the \textit{Thurston norm} of $\lambda$ is defined as:

$$     \lVert \lambda\rVert_{\mathrm{Th}}:=\underset{I \ \ \ \ \ \  }{\sup\lambda(I)},          $$
where $I$ varies over all geodesic segments of length $1$ transverse to the
geodesic lamination $\lambda.$
\end{defi}
It turn out that the cross-ratio distortion norm and Thurston norm are equivalent.
\begin{theorem}[\cite{earthnorm}]\label{JU}
There exists a universal constant $C>0$ such that for any quasisymmetric homeomorphism $\phi:\mathbb{RP}^1\to\mathbb{RP}^1 $,
$$\frac{1}{C}\lVert \phi \rVert\leq \lVert \lambda \rVert_{\mathrm{Th}}\leq C \lVert  \phi\rVert,$$ where $\phi=\mathrm{E}^{\lambda}|_{\mathbb{RP}^1}.$
\end{theorem}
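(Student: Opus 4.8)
The plan is to work in the disk model, identify $\mathbb{S}^1=\partial\mathbb{D}^2$ with $\mathbb{RP}^1$, and translate both norms into statements about \emph{shear (Thurston) coordinates}. Recall that for an ideal quadrilateral with vertices $a,b,c,d$ in cyclic order on $\mathbb{S}^1$, cut along the diagonal $\delta$ joining $a$ to $c$, the signed shear $s(\delta)$ of the gluing of the two ideal triangles satisfies $\mathrm{cr}(a,b,c,d)=-e^{s(\delta)}$ up to the standard sign conventions; hence $\bigl|\ln|\mathrm{cr}(\phi(Q))|\bigr|=|s_\phi(\delta)|$, where $s_\phi(\delta)$ denotes the shear of the image quadrilateral $\phi(Q)$. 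The second ingredient is the effect of an earthquake on shears: if $\phi=\mathrm{E}^{\lambda}|_{\mathbb{RP}^1}$, then $s_\phi(\delta)-s(\delta)$ is obtained by adding, with the appropriate signs, the transverse measure that $\lambda$ deposits along $\delta$ between the two triangles of $Q$. Both facts are classical \cite{Thurston,Saric}; I would recall them and then prove the two inequalities separately.

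For $\lVert\lambda\rVert_{\mathrm{Th}}\le C\lVert\phi\rVert$: fix a geodesic segment $I$ of length $1$ transverse to $\lambda$ with $\lambda(I)$ as close as we wish to $\lVert\lambda\rVert_{\mathrm{Th}}$ (or arbitrarily large in the unbounded case). I would choose an ideal quadrilateral $Q=(a,b,c,d)$ with $\mathrm{cr}(Q)=1$ whose diagonal $\delta=ac$ contains $I$, positioned so that (i) every leaf of $\lambda$ meeting $I$ crosses $\delta$ inside a fixed compact sub-arc, (ii) the portion of $\delta$ outside that sub-arc meets no leaf transverse to $I$, and (iii) the two triangles of $Q$ lie on the sides for which the contributions of those leaves accumulate rather than cancel. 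Then $|s_\phi(\delta)|\ge\lambda(I)-O(1)$, and since $\mathrm{cr}(Q)=1$ this gives $\bigl|\ln|\mathrm{cr}(\phi(Q))|\bigr|\ge\lambda(I)-O(1)$, hence $\lVert\phi\rVert\ge\lVert\lambda\rVert_{\mathrm{Th}}-O(1)$. To obtain a genuine linear comparison all the way down to $\lambda=0$, for small $\lVert\lambda\rVert_{\mathrm{Th}}$ I would instead arrange $Q$ so that there is strictly no cancellation, whence $|s_\phi(\delta)|$ is comparable to $\lambda(I)$ outright (the earthquake being $C^0$-close to the identity, the factors relating shear to the logarithm of the cross-ratio are bounded above and below).

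For the reverse inequality $\lVert\phi\rVert\le C\lVert\lambda\rVert_{\mathrm{Th}}$: take any quadruple $Q$ with $\mathrm{cr}(Q)=1$; pre- and post-composing with M\"obius maps changes neither $\lVert\phi\rVert$ nor $\lVert\lambda\rVert_{\mathrm{Th}}$ (it only moves $\lambda$ by an isometry), so I may assume $Q$ is the standard symmetric ideal quadrilateral. Its diagonal $\delta$ has bounded geometry: removing disjoint horoball neighbourhoods of the four ideal vertices leaves a sub-arc of $\delta$ of bounded length and a compact core of $Q$ of bounded diameter. I would then estimate $s_\phi(\delta)-s(\delta)$ by (a) the $\lambda$-measure of the compact part of $\delta$, which is $\le C\lVert\lambda\rVert_{\mathrm{Th}}$ by subdividing it into unit-length pieces; plus (b) the contribution of leaves crossing $\delta$ deep inside the horoball cusps, which I would bound by a geometric series: the $n$-th unit layer of a cusp carries at most $\lVert\lambda\rVert_{\mathrm{Th}}$ worth of transverse measure but produces only an $e^{-cn}$ fraction of that as shear (leaves with endpoints exponentially close together distort cross-ratios exponentially little), so the layers sum to $\le C'\lVert\lambda\rVert_{\mathrm{Th}}$. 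Since $s(\delta)=\ln|\mathrm{cr}(Q)|=0$, combining (a) and (b) yields $\bigl|\ln|\mathrm{cr}(\phi(Q))|\bigr|=|s_\phi(\delta)|\le C''\lVert\lambda\rVert_{\mathrm{Th}}$, and taking the supremum over $Q$ gives the bound.

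The technical heart — and the step I expect to be the main obstacle — is estimate (b): quantifying, uniformly over all normalized quadrilaterals, the cross-ratio distortion produced by leaves of $\lambda$ that cross the diagonal far out in a cusp. The Thurston norm permits a fixed amount of transverse measure per unit length along $\delta$, and one must show that the resulting effect on the cross-ratio nevertheless decays geometrically with the depth in the cusp; this is where the interplay between the linear structure of the measure and the exponential geometry of the hyperbolic plane has to be controlled carefully. A secondary subtlety is bookkeeping of signs: left versus right earthquakes and the choice of diagonal flip the sign of each leaf's contribution, so one must fix conventions and, in the lower bound, choose the quadrilateral so that the contributions genuinely add up. Once (b) and the sign bookkeeping are settled, the two inequalities combine to give the asserted universal constant $C$.
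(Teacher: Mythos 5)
First, note that the paper does not prove this statement at all: Theorem \ref{JU} is quoted verbatim from \cite{earthnorm} (Hu's theorem relating the cross-ratio distortion norm of an earthquake map to the Thurston norm of its earthquake measure), so there is no in-paper argument to compare yours against. Your outline does reproduce the broad architecture of the known proof — test the two norms against each other on well-chosen ideal quadrilaterals, and for the upper bound split the diagonal into a bounded core plus cusp layers whose contributions decay geometrically. As an actual proof, however, it has a genuine gap at its foundation.

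The "second ingredient" you call classical — that $s_\phi(\delta)-s(\delta)$ equals the signed transverse measure that $\lambda$ deposits along $\delta$ — is not a classical fact and is false in general. The exact additivity of shears under earthquakes holds only when the edge is a leaf of (or disjoint from) $\lambda$; here $\delta$ is transverse to $\lambda$. For a single leaf $g$ of weight $w$ crossing $\delta$, the change in $\ln\vert\mathrm{cr}(Q)\vert$ is not $\pm w$: it depends on which pair of the four boundary arcs determined by $a,b,c,d$ contains the endpoints of $g$, it equals $w$ only in the degenerate case $g=\delta$, and it tends to $0$ as $g$ escapes toward an ideal vertex. This position-dependence is not a nuisance term — it is the reason the theorem is true at all, since for $\lVert\lambda\rVert_{\mathrm{Th}}<\infty$ the total transverse measure of the infinite geodesic $\delta$ is typically infinite, yet the cross-ratio distortion is finite. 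Consequently your step (b) is not a secondary estimate to be checked but the entire content of the upper bound, and it is only asserted; symmetrically, the lower bound needs a uniform \emph{positive lower} bound on the per-leaf contribution for leaves crossing a unit segment in the core of a suitably normalized quadrilateral, together with the coherence-of-signs property of left earthquakes (disjoint leaves can a priori contribute with opposite signs to a fixed cross-ratio), and a genuinely multiplicative rather than additive comparison near $\lambda=0$. Until the quantitative dependence of the cross-ratio distortion on the position of each leaf is established uniformly in the quadrilateral, the argument is a plan rather than a proof.
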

For an $\ads$- quasi-Fuchsian representation $\rho=(\sigma_l,\sigma_r)$, we define the width of $\mathcal{C}_{\ads}(\rho)$ as the width of the convex core of the unique homeomorphism $\phi$ which conjugates $\sigma_l$ and $\sigma_r$ (see Section \ref{conj}). Now combining Theorem \ref{JU} and \ref{widthads} we get 

\begin{cor}\label{Fwidth}
Let $\lambda$ and $\mu$ two weighted multicurves which fill $\Sigma$. Then there exists $C>0$ (the universal constant $C$ in Theorem \ref{JU}) such that the width of $\mathcal{C}_{\ads}(\rho_{(t\lambda,t\mu)})$ satisfies the following. 
$$w(\mathcal{C}_{\ads}(\rho_{(t\lambda,t\mu)}))\leq\arctan\left( \sinh{\frac{\vert t\vert C\lVert \lambda \rVert_{\mathrm{Th}}}{2}}\right).$$
In particular, there is $C^{'}, \epsilon>0$ depending on $\lambda$ such that if $\vert t\vert\leq\epsilon$, then

$$w(\mathcal{C}_{\ads}(\rho_{(t\lambda,t\mu)}))\leq C^{'}\vert t\vert .$$

 \end{cor}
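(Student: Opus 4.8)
The plan is to run the two quantitative inputs just recalled — the width bound of Theorem~\ref{widthads} and the equivalence of norms of Theorem~\ref{JU} — through Mess's description of the $\ads$ convex core from Proposition~\ref{Mess}. Fix $t<0$ and write $\rho_{(t\lambda,t\mu)}=(\sigma_l^t,\sigma_r^t)$ for the pair of Fuchsian holonomies, and let $\phi_t\colon\mathbb{RP}^1\to\mathbb{RP}^1$ be the unique orientation-preserving $(\sigma_l^t,\sigma_r^t)$-equivariant homeomorphism. By the definition of the width of an $\ads$-quasi-Fuchsian representation recalled above, $w(\mathcal{C}_{\ads}(\rho_{(t\lambda,t\mu)}))=w(\mathrm{CH}(\Lambda_{\phi_t}))$, so it suffices to bound the cross-ratio norm $\lVert\phi_t\rVert_{cr}$ by a quantity which is $O(|t|)$.

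The key, and only non-formal, step is to realise $\phi_t$ as a boundary earthquake whose lamination has Thurston norm $O(|t|)$. By Proposition~\ref{Mess} with the normalisation fixed there, the upper boundary of the convex core of $\rho_{(t\lambda,t\mu)}$ is pleated along $|t|\lambda$, and in Mess's parametrisation $\sigma_l^t$ and $\sigma_r^t$ are obtained from the induced hyperbolic metric $\sigma^+_{(t\lambda,t\mu)}$ by the two earthquakes along $|t|\lambda$. Thus, by Thurston's earthquake theorem, $\phi_t$ is the extension to $\mathbb{RP}^1$ of an earthquake $\mathrm{E}^{\nu_t}$ along a bounded measured lamination $\nu_t$ supported on $|\lambda|$ whose transverse weights are those of $|t|\lambda$, realised with respect to $\sigma_l^t$. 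Since the Thurston norm scales linearly with the transverse weights, $\lVert\nu_t\rVert_{\mathrm{Th}}\le |t|\,\lVert\lambda\rVert_{\mathrm{Th}}$; strictly speaking the norm here is computed with respect to $\sigma_l^t$, but by Theorem~\ref{ads} this metric converges to the Kerckhoff point $k_{\lambda,\mu}$ as $t\to 0$, so after possibly shrinking the range of $t$ we may pass to a fixed reference metric at the cost of a bounded factor, which will be absorbed into the constant $C'$ below. In particular $\nu_t$ is bounded, $\phi_t$ is quasisymmetric, and Theorem~\ref{widthads} applies.

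It now remains to assemble the estimates. Theorem~\ref{JU} gives $\lVert\phi_t\rVert_{cr}\le C\,\lVert\nu_t\rVert_{\mathrm{Th}}\le C\,|t|\,\lVert\lambda\rVert_{\mathrm{Th}}$ with $C$ the universal constant of that theorem, and substituting into Theorem~\ref{widthads} together with the monotonicity of $\arctan$ and $\sinh$ yields
$$w(\mathcal{C}_{\ads}(\rho_{(t\lambda,t\mu)}))=w(\mathrm{CH}(\Lambda_{\phi_t}))\le \arctan\!\left(\sinh\frac{\lVert\phi_t\rVert_{cr}}{2}\right)\le \arctan\!\left(\sinh\frac{|t|\,C\,\lVert\lambda\rVert_{\mathrm{Th}}}{2}\right),$$
which is the first assertion. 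For the second, observe that $g(x):=\arctan(\sinh x)$ is smooth with $g(0)=0$, hence $g(x)\le C_0\,x$ on $[0,1]$; choosing $\epsilon>0$ so small that $\epsilon\,C\,\lVert\lambda\rVert_{\mathrm{Th}}/2\le 1$, we obtain $w(\mathcal{C}_{\ads}(\rho_{(t\lambda,t\mu)}))\le C'|t|$ for $|t|\le\epsilon$, with $C'$ depending only on $\lambda$ (and, through the Kerckhoff point, on $\mu$). I expect the identification of $\phi_t$ with the earthquake $\mathrm{E}^{\nu_t}$ — hence the control of $\lVert\phi_t\rVert_{cr}$ by $|t|\,\lVert\lambda\rVert_{\mathrm{Th}}$, which relies on Mess's parametrisation and on the convergence of the induced metrics to $k_{\lambda,\mu}$ — to be the main point; everything afterwards is monotonicity of elementary functions, and the only technical nuisance, that $\lambda$ need not be geodesic for the reference metric, affects only the constant.
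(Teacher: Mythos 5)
Your argument is essentially the paper's own: the paper derives this corollary directly by combining Theorem~\ref{JU} with Theorem~\ref{widthads}, using that $\phi_t$ is the boundary extension of an earthquake whose lamination has Thurston norm $O(|t|)$ thanks to Mess's parametrisation, exactly as you do. Your extra care about the metric dependence of the Thurston norm (and the possible factor of~$2$ in Mess's earthquake relation between $\sigma_l^t$ and $\sigma_r^t$) only affects constants and is harmless for the linear bound $w\le C'|t|$ that is actually used later.
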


\subsubsection{Width of convex core in hyperbolic space.}
Given a Jordan curve $\Lambda$ in $\partial\mathbb{H}^3$, let $\mathrm{CH}(\Lambda)$ denote the convex hull of $\Lambda$ in $\mathbb{H}^3$. Denote by $\partial_+\mathrm{CH}(\Lambda)$ (resp. $\partial_-\mathrm{CH}(\Lambda)$) the upper (resp. lower) boundary components of $\partial\mathrm{CH}(\Lambda)$. Then the \textit{width} of $\mathrm{CH}(\Lambda)$ is defined in \cite{Bonsante2020QuasicirclesAW} as:

$$w(\mathrm{CH}(\Lambda)):=\max (\underset{x\in \partial_+\mathrm{CH}(\Lambda) \hspace{1.9cm} }{\sup d_{\mathbb{H}^3}(x,\partial_-\mathrm{CH}(\Lambda))  }, \ \underset{x\in \partial_-\mathrm{CH}(\Lambda)\hspace{1.9cm}  }{\sup d_{\mathbb{H}^3}(x,\partial_+\mathrm{CH}(\Lambda))  }). $$ For a $\mathbb{H}^3$-quasi-Fuchsian representation $\rho$, we define the width of $\mathcal{C}_{\mathbb{H}^3}(\rho)$ as the width of the convex core of the limit set of $\rho.$ We have the following estimate which is not exactly about the width of $\mathcal{C}_{\mathbb{H}^3}(\rho)$ but it is sufficient for the purpose of the paper.
\begin{prop}[Corollary 6.10 of \cite{limitefisch}]\label{suppH}
Let $\lambda$ and $\mu$ be two weighted multicurves which fill $\Sigma$. Then there exists $C>0$ such that: For all $\gamma\in\vert\lambda \vert\cup\vert\mu\vert$, let $\widetilde{\gamma^+}$ be the lift of $\gamma$ to $\partial_+\mathrm{CH}(\Lambda_{\rho_{(t\lambda,t\mu)}})$. Then for all $x\in\widetilde{\gamma^+}$:

$$d_{\mathbb{H}^3}(x,\partial_- \mathrm{CH}(\Lambda_{\rho_{(t\lambda,t\mu)}}))<C\vert t\vert. $$
In particular there is $y_t\in \partial_-\mathrm{CH}(\Lambda_{\rho_{(t\lambda,t\mu)}})$ such that $d_{\mathbb{H}^3}(x,y_t)<C\vert t\vert.$ 
\end{prop}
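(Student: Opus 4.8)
This is Corollary 6.10 of \cite{limitefisch}; I outline the strategy. Since $\vert\lambda\vert\cup\vert\mu\vert$ is finite and the asserted inequality is $\rho_{(t\lambda,t\mu)}(\pi_1(\Sigma))$-equivariant, it suffices to fix a single curve $\gamma\in\vert\lambda\vert\cup\vert\mu\vert$ and one lift $\widetilde{\gamma^+}$, and to produce $C_\gamma>0$ with $d_{\mathbb{H}^3}(x,\partial_-\mathrm{CH}(\Lambda_{\rho_{(t\lambda,t\mu)}}))<C_\gamma\vert t\vert$ for all $x\in\widetilde{\gamma^+}$ and all small $t>0$; then $C=\max_\gamma C_\gamma$ works. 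The first ingredient is that the bending cocycle is small: by Proposition \ref{holHfuch}, $\rho_{(t\lambda,t\mu)}(\gamma)=\mathrm{B}^{\mathbb{H}^3}_{\vert t\vert \lambda,+}(x_0,\gamma x_0)\circ\sigma_{(t\lambda,t\mu)}^+(\gamma)$, where $\mathrm{B}^{\mathbb{H}^3}_{\vert t\vert \lambda,+}(x_0,\gamma x_0)$ is a product of rotations of angle $O(\vert t\vert)$ whose number depends only on $\gamma$ (the leaves of $\widetilde\lambda$ crossed by $[x_0,\gamma x_0]$), hence equals $\mathrm{Id}+O(\vert t\vert)$. Together with $\sigma_{(t\lambda,t\mu)}^{\pm}\to k_{\lambda,\mu}\in\isom(\mathbb{H}^2)$ (Theorems \ref{series} and \ref{ads}), this should show that, after the normalization of the Notation above, $\widetilde{\gamma^+}$ lies in the $C_\gamma\vert t\vert$-neighbourhood of the totally geodesic plane $\mathbb{H}^2$ fixed by $\tau_t$: if $\gamma\in\vert\lambda\vert$, then $\widetilde{\gamma^+}$ is a bending line of $\partial_+\mathrm{CH}$, hence a geodesic of $\mathbb{H}^3$ invariant under $\rho_{(t\lambda,t\mu)}(\gamma)=(\mathrm{Id}+O(\vert t\vert))\sigma_{(t\lambda,t\mu)}^+(\gamma)$, so it is the axis of that element, whose fixed points on $\partial\mathbb{H}^3$ are $O(\vert t\vert)$-close to $\partial\mathbb{H}^2$; if $\gamma\in\vert\mu\vert$, one uses instead that $\partial_+\mathrm{CH}$, being bent by $O(\vert t\vert)$, stays $O(\vert t\vert)$-close to $\mathbb{H}^2$ over a fundamental domain of $\widetilde{\gamma^+}$ for $\rho_{(t\lambda,t\mu)}(\gamma)$, so the $\sigma_{(t\lambda,t\mu)}^+$-geodesic $\widetilde{\gamma^+}$ does too.

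Next I would apply the same analysis to the lower component: over the portion of $\mathbb{H}^2$ above which $\widetilde{\gamma^+}$ projects --- a fixed thick region, since $\gamma$ is a fixed closed curve --- the surface $\partial_-\mathrm{CH}(\Lambda_{\rho_{(t\lambda,t\mu)}})$ is likewise $C\vert t\vert$-close to $\mathbb{H}^2$, its flat pieces there being totally geodesic, bounded by leaves of $\widetilde\mu$ that are $O(\vert t\vert)$-close to $\mathbb{H}^2$, with bending angles $O(\vert t\vert)$ between consecutive pieces. For $t$ small, both $\partial_{\pm}\mathrm{CH}(\Lambda_{\rho_{(t\lambda,t\mu)}})$ are then graphs over $\mathbb{H}^2$ for the normal projection $\pi$; writing them over $\pi(\widetilde{\gamma^+})$ as graphs of signed normal coordinates $s_{\pm}$ with $\vert s_{\pm}\vert\le C\vert t\vert$, the point $y_t\in\partial_-\mathrm{CH}$ on the fibre $\pi^{-1}(\pi(x))$ satisfies $d_{\mathbb{H}^3}(x,y_t)=\vert s_+(\pi(x))-s_-(\pi(x))\vert\le 2C\vert t\vert$, which is the assertion.

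The hard part --- and the reason the statement is phrased for points on the support rather than as a bound on the full width of $\mathcal{C}_{\mathbb{H}^3}(\rho_{(t\lambda,t\mu)})$ --- is controlling the distance to $\mathbb{H}^2$ near the ideal boundary of the flat pieces, and near the cusps when $\Sigma$ has punctures: there the estimate $O(\vert t\vert)$ need not hold uniformly. Working with a fixed closed curve $\gamma$ confines the whole argument to a fixed thick region, and the hypothesis that $\vert\lambda\vert$ and $\vert\mu\vert$ fill keeps the number and combinatorial type of the flat pieces bounded, so that the neighbourhood estimates above propagate with a uniform constant; carrying this out in detail is the content of \cite{limitefisch}.
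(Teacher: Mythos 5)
The first thing to note is that the paper does not prove this statement: Proposition \ref{suppH} is imported verbatim from \cite{limitefisch} (Corollary 6.10 there) and used as a black box, so there is no internal argument to measure your outline against. Your proposal is likewise an outline that defers the details to the same reference, so at the level of logical dependency the two agree; the only question is whether your sketch of the strategy is sound, and there it has a real gap.

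The first half is fine: in the normalization of Proposition \ref{holHfuch} the surface $\partial_+\mathrm{CH}(\Lambda_{\rho_{(t\lambda,t\mu)}})$ is the image of $\mathbb{H}^2$ under a bending map with total angle $O(\vert t\vert)$ along any fixed compact arc, and by equivariance under $\rho_{(t\lambda,t\mu)}(\gamma)$ it suffices to treat a compact fundamental domain of $\widetilde{\gamma^+}$. The gap is the sentence asserting that $\partial_-\mathrm{CH}(\Lambda_{\rho_{(t\lambda,t\mu)}})$ is ``likewise $C\vert t\vert$-close to $\mathbb{H}^2$'': this is not the same analysis, because in this normalization the lower boundary is not obtained by bending $\mathbb{H}^2$ but by bending some support plane whose displacement from $\mathbb{H}^2$ is precisely the quantity the proposition is meant to control. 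Theorem \ref{series} gives only qualitative collapse, with no rate. Smallness of the bending angles of $\partial_-$ and closeness of its bending lines to $\mathbb{H}^2$ (which you can indeed extract, since those lines are axes of elements of the form $(\mathrm{Id}+O(\vert t\vert))\cdot A$ with $A\in\isom(\mathbb{H}^2)$) constrain its shape but do not by themselves bound its normal displacement; one would still have to pin each support plane of $\partial_-$ using at least two non-asymptotic bending lines in its closure and propagate the estimate across the finitely many flat pieces met over the fundamental domain, none of which is carried out. Note also that within this paper the convergence of the rescaled support planes of $\partial_-$ (Proposition \ref{hyperbolicsurface}) is \emph{deduced from} Proposition \ref{suppH}, so an argument that begins by asserting that $\partial_-$ is $O(\vert t\vert)$-close to $\mathbb{H}^2$ is circular relative to the paper's own architecture; the estimate in \cite{limitefisch} is obtained by different means, namely the quantitative control on the holonomy developed in the earlier sections of that paper, which is why the present paper imports it rather than reproving it.
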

We now turn to the transition of the lower boundary component of $\mathrm{CH}(\Lambda_{\rho_{(t\lambda,t\mu)}})$. Before moving to the proof, we need the following definition: Let $\mathrm{C}$ be a convex subset of an affine space. We say that a plane $\mathrm{P}$ is a \textit{support plane} of $\mathrm{C}$ (at $x\in \partial \mathrm{C}$) if $\mathrm{P}$ contains $x$, with the property that all of $\mathrm{C}$ is contained in one of the two closed half-spaces bounded by $\mathrm{P}$. We will freely use the following two basic facts: Let $\mathrm{P}_t$ be a family of affine planes and $x_t$ be a family of points in $\mathrm{P}_t$, then
\begin{itemize}
\item If $x_t$ is bounded, then $\mathrm{P}_t$ converges, up to a subsequence to an affine plane $\mathrm{P}$.
\item If the family $\mathrm{P}_t$ is disjoint from a plane $\mathrm{P}$ and $x_t$ converges to a point $x$ in $\mathrm{P}$, then $\mathrm{P}_t$ necessarily converges to $\mathrm{P}$.
\end{itemize}
The same terminology will be used for our three projective geometries. More precisely we have:
\begin{defi}\label{extremal}
Let $\mathrm{C}$ be a convex subset of $\mathbb{H}^3$ (resp. in $\ads$ or $\HP$). In the case of $\ads$, we additionally assume that $\mathrm{C}$ is contained in an affine chart $\ads\setminus \mathrm{Q}$, where $\mathrm{Q}$ is a spacelike plane in $\ads$. We say that a totally geodesic plane $\mathrm{P}$ in $\mathbb{H}^3$ (resp. a spacelike plane in $\ads$ or $\HP$) is a \textit{support plane} of $\mathrm{C}$ (at $x\in \partial \mathrm{C}$) if $\mathrm{P}$ contains $x$, with the property that all of $\mathrm{C}$ is contained in one of the two closed half-spaces of $\mathbb{H}^3\setminus \mathrm{P}$ (resp. $\ads\setminus \mathrm{P}\cup\mathrm{Q}$, $\HP\setminus \mathrm{P}$).
\end{defi}
Also recall that if $\mathrm{P}\cap\mathrm{C}$ is a line then it is called a \textit{bending line}. If $\mathrm{P}\cap\mathrm{C}$ is not a line then we say that $\mathrm{P}$ is an \textit{extremal support plane} of $\mathrm{C}.$ The main ingredients to prove the convergence of $\tau_t\partial_-\mathrm{CH}(\Lambda_{\rho_{(t\lambda,t\mu)}})$ are Proposition \ref{hyperbolicsurface} and \ref{adssurface}. Before stating the propositions, we would like to remind that by construction, the image of the positive bending map $\mathrm{b}_{\vert t\vert\lambda,+}^{\X}$ is exactly $\partial_+\mathrm{CH}(\Lambda_{\rho_{(t\lambda,t\mu)}})$. Hence, the hyperbolic plane $\mathbb{H}^2$ simultaneously serves as a support plane for $\partial_+\mathrm{CH}(\Lambda_{\rho_{(t\lambda,t\mu)}})$ for any $t$ (small enough). 
\begin{prop}\label{hyperbolicsurface}
For $t>0$, let $p_t$ be a family of points in the boundary of 
$\mathbb{H}^2\cap \partial_+\mathrm{CH}(\Lambda_{\rho_{(t\lambda,t\mu)}})$ which converges in $\mathbb{H}^2$. Consider $\mathrm{L}_t$ the geodesic in $\mathbb{H}^3$ such that $$\mathrm{L}_{t}(0)=p_t, \ \ \ \mathrm{L}_t^{'}(0)=-e_3:=(0,0,0,-1).$$
Then there exists a support plane $\mathrm{P}_t$ of $\partial_-\mathrm{CH}(\Lambda_{\rho_{(t\lambda,t\mu)}})$ at the intersection point $\mathrm{L}_t\cap\partial_-\mathrm{CH}(\Lambda_{\rho_{(t\lambda,t\mu)}})$ such that up to taking a subsequence, the family of planes $\tau_t\mathrm{P}_t$ converges to a spacelike plane  $\mathrm{P}_{\infty}$ in $\HP.$ 
\end{prop}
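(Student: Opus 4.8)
The plan is to reduce the statement to a single metric estimate and then run a compactness argument. Throughout, fix $t>0$ small and write $\mathrm{CH}_t:=\mathrm{CH}(\Lambda_{\rho_{(t\lambda,t\mu)}})\subset\mathbb{H}^3$. By the normalisation fixed before the proposition, $\partial_+\mathrm{CH}_t$ is the image of the positive bending map, so that $\mathbb{H}^2=\{x_3=0\}$ is a support plane of $\mathrm{CH}_t$ touching it along the bending line through $p_t$, and $\mathrm{CH}_t\subset\{x_3\le 0\}$. Hence $\mathrm{L}_t$, being perpendicular to $\mathbb{H}^2$ at $p_t$ and directed by $-e_3$ into the half‑space containing $\mathrm{CH}_t$, enters the interior of $\mathrm{CH}_t$; I would show (as a by‑product of the estimate below) that it leaves $\mathrm{CH}_t$ through $\partial_-\mathrm{CH}_t$, at a point $q_t=\mathrm{L}_t\cap\partial_-\mathrm{CH}_t$, and that, $\mathrm{L}_t$ being normal to $\mathbb{H}^2$, one has $s_t:=d_{\mathbb{H}^3}(p_t,q_t)=d_{\mathbb{H}^3}(q_t,\mathbb{H}^2)$.

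The key point, and what I expect to be the main obstacle, is the bound $s_t\le C'\vert t\vert$ for a constant $C'$ independent of $t$. I would deduce it from Proposition \ref{suppH}: since $p_t$ lies on a bending line of $\partial_+\mathrm{CH}_t$, i.e. on the lift of some $\gamma\in\vert\lambda\vert$, that proposition yields $y_t\in\partial_-\mathrm{CH}_t$ with $d_{\mathbb{H}^3}(p_t,y_t)<C\vert t\vert$; being a nearest point, $y_t$ carries a support plane $\mathrm{Q}_t$ of $\mathrm{CH}_t$ with $[p_t,y_t]$ orthogonal to $\mathrm{Q}_t$. On the other hand, by Theorem \ref{series} the holonomies $\rho_{(t\lambda,t\mu)}$ converge to the Fuchsian representation $k_{\lambda,\mu}$, so $\Lambda_{\rho_{(t\lambda,t\mu)}}$ converges in $\partial\mathbb{H}^3$ to the round circle $\partial\mathbb{H}^2$ and $\mathrm{CH}_t$ collapses onto $\mathbb{H}^2$; moreover $\mathrm{dev}^-_{(t\lambda,t\mu)}\to\mathrm{dev}^0_{(\lambda,\mu)}$, so the flat faces of $\partial_-\mathrm{CH}_t$ meeting a neighbourhood of $p_\infty$ have diameter bounded below. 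A convex surface trapped in a thin uniform slab about $\mathbb{H}^2$ and flat on pieces of definite size has support planes converging to $\mathbb{H}^2$ there; hence $\mathrm{Q}_t\to\mathbb{H}^2$, the segment $[p_t,y_t]$ becomes almost orthogonal to $\mathbb{H}^2$ — i.e. almost parallel to $\mathrm{L}_t$ — and a right‑triangle estimate in $\mathbb{H}^3$ (using that $\mathrm{CH}_t$ lies on one side of $\mathrm{Q}_t$ and that $\mathrm{L}_t$ stays in $\mathrm{CH}_t$ until it exits) shows $\mathrm{L}_t$ leaves $\mathrm{CH}_t$ at distance at most $d_{\mathbb{H}^3}(p_t,y_t)/\cos\theta_t$, with $\theta_t=\angle(-e_3,\overrightarrow{p_ty_t})\to 0$, hence at distance $\le 2C\vert t\vert$ once $t$ is small. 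As this exit point is $O(\vert t\vert)$‑close to $p_t$, it lies on $\partial_-\mathrm{CH}_t$ — near $p_t$ the surface $\partial_+\mathrm{CH}_t$ consists of two faces nearly contained in $\mathbb{H}^2$, which $\mathrm{L}_t$ does not re‑meet — so $q_t$ is well defined and $s_t\le C'\vert t\vert$. This is the only place the hypothesis that $p_t$ lie on a bending line enters.

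Granting the bound, the parametrisation $\mathrm{L}_t(s)=\cosh(s)\,P_t+\sinh(s)(-e_3)$, with $P_t\in\mathcal{H}^3$ lifting $p_t$, gives $q_t=[\,(P_t)_0:(P_t)_1:(P_t)_2:-\tanh s_t\,]$, so after passing to a subsequence along which $s_t/\vert t\vert\to c\in[0,\infty)$,
\[
\tau_tq_t\ \longrightarrow\ q_\infty:=[\,(P_\infty)_0:(P_\infty)_1:(P_\infty)_2:-c\,],
\]
with $P_\infty$ lifting $p_\infty$; since $q_0(q_\infty)=\langle P_\infty,P_\infty\rangle_{1,3}=-1<0$, the point $q_\infty$ lies in $\HP$. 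I would then pick any support plane $\mathrm{P}_t$ of $\mathrm{CH}_t$ at $q_t$ and let $U_t$ be the closed half‑space it bounds that contains $\mathrm{CH}_t$. Since the space of projective planes is compact, a further subsequence makes $\tau_t\mathrm{P}_t\to\mathrm{P}_\infty$, a projective plane through $q_\infty$, and correspondingly $\tau_tU_t\to U_\infty$, a closed half‑space bounded by $\mathrm{P}_\infty$.

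Finally I would check that $\mathrm{P}_\infty$ is a spacelike plane of $\HP$. It meets $\HP$, as $q_\infty\in\mathrm{P}_\infty\cap\HP$. If it contained a fiber, then in the Klein model $\mathbb{D}^2\times\mathbb{R}$ it would be $\ell\times\mathbb{R}$ for an affine line $\ell$ which, since $\mathrm{P}_\infty$ meets the open $\HP$, crosses the open disc $\mathbb{D}^2$; correspondingly $U_\infty=H\times\mathbb{R}$ for a closed half‑plane $H$ bounded by $\ell$. But by Proposition \ref{Sx1}, $\tau_t\,\partial_+\mathrm{CH}_t=\tau_t\,\mathrm{b}_{\vert t\vert\lambda,+}^{\X}(\widetilde\Sigma)$ converges to $\mathrm{b}_{\lambda,+}^{\HP}(\widetilde\Sigma)$, which is $\partial_+\mathrm{CH}(\Lambda_{\rho_{(\lambda,\mu)}^{\HP}})$ (Propositions \ref{Sx1} and \ref{half}); in the Klein model the bending map has the form $(\mathrm{dev},\phi)$ with $\mathrm{dev}\colon\widetilde\Sigma\to\mathbb{D}^2$ a diffeomorphism, so this limit is the graph of a function over \emph{all} of $\mathbb{D}^2$. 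Since $\tau_t\,\partial_+\mathrm{CH}_t\subset\tau_t\mathrm{CH}_t\subset\tau_tU_t$ and $U_\infty$ is closed, this graph would be contained in $H\times\mathbb{R}$, forcing $\mathbb{D}^2\subseteq H$ — impossible, as $\ell$ meets the open disc. Hence $\mathrm{P}_\infty$ contains no fiber; being a plane that meets $\HP$ without containing a fiber, it is a spacelike plane of $\HP$, which completes the proof.
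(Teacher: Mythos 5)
Your proposal follows the same overall architecture as the paper's proof: invoke Proposition \ref{suppH} at the point $p_t$ on a bending line to produce $y_t\in\partial_-\mathrm{CH}(\Lambda_{\rho_{(t\lambda,t\mu)}})$ with $d_{\mathbb{H}^3}(p_t,y_t)\le Ct$, use a support plane $\mathrm{Q}_t$ at $y_t$ converging to $\mathbb{H}^2$ to bound by $O(t)$ the distance along $\mathrm{L}_t$ to its exit point, conclude that the rescaled exit point stays bounded, extract a convergent subsequence of $\tau_t\mathrm{P}_t$ by compactness of the space of projective planes, and finally rule out a fiber in the limit plane. Two sub-steps are argued differently from the paper, and one of them is not supported by the results you cite. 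First, you justify $\mathrm{Q}_t\to\mathbb{H}^2$ by asserting that $\mathrm{CH}(\Lambda_{\rho_{(t\lambda,t\mu)}})$ collapses into a thin \emph{uniform} slab about $\mathbb{H}^2$; no such uniform width bound is available, since Proposition \ref{suppH} only controls the distance to $\partial_-$ along the lifts of the curves of $\vert\lambda\vert\cup\vert\mu\vert$ (which is exactly why the paper notes it "is not exactly about the width"). The convergence $\mathrm{Q}_t\to\mathbb{H}^2$ is nonetheless immediate by the paper's route: $\mathrm{Q}_t$ is disjoint from $\mathbb{H}^2$ (a support plane of $\partial_-$ is disjoint from one of $\partial_+$) and contains $y_t\to p_\infty\in\mathbb{H}^2$, so it converges to $\mathbb{H}^2$ by the second basic fact recorded before Definition \ref{extremal}. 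Second, your right-triangle estimate leans on $[p_t,y_t]$ being orthogonal to $\mathrm{Q}_t$; since $p_t$ itself lies on the boundary of the convex hull, the inscribed-ball argument for nearest-point orthogonality does not apply, and in any case the angle to control is the one between $\mathrm{L}_t$ and the normal of $\mathrm{Q}_t$, not $\angle(-e_3,\overrightarrow{p_ty_t})$. This is easily repaired and is precisely what the paper computes explicitly: $\sinh d_{\mathbb{H}^3}(p_t,\mathrm{Q}_t)=\vert\langle\widetilde{p_t},\alpha(t)\rangle_{1,3}\vert\le\sinh(Ct)$ together with $\alpha_3(t)\to1$ (from $\mathrm{Q}_t\to\mathbb{H}^2$) locates the crossing point $z_t=\mathrm{L}_t\cap\mathrm{Q}_t$ at distance at most $C''t$ from $p_t$, and convexity gives $d_{\mathbb{H}^3}(p_t,q_t)\le d_{\mathbb{H}^3}(p_t,z_t)$. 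Your final step ruling out a fiber in $\mathrm{P}_\infty$ --- via the rescaled upper boundary converging to a global graph over $\mathbb{D}^2$ (Proposition \ref{Sx1}) trapped in the limiting half-space --- is correct and a legitimate alternative to the paper's shorter argument that $\mathrm{P}_t$ is disjoint from $\mathbb{H}^2$, so a vertical limit would force an intersection for small $t$.
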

Note that the hypothesis that $p_t$ is contained in the boundary of $\mathbb{H}^2\cap \partial_+\mathrm{CH}(\Lambda_{\rho_{(t\lambda,t\mu)}})$ is necessary to apply Proposition \ref{suppH} which gives an estimate of the distance between $p_t$ and the lower boundary component $\partial_-\mathrm{CH}(\Lambda_{\rho_{(t\lambda,t\mu)}})$.
\begin{proof}
Let $p_t$ be a family of points in a bending line contained in $\mathbb{H}^2\cap \partial_+\mathrm{CH}(\Lambda_{\rho_{(t\lambda,t\mu)}})$ that converge to some $p_{\infty}$ in $\mathbb{H}^2.$ The Proposition \ref{suppH} implies that there is a point $y_t$ in $\partial_-\mathrm{CH}(\Lambda_{\rho_{(t\lambda,t\mu)}})$ such that $$d_{\mathbb{H}^3}(p_t,y_t)\leq Ct,$$ for some constant $C>0$. Let $\mathrm{Q}_t$ be a support plane of $\partial_-\mathrm{CH}(\Lambda_{\rho_{(t\lambda,t\mu)}})$ at $y_t$, we remark that $\mathrm{Q}_t$ is disjoint from $\mathbb{H}^2$ since $\mathbb{H}^2$ is a support plane for $\partial_+\mathrm{CH}(\Lambda_{\rho_{(t\lambda,t\mu)}})$ and any support plane of the upper boundary component $\partial_+\mathrm{CH}(\Lambda_{\rho_{(t\lambda,t\mu)}})$ is disjoint from a support plane of the lower boundary component $\partial_-\mathrm{CH}(\Lambda_{\rho_{(t\lambda,t\mu)}})$. 
Therefore the support plane $\mathrm{Q}_t$ must converge to $\mathbb{H}^2$ since $y_t\to p_{\infty}\in \mathbb{H}^2$ and $\mathrm{Q}_t$ is disjoint from $\mathbb{H}^2$.

\begin{figure}[htb]
\centering
\includegraphics[width=.6\textwidth]{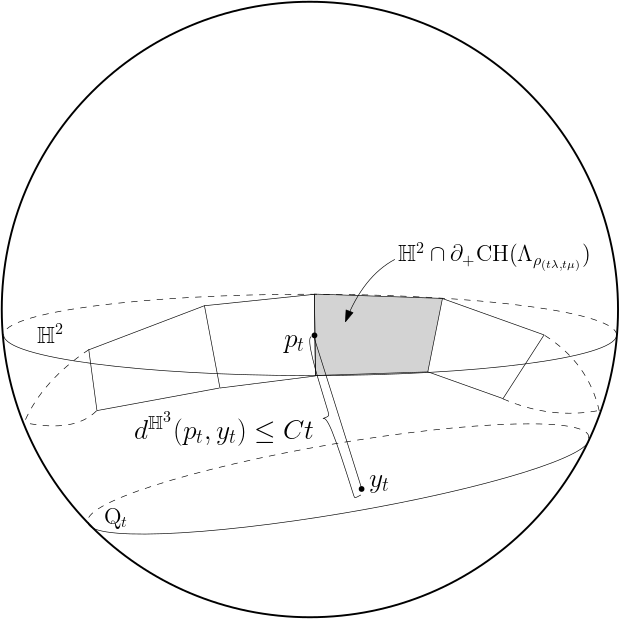}
\caption{Illustration of the beginning of the proof of Proposition \ref{hyperbolicsurface} using the estimate \ref{suppH}.}\label{Seriesestimate}
\end{figure}

We claim now that $\tau_t\mathrm{Q}_t$ converges also to a spacelike plane $\mathrm{Q}_{\infty}$ in $\HP$ up to subsequence. To prove this, it is enough to find a point $z_t\in \mathrm{Q}_t$ such that $\tau_tz_t$ is bounded. We consider $z_t$ the intersection of $\mathrm{Q}_t$ with the geodesic starting from $p_t$ with velocity $-e_3$. Namely 
\begin{equation}\label{verticallineH}
    z_t=[\cosh{(d_{\mathbb{H}^3}(p_t,z_t))}\widetilde{p_t}   -\sinh({d_{\mathbb{H}^3}(p_t,z_t)})e_3],
\end{equation}
where $\widetilde{p_t}$ is the lift of $p_t$ in $\mathcal{H}^3$ .
Let $\alpha(t):=(\alpha_0(t),\alpha_1(t),\alpha_2(t),\alpha_3(t))$ be a unit normal vector with respect to $q_1$ such that $\mathrm{Q}_t$ is the intersection of $\mathbb{H}^3$ with the projectivization of the orthogonal of $\alpha(t)$, here orthogonal with respects to the bilinear form $\langle \cdot, \cdot \rangle_{1,3}$ whose associated quadratic form is $q_{1}.$ We obtain 
\begin{align*}
z_t\in \mathrm{Q}_t&\iff\cosh{(d_{\mathbb{H}^3}(p_t,z_t))}\langle  \widetilde{p_t}  , \alpha(t)\rangle_{1,3} -\sinh({d_{\mathbb{H}^3}(p_t,z_t)})\langle e_3,\alpha(t)\rangle_{1,3}=0.\\
&\iff \tanh(d_{\mathbb{H}^3}(p_t,z_t))=\frac{\langle \widetilde{p_t},\alpha(t)\rangle _{1,3}}{\alpha_3(t)}. 
\end{align*}
By elementary hyperbolic geometry it is easy to check that $$\sinh(d_{\mathbb{H}^3}(p_t,\mathrm{Q}_t))=\vert\langle \widetilde{p_t},\alpha(t)\rangle_{1,3}\vert.$$ 
Since $y_t\in \mathrm{Q}_t$, then $d_{\mathbb{H}^3}(p_t,\mathrm{Q}_t)\leq d_{\mathbb{H}^3}(p_t,y_t)$ hence $\vert\langle \widetilde{p_t},\alpha(t)\rangle_{1,3}\vert\leq \sinh(Ct)$. This implies that for some $C^{'}>0$ we have $$\vert\langle \widetilde{p_t},\alpha(t)\rangle_{1,3}\vert\leq C^{'}t.$$ Note that since $\mathrm{Q}_t\to\mathbb{H}^2$, then necessary $\alpha_3(t)\to 1 $. Hence
$$d_{\mathbb{H}^3}(p_t,z_t)\leq C^{''}t$$ for some constant $C^{''}>0$. Therefore,
\begin{equation}\label{zt}
    \tau_t z_t=[\cosh (d_{\mathbb{H}^3}(p_t,z_t))\widetilde{p_t}-\frac{\sinh(d_{\mathbb{H}^3 }(p_t,z_t))}{t}e_3],
    \end{equation} 
so $\tau_tz_t$ is bounded in $\HP$. Now let's take $x_t$ to be the intersection of $\mathrm{L}_t$ with $\partial_-\mathrm{CH}(\Lambda_{\rho_{(t\lambda,t\mu)}})$ and $\mathrm{P}_t$ a support plane of $\partial_-\mathrm{CH}(\Lambda_{\rho_{(t\lambda,t\mu)}})$ at $x_t$. Clearly $z_t$ lies in the concave side of $\partial_-\mathrm{CH}(\Lambda_{\rho_{(t\lambda,t\mu)}})$ (see Figure \ref{lower}), hence $$d_{\mathbb{H}^3}(p_t,x_t)\leq d_{\mathbb{H}^3}(p_t,z_t)\leq C^{''}t.$$ Therefore a computation similar to \eqref{zt} shows that $\tau_tx_t$ is bounded and hence $\tau_t\mathrm{P}_t$ converges up to subsequence to some plane $\mathrm{P}_{\infty}$. We claim that $\mathrm{P}_{\infty}$ is a spacelike plane of $\HP$. Indeed since $\mathbb{H}^2$ is a support plane of $\partial_+\mathrm{CH}(\Lambda_{\rho_{(t\lambda,t\mu)}})$ then $\mathrm{P}_t$ is disjoint from $\mathbb{H}^2$, then necessary the plane  $\mathrm{P}_{\infty}$ is not vertical, otherwise for sufficiently small $t$, $\mathrm{P}_t$ would have non empty intersection with $\mathbb{H}^2$ which is a contradiction. \end{proof} 

\begin{figure}[htb]
\centering
\includegraphics[width=.7\textwidth]{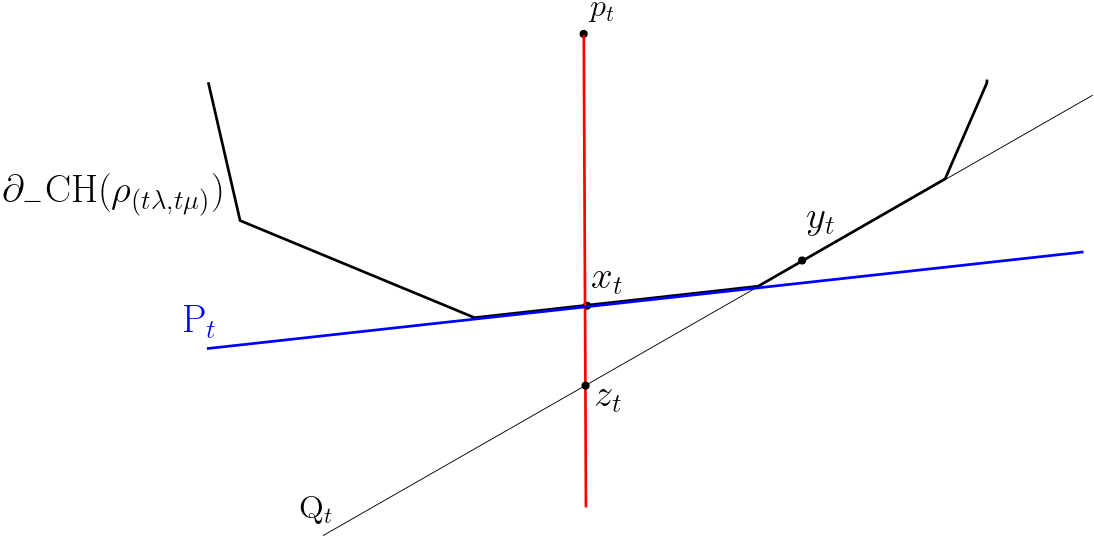}
\caption{Estimating the distance between $x_t$ and $p_t$.}\label{lower}
\end{figure}

We can show the analogous of Proposition \ref{hyperbolicsurface} in the Anti-de Sitter setting.

\begin{prop}\label{adssurface}
For $t<0$, Let $p_t$ be any point on $\mathbb{H}^2\cap \partial_+\mathrm{CH}(\Lambda_{\rho_{(t\lambda,t\mu)}})$ which converges in $\mathbb{H}^2$. Consider $\mathrm{L}_t$ the timelike geodesic in $\ads$ such that 
$$\mathrm{L}_{t}(0)=p_t, \ \ \ \mathrm{L}_t^{'}(0)=-e_3=(0,0,0,-1).$$
Then there exists a spacelike support plane $\mathrm{P}_t$ of $\partial_-\mathrm{CH}(\Lambda_{\rho_{(t\lambda,t\mu)}})$ at the intersection point $\mathrm{L}_t\cap\partial_-\mathrm{CH}(\Lambda_{\rho_{(t\lambda,t\mu)}})$ such that up to taking a subsequence, the family of planes $\tau_t\mathrm{P}_t$ converges to a spacelike plane  $\mathrm{P}_{\infty}$ in $\HP.$
\end{prop}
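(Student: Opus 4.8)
The plan is to run the argument of Proposition \ref{hyperbolicsurface} essentially line by line, replacing the input from Series's estimate (Proposition \ref{suppH}) by the Anti-de Sitter width bound of Corollary \ref{Fwidth}. This substitution is in fact a simplification: Corollary \ref{Fwidth} bounds the distance from \emph{any} point of $\partial_+\mathrm{CH}(\Lambda_{\rho_{(t\lambda,t\mu)}})$ to the lower boundary component, so, unlike in the hyperbolic case, we need not assume that $p_t$ lies on a bending line, which is why the hypothesis on $p_t$ is weaker in the present statement. So fix $|t|$ small enough that $w(\mathcal{C}_{\ads}(\rho_{(t\lambda,t\mu)}))\le C'|t|$, with $C'$ depending only on $\lambda$. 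Recall (as noted before Proposition \ref{hyperbolicsurface}) that by our normalization the image of the positive bending map is $\partial_+\mathrm{CH}(\Lambda_{\rho_{(t\lambda,t\mu)}})$ and that $\mathbb{H}^2$ is a spacelike support plane of it for all small $t$; since $p_t\in\mathbb{H}^2\cap\partial_+\mathrm{CH}(\Lambda_{\rho_{(t\lambda,t\mu)}})$ and the convex body $\mathrm{CH}(\Lambda_{\rho_{(t\lambda,t\mu)}})$ lies in an affine chart $\ads\setminus\mathrm{Q}$ with $\mathrm{Q}$ spacelike (cf.\ Definition \ref{extremal}), the direction $-e_3$ points from $p_t$ into $\mathrm{CH}(\Lambda_{\rho_{(t\lambda,t\mu)}})$. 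Hence the timelike geodesic $\mathrm{L}_t$ enters the convex hull at $p_t$ and, since $\partial_-\mathrm{CH}(\Lambda_{\rho_{(t\lambda,t\mu)}})$ is achronal, leaves it through a single point $x_t\in\partial_-\mathrm{CH}(\Lambda_{\rho_{(t\lambda,t\mu)}})$; the subarc of $\mathrm{L}_t$ from $p_t$ to $x_t$ is a timelike path joining the two boundary components, so its length $s_t$ satisfies $s_t\le d_{\ads}(p_t,x_t)\le w(\mathcal{C}_{\ads}(\rho_{(t\lambda,t\mu)}))\le C'|t|$.

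Next I would locate $x_t$ explicitly. Lifting $p_t$ to $\widetilde{p_t}\in\mathcal{A}\mathrm{d}\mathcal{S}^3$ we have $\widetilde{p_t}=(\ast,\ast,\ast,0)$, while $\langle e_3,e_3\rangle_{2,2}=-1$ and $\langle\widetilde{p_t},e_3\rangle_{2,2}=0$, so $\mathrm{L}_t(s)=[\cos s\,\widetilde{p_t}-\sin s\,e_3]$ and
$$x_t=[\cos s_t\,\widetilde{p_t}-\sin s_t\,e_3],\qquad 0\le s_t\le C'|t|.$$
Since $\tau_t$ fixes $\widetilde{p_t}$ (its last coordinate vanishes) and sends $e_3$ to $e_3/|t|$,
$$\tau_t x_t=\Big[\cos s_t\,\widetilde{p_t}-\tfrac{\sin s_t}{|t|}\,e_3\Big].$$
From $\sin s_t/|t|\le s_t/|t|\le C'$, $\cos s_t\to1$ and $\widetilde{p_t}\to\widetilde{p_\infty}$ it follows that $\tau_t x_t$ stays in a compact subset of the affine chart, and along a subsequence $\sin s_t/|t|\to c\in[0,C']$, whence $\tau_t x_t\to[\widetilde{p_\infty}-c\,e_3]$, a point of $\HP$ because $q_0(\widetilde{p_\infty}-c\,e_3)=q_0(\widetilde{p_\infty})=-1<0$ (using $q_0(e_3)=0$).

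Finally I would pass from the point to the planes, exactly as at the end of the proof of Proposition \ref{hyperbolicsurface}. Pick a spacelike support plane $\mathrm{P}_t$ of $\partial_-\mathrm{CH}(\Lambda_{\rho_{(t\lambda,t\mu)}})$ at $x_t$; one exists since $\partial_-\mathrm{CH}(\Lambda_{\rho_{(t\lambda,t\mu)}})$ is a convex spacelike surface. Working in the affine chart of the Klein model, $\tau_t x_t\in\tau_t\mathrm{P}_t$ is bounded, so the first of the two basic facts stated before Definition \ref{extremal} yields, along a further subsequence, a limiting affine plane $\mathrm{P}_\infty$ of the family $\tau_t\mathrm{P}_t$; since $\mathrm{P}_\infty$ contains $\lim\tau_t x_t\in\HP$, it meets $\HP$ and defines an $\HP$-plane. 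It remains to see that $\mathrm{P}_\infty$ is spacelike, i.e.\ contains no fiber. As in the hyperbolic setting, a spacelike support plane of $\partial_+\mathrm{CH}(\Lambda_{\rho_{(t\lambda,t\mu)}})$ and one of $\partial_-\mathrm{CH}(\Lambda_{\rho_{(t\lambda,t\mu)}})$ are disjoint; since $\mathbb{H}^2$ is a support plane of $\partial_+\mathrm{CH}(\Lambda_{\rho_{(t\lambda,t\mu)}})$ and $\tau_t$ fixes $\mathbb{H}^2$ pointwise, this forces $\tau_t\mathrm{P}_t\cap\mathbb{H}^2=\emptyset$ for all small $t$. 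If $\mathrm{P}_\infty$ contained a fiber then, already containing a point of $\HP$, it would cross $\mathbb{H}^2=\mathbb{D}^2\times\{0\}$ transversally at an interior point of the Klein model; transversal intersections are stable under small perturbations, so $\tau_t\mathrm{P}_t$ would meet $\mathbb{H}^2$ for $t$ small, a contradiction. Therefore $\mathrm{P}_\infty$ is a spacelike plane of $\HP$, which completes the argument.

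The computations above are routine; the delicate ingredients are geometric and are precisely the Anti-de Sitter counterparts of facts already invoked in the proof of Proposition \ref{hyperbolicsurface}: that $\mathrm{L}_t$ actually meets $\partial_-\mathrm{CH}(\Lambda_{\rho_{(t\lambda,t\mu)}})$ in a single point at timelike distance $O(|t|)$ from $p_t$ — which rests on $\mathrm{CH}(\Lambda_{\rho_{(t\lambda,t\mu)}})$ lying in an affine chart $\ads\setminus\mathrm{Q}$ and on the achronality of its boundary components — and that spacelike support planes of the two boundary components are disjoint, which is what ultimately rules out a fiber-containing limit plane. I expect this last point to be the main thing to nail down carefully.
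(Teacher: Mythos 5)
Your proposal is correct and follows essentially the same route as the paper: replace Series's estimate by the width bound of Corollary \ref{Fwidth}, parametrize the timelike geodesic to show $\tau_t x_t$ is bounded, then extract a convergent subsequence of support planes and rule out a fiber-containing limit by disjointness from $\mathbb{H}^2$. The extra details you supply (why $\lim \tau_t x_t$ lies in $\HP$, and the transversality argument for spacelikeness of $\mathrm{P}_\infty$) are exactly the steps the paper delegates to the hyperbolic case.
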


\begin{proof}
Let $p_t$ be a point in $\partial_+\mathrm{CH}(\Lambda_{\rho_{(t\lambda,t\mu)}})$, it follows from Corollary \ref{Fwidth} that for all $y_t\in \partial_-\mathrm{CH}(\Lambda_{\rho_{(t\lambda,t\mu)}})$
\begin{equation}\label{adsestima}
    d_{\ads}(y_t,p_t)\leq C^{'}\vert t\vert,
\end{equation} for some $C^{'}>0$. We consider $x_t$ the intersection of $\partial_-\mathrm{CH}(\Lambda_{\rho_{(t\lambda,t\mu)}})$ with the time like geodesic starting from $x_0$ with velocity $(0,0,0,1)$. Namely $$x_t=[\cos{(d_{\ads}(x_t,p_t))}\widetilde{p_t}   -\sin({d_{\ads}(x_t,p_t)})e_3],$$ where $q_{-1}(\widetilde{p_t})=-1$ and $p_t=[\widetilde{p_t}]$. Then \begin{equation}\label{verticallineADS}
    \tau_tx_t=[\cos{(d_{\ads}(x_t,p_t))}\widetilde{p_t}   -\frac{\sin({d_{\ads}(x_t,p_t)})}{\vert t\vert }e_3].
\end{equation}
As consequence, $\tau_tx_t$ is bounded, this follows from the estimate \eqref{adsestima}. Now, take $\mathrm{P}_t$ a support plane of $\partial_-\mathrm{CH}(\Lambda_{\rho_{(t\lambda,t\mu)}})$ at $x_t$. Thus up to extracting a sub-sequence, the family $\tau_t\mathrm{P}_t$ converges to a plane that we denote $\mathrm{P}_{\infty}$. Similar to the proof of Proposition \ref{hyperbolicsurface}, we observe that $\mathrm{P}_\infty$ must necessarily be a spacelike plane in $\HP$.
\end{proof}
The lower boundary component of $\mathrm{CH}(\Lambda_{\rho_{(t\lambda,t\mu)}})$ can be described in terms of negative bending cocycle as follow; we fix $y_0$ a point in the support of $\widetilde{\lambda}$ so that $p_t:=\mathrm{dev}_{(t\lambda, t\mu)}^{+}(y_0)$ is contained in the boundary of $\mathbb{H}^2\cap \partial_+\mathrm{CH}(\Lambda_{\rho_{(t\lambda,t\mu)}})$, recall that we require this condition to be able to apply the estimate of Proposition \ref{suppH}. We take $x_t$ the intersection of $\partial_{-}\mathrm{CH}(\Lambda_{\rho_{(t\lambda,t\mu)}})$ with the geodesic starting from $p_t$ and velocity $(0,0,0,-1)$. We take $\mathrm{P}_t$ the extremal support plane of $\partial_{-}\mathrm{CH}(\Lambda_{\rho_{(t\lambda,t\mu)}})$ at $x_t$ (If $x_t$ is in a bending line $l_t$, then there are two extremal support planes, in this case we choose the plane $\mathrm{P}_t$ which is in the left with respect to the orientation induced on $l_t$). By Propositions \ref{hyperbolicsurface} and \ref{adssurface}, there exist points $x_{\infty}^{\pm}$ in $\HP$ and spacelike planes $\mathrm{P}_{\infty}$ of $\HP$ such that up to subsequence we have
$$ \lim_{t \to 0^{\pm}}\tau_tx_t=x_{\infty}^{\pm},  \  \ \lim_{t \to 0^{\pm}}\tau_t\mathrm{P}_t=\mathrm{P}_{\infty}^{\pm}.$$
We will us the fact (Lemma \ref{rcompos} in the Appendix) that one can choose a family of isometries $\mathrm{A}_t^{\X}$ in $\isom(\X)$ which converges up to sebsuequence to the identity ($\X=\mathbb{H}^3$ for $t>0$ and $\X=\ads$ for $t<0$) and
\begin{equation}\label{AT}
 \mathrm{A}_t^{\X}(p_t)=x_t,\ \mathrm{A}_t^{\X}(\mathbb{H}^2)=\mathrm{P}_t.
 \end{equation}
Moreover, up to subsequence
 \begin{equation}\label{A+}
     \lim_{t \to 0^{+}}\tau_t\mathrm{A}_t^{\mathbb{H}^3}\tau_t^{-1}=\mathrm{A}_+,\ \lim_{t \to 0^{-}}\tau_t\mathrm{A}_t^{\ads}\tau_t^{-1}=\mathrm{A}_{-}
\end{equation}
The isometry $\mathrm{A}_t^{\X}$ is defined as an isometry that sends the support plane $\mathbb{H}^2$ to $\mathrm{P}_t$ and sends $p_t$ to $x_t$ (see Figure \ref{Renormalization}). Using this isometry, we can bend from $\mathrm{P}_t$ along $\mu$ to obtain the lower boundary component of $\mathrm{CH}(\Lambda_{\rho_{(t\lambda,t\mu)}})$. More precisely, we have:
\begin{equation}\label{bordinferieure}
    \mathrm{A}_t^{\X}\mathrm{b}_{\vert t\vert\mu,-}^{\X}(\widetilde{\Sigma})=\partial_{-}\mathrm{CH}(\Lambda_{\rho_{(t\lambda,t\mu)}}),
\end{equation}
where $\mathrm{b}_{\vert t\vert\mu,-}^{\X}(x)=\B^{\X}_{\vert t\vert\mu,-}(y_0,x)(\mathrm{dev}_{(t\lambda,t\mu)}^-(x)) $ and $\B^{\X}_{\vert t\vert\mu,-}$ is the negative cocycle associated to the complete hyperbolic structure $(\mathrm{dev}^-_{(t\lambda, t\mu)}, \sigma^-_{(t\lambda,t\mu)})$. 
\begin{figure}[htb]
\centering
\includegraphics[width=.8\textwidth]{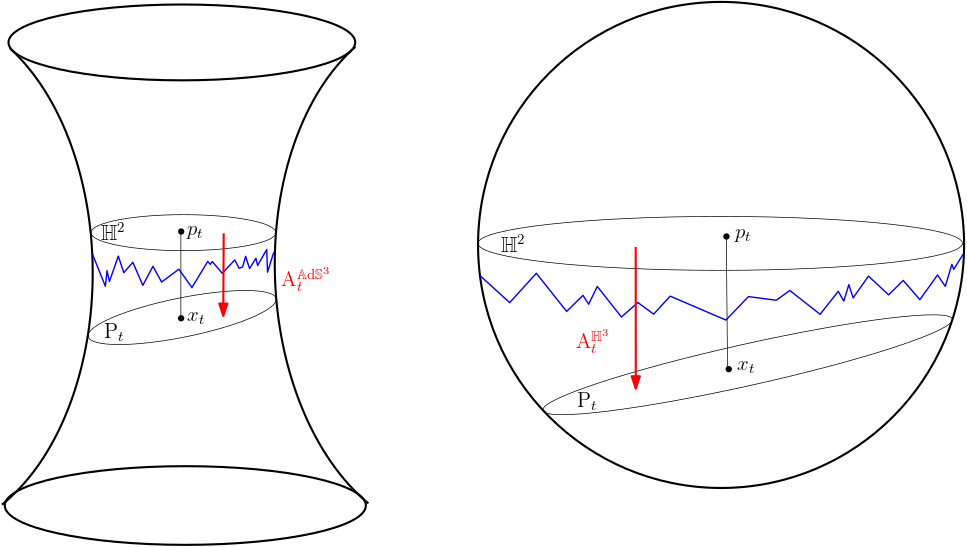}
\caption{An illustrated picture explaining the role of the isometry $\mathrm{A}_t^{\mathrm{X}}$ which sends $p_t$ to $x_t$ and $\mathbb{H}^2$ to $\mathrm{P}_t$.}\label{Renormalization}
\end{figure}
The following Proposition shows that it is not necessary to take a subsequence.
\begin{prop}\label{soussuite}
 Keeping the same notation as above, the limits of $\tau_tx_t$, $\tau_t\mathrm{P}_t$ and $\tau_t\mathrm{A}_t^\X\tau_t^{-1}$ do not depend on the extracted subsequence as $t\to0^+$ and $t\to 0^-.$  
\end{prop}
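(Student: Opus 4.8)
The plan is to identify each of the three limits with data intrinsically attached to the Half-pipe convex core structure $\rho_{(\lambda,\mu)}^{\HP}$ and to the point $p_\infty:=\lim_{t\to 0}p_t$, and then to observe that this data involves no choice of subsequence. First I would record the cheap inputs. Since $\mathrm{dev}_{(t\lambda,t\mu)}^{+}\to\mathrm{dev}_{(\lambda,\mu)}^{0}$ uniformly on compact sets, the point $p_t=\mathrm{dev}_{(t\lambda,t\mu)}^{+}(y_0)$ converges to $p_\infty:=\mathrm{dev}_{(\lambda,\mu)}^{0}(y_0)\in\mathbb{H}^2$ with no subsequence extracted, and $\tau_t p_t=p_t$ because $\tau_t$ fixes $\mathbb{H}^2$ pointwise. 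From the explicit formulas \eqref{verticallineH} and \eqref{verticallineADS} the rescaled geodesics $\tau_t\mathrm{L}_t$ then converge, again without subsequences, to the fiber $\mathrm{F}_\infty:=\{p_\infty\}\times\mathbb{R}\subset\HP$. In particular every subsequential limit of $\tau_t x_t$ lies on the fixed fiber $\mathrm{F}_\infty$.

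The heart of the argument is to show that $\tau_t\partial_-\mathrm{CH}(\Lambda_{\rho_{(t\lambda,t\mu)}})$ converges, for the full families $t\to 0^{\pm}$, to the fixed surface $\partial_-\mathrm{CH}(\Lambda_{\rho_{(\lambda,\mu)}^{\HP}})$. I would start from \eqref{bordinferieure}, which reads $\tau_t\partial_-\mathrm{CH}(\Lambda_{\rho_{(t\lambda,t\mu)}})=\bigl(\tau_t\mathrm{A}_t^{\X}\tau_t^{-1}\bigr)\bigl(\tau_t\mathrm{b}_{\vert t\vert\mu,-}^{\X}(\widetilde{\Sigma})\bigr)$, together with the negative-bending analogue of Proposition \ref{Sx1}: running the same proof with the negative bending map built from $(\mathrm{dev}_{(t\lambda,t\mu)}^{-},\sigma_{(t\lambda,t\mu)}^{-})$, which also converges to $(\mathrm{dev}_{(\lambda,\mu)}^{0},k_{\lambda,\mu})$, gives $\tau_t\mathrm{b}_{\vert t\vert\mu,-}^{\X}\to\mathrm{b}_{\mu,-}^{\HP}$ uniformly on compact sets of $\widetilde{\Sigma}$. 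Hence along any subsequence for which $\tau_t\mathrm{A}_t^{\X}\tau_t^{-1}\to\mathrm{A}_{\pm}$ --- such subsequences exist by Propositions \ref{hyperbolicsurface} and \ref{adssurface} together with \eqref{A+} --- the rescaled lower boundary converges uniformly on compact sets to $S_{\pm}:=\mathrm{A}_{\pm}\bigl(\mathrm{b}_{\mu,-}^{\HP}(\widetilde{\Sigma})\bigr)$. I would then identify $S_{\pm}$ with $\partial_-\mathrm{CH}(\Lambda_{\rho_{(\lambda,\mu)}^{\HP}})$ as follows. By the computation of Theorem \ref{H} applied to $\rho_{\vert t\vert\mu,-}^{\X}$ one has $\tau_t\rho_{\vert t\vert\mu,-}^{\X}\tau_t^{-1}\to\rho_{\mu,-}^{\HP}$; feeding this, $\tau_t\mathrm{A}_t^{\X}\tau_t^{-1}\to\mathrm{A}_{\pm}$ and Theorem \ref{H} into the conjugation relation $\rho_{(t\lambda,t\mu)}=\mathrm{A}_t^{\X}\rho_{\vert t\vert\mu,-}^{\X}(\mathrm{A}_t^{\X})^{-1}$ (which follows from \eqref{bordinferieure} and Proposition \ref{rep}) gives $\mathrm{A}_{\pm}\rho_{\mu,-}^{\HP}\mathrm{A}_{\pm}^{-1}=\rho_{(\lambda,\mu)}^{\HP}$. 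Thus $S_{\pm}$ is a properly embedded, $\rho_{(\lambda,\mu)}^{\HP}$-invariant pleated surface, bent along $\mu$, whose ideal boundary --- using Corollary \ref{exx} and the uniqueness of the $\rho_{(\lambda,\mu)}^{\HP}$-invariant graph over $\partial\mathbb{D}^2$ --- is exactly $\Lambda_{\rho_{(\lambda,\mu)}^{\HP}}$; moreover, since $\mathbb{H}^2$ is a support plane of $\partial_+\mathrm{CH}(\Lambda_{\rho_{(\lambda,\mu)}^{\HP}})$ (Proposition \ref{Sx1}) disjoint from every support plane of $\partial_-\mathrm{CH}(\Lambda_{\rho_{(t\lambda,t\mu)}})$, the surface $S_{\pm}$ lies on the side of $\mathbb{H}^2$ containing $\mathrm{CH}(\Lambda_{\rho_{(\lambda,\mu)}^{\HP}})$. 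These properties, combined with the description of the lower boundary component of the $\HP$-convex core as the graph of the appropriate envelope of $\psi_\mu$ on $\mathbb{S}^1$ (the argument of Proposition \ref{half} applied to $\mu$, cf. \cite[Lemma 2.41]{barbotfillastre}), force $S_{+}=S_{-}=\partial_-\mathrm{CH}(\Lambda_{\rho_{(\lambda,\mu)}^{\HP}})$; as every convergent subsequence produces the same limit, the full family converges.

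Once the limiting surface is the fixed surface $\partial_-\mathrm{CH}(\Lambda_{\rho_{(\lambda,\mu)}^{\HP}})$ --- which is spacelike, hence contains no fiber and meets $\mathrm{F}_\infty$ transversally in a single point $x_\infty$ --- the three limits are pinned down. By the first step and this transversality, $\tau_t x_t=\tau_t\mathrm{L}_t\cap\tau_t\partial_-\mathrm{CH}(\Lambda_{\rho_{(t\lambda,t\mu)}})\to x_\infty$, independently of the subsequence. The ``left'' extremal support plane $\mathrm{P}_t$ at $x_t$ then converges to the unique extremal support plane $\mathrm{P}_\infty$ of $\partial_-\mathrm{CH}(\Lambda_{\rho_{(\lambda,\mu)}^{\HP}})$ at $x_\infty$ compatible with the left convention, the convention being preserved in the limit because the orientations of the leaves of $\widetilde{\lambda}$ and $\widetilde{\mu}$ are fixed combinatorial data. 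Finally, $\tau_t\mathrm{A}_t^{\X}\tau_t^{-1}$ sends $\mathbb{H}^2$ to $\tau_t\mathrm{P}_t$ and $p_t$ to $\tau_t x_t$, so any limit $\mathrm{A}_{\pm}$ sends $\mathbb{H}^2$ to $\mathrm{P}_\infty$ and $p_\infty$ to $x_\infty$, and the residual one-parameter freedom is removed by the same orientation normalization carried through Lemma \ref{rcompos}; hence $\mathrm{A}_{+}=\mathrm{A}_{-}$. The main obstacle I anticipate is the identification step for $S_{\pm}$ --- specifically, verifying that the subsequential limit surface genuinely sits inside $\mathrm{CH}(\Lambda_{\rho_{(\lambda,\mu)}^{\HP}})$ and therefore coincides with its lower boundary component --- together with the bookkeeping ensuring that the left/orientation normalization of Lemma \ref{rcompos} survives the limit and kills the residual isometry ambiguity.
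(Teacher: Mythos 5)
Your treatment of the first limit is essentially the paper's own argument, run forwards rather than by contradiction: the paper also pins down $x_\infty$ by observing that any subsequential limit $\mathrm{A}_\pm$ of $\tau_t\mathrm{A}_t^{\X}\tau_t^{-1}$ carries $\mathrm{b}_{\mu,-}^{\HP}(\widetilde{\Sigma})$ onto $\partial_-\mathrm{CH}(\Lambda_{\rho^{\HP}_{(\lambda,\mu)}})$ (via Proposition \ref{half}), which is a graph over $\mathbb{D}^2$ and hence meets the fiber over $p_\infty$ in exactly one point. Your explicit verification that $\mathrm{A}_\pm\rho_{\mu,-}^{\HP}\mathrm{A}_\pm^{-1}=\rho^{\HP}_{(\lambda,\mu)}$, needed to identify $S_\pm$ with the lower boundary of the convex core of $\rho^{\HP}_{(\lambda,\mu)}$ rather than of $\rho_{\mu,-}^{\HP}$, fills in a step the paper leaves implicit.

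Where you genuinely diverge is the order of the remaining two limits, and this is where the argument does not quite close. The paper first secures convergence of $\tau_t\mathrm{A}_t^{\X}\tau_t^{-1}$ (from the convergence of $\tau_tx_t$ together with $\mathrm{P}_t\to\mathbb{H}^2$, via Lemma \ref{rcompos}) and only then reads off $\tau_t\mathrm{P}_t=\tau_t\mathrm{A}_t^{\X}\tau_t^{-1}(\mathbb{H}^2)$. You go the other way, and the step ``the left extremal support plane at $x_t$ converges to the left extremal support plane at $x_\infty$'' is asserted but not proved. The danger is precisely the configuration where $x_\infty$ lies on a bending line $l_\infty$ of $\partial_-\mathrm{CH}(\Lambda_{\rho^{\HP}_{(\lambda,\mu)}})$ while, for finite $t$, $x_t$ sits in the interior of a totally geodesic face (so the ``left'' convention is never invoked before the limit): along different subsequences that face could a priori be either of the two faces adjacent to $l_\infty$, yielding two distinct extremal support planes in the limit. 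You must either rule this out or revert to the paper's order of deduction. Finally, your concluding $\mathrm{A}_+=\mathrm{A}_-$ overshoots the statement, which only asserts subsequence-independence separately for $t\to0^+$ and $t\to0^-$; in the paper that equality is the content of the subsequent Lemma \ref{adju} and is obtained only after composing $\mathrm{A}_t^{\X}$ with suitable rotations fixing $p_t$ --- the stabilizer of the pair $(p_\infty,\mathbb{H}^2)$ is a full circle, and the normalization inherited from Lemma \ref{rcompos} is not, as written, shown to select the same element on both sides of $t=0$ (see also Remark \ref{comutation}, which the paper only invokes after that adjustment).
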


\begin{proof}
First, let us show that $\tau_tx_t$ has a unique accumulation point. We will focus on the limit from the hyperbolic part ($t>0$), and note that the Anti-de Sitter ($t<0$) case can be proved in the same way.

By contradiction, assume that there are two accumulation points of $\tau_tx_t$, denoted by $x_{\infty,1}$ and $x_{\infty,2}$. Thus, there exist two subsequences, denoted as $(\tau_{t_{k_1}}x_{t_{k_1}})_{k_1}$ and $(\tau_{t_{k_2}}x_{t_{k_2}})_{k_2}$, that converge to $x_{\infty,1}$ and $x_{\infty,2}$, respectively. Since the plane $\mathrm{P}_t$ converges to $\mathbb{H}^2$, then Lemma \ref{rcompos} implies that there are subsequences $(\mathrm{A}_{t_{k_1}}^{\mathbb{H}^3})_{k_1}$ and $(\mathrm{A}_{t_{k_2}}^{\mathbb{H}^3})_{k_2}$ such that $\lim_{t_{k_1} \to 0^+}\tau_{t_{k_1}}\mathrm{A}_{t_{k_1}}^{\mathbb{H}^3}\tau_{t_{k_1}}^{-1} :=\mathrm{A}_1$, $\lim_{t_{k_2} \to 0^+}\tau_{t_{k_2}}\mathrm{A}_{t_{k_2}}^{\mathbb{H}^3}\tau_{t_{k_2}}^{-1} :=\mathrm{A}_2$ and 
\begin{equation}\label{eqss}
    \mathrm{A}_{t_{k_1}}^{\mathbb{H}^3}(p_{t_{k,1}})=x_{t_{k_1}}, \ \mathrm{and}\ \mathrm{A}_{t_{k_2}}^{\mathbb{H}^3}(p_{t_{k,2}})=x_{t_{k_2}}.\end{equation} Recall that $p_t=\mathrm{dev}_{(t\lambda, t\mu)}^{+}(y_0)$ hence $p_t$ converges to $p_{\infty}:=\mathrm{dev}_{(\lambda,\mu)}^0(y_0)$ as $t$ goes to $0$, see Notation \ref{notation}. Thus passing to the limit in \eqref{eqss}, we get
$$\mathrm{A}_1(p_{\infty})=x_{\infty,1}, \  \mathrm{and}\ \mathrm{A}_2(p_{\infty})=x_{\infty,2}.$$
By Proposition \ref{half}, the image of the maps $\mathrm{A}_1\mathrm{b}_{\mu,-}^{\HP}$ and $\mathrm{A}_2\mathrm{b}_{\mu,-}^{\HP}:\widetilde{\Sigma}\to \HP$ is necessarily $\partial_-\mathrm{CH}(\Lambda_{\rho_{(\lambda,\mu)}^{\HP}}),$ where we recall that $\rho_{(\lambda,\mu)}^{\HP}=\lim_{t \to 0} \tau_t \rho_{(t\lambda,t\mu)}\tau_t^{-1}$. We claim now that $x_{\infty,1}=x_{\infty,2}$; indeed this follows from the fact that $\partial_-\mathrm{CH}(\Lambda_{\rho_{(\lambda,\mu)}^{\HP}})$ is a graph over $\widetilde{\Sigma}$ and the fact that both $x_{\infty,1}$ and $x_{\infty,2}$ belong to $\partial_-\mathrm{CH}(\Lambda_{\rho_{(\lambda,\mu)}^{\HP}})\cap \mathrm{F}$, where $\mathrm{F}$ is the fiber in $\HP$ over $p_{\infty}$.
Now, since $\tau_tx_t$ converges and $\mathrm{P}_t$ converges to $\mathbb{H}^2$, Lemma \ref{rcompos} allows us to choose a family of isometries $\mathrm{A}_t^{\mathbb{H}^3}$ such that $\tau_t\mathrm{A}_t^{\mathbb{H}^3}\tau_t^{-1}$ converges to an isometry of $\HP$, and  
$$\mathrm{A}_t^{\mathbb{H}^3}(p_t)=x_t, \ \mathrm{and}\ \mathrm{A}_t^{\mathbb{H}^3}(\mathbb{H}^2)=\mathrm{P}_t.$$
On the other hand, since, $\tau_t$ fixes $\mathbb{H}^2$, then the family of planes $\tau_t\mathrm{P}_t=\tau_t\mathrm{A}_t^{\mathbb{H}^3}\tau_t^{-1}(\mathbb{H}^2)$ will be automatically convergent. This completes the proof.\end{proof}

Note that the choice of the isometry $\mathrm{A}_t^{\X}$ in identity \eqref{AT} is not unique. In fact we can take any isometry of the form $\mathrm{A}_t^{\X}\mathrm{R}$ where $\mathrm{R}$ is an element of $\isom(\mathbb{H}^2)$ such that $ \mathrm{R}(p_t)=p_t  $. The next Lemma shows that we can manage to change $\mathrm{A}_t^{\X}$ in such way that the rescaled limits coincide as $t\to0^+$ and $t\to0^-$.
\begin{lemma}\label{adju}
    Up to composing $\mathrm{A}_t^{\X}$ with an element $\mathrm{R}_t$ in $\isom(\mathbb{H}^2)$ fixing $p_t$ we have:
   \begin{equation}\label{ajusememnt}
       \lim_{t \to 0^{+}}\tau_t\mathrm{A}_t^{\mathbb{H}^3}\tau_t^{-1}= \lim_{t \to 0^{-}}\tau_t\mathrm{A}_t^{\ads}\tau_t^{-1}\end{equation}
\end{lemma}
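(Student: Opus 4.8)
The plan is to exploit the freedom in the choice of $\mathrm{A}_t^{\X}$ in \eqref{AT}. Once $\mathrm{P}_t$ and $x_t$ are fixed, $\mathrm{A}_t^{\X}$ is determined up to right multiplication by an element of $\isom(\mathbb{H}^2)$ fixing $p_t$, and since $\tau_t$ commutes with $\isom(\mathbb{H}^2)$, replacing $\mathrm{A}_t^{\X}$ by $\mathrm{A}_t^{\X}\mathrm{R}_t$ with $\mathrm{R}_t\in\isom(\mathbb{H}^2)$ fixing $p_t$ and $\mathrm{R}_t\to\mathrm{R}$ changes the rescaled limit from $\mathrm{A}_{\pm}$ to $\mathrm{A}_{\pm}\mathrm{R}$. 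Hence it suffices to prove that $\mathrm{g}:=\mathrm{A}_-^{-1}\mathrm{A}_+\in\isom(\HP)$ lies in $\isom(\mathbb{H}^2)$ and fixes $p_{\infty}:=\lim_{t\to0}p_t=\mathrm{dev}_{(\lambda,\mu)}^0(y_0)$. Granting this, set $\mathrm{R}_{\infty}:=\mathrm{g}$, choose any family $\mathrm{S}_t\in\isom(\mathbb{H}^2)$ with $\mathrm{S}_t(p_{\infty})=p_t$ and $\mathrm{S}_t\to\mathrm{Id}$, put $\mathrm{R}_t:=\mathrm{S}_t\mathrm{R}_{\infty}\mathrm{S}_t^{-1}$ (which fixes $p_t$, lies in $\isom(\mathbb{H}^2)$, and tends to $\mathrm{R}_{\infty}$), and replace $\mathrm{A}_t^{\ads}$ by $\mathrm{A}_t^{\ads}\mathrm{R}_t$ for $t<0$: the new isometry still satisfies \eqref{AT}, and $\tau_t(\mathrm{A}_t^{\ads}\mathrm{R}_t)\tau_t^{-1}=(\tau_t\mathrm{A}_t^{\ads}\tau_t^{-1})\mathrm{R}_t\to\mathrm{A}_-\mathrm{R}_{\infty}=\mathrm{A}_+$, which is \eqref{ajusememnt}.

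First I would check that $\mathrm{g}$ fixes $p_{\infty}$. Since $\tau_t$ fixes $\mathbb{H}^2$ pointwise, $\tau_t\mathrm{A}_t^{\X}\tau_t^{-1}(p_t)=\tau_t\mathrm{A}_t^{\X}(p_t)=\tau_t x_t$, so passing to the limit gives $\mathrm{A}_{\pm}(p_{\infty})=x_{\infty}^{\pm}$ with $x_{\infty}^{\pm}:=\lim_{t\to0^{\pm}}\tau_t x_t$. Exactly as in the proof of Proposition \ref{soussuite}, $x_{\infty}^{\pm}$ lies both on $\partial_-\mathrm{CH}(\Lambda_{\rho_{(\lambda,\mu)}^{\HP}})$ and on the fiber of $\HP$ over $p_{\infty}$; since that boundary component is a graph over $\widetilde{\Sigma}$, this intersection is a single point, so $x_{\infty}^+=x_{\infty}^-$, and therefore $\mathrm{g}(p_{\infty})=\mathrm{A}_-^{-1}(x_{\infty}^+)=\mathrm{A}_-^{-1}(x_{\infty}^-)=p_{\infty}$.

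The main point is to show $\mathrm{g}$ stabilizes $\mathbb{H}^2$, equivalently $\mathrm{P}_{\infty}^+=\mathrm{P}_{\infty}^-$, where $\mathrm{P}_{\infty}^{\pm}:=\lim_{t\to0^{\pm}}\tau_t\mathrm{P}_t=\mathrm{A}_{\pm}(\mathbb{H}^2)$. Here I would use that the base point is free to choose: since neither $\lambda$ nor $\mu$ fills $\Sigma$ on its own, $\vert\lambda\vert\not\subseteq\vert\mu\vert$, so (possibly after moving the base point of the $\lambda$-bending cocycle) one may take $y_0\in\vert\widetilde{\lambda}\vert\setminus\vert\widetilde{\mu}\vert$ while keeping $p_t$ on the boundary of $\mathbb{H}^2\cap\partial_+\mathrm{CH}(\Lambda_{\rho_{(t\lambda,t\mu)}})$, as is needed to apply Proposition \ref{suppH}. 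With this choice $\mathrm{b}_{\mu,-}^{\HP}(y_0)=\mathrm{dev}_{(\lambda,\mu)}^0(y_0)=p_{\infty}$ lies in the relative interior of a totally geodesic (spacelike) face of the pleated surface $\mathrm{b}_{\mu,-}^{\HP}(\widetilde{\Sigma})$, because $y_0\notin\widetilde{\mu}$; applying $\mathrm{A}_{\pm}$ and using that $\mathrm{A}_{\pm}\mathrm{b}_{\mu,-}^{\HP}(\widetilde{\Sigma})=\partial_-\mathrm{CH}(\Lambda_{\rho_{(\lambda,\mu)}^{\HP}})$ (Propositions \ref{half} and \ref{soussuite}), the point $x_{\infty}$ lies in the relative interior of a totally geodesic face of $\partial_-\mathrm{CH}(\Lambda_{\rho_{(\lambda,\mu)}^{\HP}})$, where the support plane (in the sense of Definition \ref{extremal}) is unique, being forced to be the affine span of that face. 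On the other hand, by \eqref{bordinferieure} we may write $\tau_t\partial_-\mathrm{CH}(\Lambda_{\rho_{(t\lambda,t\mu)}})=(\tau_t\mathrm{A}_t^{\X}\tau_t^{-1})\bigl(\tau_t\mathrm{b}_{\vert t\vert\mu,-}^{\X}(\widetilde{\Sigma})\bigr)$, which converges to $\mathrm{A}_{\pm}\mathrm{b}_{\mu,-}^{\HP}(\widetilde{\Sigma})=\partial_-\mathrm{CH}(\Lambda_{\rho_{(\lambda,\mu)}^{\HP}})$ by the negative-bending analogue of Proposition \ref{Sx1} (together with $(\mathrm{dev}_{(t\lambda,t\mu)}^{-},\sigma_{(t\lambda,t\mu)}^{-})\to(\mathrm{dev}_{(\lambda,\mu)}^0,k_{\lambda,\mu})$ and $\tau_t\mathrm{A}_t^{\X}\tau_t^{-1}\to\mathrm{A}_{\pm}$). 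Since each $\mathrm{P}_t$ is a support plane of $\partial_-\mathrm{CH}(\Lambda_{\rho_{(t\lambda,t\mu)}})$ at $x_t$ and, by Propositions \ref{hyperbolicsurface} and \ref{adssurface}, the limits $\mathrm{P}_{\infty}^{\pm}$ are spacelike planes of $\HP$, they are support planes of $\partial_-\mathrm{CH}(\Lambda_{\rho_{(\lambda,\mu)}^{\HP}})$ at $x_{\infty}$; by the uniqueness just noted, $\mathrm{P}_{\infty}^+=\mathrm{P}_{\infty}^-$. Hence $\mathrm{g}$ preserves $\mathbb{H}^2$, and since the elements of $\isom(\HP)$ preserving $\mathbb{H}^2=\{x_3=0\}$ are exactly the block-diagonal ones, $\mathrm{Stab}_{\isom(\HP)}(\mathbb{H}^2)=\isom(\mathbb{H}^2)$, so $\mathrm{g}\in\isom(\mathbb{H}^2)$, which finishes the argument.

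I expect the last step to be the main obstacle: it combines the convergence of the rescaled lower boundary components to the Half-pipe one (hence it uses the negative-bending version of Proposition \ref{Sx1} and the subsequence-independence from Proposition \ref{soussuite}) with the geometric input that the common limit point $x_{\infty}$ can be placed in a totally geodesic part of the limiting pleated surface, which is exactly what the choice $y_0\notin\vert\widetilde{\mu}\vert$ secures and which rules out the ambiguity caused by the two extremal support planes along a bending line.
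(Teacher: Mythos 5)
Your proof is correct and follows essentially the same route as the paper: both reduce the statement to showing that the discrepancy $\mathrm{A}_{+}^{-1}\mathrm{A}_{-}$ (in your notation, its inverse $\mathrm{g}=\mathrm{A}_{-}^{-1}\mathrm{A}_{+}$) is an element of $\isom(\mathbb{H}^2)$ fixing $p_{\infty}$, and then absorb it by composing with a convergent family of rotations $\mathrm{R}_t$ fixing $p_t$. The one point where you do more than the paper is the verification that $\mathrm{A}_{+}(\mathbb{H}^2)=\mathrm{A}_{-}(\mathbb{H}^2)$, i.e.\ that the two one-sided rescaled limits of the support planes $\tau_t\mathrm{P}_t$ coincide: the paper obtains this simply by ``passing to the limit in \eqref{AT}'', whereas your uniqueness-of-the-support-plane argument at a point in the open face of the limiting pleated surface (after arranging $y_0\in\vert\widetilde{\lambda}\vert\setminus\vert\widetilde{\mu}\vert$, which is possible since a multicurve alone never fills) supplies an explicit justification of that step.
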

\begin{proof}
    Let $\mathrm{A}_{+}$, $ \mathrm{A}_-$ be elements of $\isom(\HP)$ such that 
    $$\lim_{t \to 0^{+}}\tau_t\mathrm{A}_t^{\mathbb{H}^3}\tau_t^{-1}=\mathrm{A}_+, \lim_{t \to 0^{-}}\tau_t\mathrm{A}_t^{\ads}\tau_t^{-1}=\mathrm{A}_{-}.$$ 
    As in the proof of Proposition \ref{soussuite}, we keep denoting by $p_{\infty}$ the limit of $p_{t}$ as $t$ goes to $0$. Then from the proof of the Proposition \ref{soussuite}, the limits $\lim_{t \to 0^+}\tau_tx_t$ and $\lim_{t \to 0^-}\tau_tx_t$ are contained in $\partial_-\mathrm{CH}(\Lambda_{\rho_{(\lambda,\mu)}^{\HP}})\cap \mathrm{F}$ where $\mathrm{F}$ is the fiber in $\HP$ over $p_{\infty}$, hence the two limits are equal and we will denote them by $x_{\infty}.$
    Passing to the limit in the identity \eqref{AT}, we get $$\mathrm{A}_{+}^{-1}\mathrm{A}_{-}(p_{\infty})=p_{\infty}, \ \ \mathrm{A}_{+}^{-1}\mathrm{A}_{-}(\mathbb{H}^2)=\mathbb{H}^2.$$
    Therefore we can view $\mathrm{A}_{+}^{-1}\mathrm{A}_{-}$ as a rotation of $\mathbb{H}^2$ fixing $p_{\infty}$. Since $p_t$ converges to $p_{\infty}$ then it not difficult to construct a family of rotations $\mathrm{R}_t$ of $\isom(\mathbb{H}^2)$ that fix $p_t$ and converge to $\mathrm{A}_{+}^{-1} \mathrm{A}_{-}$ (for example, by using Lemma \ref{rcompos} where the family of planes is constant and equal to $\mathbb{H}^2$). As consequence the family of isometries defined by
    $$\begin{cases}
\mathrm{A}_t^{\mathbb{H}^3}\mathrm{R}_t &\text{ if}\ t>0\\
\mathrm{A}_t^{\ads} &\text{ if }\ t<0
\end{cases}$$ 
satisfies the identities \eqref{AT} and \eqref{ajusememnt}. This concludes the proof.
\end{proof}

Combining Propositions \ref{Sx1} and Lemma \ref{adju}, we get the principal result of this section about the transitions of the boundary components of $\mathrm{CH}(\Lambda_{\rho_{(t\lambda,t\mu)}})$.

\begin{theorem}[Transition of pleated surfaces]\label{Sx0}
Let $\mathrm{b}_{\vert t\vert\lambda,+}^{\X}$ and $\mathrm{b}_{\vert t\vert\mu,-}^{\X}$ be the positive and negative bending maps associated to the complete hyperbolic structures $(\mathrm{dev}_{(t\lambda, t\mu)}^-, \sigma_{(t\lambda,t\mu)}^-)$ and $(\mathrm{dev}_{(t\lambda, t\mu)}^+, \sigma_{(t\lambda,t\mu)}^+)$, respectively. Consider also the family $\mathrm{A}_t^{\X}$ obtained in Lemma \ref{adju}. Then:
\begin{itemize}
\item $\tau_t \mathrm{b}_{\vert t\vert\mu,-}^{\X}$ converges uniformly to $\mathrm{b}_{\lambda,+}^{\HP}$ on compact sets of $\widetilde{\Sigma}$, (see Proposition \ref{Sx1}).
\item $\tau_t \mathrm{A_t}^{\X}\mathrm{b}_{\vert t\vert\mu,-}^{\X}$ converges uniformly to $\mathrm{A}^{\HP}\mathrm{b}_{\mu,-}^{\HP}$ on compact sets of $\widetilde{\Sigma}$, where $\mathrm{A}^{\HP}=\lim_{t \to 0^{\pm}}\tau_t\mathrm{A}_t^{\X}\tau_t^{-1}$.
\end{itemize}
Here, $\mathrm{b}_{\lambda,+}^{\HP}$ and $\mathrm{b}_{\mu,-}^{\HP}$ are the positive and negative bending maps associated to the complete hyperbolic structure $(\mathrm{dev}_{(\lambda, \mu)}^0, k_{\lambda,\mu})$.
\end{theorem}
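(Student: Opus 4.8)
The plan is to obtain Theorem \ref{Sx0} by combining Proposition \ref{Sx1} with Lemma \ref{adju}; almost everything substantive is already contained in those two results, so what remains is a short assembly plus a routine continuity argument. The convergence of the upper boundary component is exactly Proposition \ref{Sx1}. For the analogous statement about the negative bending maps, I would rerun the proof of Proposition \ref{Sx1} verbatim, with the negative bending cocycle $\B^{\X}_{\vert t\vert\mu,-}$ built from the induced hyperbolic structure $(\mathrm{dev}^-_{(t\lambda,t\mu)},\sigma^-_{(t\lambda,t\mu)})$ on the lower boundary component in place of the positive one. The two inputs are: that $(\mathrm{dev}^-_{(t\lambda,t\mu)},\sigma^-_{(t\lambda,t\mu)})$ converges to $(\mathrm{dev}^0_{(\lambda,\mu)},k_{\lambda,\mu})$, which follows from Theorems \ref{series} and \ref{ads} together with continuity of the map sending a convex core structure to the induced metric on either boundary component; and that each factor $\tau_t\R^{\X}(-\vert t\vert a_i,l_i^t)\tau_t^{-1}$ converges, which is Proposition \ref{rotation} applied with angle $-\vert t\vert a_i$. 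The case distinction according to whether the base point lands on a ``new'' leaf of $\widetilde{\mu}$ is identical to the one in Proposition \ref{Sx1}. This gives $\tau_t\mathrm{b}^{\X}_{\vert t\vert\mu,-}\to\mathrm{b}^{\HP}_{\mu,-}$ uniformly on compact subsets of $\widetilde{\Sigma}$.

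Next I would factor the composed map as
\[
\tau_t\,\mathrm{A}_t^{\X}\,\mathrm{b}^{\X}_{\vert t\vert\mu,-}
=\bigl(\tau_t\mathrm{A}_t^{\X}\tau_t^{-1}\bigr)\circ\bigl(\tau_t\,\mathrm{b}^{\X}_{\vert t\vert\mu,-}\bigr),
\]
the composition being taken in $\mathbb{RP}^3$. By Lemma \ref{adju} the first factor converges, both as $t\to 0^+$ and as $t\to 0^-$, to a single isometry $\mathrm{A}^{\HP}=\lim_{t\to 0^{\pm}}\tau_t\mathrm{A}_t^{\X}\tau_t^{-1}\in\isom(\HP)$, and by the previous step the second factor converges uniformly on compacts to $\mathrm{b}^{\HP}_{\mu,-}$. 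Fixing a compact $K\subset\widetilde{\Sigma}$, the sets $\tau_t\mathrm{b}^{\X}_{\vert t\vert\mu,-}(K)$ are eventually contained in a fixed compact neighbourhood of the compact set $\mathrm{b}^{\HP}_{\mu,-}(K)$ in $\mathbb{RP}^3$, on which the projective transformations $\tau_t\mathrm{A}_t^{\X}\tau_t^{-1}$ act uniformly continuously once they are close to $\mathrm{A}^{\HP}$; a triangle-inequality argument then yields $\tau_t\,\mathrm{A}_t^{\X}\,\mathrm{b}^{\X}_{\vert t\vert\mu,-}\to\mathrm{A}^{\HP}\mathrm{b}^{\HP}_{\mu,-}$ uniformly on $K$, which is the second item.

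Finally, to make the geometric content explicit, I would invoke Propositions \ref{holHfuch}, \ref{Mess} and the identity \eqref{bordinferieure} to identify $\mathrm{b}^{\X}_{\vert t\vert\lambda,+}(\widetilde{\Sigma})=\partial_+\mathrm{CH}(\Lambda_{\rho_{(t\lambda,t\mu)}})$ and $\mathrm{A}_t^{\X}\mathrm{b}^{\X}_{\vert t\vert\mu,-}(\widetilde{\Sigma})=\partial_-\mathrm{CH}(\Lambda_{\rho_{(t\lambda,t\mu)}})$, and Proposition \ref{half} (applied to $(\mathrm{dev}^0_{(\lambda,\mu)},k_{\lambda,\mu})$) together with the identification of limits in the proof of Proposition \ref{soussuite} to identify $\mathrm{b}^{\HP}_{\lambda,+}(\widetilde{\Sigma})=\partial_+\mathrm{CH}(\Lambda_{\rho^{\HP}_{(\lambda,\mu)}})$ and $\mathrm{A}^{\HP}\mathrm{b}^{\HP}_{\mu,-}(\widetilde{\Sigma})=\partial_-\mathrm{CH}(\Lambda_{\rho^{\HP}_{(\lambda,\mu)}})$; the theorem then says precisely that $\tau_t\partial_{\pm}\mathrm{CH}(\Lambda_{\rho_{(t\lambda,t\mu)}})$ converges to $\partial_{\pm}\mathrm{CH}(\Lambda_{\rho^{\HP}_{(\lambda,\mu)}})$. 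I do not expect a serious obstacle: the genuinely hard work is upstream — the width estimates of Proposition \ref{suppH} and Corollary \ref{Fwidth}, which feed into Propositions \ref{hyperbolicsurface} and \ref{adssurface} and hence into Lemma \ref{adju}, in particular the delicate point that the half-pipe limit $\mathrm{A}^{\HP}$ is the same on both sides $t\to 0^{\pm}$ — and this is already available; the only mildly technical point is the verification, sketched above, that uniform-on-compacts convergence of the bending maps survives post-composition with projective transformations converging in the Chabauty topology.
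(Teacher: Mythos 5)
Your proposal is correct and follows exactly the route the paper intends: the theorem is stated there as a direct combination of Proposition \ref{Sx1} (rerun with the negative cocycle and Proposition \ref{rotation} applied to the angles $-\vert t\vert a_i$) together with Lemma \ref{adju} for the common rescaled limit $\mathrm{A}^{\HP}$, and your factorization $\tau_t\mathrm{A}_t^{\X}\mathrm{b}^{\X}_{\vert t\vert\mu,-}=(\tau_t\mathrm{A}_t^{\X}\tau_t^{-1})\circ(\tau_t\mathrm{b}^{\X}_{\vert t\vert\mu,-})$ is precisely the intended assembly. The only remark worth adding is that the paper's statement contains typos (the first bullet should read $\tau_t\mathrm{b}^{\X}_{\vert t\vert\lambda,+}\to\mathrm{b}^{\HP}_{\lambda,+}$, and the two induced structures are listed in swapped order), which you have implicitly and correctly repaired.
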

We finish this section by a remark that will be useful in the next Section:
\begin{remark}\label{comutation}
Since the family of isometries $\mathrm{A}_t^X$ is constructed by using Lemma \ref{rcompos}, then it converges to the identity. This implies that the half-pipe isometry $\mathrm{A}^{\HP}:=\lim_{t \to 0^{\pm}}\tau_t\mathrm{A}_t^{\X}\tau_t^{-1}$ has the form $\begin{bmatrix}
\mathrm{Id} & 0 \\
v & 1 
\end{bmatrix}  $ for some $v\in \mathbb{R}^{1,2}$. Therefore, if we denote by $\pi$ the projection $\HP\to\mathbb{H}^2$, then $\pi\circ\mathrm{A}^{\HP}([x,t])=[x]$, for all $[x,t]$ in $\HP$. In particular $\mathrm{b}_{\lambda,+}^{\HP}$ and  $\mathrm{A}^{\HP}\mathrm{b}_{\mu,-}^{\HP}$ have the same projection in $\mathbb{H}^2$. 
\end{remark}

\section{Transition of developing map}\label{TRNDEV}
In this section, our goal is to construct a developing map with holonomy given by $\rho_{(t\lambda,t\mu)}$. To achieve this, we will extend the bending maps obtained in Propositions \ref{Sx1} and \ref{Sx0} along a vector field which is transverse to the pleated surfaces $\partial_{\pm}\mathrm{CH}(\Lambda_{\rho_{(t\lambda,t\mu)}})$. This vector field would not be the normal vector of a pleated surface since the behavior of the equidistant surface in hyperbolic space is different from that in Anti-de Sitter space. Specifically, the equidistant surface obtained by following the normal flow in the convex (resp. concave) side may be singular (resp. not singular) in hyperbolic space, while the opposite situation holds in Anti-de Sitter space. Given a surface $\mathrm{S}$ in $\mathbb{H}^3$ or $\ads$, the singular points of the equidistant surface $\mathrm{S}_r$ at distance $r>0$ from $\mathrm{S}$ are the points which have more than one projection on $\mathrm{S}$. We will address this issue in Proposition \ref{vector}.

Finally, once we control the behaviour of the developing map in a neighborhood $\widetilde{\Sigma}\times\{0\}$ and $\widetilde{\Sigma}\times\{1\}$, we will use a standard argument from the theory of deformation of geometric structures to extend the developing map into $\widetilde{\Sigma}\times[0,1]$.
\subsection{Transverse vector field to the pleated surface}
The main goal of this subsection is to prove Proposition \ref{vector}, which provides a unit vector field that is transverse to $\partial\mathrm{CH}(\Lambda_{\rho_{(t\lambda,t\mu)}})$ and invariant with respect to the action of $\rho_{(t\lambda,t\mu)}$. By "unit vector field" on $\mathbb{H}^3$ (resp. $\ads$), we mean a vector field that has a norm of $1$ (resp. $-1$).
\begin{prop}\label{vector}
There exists a continuous unit vector field $\mathcal{N}_t^{\pm}: \partial_{\pm}\mathrm{CH}(\Lambda_{\rho_{(t\lambda,t\mu)}})\to \mathrm{T}\X$ such that
\begin{itemize}
    \item $\mathcal{N}_t^{\pm}$ is $\rho_{(t\lambda,t\mu)}$-invariant. Namely for each $\gamma\in \pi_1(\Sigma)$ $$\rho_{(t\lambda,t\mu)}(\gamma)^* \mathcal{N}_t^{\pm}=\mathcal{N}_t^{\pm}.$$
    \item $\mathcal{N}_t^{\pm}$ is transverse to $\partial_{\pm}\mathrm{CH}(\Lambda_{\rho_{(t\lambda,t\mu)}})$, that is for each $x\in \partial_{\pm}\mathrm{CH}(\Lambda_{\rho_{(t\lambda,t\mu)}})$ and $\mathrm{P}_t$ a support plane at $x$, the vector $\mathcal{N}_t^{\pm}(x)$ is not contained in $\mathrm{T}_x\mathrm{P}_t$. 
    \item There is $\delta>0$, such that the maps $(x,s)\to \exp_{\mathrm{b}_{t\lambda,+}(x)}^{\X}(s\mathcal{N}_t^+(x))$ and $(x,s)\to \exp_{\mathrm{A}^{\X}_t\mathrm{b}_{t\mu,-}(x)}^{\X}(s\mathcal{N}_t^-(x))$ are local homeomorphisms from $\widetilde{\Sigma}\times[0,\delta]$ to $\X$, where $\exp^X$ denotes the exponential map associated to $\X$.
    \item  $\lim_{t \to 0^{\pm}}\mathcal{N}_t^{\pm}=\mp e_3,$ where we recall that $e_3=(0,0,0,1)$ is a unit normal vector of the plane $\mathbb{H}^2.$
\end{itemize}
 
\end{prop}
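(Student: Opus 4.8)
The plan is to build $\mathcal{N}_t^{\pm}$ explicitly from the bending cocycle data, exploiting the fact that the pleated surface $\partial_{\pm}\mathrm{CH}(\Lambda_{\rho_{(t\lambda,t\mu)}})$ is obtained by bending $\mathbb{H}^2$ and that $\mathbb{H}^2$ is a support plane of $\partial_+\mathrm{CH}$. First I would work on a single face: on each totally geodesic piece of $\partial_+\mathrm{CH}(\Lambda_{\rho_{(t\lambda,t\mu)}})$, which is the image under some isometry $g$ of (a piece of) $\mathbb{H}^2$, one has a natural candidate vector field, namely the pushforward $g_*(-e_3)$ on the hyperbolic side and $g_*(e_3)$ on the AdS side (so that it points into the concave/convex side appropriately). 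The problem is that across a bending line these candidate vector fields from the two adjacent faces disagree; the fix is the standard one used for pleated surfaces: along a bending line $l$ with exterior angle $\theta$, interpolate between the two face-normals by a rotation about $l$ through angle varying continuously in $[0,\theta]$, parametrized e.g. by arclength in a direction transverse to $l$ on the surface — but since our lamination is a multicurve this is a \emph{finite} interpolation and causes no measure-theoretic difficulty. Equivariance is automatic because the whole construction is natural with respect to the bending cocycle: on the face containing $\mathrm{b}_{t\lambda,+}^{\X}(x)$ the vector is $\B^{\X}_{t\lambda,+}(x_0,x)_*(-e_3)$, and the cocycle relation together with $\rho_{(t\lambda,t\mu)}(\gamma)=\B^{\X}_{t\lambda,+}(x_0,\gamma x_0)\sigma(\gamma)$ forces $\rho_{(t\lambda,t\mu)}(\gamma)^*\mathcal{N}_t^+=\mathcal{N}_t^+$; the same works for the lower boundary after conjugating by $\mathrm{A}_t^{\X}$. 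Transversality is clear face-by-face since $-e_3\notin T\mathbb{H}^2$, and along bending lines the rotated interpolants stay off the tangent plane of any support plane because rotation about $l$ through an angle in the open interval keeps the vector strictly between the two face-tangent-planes.

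The main obstacle — and the reason the proposition is nontrivial rather than cosmetic — is the third bullet: that the exponential flow $(x,s)\mapsto \exp^{\X}(s\mathcal{N}_t^{\pm}(x))$ is a local homeomorphism of $\widetilde{\Sigma}\times[0,\delta]$ onto its image, uniformly enough that a single $\delta$ works. Here is where the asymmetry between $\mathbb{H}^3$ and $\ads$ flagged in the section intro bites: if one naively used the normal vector and flowed toward the concave side in $\mathbb{H}^3$, the equidistant surface immediately develops focal/singular points at the bending lines (the normals from the two faces converge). The point of choosing a \emph{transverse} (not normal) field, and in particular of using the interpolation above, is that near a bending line the flowlines from the two faces \emph{diverge} rather than converge, so no collapsing occurs. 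Concretely I would argue: (i) on the interior of each face the flow is an embedding for $s$ small by the standard behavior of the normal exponential map of a totally geodesic surface (noting our chosen $\mathcal{N}_t^{\pm}$ makes a bounded angle with the true normal, bounded away from $\pi/2$); (ii) in a neighborhood of a bending line, one computes directly in coordinates adapted to $l$ — writing $\X$ as a warped product over $l$ — that the interpolated-rotation flow is injective for $s<\delta$ because the rotation angle is monotone in the transverse direction; (iii) a compactness argument on a fundamental domain for the $\rho_{(t\lambda,t\mu)}$-action (there are finitely many face-types and finitely many bending lines up to the group) produces a uniform $\delta=\delta(t)>0$, and since everything converges as $t\to0$ one can take $\delta$ uniform in $t$ near $0$ as well. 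I expect step (ii) to require the most care, essentially an explicit Jacobian computation in each geometry, but it is a local model computation and does not interact with the global topology of $\Sigma$.

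Finally, the fourth bullet is immediate from the construction together with the convergence results already established: by Proposition \ref{rotation} / the computations in the proof of Theorem \ref{H}, the bending cocycles $\B^{\X}_{\vert t\vert\lambda,+}$ converge (after rescaling by $\tau_t$) to the HP bending cocycle, and in particular on the face through the basepoint the vector field is $\B^{\X}_{\vert t\vert\lambda,+}(x_0,x)_*(\mp e_3)$ evaluated at points converging into $\mathbb{H}^2$; since $\tau_t$ fixes $\mathbb{H}^2$ pointwise and the relevant isometries converge to elements of $\isom(\mathbb{H}^2)$, which preserve the $e_3$-direction up to the degeneration, one gets $\lim_{t\to0^{\pm}}\mathcal{N}_t^{\pm}=\mp e_3$. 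One must check this is consistent across bending lines in the limit, but the interpolation angles are $\vert t\vert\lambda(l_i)\to0$, so the interpolation disappears in the limit and the whole field converges to the constant $\mp e_3$ on $\mathbb{H}^2$; for the lower component the extra conjugation by $\mathrm{A}_t^{\X}$ contributes nothing in the limit by Remark \ref{comutation}, since $\mathrm{A}^{\HP}$ is a translation along fibers and fixes the $e_3$-direction.
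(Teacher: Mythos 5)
Your proposal follows essentially the same route as the paper: the paper builds the field from a local ``roof'' model (Lemmas \ref{AdsV} and \ref{H3V}) that interpolates the two face normals by a rotation supported in an $\epsilon$-neighborhood of each bending line, propagates it equivariantly using invariance under the one-parameter hyperbolic subgroup preserving the line, and verifies the local-homeomorphism property by exactly the adapted-coordinate Jacobian computation you anticipate (plus, in the hyperbolic case, a quantitative estimate that the flows issuing from the two sides of the crease stay disjoint for $s\le\eta(\theta,\epsilon)$). The only discrepancies are cosmetic: the field points toward the convex side, i.e.\ in the $-e_3$ direction on the upper component, in \emph{both} $\mathbb{H}^3$ and $\ads$ (not with opposite signs), and the paper handles the cusp neighborhoods as a separate, totally geodesic piece of the cover rather than through a compact fundamental domain.
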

Since $\lambda$ and $\mu$ are weighted multicurves, we can focus on understanding the construction of the vector field on the surface obtained by the union of two half-spaces that intersect along a geodesic. More precisely, let $\mathrm{P}$ and $\mathrm{Q}$ be two planes (resp. spacelike planes) in $\mathbb{H}^3$ (resp. in $\ads$) that intersect along a geodesic $l$.  We fix an orientation on $l$ and a vector $\mathrm{V}$ (resp. timelike vector in $\ads$) which is not colinear to the direction of $l$, this gives rise to a well defined notion of a left/right side of $l$ in the planes $\mathrm{P}$ and $\mathrm{Q}$. Then we define a \textit{roof} as a surface $\mathrm{S}$ in $\mathbb{H}^3$ (resp. $\ads$) that consists of two pieces. The first piece is the portion of $\mathrm{P}$ in the, say left of $l$. The second piece is the portion of $\mathrm{Q}$ which is in the right of $l$. See Figure \ref{CPQ}. We will use the notation $\mathrm{S}^{\mathbb{H}^3}(\mathrm{P}, \mathrm{Q})$ (resp.  $\mathrm{S}^{\ads}(\mathrm{P}, \mathrm{Q})$ ) to specify a roof in $\mathbb{H}^3$ (resp. in $\ads$) and to indicate the planes $\mathrm{P}$, $\mathrm{Q}$ used to construct it. Note that the last notations can be potentially confusing, indeed once we choose an orientation, we can construct two roofs starting from the planes $\mathrm{P}$ and $\mathrm{Q}$ depending on which side of $\mathrm{P}\setminus l$ and $\mathrm{Q}\setminus l$ we choose. Therefore, whenever we write $\mathrm{S}^{\mathbb{H}^3}(\mathrm{P}, \mathrm{Q})$ or $\mathrm{S}^{\ads}(\mathrm{P}, \mathrm{Q})$, it should be understood as choosing one of the two possible roofs.

\begin{figure}[htb]
\centering
\includegraphics[width=.6\textwidth]{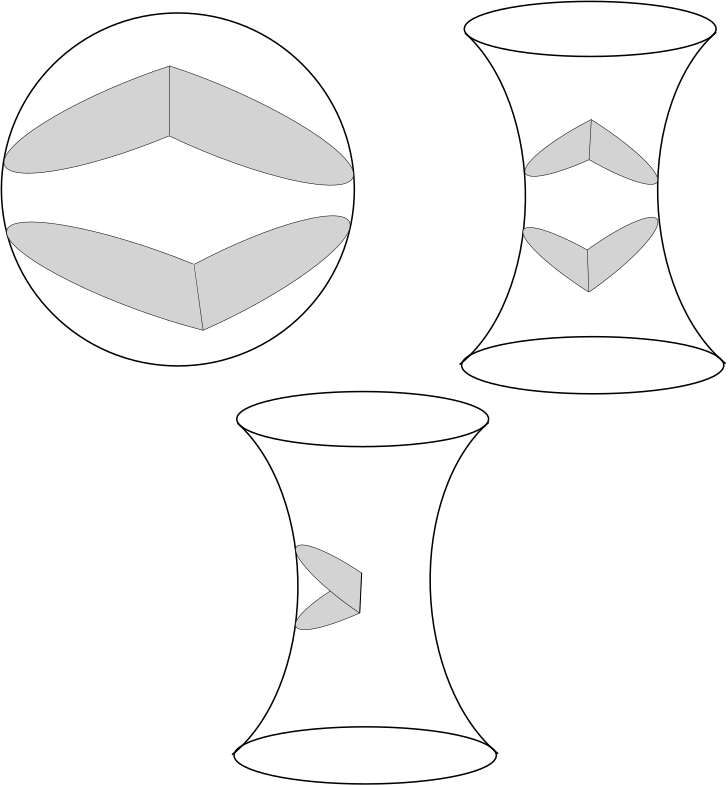}
\caption{On the top, examples of hyperbolic and Anti-de Sitter roofs (shaded part) in an affine chart. In the bottom, a non-example of a roof in $\ads$ that may occur if we allow the vector $\mathrm{V}$ used to define a roof to be spacelike.}\label{CPQ}
\end{figure}

\begin{lemma}\label{AdsV}
      Let $\mathrm{P}$ and $\mathrm{Q}$ be two spacelike planes in $\ads$ which intersect along a geodesic $l$ and $\mathrm{S}^{\ads}(\mathrm{P}, \mathrm{Q})$ a roof in $\ads$. Let $\epsilon>0$ and $U_{l}(\epsilon)$ an $\epsilon$-neighborhood of $l$ in $\mathrm{S}^{\ads}(\mathrm{P}, \mathrm{Q})$ Then, there exists a continuous unit transverse vector field $X_{\mathrm{P},\mathrm{Q},\epsilon}$ on $\mathrm{S}^{\ads}(\mathrm{P}, \mathrm{Q})$ such that 
    \begin{itemize}
    \item The restriction of $X_{\mathrm{P},\mathrm{Q},\epsilon}$ to $\mathrm{P}\cap\mathrm{S}^{\ads}(\mathrm{P}, \mathrm{Q})\setminus U_{l}(\epsilon)$ (resp. $\mathrm{Q}\cap\mathrm{S}^{\ads}(\mathrm{P}, \mathrm{Q})\setminus U_{l}(\epsilon)$) coincide with the unit normal vector $\mathrm{N_{\mathrm{P}}}$ of $\mathrm{P}$ pointing into the convex side of $\mathrm{S}^{\ads}(\mathrm{P}, \mathrm{Q})$ (resp. the unit normal vector $\mathrm{N_{\mathrm{Q}}}$ of $\mathrm{Q}$).
        \item $X_{\mathrm{P},\mathrm{Q}},\epsilon$ invariant by the 1-parameter subgroups of hyperbolic isometry in $\ads$ preserving $l=\mathrm{P}\cap \mathrm{Q}$.
        \item For any $0<\delta<\frac{\pi}{2}$, the map $\mathcal{E}: \mathrm{S}^{\ads}(\mathrm{P}, \mathrm{Q})\times [0,\delta]\to \ads$, $(x,s)\to \exp_x^{\ads}(sX_{\mathrm{P},\mathrm{Q},\epsilon}(x))$ is a local homeomorphism.
       
    \end{itemize}
\end{lemma}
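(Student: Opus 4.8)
The plan is to build the transverse vector field $X_{\mathrm{P},\mathrm{Q},\epsilon}$ explicitly by interpolating, inside the $\epsilon$-collar $U_l(\epsilon)$, between the two unit normals $\mathrm{N}_{\mathrm{P}}$ and $\mathrm{N}_{\mathrm{Q}}$, and then verify the three bulleted properties. First I would reduce to a normal form: after applying an isometry of $\ads$ we may assume $l = \{[\cosh s, \sinh s, 0, 0]\}$ and that $\mathrm{P}$ is the plane $x_3 = 0$ while $\mathrm{Q}$ is obtained from $\mathrm{P}$ by a rotation of some angle $\theta$ (in the $\ads$ sense, with $\cosh\theta = |\langle \mathrm{n}_{\mathrm{P}},\mathrm{n}_{\mathrm{Q}}\rangle_{2,2}|$) around $l$, exactly as in the description of $\ads$-rotations at the start of Section~\ref{TROFHOL}. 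In these coordinates the $1$-parameter group of hyperbolic isometries fixing $l$ acts by the obvious $\mathrm{SO}(1,1)$-block on the $(x_0,x_1)$-coordinates and fixes the $(x_2,x_3)$-plane, so a vector field depending only on the signed distance to $l$ (and on which side of $l$ one is) is automatically invariant under this group; this handles the second bullet essentially for free.

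Next I would construct the field. On the left piece $\mathrm{P}\cap \mathrm{S}\setminus U_l(\epsilon)$ set $X = \mathrm{N}_{\mathrm{P}}$ (the future-pointing, or convex-side, unit normal, well defined since $\mathrm{P}$ is spacelike), and on the right piece $\mathrm{Q}\cap\mathrm{S}\setminus U_l(\epsilon)$ set $X = \mathrm{N}_{\mathrm{Q}}$. Inside $U_l(\epsilon)$, writing $r \in [-\epsilon,\epsilon]$ for the signed distance to $l$ along the roof and $g:\mathbb{R}\to[0,1]$ a smooth function with $g\equiv 0$ for $r\le 0$ (the $\mathrm{P}$-side, appropriately oriented) and $g \equiv 1$ for $r \ge \epsilon$, define $X(x)$ at the point at parameter $r$ to be the unit vector (normalized so that $\langle X,X\rangle_{2,2} = -1$, i.e. timelike) in the $(x_2,x_3)$-plane making angle $g(r)\cdot\theta$ with $\mathrm{N}_{\mathrm{P}}$; concretely $X = \bigl(\text{lift of }x\bigr)$-orthogonal direction rotated by $g(r)\theta$, which depends continuously on $x$ and agrees with $\mathrm{N}_{\mathrm{P}}$, $\mathrm{N}_{\mathrm{Q}}$ at the two ends of the collar. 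Transversality is automatic because $X$ always has a nonzero $(x_2,x_3)$-component while each tangent plane $\mathrm{T}_x\mathrm{P}$, $\mathrm{T}_x\mathrm{Q}$ is a translate of the corresponding plane through $l$, which meets the $(x_2,x_3)$-plane only along one line transverse to $X$.

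For the third bullet — that $\mathcal{E}(x,s)=\exp^{\ads}_x(sX(x))$ is a local homeomorphism on $\mathrm{S}\times[0,\delta]$ for $\delta<\pi/2$ — I would check the differential $d\mathcal{E}$ is invertible at every point, which by the inverse function theorem (and a standard argument that a locally injective proper-on-fibers map onto a manifold is a local homeomorphism up to the stated codimension) suffices. Away from the collar $X$ is the geodesic normal field of a spacelike plane in $\ads$; the normal exponential map of a spacelike plane is a diffeomorphism onto its image for $|s| < \pi/2$ (the focal distance of a totally geodesic spacelike plane in $\ads$ is $\pi/2$), so $d\mathcal{E}$ is invertible there. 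Inside the collar I would compute $d\mathcal{E}$ in the normal-form coordinates: it is block-triangular with respect to (direction along $l$) $\oplus$ (the $r,s$ variables), the $l$-block being the Jacobi-field factor $\cosh(s\cdot\kappa)$-type term which is nonzero for $|s|<\pi/2$, and the $(r,s)$-block being a $2\times2$ matrix whose determinant involves $1 + s\, g'(r)\,\theta\cdot(\text{bounded geometric factor})$; since $g'\ge 0$ and we take $\delta$ small (or rather: the sign works out because the interpolation bends $X$ \emph{toward} the concave side, the side where the $\ads$ equidistant surface is nonsingular — this is precisely the asymmetry emphasized in the paragraph preceding Proposition~\ref{vector}), this determinant stays positive. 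I expect \textbf{this last determinant computation to be the main obstacle}: one has to be careful about orientations and which side of the roof is the ``convex'' versus ``concave'' side, and to check that choosing the normal to point into the convex side of $\mathrm{S}^{\ads}(\mathrm{P},\mathrm{Q})$ makes the interpolation increase rather than decrease the Jacobian, so that $\mathcal{E}$ has no fold for all $\delta<\pi/2$ rather than only for very small $\delta$. Once $d\mathcal{E}$ is everywhere invertible, local injectivity gives the local homeomorphism claim, completing the proof.
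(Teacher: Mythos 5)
Your construction is essentially the one in the paper: normalize $\mathrm{P}$, $\mathrm{Q}$ and $l$, reduce to a two\hyphen dimensional cross\hyphen section and extend by the one\hyphen parameter hyperbolic group fixing $l$ (which gives the invariance for free), interpolate the normal across the collar with a monotone cutoff, and control the exponential map via the inverse function theorem. However, there is a concrete gap in your verification of the third bullet. You propose to ``check the differential $d\mathcal{E}$ is invertible at every point,'' but $\mathcal{E}$ is not differentiable along the edge $l\times[0,\delta]$: the roof has a corner along $l$, so the cross\hyphen section curve $\alpha$ has distinct one\hyphen sided derivatives at the bending line even though the cutoff function is smooth. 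The inverse function theorem therefore says nothing at exactly the most delicate locus, and invertibility of $d\mathcal{E}$ off the corner does not by itself yield local injectivity \emph{across} it. The paper closes this by an elementary direct argument: in the normalized coordinates the $x_2$\hyphen coordinate of $\mathcal{E}(t,s)$ is strictly positive for $t>0$ and strictly negative for $t<0$, so the two smooth sheets cannot collide near $(0,s)$, giving local injectivity at the corner. Some such ad hoc separation argument is needed; your proposal as written omits it.

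Two smaller points. First, you have the orientation backwards: with the paper's conventions the AdS normal flow is nonsingular on the \emph{convex} side of the roof (the hyperbolic/AdS roles are swapped relative to $\mathbb{H}^3$), and the field is chosen to point into the convex side; you say it bends toward the concave side, though you flag your own uncertainty here. Second, your Jacobian discussion hedges between ``take $\delta$ small'' and ``all $\delta<\pi/2$''; the lemma needs the latter, and the reason it holds is precisely the sign condition $f'\ge 0$ together with the convex\hyphen side orientation (in the cross\hyphen section, $\mathcal{E}(t,s)=\cos(s)\alpha(t)+\sin(s)X(\alpha(t))$ and the two contributions to $\partial_t\mathcal{E}$ reinforce rather than cancel for $0\le s<\pi/2$). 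Once these points are supplied, your argument matches the paper's.
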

The $\epsilon$-neighborhood of $l$ in $\mathrm{S}^{\ads}(\mathrm{P}, \mathrm{Q})$ refers to the set of points in $\mathrm{S}^{\ads}(\mathrm{P}, \mathrm{Q})$ at distance at most $\epsilon$ from $l$, In this context the distance is defined as the path distance induced on the spacelike surface $\mathrm{S}^{\ads}(\mathrm{P}, \mathrm{Q})$, which is isometric to $\mathbb{H}^2$. By the one-parameter subgroup of hyperbolic isometries preserving $l$, we mean the following: Up to composing by an isometry of $\ads$, assume that $l$ is equal to the spacelike geodesic $\{x_0=x_1=0\}$. Then the subgroup is given by:
\begin{equation}\label{adshyper}
\left\{\begin{bmatrix}
1 & 0 & 0&0 \\
0 & 1 & 0 &0\\
0 & 0 & \cosh(t)& \sinh(t)\\
0&0& \sinh(t)& \cosh (t)
\end{bmatrix}\ \mid \ t\in \mathbb{R}\right\}.
\end{equation}
The family of isometries given in \eqref{adshyper} acts as a translation on $l$. One may also remark that these isometries can be interpreted as rotations that fix pointwise the geodesic $l^*=\{x_3=x_4=0\}$, and this is, in fact, the \textit{dual} of the geodesic $l$. We refer the reader to \cite{adsarticle} for a more detailed exposition about duality in AdS geometry.

\begin{proof}[Proof of Lemma \ref{AdsV}]
We may assume, up to applying an isometry of $\mathrm{Isom}(\ads)$, that $\mathrm{P}=\{[x_0,x_1,x_2,x_3]\in \ads \mid x_3=0\}$ and $\mathrm{Q}=\{[x_0,x_1,x_2,x_3]\in \ads \mid x_3=-\tanh(\theta)x_2\}$, where $\theta$ is the angle between $\mathrm{P}$ and $\mathrm{Q}$. Thus, $l=\mathrm{P}\cap\mathrm{Q}$ is the spacelike geodesic in $\ads$ given by $\{[x_0,x_1,x_2,x_3]\in \ads \mid x_3=x_2=0\}$. We will focus on the roof given by (see Figure \ref{Vectorads}):
\begin{align*}
\mathrm{S}^{\ads}(\mathrm{P},\mathrm{Q})&=\left(\mathrm{P}\cap \left\{[x_0,x_1,x_2,x_3]\in\ads \mid \frac{x_2}{x_0}<0\right\} \right) \\
&\ \ \cup\left(\mathrm{Q}\cap \left\{[x_0,x_1,x_2,x_3]\in\ads \mid \frac{x_2}{x_0}>0\right\} \right).
\end{align*}

Let $\mathrm{R}:=\{[x_0,x_1,x_2,x_3]\in \ads\mid x_1=0\}$ be the timelike plane orthogonal to $\mathrm{P}$ and containing $l$. This allow us to reduce the problem to a two-dimensional problem in $\mathrm{R}\cong \mathbb{A}\mathrm{d}\mathbb{S}^2$. Indeed, once we construct the desired vector field on $\mathrm{R}\cap \mathrm{S}^{\ads}(\mathrm{P},\mathrm{Q})$, we can use a 1-parameter subgroup of hyperbolic isometries in $\ads$ that preserves $l$ to extend the vector field to $\mathrm{S}^{\ads}(\mathrm{P},\mathrm{Q})$. In particular, our vector field will satisfy the second item of the statement. By an elementary computation, we can see that the set $\mathrm{R}\cap \mathrm{S}^{\ads}(\mathrm{P},\mathrm{Q})$ can be parameterized by the piecewise geodesic $\alpha: \mathbb{R}\to \mathrm{R}$ given by: 
$$\begin{array}{ccccc}
  \\
 & & \alpha(t) & = & \begin{cases}
[\cosh(t),0,\sinh(t)\cosh(\theta),-\sinh(t)\sinh(\theta)] &\text{ if}\ t\geq 0 \\
[\cosh(t),0,\sinh(t),0] &\text{ if }\ t\leq 0,
\end{cases} \\
\end{array}$$
Now consider $f: \mathbb{R}\to [0,+\infty[$ a smooth increasing function such that $f=0$ on $]-\infty, 0] $ and $f=\theta $ on $[\epsilon,+\infty[$. Define $X_{\mathrm{P},\mathrm{Q},\epsilon}$ on $\mathrm{R}\cap \mathrm{S}^{\ads}(\mathrm{P},\mathrm{Q})$ as the unit vector given by 
$$X_{\mathrm{P},\mathrm{Q},\epsilon}(\alpha(t))=\left(0,0,\sinh(f(t)),-\cosh(f(t))\right).$$

\begin{figure}[htb]
\centering
\includegraphics[width=.7\textwidth]{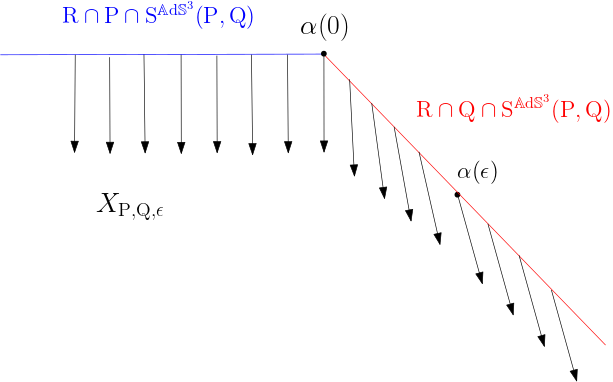}
\caption{A cross section of the roof $\mathrm{S}^{\ads}(\mathrm{P},\mathrm{Q})$ with the transverse vector field $X_{\mathrm{P},\mathrm{Q},\epsilon}$ which is oriented towards the convex side of the roof.}\label{Vectorads}
\end{figure}

One can check that the vector field $X_{\mathrm{P},\mathrm{Q},\epsilon}$ is oriented in the convex side of $\mathrm{S}^{\ads}(\mathrm{P},\mathrm{Q})$, and moreover, $X_{\mathrm{P},\mathrm{Q},\epsilon}$ satisfies the first item of the statement. The goal now is to show that the map $\mathrm{E}: \mathbb{R}\times [0,\delta]\to\mathrm{R}$ given by $\mathrm{E}(t,s)=\exp_{\alpha(t)}^{\ads}(sX_{\mathrm{P},\mathrm{Q},\epsilon}(\alpha(t)))$ is a local homeomorphism.

Using the fact that the derivative of $f$ is non negative, one can apply the inverse function theorem, to check that for all $t\neq 0$, the map $\mathrm{E}$ is a local homeomorphism (in fact it is a local diffeomorphism). The point $t=0$ corresponds to the singular point of $\mathrm{R}\cap\mathrm{S}^{\ads}(\mathrm{P},\mathrm{Q})$. Therefore, the only problem that occurs is when $t=0$. However, we still have local injectivity around this point. To see this, let's examine the $x_2$ coordinate of $\mathrm{E}(t,s)$. We have:
$$\begin{array}{ccccc}
  \\
 & &  & \begin{cases}
 \cos(s)\sinh(t)\cosh(\theta) +\sin(s)\sinh(f(t))&\text{ if}\ t\geq0 \\
\cos(s)\sinh(t)  &\text{ if }\ t\leq0,
\end{cases} \\
\end{array}$$
We observe that the $x_2$ coordinate of $\mathrm{E}(t,s)$ is positive when $t>0$ and negative when $t<0$. Therefore, $\mathrm{E}(t,s)$ is locally injective around $(0,s)$ for any $s\in [0,\delta]$.\end{proof}

\begin{lemma}\label{H3V}
    Let $\mathrm{P}$ and $\mathrm{Q}$ be two planes in $\mathbb{H}^3$ which intersect along a geodesic $l$, and $\theta$ be the angle between $\mathrm{P}$ and $\mathrm{Q}$. We furthermore assume that $0<\theta<\frac{\pi}{2}$. Consider $\mathrm{S}^{\mathbb{H}^3}(\mathrm{P}, \mathrm{Q})$ a roof in $\mathbb{H}^3$. Let $\epsilon>0$ and $U_{l}(\epsilon)$ an $\epsilon$-neighborhood of $l$ in $\mathrm{S}^{\ads}(\mathrm{P}, \mathrm{Q})$. Then, there exists a continuous unit transverse vector field $X_{\mathrm{P},\mathrm{Q},\epsilon}$ on $\mathrm{S}^{\mathbb{H}^3}(\mathrm{P}, \mathrm{Q})$ such that 
    \begin{itemize}
    \item The restriction of $X_{\mathrm{P},\mathrm{Q},\epsilon}$ to $\mathrm{P}\cap\mathrm{S}^{\mathbb{H}^3}(\mathrm{P}, \mathrm{Q})\setminus U_{l}(\epsilon)$ (resp. $\mathrm{Q}\cap\mathrm{S}^{\mathbb{H}^3}(\mathrm{P}, \mathrm{Q})\setminus U_{l}(\epsilon)$) coincides with the unit normal vector $\mathrm{N_{\mathrm{P}}}$ of $\mathrm{P}$ pointing into the convex side of $\mathrm{S}^{\mathbb{H}^3}(\mathrm{P}, \mathrm{Q})$ (resp. the unit normal vector $\mathrm{N_{\mathrm{Q}}}$ of $\mathrm{Q}$ pointing into the convex side of $\mathrm{S}^{\mathbb{H}^3}(\mathrm{P}, \mathrm{Q})$).
        \item $X_{\mathrm{P},\mathrm{Q},\epsilon}$ is invariant by the 1-parameter subgroup of hyperbolic isometries in $\mathbb{H}^3$ preserving $l=\mathrm{P}\cap \mathrm{Q}$.
\item For $\delta>0$ small enough, the map $\mathcal{E}: \mathrm{S}^{\mathbb{H}^3}(\mathrm{P}, \mathrm{Q})\times [0,\delta]\to \mathbb{H}^3$, $(z,s)\to \exp_z^{\mathbb{H}^3}(sX_{\mathrm{P},\mathrm{Q},\epsilon}(z))$ is a local homeomorphism.

    \end{itemize}
\end{lemma}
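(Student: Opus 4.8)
The plan is to mirror the proof of Lemma~\ref{AdsV}: normalise, pass to a two--dimensional model transverse to the bending geodesic, build the field there by an explicit interpolation, and propagate it by a one--parameter group. Two features are proper to the hyperbolic case: the hypothesis $0<\theta<\frac{\pi}{2}$ will be genuinely used, and $\delta$ can only be taken small (not merely bounded by $\frac{\pi}{2}$ as in the Anti--de Sitter case), since in $\mathbb{H}^3$ the surface equidistant from the convex side of a smoothed roof becomes singular at a finite distance. First I would normalise so that $\mathrm{P}=\{x_3=0\}\cap\mathbb{H}^3$, $\mathrm{Q}=\{x_3=-\tan(\theta)\,x_2\}\cap\mathbb{H}^3$, hence $l=\mathrm{P}\cap\mathrm{Q}=\{x_2=x_3=0\}\cap\mathbb{H}^3$, and fix
\begin{align*}
\mathrm{S}^{\mathbb{H}^3}(\mathrm{P},\mathrm{Q})&=\bigl(\mathrm{P}\cap\{[x_0,x_1,x_2,x_3]\in\mathbb{H}^3\mid \tfrac{x_2}{x_0}<0\}\bigr)\\
&\ \ \cup\bigl(\mathrm{Q}\cap\{[x_0,x_1,x_2,x_3]\in\mathbb{H}^3\mid \tfrac{x_2}{x_0}>0\}\bigr).
\end{align*}

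Next, as in Lemma~\ref{AdsV}, I would reduce to a planar problem. Let $\mathrm{R}=\{x_1=0\}\cap\mathbb{H}^3\cong\mathbb{H}^2$ be the totally geodesic plane orthogonal to both $\mathrm{P}$ and $\mathrm{Q}$, and let $G$ be the one--parameter group of hyperbolic translations of $\mathbb{H}^3$ along $l$; it preserves $\mathrm{R},\mathrm{P},\mathrm{Q}$ and $\mathrm{S}^{\mathbb{H}^3}(\mathrm{P},\mathrm{Q})$. I build the field on $\mathrm{R}\cap\mathrm{S}^{\mathbb{H}^3}(\mathrm{P},\mathrm{Q})$ and propagate it by $G$; $G$--invariance then gives the second item, and agreement with the convex--side unit normals of $\mathrm{P}$ and $\mathrm{Q}$ outside $U_l(\epsilon)$ gives the first. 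Concretely, parametrise $\mathrm{R}\cap\mathrm{S}^{\mathbb{H}^3}(\mathrm{P},\mathrm{Q})$ by the piecewise geodesic $\alpha(t)=[\cosh t,0,\sinh t\cos\theta,-\sinh t\sin\theta]$ for $t\ge0$ and $\alpha(t)=[\cosh t,0,\sinh t,0]$ for $t\le0$, with $\alpha(0)\in l$. Choose a smooth nondecreasing function $g$ on $\mathbb{R}$ with $g\equiv-\theta$ on $(-\infty,0]$ and $g\equiv0$ on $[\epsilon,+\infty)$, and set $X_{\mathrm{P},\mathrm{Q},\epsilon}(\alpha(t))=e_3$ for $t\le0$ and $X_{\mathrm{P},\mathrm{Q},\epsilon}(\alpha(t))=\cos(g(t))\,\nu+\sin(g(t))\,\alpha'(t)$ for $t\ge0$, where $\nu=(0,0,\sin\theta,\cos\theta)$ is the convex--side unit normal of $\mathrm{Q}$. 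A direct check shows this is a continuous unit vector field tangent to $\mathbb{H}^3$, equal to $e_3=\mathrm{N}_{\mathrm{P}}$ on the $\mathrm{P}$--piece and to $\nu=\mathrm{N}_{\mathrm{Q}}$ on $\{t\ge\epsilon\}$; continuity at the vertex $t=0$ comes from $\cos(-\theta)\nu+\sin(-\theta)\alpha'(0^+)=e_3$, and, crucially because $0<\theta<\frac{\pi}{2}$, the $\nu$--component $\cos(g(t))$ never vanishes, so the field is everywhere transverse to $\mathrm{S}^{\mathbb{H}^3}(\mathrm{P},\mathrm{Q})$ and on its convex side.

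The substantive part is the third item: that $\mathcal{E}(z,s)=\exp^{\mathbb{H}^3}_z(sX_{\mathrm{P},\mathrm{Q},\epsilon}(z))$ is a local homeomorphism on $\mathrm{S}^{\mathbb{H}^3}(\mathrm{P},\mathrm{Q})\times[0,\delta]$ for $\delta>0$ small. By $G$--equivariance it is enough to show that the planar map $\mathrm{E}(t,s)=\cosh(s)\,\alpha(t)+\sinh(s)\,X_{\mathrm{P},\mathrm{Q},\epsilon}(\alpha(t))$ (in the hyperboloid model) is a local homeomorphism onto its image on $\mathbb{R}\times[0,\delta]$, since the $G$--orbits are transverse to that surface for $\delta$ small. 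Away from $t=0$ the field is smooth and the inverse function theorem gives that $\mathrm{E}$ is a local diffeomorphism, provided $s$ stays below the first focal distance of the smoothed roof; because the smoothing is supported on the compact interval $[0,\epsilon]$, a single $\delta>0$ works for all such $t$. This is exactly where $\mathbb{H}^3$ departs from $\ads$: the convex--side equidistant surface of the smoothed corner becomes singular at a distance of the order of the smoothing scale, so $\delta$ must be chosen small, whereas in Lemma~\ref{AdsV} any $\delta<\frac{\pi}{2}$ was admissible.

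The main obstacle, and the only non-routine point, is local injectivity of $\mathrm{E}$ at the vertex $t=0$. There I imitate the Anti--de Sitter computation and track the $x_2$--coordinate of $\mathrm{E}(t,s)$. For $t\le0$ one has $X_{\mathrm{P},\mathrm{Q},\epsilon}(\alpha(t))=e_3$, whose $x_2$--component vanishes, so this coordinate equals $\cosh(s)\sinh t$, hence is $<0$ for $t<0$. For $t\ge0$ it equals $\cosh(s)\sinh t\cos\theta+\sinh(s)\sin(g(t)+\theta)$, and here the hypothesis $\theta<\frac{\pi}{2}$ makes the first summand $>0$ for $t>0$ while $g(t)+\theta\in[0,\theta]$ makes the second summand $\ge0$; hence this coordinate is $>0$ for $t>0$. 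Thus the sign of the $x_2$--coordinate of $\mathrm{E}(t,s)$ separates $t<0$ from $t>0$ uniformly in $s\in[0,\delta]$; together with the local injectivity on each half-strip $\{t<0\}$ and $\{t>0\}$ and with the injectivity of the geodesic $s\mapsto\mathrm{E}(0,s)$, this yields local injectivity of $\mathrm{E}$ near the vertex, hence (with the immersion property) a local homeomorphism onto its image. Propagating by $G$ gives the statement for $\mathcal{E}$ and finishes the proof.
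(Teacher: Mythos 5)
Your overall strategy is the same as the paper's: normalise $\mathrm{P}=\{x_3=0\}$, $\mathrm{Q}=\{x_3=-\tan(\theta)x_2\}$, reduce to the orthogonal plane $\mathrm{R}=\{x_1=0\}$, interpolate the direction of the field along the $\mathrm{Q}$-face via a cutoff function, propagate by the one-parameter group preserving $l$, and prove local injectivity at the vertex by tracking the sign of the $x_2$-coordinate. However, there is a genuine problem with the orientation of your field, and it is not merely cosmetic. For the roof you fix, the convex side is the intersection of the two half-spaces $\{x_3\le 0\}\cap\{x_3\le-\tan(\theta)x_2\}$, i.e.\ the region $\{x_3\le\min(0,-\tan(\theta)x_2)\}$; the convex-side unit normals are therefore $-e_3$ for $\mathrm{P}$ and $-(0,0,\sin\theta,\cos\theta)$ for $\mathrm{Q}$. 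Your $e_3$ and $\nu=(0,0,\sin\theta,\cos\theta)$ point into the \emph{concave} side, so the field you build violates the first bullet of the statement. This matters for the third bullet: on the concave side of a convex corner the normal flows from the two faces diverge and the exponential map is essentially automatically injective, whereas on the convex side (the case actually required) the flows from the two faces converge toward each other and can cross at arbitrarily small distance from the edge. That is exactly why the paper's cutoff $f$ is kept \emph{constant equal to $0$ on $(-\infty,\epsilon/2]$} (so the field is still $\mathrm{N}_{\mathrm{P}}$ on a whole collar of the edge inside the $\mathrm{Q}$-face, making the flow near the vertex a parallel family), and why the paper then proves an explicit separation estimate between $\mathrm{E}\bigl((-\infty,\tfrac{\epsilon}{2}[\times[0,\eta]\bigr)$ and $\mathrm{E}\bigl(]\epsilon,+\infty[\times[0,\eta]\bigr)$ using the $x_2$-coordinate, before finishing on $[\tfrac{\epsilon}{2},\epsilon]$ by compactness. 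Your $g$ starts rotating immediately at $t=0$, so after correcting the sign your vertex argument would have to be redone (one then needs $\cosh(s)\cos\theta>\sinh(s)\,g'(0)$, i.e.\ $\delta$ small depending on $g'$, or a cutoff constant near $t=0$ as in the paper).

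There is also a computational slip in the key coordinate: for $t\ge 0$ the $x_2$-component of $\sin(g(t))\alpha'(t)$ is $\sin(g(t))\cosh(t)\cos\theta$, so the $x_2$-coordinate of $\mathrm{E}(t,s)$ is $\cosh(s)\sinh(t)\cos\theta+\sinh(s)\bigl(\cos(g(t))\sin\theta+\sin(g(t))\cosh(t)\cos\theta\bigr)$, which equals your $\cosh(s)\sinh(t)\cos\theta+\sinh(s)\sin(g(t)+\theta)$ only at $t=0$. The sign conclusion near the vertex survives (both terms vanish at $t=0$ and the $t$-derivative there is positive), but the claim that this coordinate is positive for \emph{all} $t>0$ needs the corrected formula and, for the correctly oriented field, the kind of quantitative comparison the paper carries out. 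In short: same architecture as the paper, but the field is built on the wrong side of the roof, and the hardest part of the lemma --- controlling the convex-side exponential near the smoothed edge --- is precisely the part your argument bypasses as a result.
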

By $\epsilon$- neighborhood of $l$ in $\mathrm{S}^{\mathbb{H}^3}(\mathrm{P}, \mathrm{Q})$, we mean the following:
$$U_{l}(\epsilon)=\{ x\in  \mathrm{S}^{\mathbb{H}^3}(\mathrm{P}, \mathrm{Q}), \ d_{\mathbb{H}^3}(x,l)<\epsilon   \}.$$

\begin{proof}
    We may assume, up to applying an isometry of $\mathrm{Isom}(\mathbb{H}^3)$, that $\mathrm{P}=\{[x_0,x_1,x_2,x_3]\in \mathbb{H}^3 \mid x_3=0\}$ and $\mathrm{Q}=\{[x_0,x_1,x_2,x_3]\in \mathbb{H}^3 \mid x_3=-\tan(\theta)x_2\}$, where $\theta$ is the angle between $\mathrm{P}$ and $\mathrm{Q}$. Thus, $l=\mathrm{P}\cap\mathrm{Q}$ is the geodesic in $\mathbb{H}^3$ given by $\{[x_0,x_1,x_2,x_3]\in \mathbb{H}^3 \mid x_3=x_2=0\}$. We will focus on the roof given by:
    \begin{align*}
\mathrm{S}^{\mathbb{H}^3}(\mathrm{P},\mathrm{Q})&=\left(\mathrm{P}\cap \left\{[x_0,x_1,x_2,x_3]\in\mathbb{H}^3 \mid \frac{x_2}{x_0}<0\right\} \right) \\
&\ \ \cup\left(\mathrm{Q}\cap \left\{[x_0,x_1,x_2,x_3]\in\mathbb{H}^3 \mid \frac{x_2}{x_0}>0\right\} \right).
\end{align*}
Let $\mathrm{R}:=\{[x_0,x_1,x_2,x_3]\in \ads\mid x_1=0\}$ be the plane orthogonal to $\mathrm{P}$ and containing $l$. This allow us to reduce the problem to a two-dimensional problem in $\mathrm{R}\cong\mathbb{H}^2$. As in the Anti-de Sitter setting, in it is enough to construct the vector field on $\mathrm{R}\cap \mathrm{S}^{\mathbb{H}^3}(\mathrm{P},\mathrm{Q})$. By an elementary computation, we can see that the set $\mathrm{R}\cap \mathrm{S}^{\mathbb{H}^3}(\mathrm{P},\mathrm{Q})$ can be parameterized by the piecewise geodesic $\alpha: \mathbb{R}\to \mathrm{R}$ given by: 
$$\begin{array}{ccccc}
  \\
 & & \alpha(t) & = & \begin{cases}
[\cosh(t),0,\sinh(t)\cos(\theta),-\sinh(t)\sin(\theta)] &\text{ if}\ t\geq 0 \\
[\cosh(t),0,\sinh(t),0] &\text{ if }\ t\leq 0,
\end{cases} \\
\end{array}$$
Now consider $f: \mathbb{R}\to [0,+\infty[$ a smooth increasing function such that $f=0$ on $]-\infty, \frac{\epsilon}{2}] $ and $f=\theta $ on $[\epsilon,+\infty[$. Define $X_{\mathrm{P},\mathrm{Q},\epsilon}$ on $\mathrm{R}\cap \mathrm{S}^{\ads}(\mathrm{P},\mathrm{Q})$ as the unit vector given by
$$X_{\mathrm{P},\mathrm{Q},\epsilon}(\alpha(t))=\left(0,0,-\sin(f(t)),-\cos(f(t))\right).$$
One can check that the vector field $X_{\mathrm{P},\mathrm{Q},\epsilon}$ is oriented in the convex side of $\mathrm{S}^{\mathbb{H}^3}(\mathrm{P},\mathrm{Q})$, and moreover, $X_{\mathrm{P},\mathrm{Q},\epsilon}$ satisfies the first item of the statement. The goal now is to show that there is $\delta>0$ such that the map $\mathrm{E}: \mathbb{R}\times [0,\delta]\to\mathrm{R}$ given by $\mathrm{E}(t,s)=\exp_{\alpha(t)}^{\mathbb{H}^3}(sX_{\mathrm{P},\mathrm{Q},\epsilon}(\alpha(t)))$ is a local homeomorphism. First, using the inverse function theorem, one can  check easily that the restriction of the map $\mathrm{E}$ to each of $]-\infty,0[\times \mathbb{R}$, $]0,\frac{\epsilon}{2}[\times \mathbb{R}$ and $]\epsilon,+\infty[\times\mathbb{R}$ is a local diffeomorphism into its image. Again as in the Anti-de Sitter setting, we still have local injectivity around the singular point $t=0$. Indeed  
the $x_2$ coordinate of $\mathrm{E}(t,s)$ is given by
$$\begin{array}{ccccc}
  \\
 & &  & \begin{cases}
 \cosh(s)\sinh(t)\cos(\theta)&\text{ if}\ 0\leq t\leq \frac{\epsilon}{2} \\
\cosh(s)\sinh(t)  &\text{ if }\ t\leq0,
\end{cases} \\
\end{array}$$
Since the $x_2$ coordinate of $\mathrm{E}(t,s)$ is positive when $0<t\leq \frac{\epsilon}{2}$ (because $\theta\neq\frac{\pi}{2}$) and negative when $t < 0$, then $\mathrm{E}(t,s)$ is locally injective around $(0,s)$ for any $s \in \mathbb{R}$. As a consequence, the restriction of $\mathrm{E}$ to $]-\infty,\frac{\epsilon}{2}[\times\mathbb{R}$ is a local homeomorphism onto its image.
Now, we claim that we can find $\eta > 0$ such that 
\begin{equation}\label{EH}
    \mathrm{E}(]-\infty,\frac{\epsilon}{2}[\times[0,\eta])\cap \mathrm{E}(]\epsilon,+\infty[\times[0,\eta])=\emptyset,\end{equation}
This guarantees that the restriction of $\mathrm{E}(t,s)$ to $\left(]-\infty,\frac{\epsilon}{2}[\cup]\epsilon,+\infty[\right)\times[0,\eta]$ is a local homeomorphism into its image, as illustrated in Figure \ref{Vector}.  To prove the claim, we observe that the $x_2$ coordinates of $\mathrm{E}$ satisfy the following condition:
$$\begin{array}{ccccc}
  \\
 & &  & \begin{cases}
 x_2(t,s)>\cos(\theta)\sinh(\epsilon)-\sinh(s)\sin(\theta)  \ &\text{ if}\ t>\epsilon \\
x_2(t,s)<\cosh(s)\sinh{(\frac{\epsilon}{2})}\cos{(\theta)} &\text{ if }\ t<\frac{\epsilon}{2}.
\end{cases} \\
\end{array}$$
Then one can find a positive constant $\eta$ which depends only on $\theta$ and $\epsilon$ such that for all $0\leq s\leq \eta$, we have
$$\cosh(s)\sinh{(\frac{\epsilon}{2})}\cos{(\theta)}<\cos(\theta)\sinh(\epsilon)-\sinh(s)\sin(\theta),$$ hence $x_2(t_1, s_1)$ cannot be equal to $x_2(t_2, s_2)$ for $t_1 < \frac{\epsilon}{2}$, $t_2 > \epsilon$, and $s_1, s_2 \in [0, \eta]$. This concludes the proof of \eqref{EH}.

\begin{figure}[htb]
\centering
\includegraphics[width=.7\textwidth]{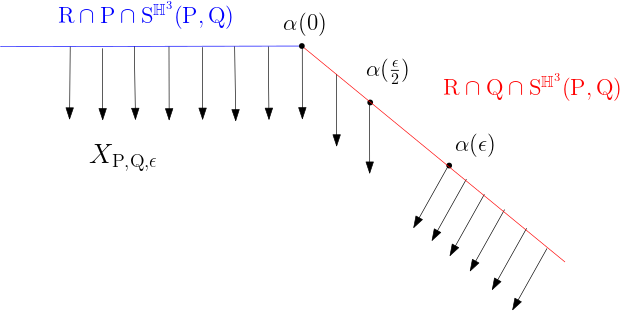}
\caption{The vector field $X_{\mathrm{P},\mathrm{Q},\epsilon}$ drawn in a cross section of the hyperbolic plane $\mathrm{R}\cong\mathbb{H}^2$. From the picture, we observe that the geodesic tangent to $X_{\mathrm{P},\mathrm{Q},\epsilon}$ starting from $\alpha(t_1)$ with $t_1>\epsilon$ does not intersect those starting from $\alpha(t_2)$ with $t_2<\frac{\epsilon}{2}$ for a small time $s<\eta$.}\label{Vector}
\end{figure}

It remains to analyze what happens when $t \in \left[\frac{\epsilon}{2}, \epsilon\right]$. By the inverse function theorem and the compactness of $\left[\frac{\epsilon}{2}, \epsilon\right]$, we can find a uniform $c > 0$ such that $\mathrm{E}$ is a local homeomorphism (diffeomorphism) at each point of $\left[\frac{\epsilon}{2}, \epsilon\right] \times [0, c]$. The proof of Lemma is then completed by taking $\delta = \min(\eta, c)$.\end{proof}

\begin{remark}\label{continuityofvector}
    It is important to note the following properties of the vector field $X_{\mathrm{P},\mathrm{Q},\epsilon}$ constructed in Lemmas \ref{H3V} and \ref{AdsV}:
    \begin{itemize}
        \item First, when $\mathrm{P}=\mathrm{Q}$, the vector field $X_{\mathrm{P},\mathrm{P},\epsilon} $ is just the unit normal vector $\mathrm{N}_{\mathrm{P}}$ orthogonal to $\mathrm{P}.$ 
    \item Second, it is not difficult to show that the construction of the vector field $X_{\mathrm{P},\mathrm{Q},\epsilon}$ is continuous with respect to $\mathrm{P}$ and $\mathrm{Q}$. More precisely, if we have a family of planes $\mathrm{P}_s$ and $\mathrm{Q}_s$ which converge to $\mathrm{P}$ and $\mathrm{Q}$ respectively, then $X_{\mathrm{P}_s,\mathrm{Q}_s,\epsilon}$ converges to $X_{\mathrm{P},\mathrm{Q},\epsilon}$ uniformly on compact sets. 
    \end{itemize}
\end{remark}
Now we have all the tools to prove Proposition \ref{vector}.
\begin{proof}[Proof of Proposition \ref{vector}]
We will only give the proof for the vector field $\mathcal{N}_t^+$ as the proof for $\mathcal{N}_t^-$ can be obtained in the same way. For each curve $\alpha_j$ in the support of $\lambda$, we take $U_j$ as a $2\epsilon$-neighborhood of $\alpha_j$ and denote $V_0$ as the union of $U_j$ for $j=1,\cdots, p$. We can choose $\epsilon$ such that these neighborhoods have disjoint closures.

Let $V_1$ be the union of small neighborhoods around the punctures of $\Sigma$. Now we take simply connected open sets $V_2,\cdots,V_N$ such that $\Sigma=\partial_{+}\mathrm{CH}(\Lambda_{\rho_{(t\lambda,t\mu)}})/\rho_{(t\lambda,t\mu)}(\pi_1(\Sigma))$ is covered by $V_0 \cup V_1 \cup \cdots \cup V_N$. We further assume that $V_2,\cdots, V_N$ are disjoint from $V_1$ and from a $\epsilon$ neighborhood of $\alpha_j$. (Remark that $U_j$ is a $2\epsilon$ neighborhood, not an $\epsilon$ neighborhood of $\alpha_j$.) The goal now is to construct a vector field $X_i^t$ in $\widetilde{V_i}$ which is $\rho_{(t\lambda,t\mu)}$-invariant for $i=0,\cdots, n$ and this completes the construction of $\mathcal{N}_t^+$.

\item[\textbullet]For each $i\geq2$, since $\widetilde{V}_i$ does not intersect the bending lines of $\partial_+\mathrm{CH}(\Lambda_{\rho_{(t\lambda,t\mu)}})$, then we can define $X_i^t$ as the normal vector pointing towards the convex side of $\partial_+\mathrm{CH}(\Lambda_{\rho_{(t\lambda,t\mu)}})$ and this is clearely $\rho_{(t\lambda,t\mu)}-$invariant.
\item[\textbullet] We now turn to the case $i=0$, we have $V_0=\bigcup_{j=1}^{p}U_j$. In order to construct  the vector field $X_0^t$, it is enough to construct  a vector field $X_{0,j}^t$ on $\widetilde{U_j}$ which is $\rho_{(t\lambda,t\mu)}-$invariant for each $j$. So we fix $j$ and a connected component $A_j$ of $\widetilde{U}_j$. Then $A_j$ is a subset of $\mathrm{S}^{\mathbb{H}^3}(\mathrm{P}_t,\mathrm{Q}_t)$ where $\mathrm{P}_t$ and $\mathrm{Q}_t$ are support plane of $\partial_+\mathrm{CH}(\Lambda_{\rho_{(t\lambda,t\mu)}})$ and $\mathrm{P}_t\cap \mathrm{Q}_t=\widetilde{\alpha_j}$. We take $X_{0,j}^t$ to be equal to the vector field $X_{\mathrm{P}_t,\mathrm{Q}_t,\epsilon}$ constructed in Lemmas \ref{H3V} and \ref{AdsV}. Note that by construction $X_{\mathrm{P}_t,\mathrm{Q}_t}$ is invariant by the $1$-parameter subgroup of hyperbolic isometries preserving $\widetilde{\alpha_j}$ and so in particular, it is invariant by $\rho_{(t\lambda,t\mu)}(\alpha_j)$. Therefore we can use the action of $\rho_{(t\lambda,t\mu)}$ to define $X_{0,j}^t$ on the other components of $\widetilde{U_j}$.

Notice also that if $\widetilde{U_j}$ intersects with some $\widetilde{V_i}$, then the value of $X_{0,j}^t$ coincides with $X_i^t$; and this follows from the construction of the vector field $X_{\mathrm{P_t},\mathrm{Q_t},\epsilon}$, which we recall that it coincides with the unit normal vector of $\mathrm{P}_t$ or $\mathrm{Q}_t$ outside an $\epsilon-$neighborhood of $\mathrm{P}_t\cap \mathrm{Q}_t$.
\item[\textbullet] For $i=1$,  we write $V_1$ as the union of punctured disks $D_j$. As before, in order to construct a vector field $X_1^t$ on $\widetilde{V_1}$, it is enough to construct  a vector field $X_{1,j}^t$ on $\widetilde{D_j}$ which is $\rho_{(t\lambda,t\mu)}$-invariant for each $j$.
So we fix $j$ and take $C_j$ a connected component of $\widetilde{D_j}$. Then $C_j$ is contained in the unique support plane $\mathrm{P}_t$ of $\partial_{+}\mathrm{CH}(\Lambda_{\rho_{(t\lambda,t\mu)}})$ containing the puncture and so we can just take the  constant vector field equal to the normal of $\mathrm{P}_t$. This is invariant by the parabolic isometry fixing $\mathrm{P}_t$ (the holonomy of loop around the puncture), hence we get a vector field $X_{1,j}^t$ on $\widetilde{D_j}$ using the action of $\rho_{(t\lambda,t\mu)}(\pi_1(\Sigma))$. Therefore we have constructed $X_1^t$ on $V_1$, and this completes the construction of $\mathcal{N}_t^+$.

By construction, $\mathcal{N}_t^+$ is a continuous (and smooth outside the bending line) unit vector field that is $\rho_{(t\lambda,t\mu)}$-invariant and transverse to $\partial_{+}\mathrm{CH}(\Lambda_{\rho_{(t\lambda,t\mu)}})$. Furthermore the convergence of $\mathcal{N}_t^{+}$ to $-e_3$ follows from the Remark \ref{continuityofvector} and from the fact that $\mathcal{N}_t^{+}$ points in the convex side of $\partial_{+}\mathrm{CH}(\Lambda_{\rho_{(t\lambda,t\mu)}})$ as well as from the convergence of the pleated surface $\partial_{+}\mathrm{CH}(\Lambda_{\rho_{(t\lambda,t\mu)}})$ to $\mathbb{H}^2$ as proven in Lemma \ref{Sx1}. The fact that the exponential of the vector field $\mathcal{N}_t^+$ is a local homeomorphism follows from Lemmas \ref{H3V} and \ref{AdsV} since the only potential issue could arise when we are on the bending lines. However, by construction $\mathcal{N}_t^+$ coincides locally with $X_{\mathrm{P},\mathrm{Q},\epsilon}$.
\end{proof}
\subsection{Developing map of convex core structure}
Let us decompose the surface $\Sigma$ as the union of $\Sigma_{\mathfrak{p}}$ and $\Sigma_{\mathfrak{c}}$, where $\Sigma_{\mathfrak{p}}$ is a subset of $\Sigma$ consisting of the union of small neighborhoods around punctures, and $\Sigma_{\mathfrak{c}}$ is the complement of $\Sigma_{\mathfrak{p}}$ in $\Sigma$. The subset $\Sigma_{\mathfrak{p}}$ could be empty if the surface $\Sigma$ is closed. We start this subsection by constructing a developing map $\widetilde{\Sigma_\mathfrak{c}}\times [0,1]\to\HP$ which is $\rho_{(\lambda,\mu)}^{\HP}$-equivariant. To achieve this, let us define the function $\psi$ as follows
\begin{equation}
    \begin{array}{ccccc}
\psi & : & \widetilde{\Sigma} & \to & \mathbb{R} \\
 & & x & \mapsto &  \mathrm{L}(\mathrm{b}_{\lambda,+}^{\HP}(x))-\mathrm{L}(\mathrm{A}^{\HP}\mathrm{b}_{\mu,-}^{\HP}(x)), \\
\end{array}
\end{equation}
where $\mathrm{A}^{\HP}$ is the same Half-pipe isometry defined in Proposition \ref{Sx0} and $\mathrm{L}$ is the function defined in the Section \ref{halfpipe_structure}.
Since the cocycles $\mathrm{b}_{\lambda,+}^{\HP}$ and $\mathrm{A}^{\HP}\mathrm{b}_{\mu,-}^{\HP}$ are $\pi_1(\Sigma)$-equivariant and they have the same projection on $\mathbb{H}^2$ (see Remark \ref{comutation}), we can use the formula of the function $\mathrm{L}$ given in \eqref{formule_L} to show that $\psi$ is $\pi_1(\Sigma)$-invariant. This means that for all $\gamma\in \pi_1(\Sigma)$ and $x\in \widetilde{\Sigma}$, we have
\begin{equation}\label{psi}
    \psi(\gamma\cdot x)=\psi(x).
\end{equation}  
\begin{prop}\label{HP3_convexcore}
There exists a developing map $\mathrm{Dev}_{(\lambda,\mu)}^{\HP}:\widetilde{\Sigma_\mathfrak{c}}\times [0,1]\to \HP$ which is equivariant with respect to the $\HP$-quasi-Fuchsian structure given by $\rho_{(\lambda,\mu)}^{\HP}$, such that for all $x\in \widetilde{\Sigma_{\mathfrak{c}}}$:
\begin{itemize}
    \item $\mathrm{Dev}_{(\lambda,\mu)}^{\HP}(x,1)=\mathrm{b}_{\lambda,+}^{\HP}(x)$.
    \item $\mathrm{Dev}_{(\lambda,\mu)}^{\HP}(x,0)=\mathrm{A}^{\HP}\mathrm{b}_{\mu,-}^{\HP}(x)$,
\end{itemize}
\end{prop}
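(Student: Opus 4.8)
The plan is to build $\mathrm{Dev}_{(\lambda,\mu)}^{\HP}$ by interpolating, \emph{along the fibres of} $\HP$, between the two bending maps already at our disposal. Recall from \eqref{L} that $\HP$ is identified with $\mathbb{H}^2\times\mathbb{R}$ via $y\mapsto(\pi(y),\mathrm{L}(y))$, where $\pi\colon\HP\to\mathbb{H}^2$ is the fibre projection; on each fibre $\pi^{-1}(p)$ the function $\mathrm{L}$ is an affine coordinate, and by Lemma \ref{rotationHP} every element of $\isom(\HP)$ carries $\pi^{-1}(p)$ onto $\pi^{-1}(g\cdot p)$ by a map that is affine in $\mathrm{L}$. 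By Proposition \ref{half} (applied to the positive bending of $\lambda$ and to the negative bending of $\mu$) together with Theorem \ref{Sx0}, the maps $\mathrm{b}_{\lambda,+}^{\HP}$ and $\mathrm{A}^{\HP}\mathrm{b}_{\mu,-}^{\HP}$ are $\rho_{(\lambda,\mu)}^{\HP}$-equivariant with images $\partial_{+}\mathrm{CH}(\Lambda_{\rho_{(\lambda,\mu)}^{\HP}})$ and $\partial_{-}\mathrm{CH}(\Lambda_{\rho_{(\lambda,\mu)}^{\HP}})$ respectively, and by Remark \ref{comutation} their common projection $\pi\circ\mathrm{b}_{\lambda,+}^{\HP}=\pi\circ\mathrm{A}^{\HP}\mathrm{b}_{\mu,-}^{\HP}$ is the developing map $\mathrm{dev}_{(\lambda,\mu)}^{0}$ of the complete hyperbolic structure $k_{\lambda,\mu}$. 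Thus, for each $x$, the points $\mathrm{b}_{\lambda,+}^{\HP}(x)$ and $\mathrm{A}^{\HP}\mathrm{b}_{\mu,-}^{\HP}(x)$ lie on the common fibre over $\mathrm{dev}_{(\lambda,\mu)}^{0}(x)$, and I would set $\mathrm{Dev}_{(\lambda,\mu)}^{\HP}(x,s)$ to be the point of that fibre determined by
\begin{equation*}
\mathrm{L}\bigl(\mathrm{Dev}_{(\lambda,\mu)}^{\HP}(x,s)\bigr)=\mathrm{L}\bigl(\mathrm{A}^{\HP}\mathrm{b}_{\mu,-}^{\HP}(x)\bigr)+s\,\psi(x),
\end{equation*}
with $\psi$ the function introduced just before the statement. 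The two prescribed boundary values, at $s=0$ and $s=1$, then hold by construction.

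The second step is to check $\rho_{(\lambda,\mu)}^{\HP}$-equivariance. Since $\isom(\HP)$ acts affinely along the fibres of $\pi$, the isometry $\rho_{(\lambda,\mu)}^{\HP}(\gamma)$ sends a fibrewise affine combination, with parameter $s$, to the fibrewise affine combination of the images with the same parameter. Combining this with the equivariance of $\mathrm{b}_{\lambda,+}^{\HP}$ and of $\mathrm{A}^{\HP}\mathrm{b}_{\mu,-}^{\HP}$, and with the $\pi_1(\Sigma)$-invariance \eqref{psi} of $\psi$, one obtains $\mathrm{Dev}_{(\lambda,\mu)}^{\HP}(\gamma\cdot x,s)=\rho_{(\lambda,\mu)}^{\HP}(\gamma)\,\mathrm{Dev}_{(\lambda,\mu)}^{\HP}(x,s)$ for all $\gamma\in\pi_1(\Sigma)$; this is routine bookkeeping once the fibrewise affineness is in place.

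The hard part will be to show that $\mathrm{Dev}_{(\lambda,\mu)}^{\HP}$ is an honest developing map, that is, a homeomorphism onto the portion of $\mathrm{CH}(\Lambda_{\rho_{(\lambda,\mu)}^{\HP}})$ lying over $\mathrm{dev}_{(\lambda,\mu)}^{0}(\widetilde{\Sigma_{\mathfrak{c}}})$; equivalently, that the upper and lower boundary components are disjoint, i.e.\ $\psi(x)>0$ for every $x$. I plan to obtain this by convex analysis. By Proposition \ref{half} the upper boundary is the graph over $\mathbb{D}^2$ of the concave function $\psi_{\lambda}=\mathrm{H}$, and by the same argument for the negative bending of $\mu$, followed by the addition of the affine term coming from $\mathrm{A}^{\HP}$, the lower boundary is the graph of a convex function $\mathrm{H}_{-}$; being the upper and lower boundary, over $\mathbb{D}^2$, of the convex body $\mathrm{CH}(\Lambda_{\rho_{(\lambda,\mu)}^{\HP}})$, these satisfy $\mathrm{H}_{-}\le\mathrm{H}$ there. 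Suppose $\mathrm{H}(z_0)=\mathrm{H}_{-}(z_0)$ at some interior point $z_0$. Choose affine functions $a_{+}\ge\mathrm{H}$ and $a_{-}\le\mathrm{H}_{-}$ with $a_{\pm}(z_0)=\mathrm{H}(z_0)$ (a supporting plane of the concave $\mathrm{H}$ from above and of the convex $\mathrm{H}_{-}$ from below). Then $a_{-}\le\mathrm{H}_{-}\le\mathrm{H}\le a_{+}$ on $\mathbb{D}^2$, so $a_{+}-a_{-}$ is an affine function that is nonnegative on $\mathbb{D}^2$ and vanishes at the interior point $z_0$ — hence identically zero. The resulting sandwich $a_{+}=a_{-}\le\mathrm{H}_{-}\le\mathrm{H}\le a_{+}$ forces $\mathrm{H}$ to be affine on $\mathbb{D}^2$, contradicting the fact that $\psi_{\lambda}$ has a genuine angle along each leaf of $\mathrm{dev}_{(\lambda,\mu)}^{0}(\widetilde{\lambda})$, which is non-empty since $\lambda$ is a weighted multicurve. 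Therefore $\mathrm{H}>\mathrm{H}_{-}$ on all of $\mathbb{D}^2$, i.e.\ $\psi>0$ on $\widetilde{\Sigma}$. Granting this, for each $x$ the map $s\mapsto\mathrm{L}(\mathrm{A}^{\HP}\mathrm{b}_{\mu,-}^{\HP}(x))+s\,\psi(x)$ is strictly increasing; since $\mathrm{dev}_{(\lambda,\mu)}^{0}$ is a homeomorphism from $\widetilde{\Sigma}$ onto $\mathbb{D}^2$, the map $(x,s)\mapsto\bigl(\mathrm{dev}_{(\lambda,\mu)}^{0}(x),\mathrm{L}(\mathrm{Dev}_{(\lambda,\mu)}^{\HP}(x,s))\bigr)$ is a continuous bijection onto its image with continuous inverse, obtained by inverting the two affine relations above, and composing with the diffeomorphism \eqref{L} exhibits $\mathrm{Dev}_{(\lambda,\mu)}^{\HP}$ as a homeomorphism onto the announced region of $\mathrm{CH}(\Lambda_{\rho_{(\lambda,\mu)}^{\HP}})$ — in particular, a developing map equivariant for $\rho_{(\lambda,\mu)}^{\HP}$. (The same formula defines such a map on all of $\widetilde{\Sigma}\times[0,1]$; only its restriction to $\widetilde{\Sigma_{\mathfrak{c}}}\times[0,1]$ is recorded here, being what is needed in the sequel.)
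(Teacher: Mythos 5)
Your proposal is correct and follows essentially the same route as the paper: the same fibrewise affine interpolation $\mathrm{L}(\mathrm{A}^{\HP}\mathrm{b}_{\mu,-}^{\HP}(x))+s\,\psi(x)$ (which is exactly the paper's $s\mathrm{L}(\mathrm{b}_{\lambda,+}^{\HP}(x))+(1-s)\mathrm{L}(\mathrm{A}^{\HP}\mathrm{b}_{\mu,-}^{\HP}(x))$), the same boundary identifications via Remark \ref{comutation}, and the same equivariance computation resting on the $\pi_1$-invariance of $\psi$ and the affine action of $\isom(\HP)$ along fibres. The one genuine addition is your convex-analysis argument that $\psi>0$ (supporting affine functions sandwiching $\mathrm{H}_-\le\mathrm{H}$, forcing $\mathrm{H}$ affine if they touch at an interior point, contradicting the genuine bending along $\lambda$), hence that the interpolation is a homeomorphism onto the relevant slab of $\mathrm{CH}(\Lambda_{\rho^{\HP}_{(\lambda,\mu)}})$; the paper's proof leaves this injectivity point implicit, so your supplement is sound and strengthens the write-up rather than diverging from it.
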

\begin{proof}
Let us describe the map $\mathrm{Dev}_{(\lambda,\mu)}^{\HP}$ in the model $\mathbb{H}^2\times \mathbb{R}$ explained in Section \ref{halfpipe_structure}, (see identity \eqref{L}). We define the map using an affine interpolation between $\mathrm{b}_{\lambda,+}^{\HP}$ and $\mathrm{A}^{\HP}\mathrm{b}_{\mu,-}^{\HP}$. Namely

\begin{equation}\label{29}
    \begin{array}{ccccc}
\mathrm{Dev}_{(\lambda,\mu)}^{\HP} & : & \widetilde{\Sigma_\mathfrak{c}}\times[0,1] & \to & \HP= \mathbb{H}^2\times \mathbb{R}\\
 & & (x,s) & \mapsto & (\mathrm{dev}_{(\lambda,\mu)}^0(x), s\mathrm{L}(\mathrm{b}_{\lambda,+}^{\HP}(x))+(1-s)\mathrm{L}(\mathrm{A}^{\HP}\mathrm{b}_{\mu,-}^{\HP}(x))) \\
\end{array}\end{equation}
where $\mathrm{dev}_{(\lambda,\mu)}^0$ is the limit of $\mathrm{dev}_{(t\lambda,t\mu)}^\pm$.
Note that the $\mathbb{H}^2$ component of both $\mathrm{b}_{\lambda,+}$ and $\mathrm{b}_{\mu,-}$ is equal to $\mathrm{dev}_{(\lambda,\mu)}^0$ (see identity \eqref{forme_de_rotation}) and hence by Remark \ref{comutation}, the $\mathbb{H}^2$ part of $\mathrm{A}^{\HP}\mathrm{b}_{\mu,-}$ is also $\mathrm{dev}_{(\lambda,\mu)}^0$.
This implies that the map $\mathrm{Dev}_{(\lambda,\mu)}^{\HP}$ satisfies the items of the statement.
Now we will prove that $\mathrm{Dev}_{(\lambda,\mu)}^{\HP}$ is equivariant with respect to the representation $\rho_{(\lambda,\mu)}^{\HP}$. First let $\sigma $ be the linear part of $\rho_{(\lambda,\mu)}^{\HP}$ and $\tau$ be a $\sigma$-cocycle such that \begin{equation}  
\rho_{(\lambda,\mu)}^{\HP}:=\mathrm{Is}(\sigma,\tau)=\left [
   \begin{array}{c c c}
      \sigma & 0 \\
      
      ^T\tau\mathrm{J}\sigma & 1\\ 
   \end{array}
\right].
\end{equation}
Let $x\in \widetilde{\Sigma_{\mathfrak{c}}}$ and $\gamma\in \pi_1(\Sigma_{\mathfrak{c}})$, we denote by $\widetilde{\mathrm{dev}_{(\lambda,\mu)}^0}$ the lift of $\mathrm{dev}_{(\lambda,\mu)}^0$ to $\mathcal{H}^2$, namely $\mathrm{dev}_{(\lambda,\mu)}^0=[\widetilde{\mathrm{dev}_{(\lambda,\mu)}^0}]$, we have
\begin{align}
\mathrm{Dev}_{(\lambda,\mu)}^{\HP}(\gamma\cdot x,s)&= \left([\widetilde{\mathrm{dev}_{(\lambda,\mu)}^0(\gamma\cdot x)}], \mathrm{L}(\mathrm{b}_{\lambda,+}^{\HP}(\gamma\cdot x))+(s-1)\psi(\gamma\cdot x)\right)   \\
&=\left([\sigma(\gamma)\widetilde{\mathrm{dev}_{(\lambda,\mu)}^0(x)}], \mathrm{L}\left(\rho_{(\lambda,\mu)}^{\HP}(\gamma)\mathrm{b}_{\lambda,+}^{\HP}(x)\right)+(s-1)\psi( x)\right)\label{39}\\
&= \left([\sigma(\gamma)\widetilde{\mathrm{dev}_{(\lambda,\mu)}^0(x)}], \mathrm{L}(\mathrm{b}_{\lambda,+}^{\HP}(x))+\langle \tau(\gamma),\widetilde{\mathrm{dev}_{(\lambda,\mu)}^0(x)} \rangle_{1,2}+(s-1)\psi(x)\right)\label{40}\\
&=\rho_{(\lambda,\mu)}^{\HP}(\gamma)\mathrm{Dev}_{(\lambda,\mu)}^{\HP}(x).
\end{align}
In equation \eqref{39}, we used the fact that $\psi$ is $\pi_1(\Sigma)$-invariant and $\mathrm{dev}_{(\lambda,\mu)}^0$ is equivariant with respect to its holonomy representation $\sigma.$ In equation \eqref{40}, we used the $\pi_1(\Sigma)$-equivariance of the positive bending cocyle $\mathrm{b}_{\lambda,+}^{\HP}$ and the following basic observation: for all $[x,t]\in \HP$ with $x\in \mathcal{H}^2$ and for any $(\mathrm{A},\mathrm{v})\in \mathrm{O}_{0}(1,2)\ltimes\mathbb{R}^{1,2}$, we have

$$\mathrm{L}\left(\mathrm{Is}(\mathrm{A},\mathrm{v})([x,t])\right)=\mathrm{L}([x,t])+\langle \mathrm{v},x\rangle_{1,2}.$$\end{proof}

\begin{remark}
    The way we define $\mathrm{Dev}_{(\lambda,\mu)}^{\HP}$ in \eqref{29} still makes sense even in $\widetilde{\Sigma_\mathfrak{p}}\times[0,1]$. However we will explain later in Proposition \ref{611} how to define $\mathrm{Dev}_{(\lambda,\mu)}^{\HP}$ on $\widetilde{\Sigma_\mathfrak{p}}\times[0,1]$ in a way which is more convenient for our purpose. 
\end{remark}
The next goal is to construct a family of developing maps $\mathrm{Dev}_{(t\lambda,t\mu)}:\widetilde{\Sigma_{\mathfrak{c}}}\times [0,1]\to \mathrm{X}$ that are $\rho_{(t\lambda,t\mu)}$-equivariant and which converge to $\mathrm{Dev}_{(\lambda,\mu)}^{\HP}$ after rescaling. Let $\mathrm{A}_t^{\X}$ be the same isometry as in Proposition \ref{Sx0} and denote by $\mathrm{N}_t^{\pm}:\widetilde{\Sigma}\to\mathrm{T}\X$ the vector fields given by
$$\mathrm{N}_t^+=\mathcal{N}_t^+\circ\mathrm{b}_{\vert t\vert\lambda,+}^{\X}, \ \ \mathrm{and}\ \ \mathrm{N}_t^-=\mathcal{N}_t^-\circ\mathrm{A}_t^{\X}\mathrm{b}_{\vert t\vert\mu,-}^{\X}.$$ The following Lemma describes $\mathrm{Dev}_{(t\lambda,t\mu)}$ in a neighborhood of the pleated surfaces.

\begin{lemma}\label{extensiondev}
Let $\delta>0$ be the same constant as in Proposition \ref{vector}. Then there is $0<\delta^{'}<\delta$ such that the map defined as $$\mathrm{Dev}_{(t\lambda,t\mu)}(x,s)=\begin{cases}
\exp^{\X}_{\mathrm{b}_{\vert t\vert\lambda,+}(x)}{((s-1)\vert t\vert \psi(x)\mathrm{N}_t^+(x))} &\text{ if}\ (x,s)\in \widetilde{\Sigma_{\mathfrak{c}}}\times [1-\delta^{'},1] \\
\mathrm{A}_t^{\X}\exp^{\X}_{\mathrm{b}_{\vert t\vert\mu,-}(x)}{( s\vert t\vert\psi(x) \mathrm{N}_t^-(x))}  &\text{ if }\ (x,s)\in \widetilde{\Sigma_{\mathfrak{c}}}\times [0,\delta^{'}] \\
\end{cases}$$ satisfies the following 

\begin{itemize}
\item $\tau_t\mathrm{Dev}_{(t\lambda,t\mu)}$ is a local homeomorphism which is $\rho_{(t\lambda,t\mu)}$-equivariant.
\item  $\tau_t\mathrm{Dev}_{(t\lambda,t\mu)}$ converges uniformly on compact sets to the developing map $\mathrm{Dev}_{(\lambda,\mu)}^{\HP}$ constructed in Proposition \ref{HP3_convexcore}. 

\end{itemize}
\end{lemma}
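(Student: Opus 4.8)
The plan is to verify the three required properties of the piecewise-defined map $\mathrm{Dev}_{(t\lambda,t\mu)}$ in the order: (a) $\rho_{(t\lambda,t\mu)}$-equivariance, (b) the local homeomorphism property, and (c) the convergence after rescaling. First I would treat equivariance: on the slab $\widetilde{\Sigma_{\mathfrak c}}\times[1-\delta',1]$ the map is $\exp^{\X}_{\mathrm{b}_{|t|\lambda,+}(x)}\bigl((s-1)|t|\psi(x)\mathrm{N}_t^+(x)\bigr)$. Since $\mathrm{b}_{|t|\lambda,+}$ is $\rho_{(t\lambda,t\mu)}$-equivariant by Proposition \ref{holHfuch}/\ref{Mess}, $\mathcal{N}_t^+$ is $\rho_{(t\lambda,t\mu)}$-invariant by Proposition \ref{vector}, and $\psi$ is $\pi_1(\Sigma)$-invariant by \eqref{psi}, equivariance follows from the naturality of the exponential map, $\exp^{\X}_{g\cdot p}(g_*v)=g\cdot\exp^{\X}_p(v)$ for $g\in\isom(\X)$. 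The same argument handles the slab near $s=0$, using that $\mathrm{A}_t^{\X}\mathrm{b}_{|t|\mu,-}$ has holonomy $\rho_{(t\lambda,t\mu)}$ (identity \eqref{bordinferieure}) and that $\mathrm{N}_t^-=\mathcal{N}_t^-\circ\mathrm{A}_t^{\X}\mathrm{b}_{|t|\mu,-}^{\X}$ pulls back the $\rho_{(t\lambda,t\mu)}$-invariance of $\mathcal{N}_t^-$; the left composition with the fixed isometry $\mathrm{A}_t^{\X}$ does not affect equivariance. Note the two slabs are disjoint once $\delta'<1/2$, so there is no compatibility condition between them.

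For the local homeomorphism property I would invoke the third bullet of Proposition \ref{vector}, which asserts that $(x,s)\mapsto\exp^{\X}_{\mathrm{b}_{|t|\lambda,+}(x)}(s\mathcal{N}_t^+(x))$ is a local homeomorphism on $\widetilde{\Sigma}\times[0,\delta]$, and similarly for $\mathcal{N}_t^-$. The point is that $\mathrm{Dev}_{(t\lambda,t\mu)}$ on $\widetilde{\Sigma_{\mathfrak c}}\times[1-\delta',1]$ is the composition of this map with the reparametrization $(x,s)\mapsto(x,(1-s)|t|\psi(x))$. Because $\lambda$ and $\mu$ fill $\Sigma$, $\psi$ is bounded away from $0$ on compact sets of $\widetilde{\Sigma_{\mathfrak c}}$ — indeed $\psi(x)=\mathrm{L}(\mathrm{b}_{\lambda,+}^{\HP}(x))-\mathrm{L}(\mathrm{A}^{\HP}\mathrm{b}_{\mu,-}^{\HP}(x))$ is the ``vertical distance'' between the two half-pipe pleated surfaces, which is strictly positive by the filling condition (the two convex hulls touch only on the limit set). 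Hence $(x,s)\mapsto(x,(1-s)|t|\psi(x))$ is, for a suitable $\delta'>0$, a homeomorphism onto its image inside $\widetilde{\Sigma}\times[0,\delta]$, at least locally in $x$; composing with a local homeomorphism gives a local homeomorphism. One subtlety: $\psi$ is only continuous (it is piecewise affine, with corners along the bending lines), so the reparametrization is continuous but not smooth; this is harmless since we only claim a local homeomorphism, not a diffeomorphism. After applying $\tau_t$, which is itself a homeomorphism of the relevant charts, $\tau_t\mathrm{Dev}_{(t\lambda,t\mu)}$ remains a local homeomorphism.

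The convergence statement is where I expect the real work. The plan is: on the upper slab, write $\tau_t\mathrm{Dev}_{(t\lambda,t\mu)}(x,s)=\tau_t\exp^{\X}_{\mathrm{b}_{|t|\lambda,+}(x)}\bigl((s-1)|t|\psi(x)\mathrm{N}_t^+(x)\bigr)$ and use that $\tau_t$ conjugates the $\X$-exponential to an approximate $\HP$-exponential. Concretely, since $\tau_t\mathrm{b}_{|t|\lambda,+}^{\X}\to\mathrm{b}_{\lambda,+}^{\HP}$ uniformly on compacta (Proposition \ref{Sx1}), and $\mathcal{N}_t^+\to -e_3$ by the last bullet of Proposition \ref{vector}, while the ``time'' parameter $(s-1)|t|\psi(x)$ carries exactly the factor $|t|$ that cancels the $1/|t|$ in $\tau_t$ in the $x_3$-direction, a direct computation in coordinates — expanding the hyperbolic/AdS exponential map $\exp^{\X}_p(v)=\cosh(\|v\|)p+\sinh(\|v\|)\tfrac{v}{\|v\|}$ (or the $\cos/\sin$ analogue) and conjugating by $\tau_t$ — shows the limit is the affine map $p\mapsto p+(s-1)\psi(x)(-e_3)$ applied to $\mathrm{b}_{\lambda,+}^{\HP}(x)$, which by \eqref{formule_L} is precisely $\bigl(\mathrm{dev}^0_{(\lambda,\mu)}(x),\,\mathrm{L}(\mathrm{b}_{\lambda,+}^{\HP}(x))+(s-1)\psi(x)\bigr)=\mathrm{Dev}_{(\lambda,\mu)}^{\HP}(x,s)$. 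The analogous computation on the lower slab uses $\tau_t\mathrm{A}_t^{\X}\mathrm{b}_{|t|\mu,-}^{\X}\to\mathrm{A}^{\HP}\mathrm{b}_{\mu,-}^{\HP}$ and $\tau_t\mathrm{A}_t^{\X}\tau_t^{-1}\to\mathrm{A}^{\HP}$ from Proposition \ref{Sx0}, together with $\mathcal{N}_t^-\to e_3$. The main obstacle, then, is bookkeeping this rescaled-exponential limit carefully and uniformly on compact sets — in particular checking that the non-smoothness of $\mathrm{N}_t^{\pm}$ across bending lines does not spoil the uniform convergence, which follows from Remark \ref{continuityofvector} (continuity of the vector field construction in the support planes) and the fact that the support planes of $\partial_+\mathrm{CH}(\Lambda_{\rho_{(t\lambda,t\mu)}})$ converge to those of $\partial_+\mathrm{CH}(\Lambda_{\rho^{\HP}_{(\lambda,\mu)}})$.
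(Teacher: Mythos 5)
Your plan is correct in outline and, for two of the three items, coincides with the paper's proof. The equivariance argument is the same (the paper disposes of it in one sentence, citing the equivariance of the bending maps and of $\mathrm{N}_t^{\pm}$), and the convergence computation is exactly the paper's: write $\exp^{\X}_p(v)$ as $\cosh(\lVert v\rVert)p+\sinh(\lVert v\rVert)v/\lVert v\rVert$ (resp.\ the $\cos/\sin$ analogue in $\ads$), let the factor $\vert t\vert$ in the time parameter cancel the $1/\vert t\vert$ of $\tau_t$ in the $x_3$-direction, and invoke Proposition \ref{Sx1}, Proposition \ref{Sx0}, the last bullet of Proposition \ref{vector}, and Remark \ref{continuityofvector} for uniformity on compact sets. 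One small point: for the reparametrization $(x,s)\mapsto (x,(1-s)\vert t\vert\psi(x))$ you need \emph{both} bounds on $\psi$ over $\widetilde{\Sigma_{\mathfrak c}}$ — the positive lower bound (which you state) for injectivity in the $s$-direction, and the upper bound $\psi\leq a$ (which is what the paper uses, via cocompactness) so that the time parameter lands inside the interval $[0,\delta]$ on which Proposition \ref{vector} guarantees the exponential of the vector field is a local homeomorphism.

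The one genuine divergence is your remark that ``the two slabs are disjoint \ldots so there is no compatibility condition between them.'' This dismisses the step that occupies most of the paper's proof of the first item and that explains why the lemma asserts the existence of a particular $\delta'$: the paper shows that $\delta'$ can be chosen so that
$\mathrm{Dev}_{(t\lambda,t\mu)}(\widetilde{\Sigma_{\mathfrak{c}}}\times[1-\delta',1])$ and $\mathrm{Dev}_{(t\lambda,t\mu)}(\widetilde{\Sigma_{\mathfrak{c}}}\times[0,\delta'])$ are \emph{disjoint} for all small $t$ (identity \eqref{intersection_dev}), by a contradiction argument: if not, cocompactness of the $\pi_1(\Sigma_{\mathfrak c})$-action plus the already-established convergence would produce a point lying in both $\partial_+\mathrm{CH}(\Lambda_{\rho^{\HP}_{(\lambda,\mu)}})$ and $\partial_-\mathrm{CH}(\Lambda_{\rho^{\HP}_{(\lambda,\mu)}})$, which is impossible. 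If one reads ``local homeomorphism'' in the strictly local sense your shortcut is defensible, but the disjointness is what makes the combined map on the union of the two collars a local homeomorphism \emph{onto its image}, and it is used downstream when the Ehresmann--Thurston interpolation of Theorem \ref{exdev} is required to produce a developing map that is a homeomorphism onto $\mathrm{CH}(\Lambda_{\rho_{(t\lambda,t\mu)}})$, as the definition of a convex core structure demands. You should add this step (the argument is short and uses only ingredients you already have in hand).
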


\begin{proof}
Let us prove the convergence of $\tau_t\mathrm{Dev}_{(t\lambda,t\mu)}$ to $\mathrm{Dev}_{(\lambda,\mu)}^{\HP}$,  we will only provide the proof for the convergence in the region $\widetilde{\Sigma_{\mathfrak{c}}}\times[1-\delta^{'},1]$, as the same proof applies to $\widetilde{\Sigma_{\mathfrak{c}}}\times [0,\delta^{'}]$. We compute:
\begin{itemize}
    \item For $t>0$, so that $\X=\mathbb{H}^3$, we take $\widetilde{\mathrm{b}_{\vert t\vert\lambda,+}(x)}\in \mathcal{H}^3$ the lift of $\mathrm{b}_{\vert t\vert\lambda,+}(x)$, we obtain
    \begin{align*}
\exp^{\X}_{\mathrm{b}_{\vert t\vert\lambda,+}(x)}{((s-1)\vert t\vert \psi(x)\mathrm{N}_t^+(x))}&=\biggl[\cosh{\left(\vert t\vert (s-1)\psi(x)\right)}\widetilde{\mathrm{b}_{\vert t\vert\lambda,+}(x)}\\
&+\sinh\left(\vert t\vert (s-1) \psi(x)\right)\mathrm{N}_t^{+}(x)\biggr].
\end{align*}
    
     \item For $t<0$ so that $\X=\ads$, denoting by the same notation $\widetilde{\mathrm{b}_{\vert t\vert\lambda,+}(x)}\in \mathcal{A}\mathrm{d}\mathcal{S}^3$ the lift of $\mathrm{b}_{\vert t\vert\lambda,+}(x)$, then we have
     \begin{align*}
\exp^{\X}_{\mathrm{b}_{\vert t\vert\lambda,+}(x)}{\left((s-1)\vert t\vert \psi(x)\mathrm{N}_t^+(x)\right)}&=  \biggl[\cos{\left(\vert t\vert (s-1)\psi(x)\right)}\widetilde{\mathrm{b}_{\vert t\vert\lambda,+}(x)}  \\
& +\sin\left(\vert t\vert (s-1) \psi(x)\right)\mathrm{N}_t^{+}(x)\biggr].
\end{align*}
 Hence 
 \begin{align*}
\lim_{t \to 0^{\pm}}\tau_t \exp^{\X}_{\mathrm{b}_{\vert t\vert\lambda,+}(x)}{((s-1)\vert t\vert \psi(x)\mathrm{N}_t^+(x))}&=[\widetilde{\mathrm{dev}_{(\lambda,\mu)}^0(x)},
\mathrm{L}(\mathrm{b}_{\lambda,+}^{\HP}(x)) +(s-1)\psi(x)]\\
&=\mathrm{Dev}_{(\lambda,\mu)}^{\HP}(x,s),
\end{align*}
\end{itemize}
where $\mathrm{dev}_{(\lambda,\mu)}^0$ is the limit of $\mathrm{dev}_{(t\lambda,t\mu)}^\pm$ as $t\to0$ (See the proof of Proposition \ref{HP3_convexcore}). Note that the uniform convergence on compact sets of $\tau_t\mathrm{Dev}_{(t\lambda,t\mu)}$ follows form the uniform convergence of the cocyle $\mathrm{b}_{\vert t\vert\lambda,+}$ to $\mathrm{b}_{\lambda,+}^{\HP}$ proved in Proposition \ref{Sx1} and form the construction of the transverse vector field $\mathrm{N}_t^+$ which depends continuously on $t$. See Remark \ref{continuityofvector}.\\ The $\rho_{(t\lambda,t\mu)}$-equivariance of $\mathrm{Dev}_{(t\lambda,t\mu)}$ follows from the $\rho_{(t\lambda,t\mu)}$-equivariance of the bending maps and the vector fields $\mathrm{N}_t^{\pm}$. 

We proceed now to prove that $\mathrm{Dev}_{(t\lambda,t\mu)}$ is a local homeomorphism. First remark that since $\psi$ is $\pi_1(\Sigma_{\mathfrak{c}})-$invariant and $\Sigma_{\mathfrak{c}}$ is compact then $\psi$ is bounded above by some positive real number $a$, let $\eta:=\min(\delta,a)$, then for $t$ small enough, the restriction of the map $\mathrm{Dev}_{(t\lambda,t\mu)}$ to each of $\widetilde{\Sigma_{\mathfrak{c}}}\times [1-\eta,1]$ and $\widetilde{\Sigma_{\mathfrak{c}}}\times [0,\eta]$ is already a local homeomorphism, this follows from the construction of the vector field $\mathrm{N}_t^\pm$ in Proposition \ref{vector} and the Lemmas \ref{H3V} and \ref{AdsV}.
Now, we claim that we can choose $\delta^{'}<\eta$ small enough such that for any $t\in (-\delta^{'},\delta^{'})$ \begin{equation}
    \mathrm{Dev}_{(t\lambda,t\mu)}(\widetilde{\Sigma_{\mathfrak{c}}}\times[1-\delta^{'},1])\cap \mathrm{Dev}_{(t\lambda,t\mu)}(\widetilde{\Sigma_{\mathfrak{c}}}\times[0,\delta^{'}])=\emptyset. \label{intersection_dev}
    \end{equation} 
This ensures that $\mathrm{Dev}_{(t\lambda,t\mu)}$ is a local homeomorphism from the union $\widetilde{\Sigma_{\mathfrak{c}}}\times ([0,\delta^{'}]\cup[1-\delta^{'},1])$ to its image. To prove the claim, assume by contradiction that there exists a sequence $\delta_n\to 0$ such that the intersection \eqref{intersection_dev} is non empty. Hence, there are $x_{n,1},\ x_{n,2}\in \widetilde{\Sigma_{\mathfrak{c}}}$, $s_{n,1}\in[0,\delta_n]$ and $s_{n,2}\in [1-\delta_n,\delta_n]$ such that:
$$\mathrm{Dev}_{(t_n\lambda,t_n\mu)}(x_{n,1},s_{n,1})=\mathrm{Dev}_{(t_n\lambda,t_n\mu)}(x_{n,2},s_{n,2}).$$ Using the facts that $\pi_1(\Sigma_{\mathfrak{c}})$ acts cocompactly on $\widetilde{\Sigma_{\mathfrak{c}}}$ and that $\mathrm{Dev}_{(t\lambda,t\mu)}$ is $\rho_{t(\lambda,t\mu)}-$ equivariant, we can assume that $x_{n,1}$ and $x_{n,2}$ converge up to subsequence to $x_1$ and $x_2$ respectively. 
Therefore, we have 
\begin{align*}
   \mathrm{Dev}_{(\lambda,\mu)}^{\HP}(x_1,0) & = \lim_{ n\to +\infty}\tau_{t_n}\mathrm{Dev}_{(t_n\lambda,t_n\mu)}(x_{n,1},s_{n,1})\\
   & = \lim_{ n\to +\infty}\tau_{t_n}\mathrm{Dev}_{(t_n\lambda,t_n\mu)}(x_{n,2},s_{n,2}) \\
   &=\mathrm{Dev}_{(\lambda,\mu)}^{\HP}(x_2,1)
\end{align*}
This gives a contradiction since $\mathrm{Dev}_{(\lambda,\mu)}^{\HP}(x_1,0)$ is contained in the upper boundary component $ \partial_-\mathrm{CH}(\Lambda_{\rho_{(\lambda,\mu)}^{\HP}})$ and $\mathrm{Dev}_{(\lambda,\mu)}^{\HP}(x_2,1)$ is contained in the lower boundary component $\partial_+\mathrm{CH}(\Lambda_{\rho_{(\lambda,\mu)}^{\HP}})$. 
\end{proof}
We now move on to the principal result of this section.
\begin{theorem}[Transition of geometric structures]\label{exdev} 
    Let $\lambda$ and $\mu $ be two weighted multicurves which fill  $\Sigma$ and consider $\rho_{(t\lambda,t\mu)}$ the family of representations as in Theorem \ref{H}. Then there is a family of developing maps $\mathrm{Dev}_{(t\lambda,t\mu)}:\widetilde{\Sigma}\times[0,1]\to \X$ such that the structure $(\tau_t\mathrm{Dev}_{(t\lambda,t\mu)},\tau_t\rho_{(t\lambda,t\mu)}\tau_t^{-1})$ converges as $t\to 0$ to the Half-pipe convex core structure  $(\mathrm{Dev}_{(\lambda,\mu)}^{\HP},\rho_{(\lambda,\mu)}^{\HP})$.
\end{theorem}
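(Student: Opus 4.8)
The plan is to combine the local picture near the two boundary pleated surfaces, established in Lemma \ref{extensiondev}, with the Ehresmann--Thurston Principle (Theorem \ref{ehresman}) applied to the compact core $\Sigma_{\mathfrak{c}}\times[0,1]$, and then to handle the cusp regions $\Sigma_{\mathfrak{p}}\times[0,1]$ separately. So first I would recall that by Lemma \ref{extensiondev} we already have, for $t$ small, a $\rho_{(t\lambda,t\mu)}$-equivariant local homeomorphism $\mathrm{Dev}_{(t\lambda,t\mu)}$ defined on the collar $\widetilde{\Sigma_{\mathfrak{c}}}\times\big([0,\delta']\cup[1-\delta',1]\big)$, whose rescaling $\tau_t\mathrm{Dev}_{(t\lambda,t\mu)}$ converges uniformly on compact sets to $\mathrm{Dev}_{(\lambda,\mu)}^{\HP}$. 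The first step is to extend this to all of $\widetilde{\Sigma_{\mathfrak{c}}}\times[0,1]$: since $\Sigma_{\mathfrak{c}}$ is compact, the Ehresmann--Thurston Principle applied to the compact manifold-with-boundary $\Sigma_{\mathfrak{c}}\times[0,1]$, starting from the $\HP$ structure $\mathrm{Dev}_{(\lambda,\mu)}^{\HP}$ of Proposition \ref{HP3_convexcore} and the convergent family of holonomies $\tau_t\rho_{(t\lambda,t\mu)}\tau_t^{-1}\to\rho_{(\lambda,\mu)}^{\HP}$ (Theorem \ref{H}), produces for $t$ small a developing map on $\widetilde{\Sigma_{\mathfrak{c}}}\times[0,1]$ converging to $\mathrm{Dev}_{(\lambda,\mu)}^{\HP}$ in the $\mathcal{C}^0$ topology on compact subsets of the interior. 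One must then argue that this Ehresmann--Thurston developing map can be chosen to agree with the already-constructed $\mathrm{Dev}_{(t\lambda,t\mu)}$ on the two boundary collars; this is the standard relative version of the principle (glue the fixed structure near $\partial$ and deform only in the interior, using that the deformed structure near a compact piece of boundary is controlled by the holonomy there), and it is here that the role of the transverse vector field $\mathcal{N}_t^{\pm}$ and the collar neighborhoods of Proposition \ref{vector} pays off, since it pins down the geometry of $\Sigma\times\{0,1\}$.

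The second step is to extend the developing map across the cusps, i.e.\ to define $\mathrm{Dev}_{(t\lambda,t\mu)}$ on $\widetilde{\Sigma_{\mathfrak{p}}}\times[0,1]$ compatibly. On each punctured-disk neighborhood $V$ of a puncture, the holonomy $\rho_{(t\lambda,t\mu)}$ restricted to $\pi_1(V\times\mathbb{S}^1)$ (before doubling, to the peripheral $\mathbb{Z}$) is, by Propositions \ref{holHfuch}, \ref{Mess}, \ref{half}, the holonomy of a cusp in $\mathbb{H}^3$, $\ads$ or $\HP$ respectively — here the two weighted multicurves $\lambda,\mu$ do not meet $V$, so the bending cocycle is trivial on $\widetilde{V}$ and the restriction of the bending map is (the projectivization of) an affine/parabolic model. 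I would use the explicit cusp model from the subsection on horospheres and cusps: define $\mathrm{Dev}_{(t\lambda,t\mu)}$ on $\widetilde V\times[0,1]$ by interpolating, along the transverse vector field, between the horosphere through $\mathrm{b}_{\vert t\vert\lambda,+}^{\X}(\widetilde V)$ and the one through $\mathrm{A}_t^{\X}\mathrm{b}_{\vert t\vert\mu,-}^{\X}(\widetilde V)$, exactly as in \eqref{29}/Lemma \ref{extensiondev} but now using that both images are pieces of totally geodesic (spacelike) planes. One checks $\rho_{(t\lambda,t\mu)}$-equivariance using that the peripheral holonomy preserves the relevant horosphere, that $\tau_t\mathrm{P}^{\mathbb{H}^3}\tau_t^{-1}$ and $\tau_t\mathrm{P}^{\ads}\tau_t^{-1}$ converge to $\mathrm{P}^{\HP}$ (stated in the excerpt), so the rescaled cusp structures converge to a $\HP$-cusp, and that this matches $\mathrm{Dev}_{(\lambda,\mu)}^{\HP}$ extended over $\widetilde{\Sigma_{\mathfrak{p}}}\times[0,1]$ as announced in the remark after Proposition \ref{HP3_convexcore} (the promised Proposition \ref{611}). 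Gluing the interior piece and the cusp pieces along the collars — which agree by construction since both use the same bending maps and the same transverse vector field on the overlap — yields a global $\rho_{(t\lambda,t\mu)}$-equivariant local homeomorphism $\mathrm{Dev}_{(t\lambda,t\mu)}:\widetilde{\Sigma}\times[0,1]\to\X$.

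The third step is to assemble the convergence statement: the pair $(\tau_t\mathrm{Dev}_{(t\lambda,t\mu)},\tau_t\rho_{(t\lambda,t\mu)}\tau_t^{-1})$ is, by construction, a family of $(\mathrm{G},\X_t)$-structures (hyperbolic for $t>0$, $\ads$ for $t<0$), whose holonomies converge to $\rho_{(\lambda,\mu)}^{\HP}$ with values in $\isom(\HP)$ by Theorem \ref{H}, and whose developing maps converge uniformly on compact subsets of $\widetilde{\mathrm{int}(\Sigma\times[0,1])}$ to $\mathrm{Dev}_{(\lambda,\mu)}^{\HP}$ — on the collars and cusps by the explicit formulas above, and on the compact interior core by the Ehresmann--Thurston convergence. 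Since $\mathrm{Dev}_{(\lambda,\mu)}^{\HP}$ is a developing map of a Half-pipe convex core structure (Proposition \ref{HP3_convexcore}, together with Proposition \ref{half} identifying its boundary components with $\partial_{\pm}\mathrm{CH}(\Lambda_{\rho_{(\lambda,\mu)}^{\HP}})$), this is precisely the assertion that $(\tau_t\mathrm{Dev}_{(t\lambda,t\mu)},\tau_t\rho_{(t\lambda,t\mu)}\tau_t^{-1})\to(\mathrm{Dev}_{(\lambda,\mu)}^{\HP},\rho_{(\lambda,\mu)}^{\HP})$ in the sense of the convergence definition in Section \ref{sec2}.

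The main obstacle I expect is the second step: arranging the extension over the cusps to be simultaneously $\rho_{(t\lambda,t\mu)}$-equivariant, a local homeomorphism onto a neighborhood of the cusp, compatible with the boundary-collar map on the overlap, and convergent after rescaling. The delicate point is uniformity in $t$ near the puncture — one is working on a non-compact piece, so Ehresmann--Thurston does not apply directly, and one must instead rely on the rigidity of the peripheral (parabolic) holonomy and the explicit horosphere models, checking that the interpolation stays injective and that the rescaling limit lands in the correct $\HP$-cusp region $\mathbb{P}(\{\langle x,p\rangle_{1,2,0}>-1\})$. A related technical nuisance is verifying that the ``transverse vector field'' construction of Proposition \ref{vector} restricts nicely near the punctures (which it does, being the constant normal to the unique support plane through the puncture) so that the collar formulas in Lemma \ref{extensiondev} and the cusp interpolation formulas genuinely coincide on $\widetilde{\Sigma_{\mathfrak{c}}}\cap\widetilde{\Sigma_{\mathfrak{p}}}$.
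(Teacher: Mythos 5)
Your first step is essentially the paper's: extend over $\widetilde{\Sigma_{\mathfrak{c}}}\times[0,1]$ by Ehresmann--Thurston and then force agreement with the collar maps of Lemma \ref{extensiondev}. Be aware, though, that "the standard relative version of the principle" is doing real work here: the paper has to invoke the Epstein--Marden fact that two close developing maps with the same holonomy differ by an embedding close to the inclusion, and then Siebenmann's isotopy extension theorem (Theorem \ref{sib}) to promote that embedding to a global homeomorphism of $N_T$ equal to the identity outside a collar, so that precomposing the Ehresmann--Thurston map recovers $\tau_t\mathrm{Dev}_{(t\lambda,t\mu)}$ exactly on $\mathcal{B}$. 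That mechanism should be made explicit rather than assumed.

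The genuine gap is in your second step, the cusps. You propose an explicit interpolation on $\widetilde{V}\times[0,1]$ between the two boundary pieces "as in \eqref{29}/Lemma \ref{extensiondev}", and then claim the interior and cusp pieces "agree by construction" on the overlap. This fails for two reasons. First, the collar formula $\exp^{\X}_{\mathrm{b}_{\vert t\vert\lambda,+}(x)}\bigl((s-1)\vert t\vert\psi(x)\mathrm{N}_t^+(x)\bigr)$ does not extend to a map on all of $[0,1]$ joining the two pleated surfaces: $\psi$ is the Half-pipe width function, not the $\X$-distance between $\partial_+\mathrm{CH}$ and $\partial_-\mathrm{CH}$, so at $s=0$ the top-based formula misses $\mathrm{A}_t^{\X}\mathrm{b}_{\vert t\vert\mu,-}(x)$ (only up to an error vanishing faster than $t$), and the top-based and bottom-based formulas do not match at any interior level. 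Second, and more seriously, on the overlap annulus $\partial C_i\times[\delta',1-\delta']$ the interior map produced by Ehresmann--Thurston is abstract; no explicit formula you write down in the cusp will coincide with it there, so the asserted gluing is unjustified. The paper's actual route avoids both problems: it takes the values already defined on $\partial C_i\times[0,1]$ (which lies in $\widetilde{\Sigma_{\mathfrak{c}}}$) as boundary data and propagates them into $C_i\times[\delta',1-\delta']$ by applying hyperbolic isometries $B_i$ translating along geodesics asymptotic to the parabolic fixed point $q_t$, with prescribed translation length; compatibility on the interface then holds by construction, and convergence of the rescaled maps follows from Claim \ref{claim1} on rescaled geodesic rays. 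If you want to keep an explicit cusp model, you would have to restructure the Ehresmann--Thurston step to be relative to the full boundary of $\Sigma_{\mathfrak{c}}\times[\delta',1-\delta']$, which reintroduces exactly the Siebenmann-type gluing you skipped.
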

Before proving Theorem \ref{exdev} we need the following result of Siebenmann:
\begin{theorem}\cite{TransTop}\label{sib}
    Let $Y$ be a locally compact Hausdorff topological space and let $U$ be an open set of $Y$. Consider $K$ a compact set 
such that $K\subset U$. Then if $h: U \to Y$ is an embedding close to the inclusion $i: U\hookrightarrow Y$ for the compact-open topology, then there is a homeomorphism $h^{'}: Y\to Y$ equal to $h$ on $K$ and equal to the identity outside $U$. Moreover $h^{'}$ is close to the identity map on $Y$.
\end{theorem}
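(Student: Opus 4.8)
The plan is to derive the statement from the \emph{local contractibility of the homeomorphism group of $Y$} together with a finite patching argument, which is the scheme carried out by Siebenmann in \cite{TransTop}. The idea is to reduce the global interpolation problem --- between $h$ on $K$ and the identity off $U$ --- to a local one, solve it chart by chart, and then reassemble the local solutions while keeping the supports inside $U$ and the sizes controlled.

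First I would localize. Since $K$ is compact and contained in the open set $U$, choose a finite cover of $K$ by open sets $W_1,\dots,W_m$ with $\overline{W_j}$ compact and $\overline{W_j}\subset U$, each $W_j$ taken ``nice'' (a coordinate ball if $Y$ is a manifold, a cone-like chart if $Y$ is a stratified set), together with compact shrinkings $K_j\subset W_j$ such that $\bigcup_j K_j\supset K$. The local ingredient is the \emph{deformation theorem}: for every neighborhood $\mathcal{O}$ of the identity in the homeomorphisms of $Y$ there is a neighborhood $\mathcal{V}$ of the inclusion in the embeddings $W_j\hookrightarrow Y$ (compact--open topology) such that each $g\in\mathcal{V}$ admits an ambient homeomorphism $\varphi_g$ of $Y$ lying in $\mathcal{O}$, supported in a prescribed neighborhood of $\overline{W_j}$ contained in $U$, equal to $g$ on $K_j$, and equal to the identity wherever $g$ is already the identity. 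In the manifold case this is the \v{C}ernavski\u{\i}--Edwards--Kirby theorem, proved by the Kirby torus trick (compactify a chart to a torus, deform using smoothing and handle theory, transplant back); in the non-manifold generality of the present statement it is Siebenmann's version, obtained by an inductive handle argument over the skeleta, applying the torus trick stratum by stratum.

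Next comes the patching. I would maintain, inductively on $k$, a homeomorphism $\psi_k$ of $Y$ that is supported in $U$, close to the identity, and agrees with $h$ on $K_1\cup\dots\cup K_k$, starting from $\psi_0=\mathrm{id}$ (for which the condition is vacuous). At stage $k+1$, the embedding $\psi_k^{-1}\circ h$ of $U$ into $Y$ is close to the inclusion and is the identity on $K_1\cup\dots\cup K_k$; applying the deformation theorem to its restriction near $W_{k+1}$ produces an ambient homeomorphism $\chi_{k+1}$ of $Y$, close to the identity, supported near $\overline{W_{k+1}}\subset U$, equal to $\psi_k^{-1}\circ h$ on $K_{k+1}$, and equal to the identity where $\psi_k^{-1}\circ h$ already is --- in particular on the region already treated. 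Setting $\psi_{k+1}=\psi_k\circ\chi_{k+1}$ then gives a homeomorphism that agrees with $h$ on $K_1\cup\dots\cup K_{k+1}$, is still supported in $U$, and is still close to the identity. After $m$ steps, $h':=\psi_m$ is a homeomorphism of $Y$ with $h'|_K=h|_K$, equal to the identity outside $U$, and close to $\mathrm{id}_Y$ --- the last point because $h$ was taken sufficiently close to $i$ at the outset, so that the finitely many compositions remain within the prescribed target neighborhood.

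The main obstacle is the deformation theorem itself: it is the deep step, resting on the Kirby torus trick and, for the general $Y$ allowed in the statement, on Siebenmann's stratified handle induction, and it is exactly here that local good behaviour of $Y$ (being an Euclidean neighborhood retract, or a CS set) is needed. Everything after that --- choosing the cover, the shrinkings, the bookkeeping of supports, and tracking the closeness constants through the finite induction --- is routine once the local result is in hand.
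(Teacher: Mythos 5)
The paper offers no proof of this statement: it is quoted as Theorem \ref{sib} directly from Siebenmann \cite{TransTop} and used as a black box, so there is nothing internal to compare your argument against. Your outline does faithfully reproduce the strategy of the cited source (and of Edwards--Kirby in the manifold case): a local deformation theorem obtained from the torus trick, then a finite patching induction over a cover of $K$ by charts with supports kept inside $U$ and closeness constants tracked through the finitely many compositions. Two remarks. First, your ``proof'' defers essentially all of the content to the deformation theorem itself, which is fine as a description of how the literature proves it, but it is not an independent argument. Second, you correctly flag the one real issue with the statement as transcribed: for an arbitrary locally compact Hausdorff space $Y$ the conclusion is false, and the deformation theorem needs $Y$ to have good local structure (a manifold, or a CS/locally cone-like set in Siebenmann's generality). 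In the paper the theorem is only applied with $Y$ a manifold (a thickening of $\Sigma_{\mathfrak{c}}\times[0,1]$), so the application is legitimate, but the hypotheses as stated in Theorem \ref{sib} are broader than what the cited result actually covers, and your caveat is the right one to make.
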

\begin{proof}[Proof of Theorem \ref{exdev}]
    The convergence at the level of holonomies is already proved in Theorem \ref{H}. We need only to check the transition at the the level of developing map. For that it is enough to extend the developing map obtained in Lemma \ref{extensiondev} to the entire $\widetilde{\Sigma}\times[0,1].$ First let us focus on the extension of $\mathrm{Dev}_{(t\lambda,t\mu)}$on $\widetilde{\Sigma_{\mathfrak{c}}}\times[0,1]$.
To prove this consider $$\mathcal{B}= (\Sigma_{\mathfrak{c}}\times [0,\delta^{'}])\cup (\Sigma_{\mathfrak{c}}\times [1-\delta^{'},1])$$ a compact co-dimension $0$ manifold in $N_{\mathfrak{c}}:=\Sigma_{\mathfrak{c}}\times[0,1]$, where $\delta^{'}$ is the same constant defined in Lemma \ref{extensiondev}.
Let $\mathcal{B}_T$ be a collar neighborhood of $\mathcal{B}$ in $\Sigma\times\mathbb{R}$ and $N_T=N_{\mathfrak{c}}\cup\mathcal{B}_T$. By Ehresmann-Thurston Theorem \ref{ehresman}, there is a developing map $\mathcal{D}_t$ defined on $\widetilde{N_T}$ that is equivariant with respect to $\tau_t\rho_{(t\lambda,t\mu)}\tau_t^{-1}$. Furthermore we may assume that $\mathcal{D}_t$ converges to $\mathrm{Dev}_{(\lambda,\mu)}^{\HP}$ in the $\mathcal{C}^0$ topology on $N_{\mathfrak{c}}$. Now $\tau_t\mathrm{Dev}_{(t\lambda,t\mu)}$ and $\mathcal{D}_t$ may not agree on $\mathcal{B}.$ 
However it is well known that for a compact manifold with boundary $M$, if two developing maps are close in the $\mathcal{C}^k$ topology and have the same holonomy, then they differ by composition with a $\mathcal{C}^k$ embedding defined on a slightly thinner manifold $M_0\subset M$. Moreover this embedding is close to the identity. See \cite[Theorem I.1.7.1 and page 47]{Epstein}.  

Applying this fact to $\tau_t\mathrm{Dev}_{(t\lambda,t\mu)}$ and $\mathcal{D}_t$ which are close in the $\mathcal{C}^0$ topology (they both converge to $\mathrm{Dev}_{(\lambda,\mu)}^{\HP}$) we get an embedding $f_t:\mathcal{B}_T^{'}\to \mathcal{B}_T\subset N_T $ defined on a smaller collar neighborhood $\mathcal{B}_T^{'}$ of $\mathcal{B}$ ($\mathcal{B}_T^{'}\subset\mathcal{B}$) covered by $\widetilde{f_t}$ such that 
$$\mathcal{D}_t\circ \Tilde{f_t}=\tau_t\mathrm{Dev}_{(t\lambda,t\mu)} \ \ \mathrm{on \ \mathcal{B}_T^{'}}.$$
Moreover $f_t$ is close to the inclusion (for 
the compact-open topology). Hence we can use
Theorem \ref{sib} to say that there is a 
global homeomorphism $\phi_t:N_T\to N_T$ close
to the identity which is equal to $f_t$ on 
$\mathcal{B}_T^{'} $ and equal to the identity outside of $\mathcal{B}_T^{'}$. Let 
$\widetilde{\phi_t}$ be the lift of $\phi_t$, then the map $\mathcal{D}\mathrm{ev}_t:=\mathcal{D}_t\circ\Tilde{\phi_t}$ satisfies the following:
\begin{itemize}
\item $\mathcal{D}\mathrm{ev}_t$ is $\rho_{(t\lambda,t\mu)}$-equivariant. 
\item $\mathcal{D}\mathrm{ev}_{t|\mathcal{B}}=\tau_t\mathrm{Dev}_{( t\lambda,t\mu   )} $.
\item $\lim_{t \to 0^{\pm}} \mathcal{D}\mathrm{ev}_t=\mathrm{Dev}_{(\lambda,\mu)}^{\HP}$ on $\widetilde{\Sigma_{\mathfrak{c}}}$.
 \end{itemize}
Hence we can define $\mathrm{Dev}_{(t\lambda,t\mu)}$ to be equal to $\tau_t^{-1}\mathcal{D}
\mathrm{ev}_t$ on 
$\widetilde{\Sigma_{\mathfrak{c}}}\times[0,1]$.\\ 

So it remains to define the extension on $\widetilde{\Sigma_{\mathfrak{p}}}\times [\delta^{'},1-\delta^{'}]$. First let us identify $\widetilde{\Sigma}$ with $\mathbb{H}^{2}$ (using a complete hyperbolic structure $(\mathrm{dev},\sigma)$). We consider $V_i$ a neighborhood around the $i^{th}$ puncture disjoint from the support of $\lambda$ and $\mu$, we take a component $C_i$ of the lift of $\widetilde{V_i}$ in $\mathbb{H}^2$ which is a disk tangent to $\partial \mathbb{H}^2$ at $p_i$. Here $p_i$ is the fixed point of the parabolic isometry $\sigma(\gamma_i)$, where $\gamma_i$ is a loop around the puncture representing $\partial V_i$. Since the map $\mathrm{Dev}_{(t\lambda,t\mu)}$ is already defined on $\partial C_i\times[0,1]$ (because $\partial C_i\subset\widetilde{\Sigma_{\mathfrak{c}}}$) then we can define $C_t^+=\mathrm{Dev}_{(t\lambda,t\mu)}(\partial C_i\times \{1\} )$ and $C_t^-=\mathrm{Dev}_{(t\lambda,t\mu)}(\partial C_i\times \{0\} )$. Note that since the weighted multicurves $\lambda$ and $\mu$ consist of non peripheral curves then, there is a support plane $\mathrm{P}_t^{\pm}$ of $\partial_{\pm}\mathrm{CH}(\Lambda_{\rho_{(t\lambda,t\mu)}})$ such that $C_t^{\pm}\subset\mathrm{P}_t^{\pm}$. Moreover $\mathrm{C}_t^+\cap\mathrm{C}_t^-=q_t$ where $q_t$ is the fixed point of the parabolic isometry $\rho_{(t\lambda,t\mu)}(\gamma_i)$. 

We now claim that the rescaled point $\tau_tq_t$ converges to a fixed point of the parabolic isometry $\rho_{(\lambda,\mu)}^{\HP}(\gamma)$. Indeed the rescaled support planes $\tau_t\mathrm{P}_t^{\pm}$ converge to a spacelike support plane $\mathrm{P}^{\pm}$ of $\partial_{\pm}\mathrm{CH}(\lambda_{\rho^{\HP}_{(\lambda,\mu)}})$. This convergence can be deduced form the fact that the rescaled pleated surfaces $\tau_t\partial_{\pm}\mathrm{CH}(\lambda_{\rho_{(t\lambda,t\mu)}})$ converge to $\partial_{\pm}\mathrm{CH}(\lambda_{\rho^{\HP}_{(\lambda,\mu)}})$ as shown in Propositions \ref{Sx1} and \ref{Sx0}. However, since $\mathrm{P}^+$ and $\mathrm{P}^-$ are support planes at points near the punctures, they should intersect at infinity at a single point. This implies that the intersection point is the limit of $\tau_tq_t$.

Next we will use the subgroup $H_i$ of $\isom(\mathbb{H}^2)$ consisting of hyperbolic isometries fixing $p_i$ to extend $\mathrm{Dev}_{(t\lambda,t\mu)}$ inside $C_i\times[\delta^{'},1-\delta^{'}]$. More precisely for each $x\in C_i$ we take $l_i$ to be the geodesic in $\mathbb{H}^2$ that passes through $x$ with end point $q_i$. Let $x_i$ be the intersection point of $l_i$ with $\partial C_i$. We consider $A_i$ the unique element in $H_i$ with axis $l_i$ that sends $x$ to $x_i$. We denote by $d:=d_{\mathbb{H}^2}(y,x)$ the translation length of $A_i$. Now we take $B_i$ the unique hyperbolic isometry in $\isom(\X)$ such that
\begin{itemize}
    \item $q_t$ is a fixed point of $B_i$.
    \item The axis of $B_i$ is the geodesic starting at $\mathrm{Dev}_{(t\lambda,t\mu)}(x_i,s)$ and ending at $q_t$.
    \item The translation length of $B_i$ is equal to $d.$
\end{itemize}
Then, we define 
\begin{equation}\label{devpunc}
    \mathrm{Dev}_{(t\lambda,t\mu)}(x,s):=B_i\mathrm{Dev}_{(t\lambda,t\mu)}(x_i,s).\end{equation} 
Once we extend $\mathrm{Dev}_{(t\lambda,t\mu)}$ in $C_i\times [\delta^{'},1-\delta^{'}]$ we can use the action of $\rho_{(t\lambda,t\mu)}(\pi_1(\Sigma))$ to extend $\mathrm{Dev}_{(t\lambda,t\mu)}$ on the other components of $\widetilde{V_i}\times[\delta^{'},1-\delta^{'}]$. We then define the extension on $\widetilde{\Sigma}\times[0,1]$ by repeating this procedure for all punctures. The only remaining part is to prove the convergence of the rescaled developing map which will conclude the proof.
\end{proof}  
\begin{prop}\label{611}
The map $\mathrm{Dev}_{(\lambda,t\mu)}^{\HP}$ constructed in Proposition \ref{HP3_convexcore} extends to $\widetilde{\Sigma}\times[0,1]$. More precisely, for all $(x,s)$ in $\widetilde{\Sigma}\times[\delta^{'}, 1-\delta^{'}]$, we have \begin{equation}\label{hpunc}\mathrm{Dev}_{(\lambda,t\mu)}^{\HP}(x,s):=\lim_{t \to 0^{\pm}}\mathrm{Dev}_{(t\lambda,t\mu)}(x,s).\end{equation} 
\end{prop}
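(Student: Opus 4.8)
The plan is to rescale the cusp construction \eqref{devpunc} from the proof of Theorem~\ref{exdev} and pass to the limit. Fix a puncture with surrounding loop $\gamma_i$, identify $\widetilde{\Sigma}$ with $\mathbb{H}^2$ via $(\mathrm{dev},\sigma)$, and let $C_i$ be the horodisk tangent to $\partial\mathbb{H}^2$ at the parabolic fixed point $p_i$ of $\sigma(\gamma_i)$. Recall that for $x\in C_i$, with $x_i\in\partial C_i$ the point where the geodesic from $x$ to $p_i$ meets $\partial C_i$ and $d=d_{\mathbb{H}^2}(x,x_i)$, one has $\mathrm{Dev}_{(t\lambda,t\mu)}(x,s)=B_i(t,s)\,\mathrm{Dev}_{(t\lambda,t\mu)}(x_i,s)$, where $B_i(t,s)\in\isom(\X)$ is the hyperbolic isometry of translation length $d$ fixing $q_t$ (the fixed point of the parabolic $\rho_{(t\lambda,t\mu)}(\gamma_i)$), translating along the oriented geodesic from $\mathrm{Dev}_{(t\lambda,t\mu)}(x_i,s)$ towards $q_t$. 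Since $\partial C_i\subset\widetilde{\Sigma_{\mathfrak{c}}}$, the part of the proof of Theorem~\ref{exdev} already carried out (the convergence $\tau_t\mathrm{Dev}_{(t\lambda,t\mu)}\to\mathrm{Dev}_{(\lambda,\mu)}^{\HP}$ on $\widetilde{\Sigma_{\mathfrak{c}}}\times[0,1]$) gives that $\tau_t\mathrm{Dev}_{(t\lambda,t\mu)}(x_i,s)$ converges, uniformly on compact subsets of $\partial C_i\times[\delta',1-\delta']$, to $\mathrm{Dev}_{(\lambda,\mu)}^{\HP}(x_i,s)$. So everything reduces to analysing $\lim_{t\to0^{\pm}}\tau_tB_i(t,s)\tau_t^{-1}$.

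First I would record the convergence of the data defining $B_i(t,s)$. As in the proof of Theorem~\ref{exdev}, the rescaled support planes $\tau_t\mathrm{P}_t^{\pm}$ of $\partial_{\pm}\mathrm{CH}(\Lambda_{\rho_{(t\lambda,t\mu)}})$ over the cusp converge, by Propositions~\ref{Sx1} and~\ref{Sx0}, to spacelike support planes $\mathrm{P}^{\pm}$ of $\partial_{\pm}\mathrm{CH}(\Lambda_{\rho_{(\lambda,\mu)}^{\HP}})$, whence $\tau_tq_t$ converges to the single point $q_{\infty}=\mathrm{P}^+\cap\mathrm{P}^-\cap\partial\HP$. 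Now $q_{\infty}$ is a fixed point of the parabolic $\rho_{(\lambda,\mu)}^{\HP}(\gamma_i)$ lying on $\Lambda_{\rho_{(\lambda,\mu)}^{\HP}}$, which is the graph of the continuous function $\phi\colon\partial\mathbb{D}^2\to\mathbb{R}$ furnished by Proposition~\ref{half}; since the linear part $k_{\lambda,\mu}(\gamma_i)$ is parabolic with fixed point $p_i\in\partial\mathbb{D}^2$, necessarily $q_{\infty}=(p_i,\phi(p_i))$. In particular $q_{\infty}\neq[0,0,0,1]$, so the Half-pipe geodesic joining the finite point $\mathrm{Dev}_{(\lambda,\mu)}^{\HP}(x_i,s)$ to $q_{\infty}$ is spacelike; and the rescaled axis of $B_i(t,s)$, which passes through $\tau_t\mathrm{Dev}_{(t\lambda,t\mu)}(x_i,s)$ and has forward endpoint $\tau_tq_t$, converges to it.

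The substantive step is then to show that $\tau_tB_i(t,s)\tau_t^{-1}$ converges to the Half-pipe hyperbolic isometry $B_i^{\infty}(s)$ of translation length $d$ along the oriented spacelike geodesic from $\mathrm{Dev}_{(\lambda,\mu)}^{\HP}(x_i,s)$ towards $q_{\infty}$. In a frame in which that axis is the oriented geodesic $\{[\cosh r,\sinh r,0,0]\}$, the isometry of translation length $d$ is the block matrix $\mathrm{diag}(\mathrm{T}_d,1,1)$ with $\mathrm{T}_d=\bigl(\begin{smallmatrix}\cosh d&\sinh d\\\sinh d&\cosh d\end{smallmatrix}\bigr)$, which lies in $\isom(\mathbb{H}^2)$ and hence commutes with $\tau_t$ by \eqref{taut}. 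Writing $B_i(t,s)=\Phi_t\,\mathrm{diag}(\mathrm{T}_d,1,1)\,\Phi_t^{-1}$ with $\Phi_t\in\isom(\X)$ carrying the model axis to the oriented axis of $B_i(t,s)$, we get $\tau_tB_i(t,s)\tau_t^{-1}=(\tau_t\Phi_t\tau_t^{-1})\,\mathrm{diag}(\mathrm{T}_d,1,1)\,(\tau_t\Phi_t\tau_t^{-1})^{-1}$; and since the rescaled axes converge to a spacelike geodesic, the argument of Lemma~\ref{rcompos} (applied to the pencil of planes containing these geodesics) lets us choose $\Phi_t$ so that $\tau_t\Phi_t\tau_t^{-1}$ converges in $\isom(\HP)$, giving $\tau_tB_i(t,s)\tau_t^{-1}\to B_i^{\infty}(s)$. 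The limiting data $\mathrm{Dev}_{(\lambda,\mu)}^{\HP}(x_i,s)$, $q_{\infty}$, $d$ do not depend on the sign of $t$, so the two one-sided limits agree. I expect this to be the main obstacle: one must rule out the degeneration of the axes of $B_i(t,s)$ to a fiber of $\HP$, which is exactly what the identification $q_{\infty}=(p_i,\phi(p_i))\neq[0,0,0,1]$ — and hence the parabolicity of $k_{\lambda,\mu}(\gamma_i)$, via Proposition~\ref{half} — secures.

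Finally I would assemble the extension. For $(x,s)\in C_i\times[\delta',1-\delta']$ set $\mathrm{Dev}_{(\lambda,\mu)}^{\HP}(x,s):=B_i^{\infty}(s)\,\mathrm{Dev}_{(\lambda,\mu)}^{\HP}(x_i,s)$; the previous steps give $\tau_t\mathrm{Dev}_{(t\lambda,t\mu)}(x,s)\to\mathrm{Dev}_{(\lambda,\mu)}^{\HP}(x,s)$, which is \eqref{hpunc}. When $x\in\partial C_i$ one has $x=x_i$, $d=0$ and $B_i^{\infty}(s)=\mathrm{Id}$, so this matches the map of Proposition~\ref{HP3_convexcore} on the overlap with $\widetilde{\Sigma_{\mathfrak{c}}}$; extending by $\rho_{(\lambda,\mu)}^{\HP}$-equivariance over the orbit of $C_i$, repeating over all punctures, and gluing at the seams $s=\delta',1-\delta'$ with the region $\widetilde{\Sigma_{\mathfrak{p}}}\times([0,\delta']\cup[1-\delta',1])$ — where $\mathrm{Dev}_{(\lambda,\mu)}^{\HP}$ is the $(t\to0^{\pm})$-limit of $\tau_t\mathrm{Dev}_{(t\lambda,t\mu)}$ and, over a cusp neighbourhood disjoint from $\vert\lambda\vert\cup\vert\mu\vert$, coincides with the affine interpolation \eqref{29} — yields a $\rho_{(\lambda,\mu)}^{\HP}$-equivariant map $\mathrm{Dev}_{(\lambda,\mu)}^{\HP}\colon\widetilde{\Sigma}\times[0,1]\to\HP$ extending the one of Proposition~\ref{HP3_convexcore}. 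It remains to check it is a homeomorphism onto $\mathrm{CH}(\Lambda_{\rho_{(\lambda,\mu)}^{\HP}})$: over $\widetilde{\Sigma_{\mathfrak{c}}}$ this is Proposition~\ref{HP3_convexcore}, and over a horodisk $C_i$ the points $B_i^{\infty}(s)\,\mathrm{Dev}_{(\lambda,\mu)}^{\HP}(x_i,s)$ (for $x\in C_i$) lie between $\mathrm{P}^+$ and $\mathrm{P}^-$ and sweep out homeomorphically the slice of $\mathrm{CH}(\Lambda_{\rho_{(\lambda,\mu)}^{\HP}})$ over $C_i$, while the resulting end is a cusp in $\HP$ in the sense of Section~\ref{sec2}, with holonomy generated by the parabolic $\rho_{(\lambda,\mu)}^{\HP}(\gamma_i)$. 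Hence $(\mathrm{Dev}_{(\lambda,\mu)}^{\HP},\rho_{(\lambda,\mu)}^{\HP})$ is a Half-pipe convex core structure, which together with Theorem~\ref{exdev} completes the proof of the transition of geometric structures.
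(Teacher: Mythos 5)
Your proposal is correct and follows the same overall reduction as the paper: both localize to a horodisk $C_i$, use the cusp formula \eqref{devpunc}, exploit the already-established convergence of $\tau_t\mathrm{Dev}_{(t\lambda,t\mu)}$ on $\widetilde{\Sigma_{\mathfrak{c}}}\times[0,1]$ and of $\tau_t q_t$, and reduce everything to the convergence of the rescaled point at distance $d$ along the geodesic from $\mathrm{Dev}_{(t\lambda,t\mu)}(x_i,s)$ to $q_t$. Where you diverge is in how that last convergence is proved. The paper isolates it as Claim \ref{claim1} and settles it by a direct computation in the hyperboloid model: writing $L_t(d)=[\cosh(d)\widetilde{x_t}+\sinh(d)v_t]$ with $p_t=[x_t+v_t]$, the convergence of $\tau_t x_t$ and $\tau_t p_t$ forces that of $\tau_t v_t$ and hence of $\tau_t L_t(d)$ --- no isometries need to be normalized. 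You instead conjugate the translation $B_i(t,s)$ into the model form $\Phi_t\,\mathrm{diag}(\mathrm{T}_d,1,1)\,\Phi_t^{-1}$ and invoke a Lemma \ref{rcompos}-type normalization to make $\tau_t\Phi_t\tau_t^{-1}$ converge. This works, and it buys you more: an explicit identification of the limit as the half-pipe translation $B_i^{\infty}(s)$, an explicit reason (namely $q_{\infty}=(p_i,\phi(p_i))\neq[0,0,0,1]$) why the limiting axis is spacelike rather than a fiber, and the extra verification that the extended map is a homeomorphism onto $\mathrm{CH}(\Lambda_{\rho^{\HP}_{(\lambda,\mu)}})$ with cusped ends --- none of which the paper spells out at this point. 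The cost is that the normalization of $\Phi_t$ along a converging family of geodesics (rather than the points and planes treated in Lemma \ref{rcompos} as stated) is left as an adaptation; it is routine but should be written out, for instance by first applying Lemma \ref{rcompos} to a family of planes containing the axes and then composing with a convergent family of elements of $\isom(\mathbb{H}^2)$ aligning the geodesics inside those planes.
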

Note that the left hand side of identity \eqref{hpunc} is well defined by the formula \eqref{devpunc} in the proof of Theorem \ref{exdev}. Moreover we have convergence in $\left(\widetilde{\Sigma_{\mathfrak{p}}}\times\left([0,\delta^{'}]\cup[1-\delta^{'},1]\right)\right)\cup \left(\widetilde{\Sigma_{\mathfrak{c}}}\times[0,1]\right)$ by Lemma \ref{extensiondev} and Theorem \ref{exdev}. Therefore it is enough to prove the convergence for $(x,s)$ in $\widetilde{\Sigma_{\mathfrak{p}}}\times[\delta^{'},1-\delta^{'}]$ and this follows from the following Claim. 

\begin{claim}\label{claim1}
Let $x_t$ be a family of points in $\mathbb{H}^3$ (resp. or in $\ads$) such that $\lim_{t \to 0} \tau_tx_t=x$. Let $p_t$ be a family of points in $\partial \mathbb{H}^3$ (resp. in $\partial\ads$) such that $\lim_{t \to 0} \tau_tp_t=p$. Let $L_t$ be a geodesics in $\mathbb{H}^3$ (resp. spacelike geodesics in $\ads$) starting at $x_t$ and ending at $p_t$ parameterized by arc length. Then the limit as $t\to0$ of $\tau_t L_t(d) $ exists for all $d>0$.
\end{claim}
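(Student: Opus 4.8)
The plan is to write $L_t(d)$ by an explicit formula in homogeneous coordinates and then pass to the limit coordinate by coordinate. Throughout, lift everything to $\mathbb{R}^4$ and let $q$, $\langle\cdot,\cdot\rangle$ denote the quadratic form and its bilinear form relevant to $\X$, that is $q_1,\langle\cdot,\cdot\rangle_{1,3}$ when $\X=\mathbb{H}^3$ and $q_{-1},\langle\cdot,\cdot\rangle_{2,2}$ when $\X=\ads$. The first step is the elementary observation that if $X$ is a lift of an interior point of $\X$ and $\tilde p$ a lift of a point of $\partial\X$ (so $q(\tilde p)=0$) with $\langle X,\tilde p\rangle\neq 0$, then the unit-speed geodesic from $[X]$ towards $[\tilde p]$ --- in $\ads$, the spacelike one --- is
\[
d\longmapsto\Bigl[\,e^{-d}X+\frac{q(X)\,\sinh d}{\langle X,\tilde p\rangle}\,\tilde p\,\Bigr].
\]
This is checked by normalising $X$ so that $q(X)=-1$ and taking the unit tangent $v=-X-\langle X,\tilde p\rangle^{-1}\tilde p$, which satisfies $\langle X,v\rangle=0$, $q(v)=1$ and $X+v\in\mathbb{R}\tilde p$, and then expanding $\cosh d\,X+\sinh d\,v$; the formula is manifestly independent of the chosen lifts. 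Since by hypothesis $L_t$ \emph{is} a (spacelike, in the $\ads$ case) geodesic from $x_t$ to $p_t$, the number $\langle X_t,\tilde p_t\rangle$ is automatically nonzero for every $t$, where $X_t$ is any lift of $x_t$ and $\tilde p_t$ a lift of $p_t$; so the formula applies.

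Next I would exploit that $\tau_t x_t\to x$ and $\tau_t p_t\to p$ in $\mathbb{RP}^3$ to choose lifts $\hat x_t\to\hat x$ of $\tau_t x_t$ and $\hat p_t\to\hat p$ of $\tau_t p_t$ (normalised, say, to Euclidean unit length, so that $\hat x,\hat p\neq 0$), and then run the formula with $X_t:=\tau_t^{-1}\hat x_t$ and $\tilde p_t:=\tau_t^{-1}\hat p_t$. Because $\tau_t$ acts linearly on $\mathbb{R}^4$,
\[
\tau_tL_t(d)=\Bigl[\,e^{-d}\hat x_t+\frac{q(\tau_t^{-1}\hat x_t)\,\sinh d}{\langle\tau_t^{-1}\hat x_t,\tau_t^{-1}\hat p_t\rangle}\,\hat p_t\,\Bigr].
\]
Now $\tau_t^{-1}=\mathrm{diag}(1,1,1,|t|)$, so $q(\tau_t^{-1}\hat x_t)$ and $\langle\tau_t^{-1}\hat x_t,\tau_t^{-1}\hat p_t\rangle$ differ from $q_0(\hat x_t)$ and $\langle\hat x_t,\hat p_t\rangle_{1,2,0}$ only by a term carrying a factor $t^2$; hence they converge to $q_0(\hat x)$ and $\langle\hat x,\hat p\rangle_{1,2,0}$ respectively. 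The same remark applied to the identity $q(\tilde p_t)=0$ yields $q_0(\hat p)=0$, i.e. $\hat p$ lies on the cone $\{q_0=0\}$. Consequently, provided the limiting denominator $\langle\hat x,\hat p\rangle_{1,2,0}$ is nonzero, we obtain
\[
\lim_{t\to 0}\tau_tL_t(d)=\Bigl[\,e^{-d}\hat x+\frac{q_0(\hat x)\,\sinh d}{\langle\hat x,\hat p\rangle_{1,2,0}}\,\hat p\,\Bigr],
\]
which is the desired conclusion (and one checks along the way that $q_0$ of the bracketed vector equals $q_0(\hat x)$, so the limit lies in $\overline{\HP}$).

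The only non-formal point, and the one I expect to be the main obstacle, is the non-vanishing of $\langle\hat x,\hat p\rangle_{1,2,0}$, together with making explicit the hypotheses that guarantee it. In the situation where the Claim is used, $x$ is an interior point of $\HP$, so $q_0(\hat x)<0$, and $p$ is the fixed point of a parabolic isometry of $\HP$, hence $p\neq[0,0,0,1]=[e_3]$ --- indeed $[e_3]$ is fixed by \emph{every} element of $\isom(\HP)$ and so cannot single out a parabolic; thus $\hat p$ is not proportional to $e_3$. Writing $\hat p=(\bar p,\hat p_3)$ with $\bar p=(\hat p_0,\hat p_1,\hat p_2)$ a nonzero null vector of $\mathbb{R}^{1,2}$, the hyperplane $\hat p^{\perp}$ for $\langle\cdot,\cdot\rangle_{1,2,0}$ contains $\mathbb{R}e_3$ and projects onto the plane of $\mathbb{R}^{1,2}$ tangent to the lightcone along $\mathbb{R}\bar p$, on which $\langle\cdot,\cdot\rangle_{1,2}$ is positive semidefinite; therefore $q_0\geq 0$ on $\hat p^{\perp}$, so a vector with $q_0<0$ --- in particular $\hat x$ --- cannot belong to $\hat p^{\perp}$, and $\langle\hat x,\hat p\rangle_{1,2,0}\neq 0$. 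If instead one only knows $q_0(\hat x)=0$ (the degenerate case $x\in\partial\HP$), the limit above is simply $[\hat x]=x$ and there is nothing more to prove. The remaining steps --- verifying the geodesic formula and the coordinatewise limits --- are routine, the key mechanism being that the off-diagonal factor $|t|$ in $\tau_t^{-1}$ kills the $x_3$-contributions of $q$ in the limit.
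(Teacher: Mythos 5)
Your proof is correct and follows essentially the same route as the paper's: both write the geodesic explicitly in the hyperboloid/projective model as a linear combination of a lift of $x_t$ and a lift of $p_t$, and then use the linearity of $\tau_t$ to pass to the limit coefficient by coefficient. The one place you go beyond the paper is worth keeping: the paper simply asserts that $\tau_t v_t$ converges because $\tau_t x_t$ and $\tau_t p_t$ do, and this assertion is exactly equivalent to the non-vanishing of $\langle \hat x,\hat p\rangle_{1,2,0}$, which you correctly identify as the only non-formal point and justify (using that $x$ lies in $\HP$ and $p\neq[0,0,0,1]$ in the situation where the Claim is applied).
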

Using the same notations as in the Proof of Theorem \ref{exdev}, one can see that the family of points $\mathrm{Dev}_{(t\lambda,t\mu)}(x_i,s)$ and $q_t$ satisfy hypotheses of the Claim. Since by construction, $\mathrm{Dev}_{(t\lambda,t\mu)}(x,s)$ lies on the geodesic starting at $\mathrm{Dev}_{(t\lambda,t\mu)}(x_i,s)$ with endpoint $q_t$ and at a distance $d$ form $\mathrm{Dev}_{(t\lambda,t\mu)}(x_i,s)$, then the convergence of $\tau_t\mathrm{Dev}_{(t\lambda,t\mu)}(x,s)$ is a consequence of the Claim.
\begin{proof}[Proof of Claim \ref{claim1}]
Let $\widetilde{x_t}$ be a lift of $x_t$ such that $q_1(x_t)=-1$ (resp or $q_{-1}(x_t)=-1$), and let $v_t$ be a unit vector in $\mathrm{T}_{x_t}\mathbb{H}^3$ (resp. $\mathrm{T}_{x_t}\ads$) such that $p_t=[x_t+v_t]$. Then the geodesic $L_t$ in $\mathbb{H}^3$ (resp. spacelike geodesic in $\ads$) is given by: 
$$L_t(d)=[\cosh{(d)}\widetilde{x_t}+\sinh{(d)}v_t].$$ 
Since $\tau_tx_t$ and $\tau_tp_t$ converge then $\tau_tv_t$ also converges. This implies the convergence of $\tau_tL_t(d).$
\end{proof}

\section{Double of convex core structure}\label{sec7}
\subsection{Singular structure}
We briefly recall the  notion of singularity in the three geometries of our interest. For more detailed exposition, we refer the reader to \cite[Chapter 4]{danciger_thesis}. Let $M$ be an oriented three-manifold and $L$ a link in $M$, namely $L$ is a finite  disjoint union of embedded circles $K_i$ in $M$. For each $K_i$ we consider $T_i$ a tubular neighborhood of $K_i$. 
\begin{defi}
  A Hyperbolic cone structure on $M$ with a singularity at $L$ is a hyperbolic structure $(\mathrm{Dev},\rho)$ on $M\setminus L$ such that : 
 \begin{itemize}
     \item The developing map $\mathrm{Dev}$ on $\widetilde{T_i\setminus K_i }$ extends to the universal cover $\widetilde{T}$ branched over $K_i$.
     \item The developing map sends $ \widetilde{K_i} $ to a geodesic $l_{K_i}$ in $\mathbb{H}^3$.
     \item  The holonomy of a meridian curve $m$ encircling $K_i$ is a rotation of angle $\theta_i\in [0,2\pi]$ along $l_{K_i}$.
 \end{itemize}
 The angle $\theta_i$ is called the cone angle along $K_i$.
\end{defi}
\begin{example}\label{cone_exemple}
     Take $\mathrm{P}$, $\mathrm{Q}$ two planes in $\mathbb{H}^3$ that intersect along a geodesic $l$, assume that the angle between $\mathrm{P}$ and $\mathrm{Q}$ is $\theta$. The local model of a hyperbolic cone structure is obtained by gluing the faces of $\mathrm{S}^{\mathbb{H}^3}(\mathrm{P},\mathrm{Q})$ by a rotation around $l$. See Figure \ref{Cone} below.

\begin{figure}[htb]
\centering
\includegraphics[width=.7\textwidth]{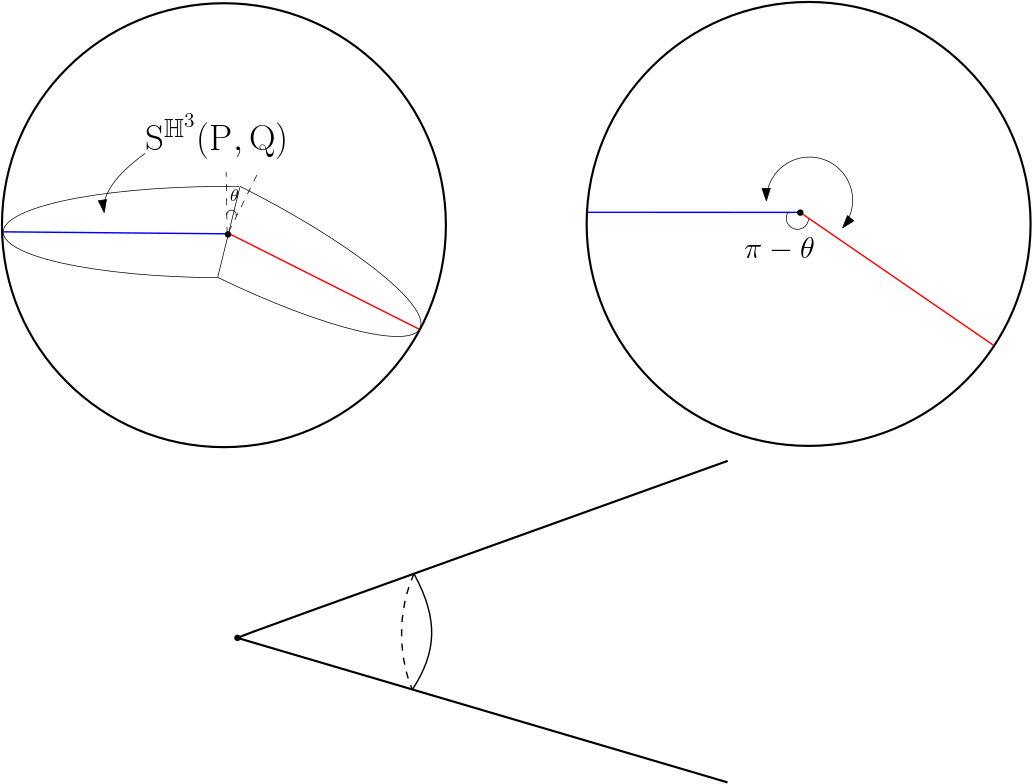}
\caption{On the left, we have a model of a hyperbolic $3$-manifold with cone singularity obtained by gluing two halfplanes in $\mathrm{S}^{\mathbb{H}^3}(\mathrm{P},\mathrm{Q})$ that are bounded by a geodesic $l$ using a rotation around $l$. On the right, we have a perpendicular cross section drawn in the Poincar\'e disk model of $\mathbb{H}^2$.}\label{Cone}
\end{figure}

\end{example}

We have also an analogous definition for Anti-de Sitter manifolds.
\begin{defi}
  An Anti- de Sitter cone structure on $M$ with a spacelike singularity at $L$ is a $\ads$-structure $(\mathrm{Dev},\rho)$ on $M\setminus L$ such that 
 \begin{itemize}
     \item The developing map $\mathrm{Dev}$ on $\widetilde{T_i\setminus K_i }$ extends to the universal cover $\widetilde{T_i}$ branched over $K_i$.
     \item The developing map sends $\widetilde{K_i}$ to a spacelike geodesic $l_{K_i}$ in $\ads$.
     \item  The holonomy of a meridian curve $m$ encircling $K_i$ is a rotation of angle $\theta >0$ along $l_{K_i}$.
 \end{itemize}
 The Lorentzian angle $\theta$ is called the \textit{mass} of the singularity $K_i$.
\end{defi}
The local model of an Anti-de Sitter cone structure can be given as in Example \ref{cone_exemple} by gluing the faces of $\mathrm{S}^{\ads}(\mathrm{P},\mathrm{Q})$ using a rotation around the spacelike geodesic $l=\mathrm{P}\cap \mathrm{Q}$.  
Note that there are various types of singularities in Anti-de Sitter manifold described in \cite{collision}. Our definition of singularity corresponds to what they referred as Tachyon singularity. Finally, we define Half-pipe cone structure with spacelike singularities which can be seen as the infinitesimal hyperbolic cone structure and Anti-de Sitter cone structure. See \cite[Proposition $23$]{danciger_thesis}.   
\begin{defi}
  A Half-pipe cone structure on $M$ with singularity at $L$ is a $\HP$-structure $(\mathrm{Dev},\rho)$ on $M\setminus L$ such that: 
 \begin{itemize}
     \item The developing map $\mathrm{Dev}$ on $\widetilde{T_i\setminus K_i }$ extends to the universal cover $\widetilde{T_i}$ branched over $K_i$.
     \item The developing map sends $\widetilde{K_i}$ to a spacelike geodesic $l_{K_i}$ in $\HP$.
     \item  The holonomy of a meridian curve $m$ encircling $K_i$ is a Half-pipe rotation of angle $\theta >0$ along $l_{K_i}$.
 \end{itemize}
 \end{defi}
An important example of structure with singularity that interest us are the structure obtained by doubling the convex core of a $\mathbb{H}^3$-quasi-Fuchsian representation (and their analogue in $\ads$ and $\HP$). See Section \ref{dh} below.

\subsection{Doubling the holonomy}\label{dh}
Having established the transition at the level of holonomy for a hyperbolic and Anti-de Sitter convex core structure with convex core pleated along $\vert t\vert\lambda$ and $\vert t\vert \mu$, we now discuss the transition of the doubled structure. First, let us clarify the notion of doubled structure. Let $N=\Sigma\times[-1,1]$ and consider the map $\tau$ defined as follows:
$$\begin{array}{ccccc}
\tau & : & N & \to & N \\
 & & (x,s) & \mapsto & (x,-s) \\
\end{array}$$
$\tau $ is an orientation reversing involution which fixes pointwise $\Sigma\times\{0\}$ and switches between $\Sigma\times\{-1\}$ and $\Sigma\times\{1\}$. 
Then we define the \textit{double} of $N$ by $$\mathcal{D}(N):=\Sigma\times [-1,1]/(x,1)\sim (x,-1).$$
Clearly $\mathcal{D}(N)$ is homeomorphic to $\Sigma\times \mathbb{S}^1$. When $N$ is endowed with a convex core structure $(\mathrm{dev},\rho)$ which is hyperbolic (resp. Anti- de Sitter or half pipe structure) and the boundary of the convex core is pleated along weighted multicurves $\lambda$ and $\mu$, then $\mathcal{D}(N)\setminus \left(\vert \lambda\vert\times\{1\} \cup\vert\mu\vert\times \{0\} \right)$ has a natural hyperbolic (resp. Anti-de Sitter or Half-pipe structure) with cone singularities along $\vert \lambda \vert \times \{1\}$ and $\vert\mu\vert\times \{0\}$. Moreover if $\alpha$ is a closed curve contained in the support of $\lambda$ or $\mu$ with weight $a>0$, then the cone angle around $\alpha$ is $2(\pi-a)$ (resp. $-2a$ in the case of Anti-de Sitter or Half-pipe structure), see Remark \ref{holonomy_of_meridian}. We refer to this as the \textit{doubled convex core structure} induced by $(\mathrm{dev},\rho)$.\\ 
We will explain in this subsection how to construct the holonomy representation of the doubled convex core structure induced by $(\mathrm{dev},\rho)$.
To simplify the notation, we will denote by $L$ the union $\vert \lambda \vert \times \{1\}\cup \vert \mu\vert \times \{0\}$ and by $M$ the manifold $\mathcal{D}(N)\setminus L$. The fundamental group of $M$ is described in details in \cite{choiesereis}. We recall here the construction for the convenience of the reader.\\
Let $\Sigma_0,\cdots \Sigma_q$ be the connected components of $\Sigma\setminus\left(\vert \lambda\vert\cup\vert \mu\vert\right)$, for each $i$ we choose a base point $x_i$ in $\Sigma_i$ and a path $\beta_i$ from $x_0$ to $x_i$. Then the paths $e_i:=\beta_i\tau(\beta_i)^{-1}$ project to a loop in $M$, see Figure \ref{double}.

\begin{figure}[htb]
\centering
\includegraphics[width=.7\textwidth]{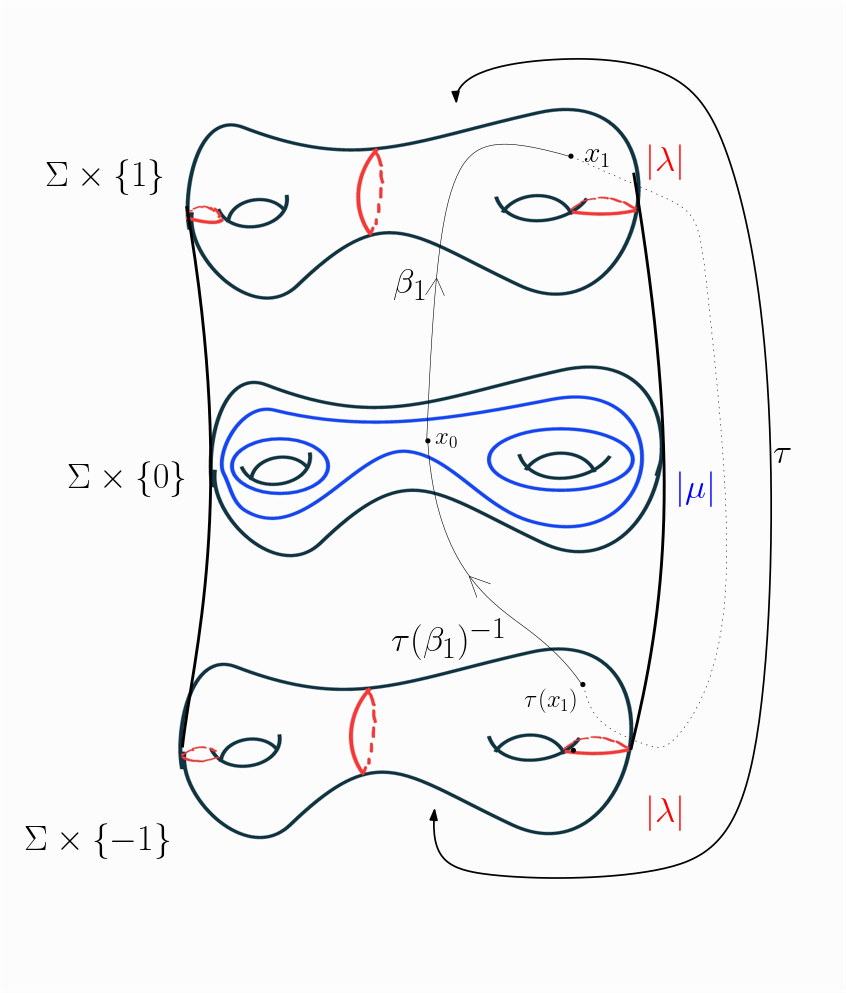}
\caption{A picture illustrating the construction of the path $e_i=\beta_i\tau(\beta_i)^{-1}.$}\label{double}
\end{figure}

For each $i$, let $M_i$ be the manifold obtained by gluing $\Sigma_i$ to $\tau(\Sigma_i)$. The fundamental group of $M$ is obtained by induction on $i$ as follow.
First by Van Kampen's theorem $$\pi_1(M_0,x_0)=\pi_1(N,x_0)\ast_{\pi_1(\Sigma_0,x_0)}\pi_1(\tau(N),\tau(x_0)).$$
Now assume that we have $\pi_1(M_{i-1},x_{0})$, then the fundamental group of $\pi_1(M_{i},x_{0})$ is the HNN-extension of $\pi_1(M_{i-1},x_{0})$ relative to $G_i:=\mathrm{Im}(\pi_1(\Sigma_{i},x_{i})\hookrightarrow \pi_1(M_{i},x_{0}))$. Namely
$$\pi_1(M_{i},x_{0})=\langle \pi_1(M_{i-1},x_{0})), e_i \ \vert \ e_i^{-1}\gamma e_i=\tau(\gamma), \ \forall \gamma\in G_i\rangle. $$
Now we can give an explicit description of the holonomy for the doubled convex core structure associated to $(\mathrm{dev},\rho)$ that we will denote by $\widehat{\rho}$. But before that, since the holonomy of certain elements of $\pi_1(M)$ will be a composition of reflections, let us recall the definition of reflection:

\begin{defi}
A reflection $r$ in $\mathbb{H}^3$ (resp. $\ads$, $\HP$) is a non-trivial involution of $\mathbb{H}^3$ (resp. $\ads$, $\HP$) that fixes point-wise a plane in $\mathbb{H}^3$ (resp. $\ads$, $\HP$).
\end{defi}
\begin{remark}
   Given a plane $\mathrm{P}$ in $\mathbb{H}^3$ (resp. in $\ads$), then there is a unique reflection $r$ fixing the plane $\mathrm{P}$. However, this result is false in $\HP$. Indeed, if the plane $\mathrm{P}$ is \textit{degenerate} (i.e., $\mathrm{P}$ contains a fiber), then it is proven in \cite[Proposition 4.15]{riolo_seppi} that there is a one-parameter family of reflections in $\HP$ which fix the plane $\mathrm{P}$ pointwise. However, if $\mathrm{P}$ is spacelike, then there is a unique reflection fixing point-wise $\mathrm{P}$, and by duality, this reflection coincides with the Minkowski transformation given by $x \to -x+v$ for some $v\in \mathbb{R}^{1,2}$, which is an isometry of $\mathbb{R}^{1,2}$ that reverses the orientation. For more details, see \cite[Section 4.5]{riolo_seppi}.
\end{remark}

Let $r_0$ be the reflection along the totally geodesic plane which is a support plane for $\partial_+\mathrm{CH}(\Lambda_{\rho})$ at $\mathrm{dev}(x_0)$. Define the representation \begin{equation}\label{36}
    \rho\ast r_0\rho r_0^{-1}: \pi_1(\Sigma,x_0)\ast \pi_1(\tau (\Sigma),\tau(x_0))\to \mathrm{Isom}(\X),\end{equation} such that its restriction to $\pi_1(\Sigma,x_0)$ and $\pi_1(\tau (\Sigma),\tau(x_0))$ are $\rho$ and $r_0\rho r_0^{-1}$ respectively.
Note that $\rho(\gamma)$ commutes with $r_0$ for all $\gamma\in \pi_1(\Sigma_0)$ and hence $\rho\ast r_0\rho r_{0}^{-1 }$ descends to a representation $\rho^0:\pi_1(M_0,x_0)\to \isom(\X).$\\
Now suppose that at the $i^{th}$ stage we have constructed the holonomy  $\rho^{i-1}:\pi_1(M_{i-1},x_0)\to \isom(\X)$ an we want to extend it to $\rho^i:\pi_1(M_i,x_0)\to\isom(\X)$. To do that it is enough to describe the holonomy of the loop $e_i$. It is not difficult to see that   
\begin{equation}\label{hol_extra_curve}
\rho^i(e_i)=r_ir_0\end{equation}
where $r_i$ is the reflection along the support plane at $\mathrm{dev}(x_i)$. Moreover such support plane is fixed by $\rho(\pi_1(\Sigma_i,x_0))$ hence we have a well defined representation on $\pi_1(M_i,x_0)$. Finally the holonomy $\widehat{\rho}$ will be equal to $\rho^i$ obtained inductively.

Now, let us come back to our convex core structures $(\mathrm{Dev}_{(t\lambda,t\mu)},\rho_{(t\lambda,t\mu)})$ and $(\mathrm{Dev}_{(\lambda,\mu)}^{\HP},\rho_{(\lambda,\mu)}^{\HP})$ constructed in Section \ref{TROFHOL} and \ref{TRNDEV}. Then we get the following. 
\begin{prop}\label{dhh}
Let $\widehat{\rho}_{(t\lambda,t\mu)}:\pi_1(M)\to \isom(\X)$ and $\widehat{\rho}^{\HP}_{(\lambda,\mu)}: \pi_1(M)\to \isom(\HP)$ be the holonomy of the doubled convex core structure induced by $(\mathrm{Dev}_{(t\lambda,t\mu)},\rho_{(t\lambda,t\mu)})$ and $(\mathrm{Dev}_{(\lambda,\mu)}^{\HP},\rho_{(\lambda,\mu)}^{\HP})$ respectively. Then the path of rescaled representations $\tau_t\widehat{\rho}_{(t\lambda,t\mu)}\tau_t^{-1}$ converges as $t\to 0^{\pm}$ to $\widehat{\rho}^{\HP}_{( \lambda,\mu)}$.
\end{prop}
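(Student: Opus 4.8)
The plan is to leverage the explicit description of the doubled holonomy $\widehat\rho$ from Section \ref{dh} in terms of the original holonomy $\rho$ together with a family of reflections $r_i$, and then reduce the convergence statement to convergences already established. Concretely, recall that $\widehat\rho$ is built inductively: on $\pi_1(M_0,x_0)$ it is $\rho\ast r_0\rho r_0^{-1}$, and the HNN generators $e_i$ have holonomy $\widehat\rho(e_i)=r_ir_0$, where $r_i$ is the reflection along the support plane of $\partial_+\mathrm{CH}(\Lambda_\rho)$ at $\mathrm{dev}(x_i)$. So $\tau_t\widehat\rho_{(t\lambda,t\mu)}\tau_t^{-1}$ is determined by (a) $\tau_t\rho_{(t\lambda,t\mu)}\tau_t^{-1}$, which converges to $\rho^{\HP}_{(\lambda,\mu)}$ by Theorem \ref{H}, and (b) the rescaled reflections $\tau_t r_i^{(t)}\tau_t^{-1}$ for $i=0,\dots,q$. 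The whole statement will follow once I show that each $\tau_t r_i^{(t)}\tau_t^{-1}$ converges, as $t\to 0^\pm$, to the reflection $r_i^{\HP}$ along the corresponding spacelike support plane of $\partial_+\mathrm{CH}(\Lambda_{\rho^{\HP}_{(\lambda,\mu)}})$ appearing in the analogous inductive construction of $\widehat\rho^{\HP}_{(\lambda,\mu)}$.

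First I would set up notation: for each component $\Sigma_i$ of $\Sigma\setminus(|\lambda|\cup|\mu|)$ pick $x_i$ as in Section \ref{dh}, and let $\mathrm{P}_i^{(t)}$ be the support plane of $\partial_+\mathrm{CH}(\Lambda_{\rho_{(t\lambda,t\mu)}})$ through $\mathrm{b}_{|t|\lambda,+}^{\X}(x_i)$ whose image in the quotient contains $\Sigma_i$; then $r_i^{(t)}$ is the unique reflection fixing $\mathrm{P}_i^{(t)}$ (in $\mathbb{H}^3$ or $\ads$ this is unique; for $\HP$ one uses that the limiting plane is spacelike, invoking Proposition 4.15 of \cite{riolo_seppi} as quoted in the Remark after the definition of reflection). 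By Proposition \ref{Sx1} the rescaled pleated surface $\tau_t\partial_+\mathrm{CH}(\Lambda_{\rho_{(t\lambda,t\mu)}})$ converges to $\partial_+\mathrm{CH}(\Lambda_{\rho^{\HP}_{(\lambda,\mu)}})$, and since this convergence is realized by the uniform convergence of $\tau_t\mathrm{b}_{|t|\lambda,+}^{\X}$ to $\mathrm{b}_{\lambda,+}^{\HP}$ on compacts, the planes $\tau_t\mathrm{P}_i^{(t)}$ converge to the spacelike plane $\mathrm{P}_i^{\HP}$ of $\partial_+\mathrm{CH}(\Lambda_{\rho^{\HP}_{(\lambda,\mu)}})$ tangent there. (Here one uses the two elementary facts on convergence of affine planes recalled before Definition \ref{extremal}: the plane $\tau_t\mathrm{P}_i^{(t)}$ contains the bounded point $\tau_t\mathrm{b}_{|t|\lambda,+}^{\X}(x_i)$, and it cannot converge to a vertical plane since $\mathbb{H}^2$ is disjoint from any support plane on the opposite side — actually more simply, since the $x_3$-extent of $\mathrm{P}_i^{(t)}$ is of order $|t|$ because the whole upper boundary is at bounded distance from $\mathbb{H}^2$ in the rescaled picture.) Passing $\tau_t r_i^{(t)}\tau_t^{-1}$ to the limit: reflections depend continuously on their fixed plane via the formula $\tau_t r_i^{(t)}\tau_t^{-1}$ being conjugation-compatible with $\tau_t$, and one checks by the computation in the proof of Proposition \ref{rotation} (replacing the rotation block by the reflection block $\mathrm{diag}(1,1,1,-1)$ in suitable coordinates adapted to $\mathrm{P}_i^{(t)}$) that the limit is the $\HP$-reflection fixing $\mathrm{P}_i^{\HP}$. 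I would phrase this last point as a short lemma parallel to Proposition \ref{rotation}, since it is the same mechanism.

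Finally I would assemble the induction exactly as in Section \ref{dh}: on $\pi_1(M_0,x_0)$, $\tau_t(\rho_{(t\lambda,t\mu)}\ast r_0^{(t)}\rho_{(t\lambda,t\mu)} (r_0^{(t)})^{-1})\tau_t^{-1}$ converges to $\rho^{\HP}_{(\lambda,\mu)}\ast r_0^{\HP}\rho^{\HP}_{(\lambda,\mu)}(r_0^{\HP})^{-1}$ because both factors converge (Theorem \ref{H} and the reflection lemma) and group operations are continuous; the compatibility condition that $\rho_{(t\lambda,t\mu)}(\gamma)$ commutes with $r_0^{(t)}$ for $\gamma\in\pi_1(\Sigma_0)$ passes to the limit, so the quotient representation $\rho^0_t$ converges to $(\rho^{\HP})^0$. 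At the $i$-th stage, $\tau_t\widehat\rho_{(t\lambda,t\mu)}(e_i)\tau_t^{-1}=\tau_t r_i^{(t)}(r_0^{(t)})^{-1}\tau_t^{-1}\to r_i^{\HP}(r_0^{\HP})^{-1}=\widehat\rho^{\HP}_{(\lambda,\mu)}(e_i)$, and the HNN relations $e_i^{-1}\gamma e_i=\tau(\gamma)$ are preserved under the limit since they hold for each $t$; by induction on $i$ the full representation $\tau_t\widehat\rho_{(t\lambda,t\mu)}\tau_t^{-1}$ converges to $\widehat\rho^{\HP}_{(\lambda,\mu)}$ on a generating set, hence on all of $\pi_1(M)$. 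The main obstacle I anticipate is not the algebra but step two: carefully justifying that $\tau_t\mathrm{P}_i^{(t)}$ converges to a \emph{spacelike} plane in $\HP$ (and controlling the case where $x_i$ happens to lie on a bending line of $\lambda$, where there is a choice of support plane — one must choose consistently with the convention used to define $\widehat\rho^{\HP}$, e.g. the "left" extremal plane as in the definition before Proposition \ref{hyperbolicsurface}), and that the reflection fixing it is the correct Half-pipe reflection; this is essentially a repeat of the arguments in Propositions \ref{hyperbolicsurface}, \ref{adssurface} and \ref{rotation}, so I would reuse those rather than redo the estimates.
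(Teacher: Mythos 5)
Your proposal is correct and follows essentially the same route as the paper: reduce to the generators ($\pi_1(N)$, its reflected copy, and the HNN generators $e_i$ with holonomy $r_ir_0$), obtain convergence of the rescaled support planes from the already-established convergence of the rescaled (developing/bending) maps, and conclude via a reflection-convergence lemma — the paper states this last step as Claim \ref{reflection}, proved by the explicit formula $r_t=\mathrm{Id}\mp2J_{\pm}\alpha_t\alpha_t^{T}$, exactly the "lemma parallel to Proposition \ref{rotation}" you anticipate. Your extra care about spacelikeness of the limit plane and the choice of extremal support plane at bending lines is sound but does not change the argument.
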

In order to prove the Proposition, we need the following claim:
\begin{claim}\label{reflection}
Let $\mathrm{P}_t$ be a family of planes in $\mathbb{H}^3$ (resp. spacelike planes in $\ads$) defined for $t>0$ (resp. $t<0$) such that $\lim_{\vert t\vert \to 0}\tau_t\mathrm{P}_t=\mathrm{P}_0 $, where $\mathrm{P}_0$ is a spacelike plane in $\HP$. Let $r_t$ be a family of reflections along $\mathrm{P}_t$. Then

$$\lim_{t \to 0^+} \tau_t r_t\tau_t^{-1}=\lim_{t \to 0^-} \tau_t r_t\tau_t^{-1} =r_0.$$
Where $r_0$ is the Half-pipe reflection along $\mathrm{P}_0$.

\end{claim}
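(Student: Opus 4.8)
The plan is to prove Claim \ref{reflection} by reducing to the explicit matrix form of a reflection along a spacelike plane, exactly parallel to the proof of Proposition \ref{rotation}. First I would normalize: since $\tau_t\mathrm{P}_t \to \mathrm{P}_0$, I can pick a family of isometries $\mathrm{B}_t$ in $\isom(\mathbb{H}^2)$ (viewed as a common subgroup of $\isom(\mathbb{H}^3)$, $\isom(\ads)$ and $\isom(\HP)$) such that $\mathrm{B}_t(\mathrm{P}_t)$ is a fixed plane transverse to the $x_3$-direction and $\lim_{|t|\to 0}\mathrm{B}_t = \mathrm{Id}$. Because $\tau_t$ commutes with $\isom(\mathbb{H}^2)$, the families $\tau_t \mathrm{B}_t r_t \mathrm{B}_t^{-1}\tau_t^{-1}$ and $\tau_t r_t \tau_t^{-1}$ have the same limit, so it suffices to treat the case where $\mathrm{P}_t$ is a single parametrized family of planes converging to $\{x_3 = 0\}$, say $\mathrm{P}_t = \{x_3 = c(t) x_2\}$ with $c(t)\to 0$ (after a further conjugation by an element of $\isom(\mathbb{H}^2)$ fixing the $x_0x_1$-plane we may assume the plane is of this normalized form; here one uses that $\mathbb{H}^2 = \mathrm{P}_t \cap \{x_3=0\}$-type normalization as in Lemma \ref{AdsV} and Lemma \ref{H3V}).

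Next I would write down the reflection explicitly. In $\mathbb{H}^3$ ($t>0$), the reflection $r_t$ along $\mathrm{P}_t=\{x_3 = c(t)x_2\}$ is conjugate by a rotation in the $x_2x_3$-plane to the reflection along $\{x_3=0\}$, i.e.
$$
r_t = \mathrm{R}_{\beta(t)}\begin{bmatrix}
1 & 0 & 0 & 0\\
0 & 1 & 0 & 0\\
0 & 0 & 1 & 0\\
0 & 0 & 0 & -1
\end{bmatrix}\mathrm{R}_{\beta(t)}^{-1},
$$
where $\mathrm{R}_{\beta(t)}$ is a Euclidean rotation in the $(x_2,x_3)$-plane with $\beta(t)\to 0$ as $t\to 0^+$; the analogous formula in $\ads$ replaces $\mathrm{R}_{\beta(t)}$ by a hyperbolic (boost) rotation $\mathrm{B}_{\beta(t)}$ in the $(x_2,x_3)$-plane, again with $\beta(t)\to 0$. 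Conjugating by $\tau_t$ and letting $t\to 0$: the crucial point is that $\tau_t r_t \tau_t^{-1}$ is the reflection along $\tau_t\mathrm{P}_t$, so its $4\times 4$ matrix is determined by $\tau_t\mathrm{P}_t$ together with the condition of being an orientation-reversing involution fixing that plane pointwise. Since $\tau_t\mathrm{P}_t\to\mathrm{P}_0$ a spacelike plane in $\HP$, the matrices converge, and I would identify the limit directly with the unique $\HP$-reflection along $\mathrm{P}_0$ (which exists and is unique precisely because $\mathrm{P}_0$ is spacelike, by the remark recalled above citing \cite[Section 4.5]{riolo_seppi}). Concretely one computes $\tau_t r_t \tau_t^{-1}$ in coordinates and checks it converges to a matrix of the form $\mathrm{Is}(\mathrm{Id},\cdot)$ composed with $\mathrm{diag}(1,1,1,-1)$-type block, which is exactly the dual description $x\mapsto -x+v$ of a spacelike $\HP$-reflection; the limit is independent of whether $t\to 0^+$ or $t\to 0^-$ because in both cases $\beta(t)\to 0$ and the rescaled limit only sees the plane $\mathrm{P}_0$.

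With the claim in hand, the proof of Proposition \ref{dhh} proceeds by the inductive description of $\pi_1(M)$ recalled just before the claim. The holonomy $\widehat{\rho}_{(t\lambda,t\mu)}$ is built from $\rho_{(t\lambda,t\mu)}$ together with the reflections $r_i^t$ along the support planes of $\partial_+\mathrm{CH}(\Lambda_{\rho_{(t\lambda,t\mu)}})$ at the points $\mathrm{Dev}_{(t\lambda,t\mu)}(x_i)$; on generators of $\pi_1(\Sigma,x_0)$ and $\pi_1(\tau\Sigma,\tau x_0)$ it is $\rho_{(t\lambda,t\mu)}$ and $r_0^t\rho_{(t\lambda,t\mu)}(r_0^t)^{-1}$, and on the HNN generators $e_i$ it is $r_i^t r_0^t$. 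Theorem \ref{H} gives $\tau_t\rho_{(t\lambda,t\mu)}\tau_t^{-1}\to\rho_{(\lambda,\mu)}^{\HP}$; Theorem \ref{exdev} (and the convergence of the rescaled pleated surface $\tau_t\partial_+\mathrm{CH}(\Lambda_{\rho_{(t\lambda,t\mu)}})\to\partial_+\mathrm{CH}(\Lambda_{\rho_{(\lambda,\mu)}^{\HP}})$ from Proposition \ref{Sx1}) shows that the support planes at $\mathrm{Dev}_{(t\lambda,t\mu)}(x_i)$, once rescaled by $\tau_t$, converge to spacelike support planes of $\partial_+\mathrm{CH}(\Lambda_{\rho_{(\lambda,\mu)}^{\HP}})$ — these are spacelike because $\rho_{(\lambda,\mu)}^{\HP}$ is $\HP$-quasi-Fuchsian so its convex hull boundary is a graph over $\mathbb{D}^2$. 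Then Claim \ref{reflection} gives $\tau_t r_i^t \tau_t^{-1} \to r_i^{\HP}$, the $\HP$-reflection along the limit plane, which is exactly the reflection used in the definition of $\widehat{\rho}_{(\lambda,\mu)}^{\HP}$. Since $\tau_t(\,\cdot\,)\tau_t^{-1}$ is a group homomorphism and the generating set of $\pi_1(M)$ is finite, convergence on generators yields convergence of $\tau_t\widehat{\rho}_{(t\lambda,t\mu)}\tau_t^{-1}$ to $\widehat{\rho}_{(\lambda,\mu)}^{\HP}$, and the common-limit statement follows from the corresponding statement for each ingredient. I expect the main obstacle to be the bookkeeping in Claim \ref{reflection}: verifying that the rescaled reflection genuinely converges to the (unique) spacelike $\HP$-reflection and not to some degenerate limit, which requires carefully tracking that the axis/plane stays uniformly transverse to the fiber direction — this is where the spacelikeness of the limit plane, guaranteed by Proposition \ref{Sx1}, is essential, just as transversality was the delicate point in Proposition \ref{hyperbolicsurface}.
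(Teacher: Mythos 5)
Your overall strategy is the same as the paper's: write the reflection explicitly in coordinates, conjugate by $\tau_t$, and compute the limit, identifying it with the unique spacelike reflection of $\HP$ along $\mathrm{P}_0$. The paper parametrizes differently — it uses the rank-one formula $r_t=\mathrm{Id}\mp 2J_{\pm}\alpha_t\alpha_t^T$ in terms of a unit normal $\alpha_t$ of $\mathrm{P}_t$, observes that the convergence $\tau_t\mathrm{P}_t\to\mathrm{P}_0$ forces $(-\tfrac{\alpha_0}{t},\tfrac{\alpha_1}{t},\tfrac{\alpha_2}{t},\alpha_3)$ to converge to $(a_0,1)$, and reads off the limit $\left(\begin{smallmatrix}\mathrm{Id}&0\\-2a_0&-1\end{smallmatrix}\right)$ directly — which requires no normalization of the plane at all.

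There is, however, a genuine gap in your normalization step. You reduce to $\mathrm{P}_t=\{x_3=c(t)x_2\}$ by conjugating with elements of $\isom(\mathbb{H}^2)$; but every plane of that form contains the geodesic $\{x_2=x_3=0\}$ and in particular meets $\mathbb{H}^2$, whereas a plane whose $\tau_t$-rescaling converges to a spacelike plane of $\HP$ need not meet $\mathbb{H}^2$ at all (e.g.\ $\mathrm{P}_t=\{x_3=tx_0\}$ is disjoint from $\mathbb{H}^2$ and $\tau_t\mathrm{P}_t\to\{x_3=x_0\}$). Since $\isom(\mathbb{H}^2)$ preserves $\mathbb{H}^2$, no such conjugation can move a plane disjoint from $\mathbb{H}^2$ into your normal form, and this missing case is exactly the one needed in the application: the support planes of the lower boundary component $\partial_-\mathrm{CH}(\Lambda_{\rho_{(t\lambda,t\mu)}})$, along which some of the reflections $r_{t,i}$ are taken, are disjoint from $\mathbb{H}^2$. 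The fix is either to use the full normalization of Lemma \ref{rcompos}, which includes a translation transverse to $\mathbb{H}^2$ whose $\tau_t$-conjugate still converges (then $r_t=\mathrm{B}_t^{-1}\,\mathrm{diag}(1,1,1,-1)\,\mathrm{B}_t$ and the limit follows), or to drop the normalization and argue with the normal vector as the paper does. A smaller point: your assertion that the matrix of $\tau_tr_t\tau_t^{-1}$ is determined by $\tau_t\mathrm{P}_t$ together with being an orientation-reversing involution fixing that plane pointwise is not correct — such an involution is determined by the fixed plane \emph{and} its $(-1)$-eigendirection, which here is the $q_t$-orthogonal of the plane; your explicit conjugation computation supplies this, but the verbal justification as written does not.
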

\begin{proof}
To prove the Claim, let $\alpha_t=(\alpha_0(t),\alpha_1(t),\alpha_2(t),\alpha_3(t))^T$ be a unit normal vector of $\mathrm{P}_t$ with respect to the quadratic form $q_1$ (for $t>0$) or $q_{-1}$ (for $t<0$). Then the hyperbolic or Anti-de Sitter reflection along $\mathrm{P}_t$ can be written in the standard basis as
\begin{equation}\label{formulereflec}
r_t=\mathrm{Id}\mp2J_{\pm}\alpha_t\alpha_t^T,
\end{equation}
where $J_{\pm}$ is the matrix $\mathrm{diag}(-1,1,1,1)$ if $t>0$ and $\mathrm{diag}(-1,1,1,-1)$ if $t<0.$ An elementary argument shows that  $\tau_t\mathrm{P}_t$ is the orthogonal of $(-\frac{\alpha_0}{t},\frac{\alpha_1}{t},\frac{\alpha_2}{t},\alpha_3)^{T}  $ with respect to the quadratic form $q_t$. Since $\tau_t\mathrm{P}_t$ converges to the spacelike plane $\mathrm{P}_0$, then $(-\frac{\alpha_0(t)}{t},\frac{\alpha_1(t)}{t},\frac{\alpha_2(t)}{t},\alpha_3(t))^{T}$ converges to $(a_0,1)^{T}$. A direct computation shows that 
$$\lim_{t \to 0^\pm}\tau_t r_t\tau_t^{-1}=\begin{pmatrix}
\mathrm{Id} & 0  \\
-2a_0 & -1 
\end{pmatrix},$$
which is the Half-pipe reflection $r_0$ along $\mathrm{P}_0$.
\end{proof}

\begin{proof}[Proof of Proposition \ref{dhh} ]
From the description of the fundamental group of $M$ and the Theorem \ref{H}, it is enough to check the limit only for the paths $e_i$. By identity \eqref{hol_extra_curve}, we have
$$\tau_t\widehat{\rho}_{(t\lambda,t\mu)}(e_i)\tau_t^{-1}=(\tau_tr_{t,i}\tau_t^{-1})(\tau_t r_{t,0}\tau_t^{-1}).$$
where $r_{t,i}$, $r_{t,0}$ are the reflections along the support planes of $\partial \mathrm{CH}(\Lambda_{(t\lambda,t\mu)})$ at $\mathrm{Dev}_{(t\lambda,t\mu)}(x_i)$ and $\mathrm{Dev}_{(t\lambda,t\mu)}(x_0)$ respectively. Moreover, the rescaled of those support planes converge because $\tau_t\mathrm{Dev}_{(t\lambda,t\mu)}$ converge to $\mathrm{Dev}_{(\lambda,\mu)}^{\HP}$ (See Theorem \ref{exdev}), then the conclusion follows from Claim \ref{reflection}.
\end{proof}

\subsection{Doubling the developing map }
Let $(\mathrm{dev},\rho)$ be a convex core structure on $N=\Sigma\times[0,1]$. We will look more closely at the developing map of the doubled convex core structure induced by $(\mathrm{dev},\rho)$, that we will denote by $\widehat{\mathrm{dev}}$. Let us denote by $\Gamma$ the fundamental group of $M$ and for each $i=0,\cdots q$, $\Gamma_i$ the fundamental group of the manifold $M_i$ obtained by gluing $\Sigma_i$ to $\tau(\Sigma_i)$ as explained in subsection \ref{dh}. Let $\widetilde{M_i}$ be a copy of the universal cover of $M_i$. As for the holonomy representation, the developing map $\widehat{\mathrm{dev}}$ of the doubled convex core structure induced by $(\mathrm{dev},\rho)$ will be constructed by induction on $i$. We will follow \cite[page 10]{Bendingdoubledev} where an explicit formula for a developing map of a gluing of two manifolds is given, see also \cite[page 33]{projective_non_hyperbolic}. For $i=0$, the universal cover of $M_0$ can be described combinatorially as

$$\widetilde{M_0}=\left(\Gamma_0\times \widetilde{N}\right)/\pi_1(N,x_0)\sqcup \left(\Gamma_0\times \widetilde{\tau(N)}\right)/\pi_1(\tau(N),\tau(x_0)) $$
where $\alpha \in \pi_1(N,x_0)$ (resp. in $\pi_1(\tau(N),\tau(x_0))$) acts on $\Gamma \times\widetilde{N}$ (resp. on $\Gamma \times\widetilde{\tau(N)}$) by
$$\alpha \cdot (\gamma, x) = (\gamma\alpha^{-1}, \alpha\cdot x).$$ Additionally, if $x\in \widetilde{N}\cap\widetilde{\tau(N)} $ then we
identify the point $(\gamma, x) \in \Gamma \times\widetilde{N}$ with the point $(\gamma, x) \in \Gamma \times\widetilde{\tau(N)}$. The action of $\Gamma_0$ on $\widetilde{M_0}$ is given by
$$\gamma_1\cdot[\gamma_2,x]=[(\gamma_1\gamma_2,x)].$$

Now let us define $\mathrm{dev}_{\tau}: \widetilde{\tau(N)}\to \X$ a developing map for $\tau(N)$ such that $\mathrm{dev}_{\tau}\circ \tau= r_0\mathrm{dev}$ where $r_0$ is a reflection along the totally
geodesic plane which is a support plane for $\partial_+\mathrm{CH}(\Lambda_{\rho})$ at $\mathrm{dev}(x_0)$ as in \eqref{36}. Therefore 
define a developing map $\mathrm{dev}^0:\widetilde{M_0}\to \X$ by
$$\mathrm{dev}^0[\gamma,x]=\begin{cases}
\hat{\rho}(\gamma)\mathrm{dev}(x) &\text{ if}\ x\in \widetilde{N} \\
\hat{\rho}(\gamma)\mathrm{dev}_{\tau}(x)  &\text{ if }\ x\in \widetilde{\tau(N)} \\
\end{cases}.$$
Now suppose inductively that we have constructed  a developing map $\mathrm{dev}^{i-1}$ and we want to extend it to $\mathrm{dev}^{i}:\widetilde{M_i} \to \X$. Here the universal cover of $M_i$ can be described combinatorially as 
$$\widetilde{M_i}=\Gamma_i\times \widetilde{M_{i-1}}/\Gamma_{i-1}$$
where $\alpha \in \Gamma_{i-1}$ acts on $\Gamma_i \times\widetilde{M_{i-1}}$ by
$$\alpha \cdot (\gamma, x) = (\gamma\alpha^{-1}, \alpha\cdot x).$$  The action of $\Gamma_i$ on $\widetilde{M_i}$ is given by
$$\gamma_1\cdot[\gamma_2,x]=[(\gamma_1\gamma_2,x)].$$ The developing map $\mathrm{dev}^i: \widetilde{M_i}\to \X$ is defined as follow:
\begin{equation}\label{formuledevdouble}\mathrm{dev}^{i}([\gamma,x])=\widehat{\rho}(\gamma)\mathrm{dev}^{i-1}(x).\end{equation} The developing map $\widehat{\mathrm{dev}}$ will be equal to $\mathrm{dev}^i$ obtained inductively. 

\begin{prop}\label{7.8}
Let $\widehat{\mathrm{Dev}}_{(t\lambda,t\mu)}:\widetilde{M}\to \X$ and $\widehat{\mathrm{Dev}}^{\HP}_{(\lambda,\mu)}: \widetilde{M}\to \HP$ be the developing maps of the doubled convex core structures induced by $(\mathrm{Dev}_{(t\lambda,t\mu)},\rho_{(t\lambda,t\mu)})$ and $(\mathrm{Dev}_{(\lambda,\mu)}^{\HP},\rho_{(\lambda,\mu)}^{\HP})$ respectively. Then the path $\tau_t\widehat{\mathrm{Dev}}_{(t\lambda,t\mu)}$
 converges as $t\to 0^{\pm}$ to $\widehat{\mathrm{Dev}}^{\HP}_{( \lambda,\mu)}$.
\end{prop}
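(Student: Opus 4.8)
The plan is to carry out the induction on $i$ that defines $\widehat{\mathrm{Dev}}_{(t\lambda,t\mu)}$ and $\widehat{\mathrm{Dev}}^{\HP}_{(\lambda,\mu)}$, checking at each stage that the rescaling $\tau_t$ commutes (in the limit) with the gluing construction. The base case is the assembly of $M_0$ from $\widetilde N$ and $\widetilde{\tau(N)}$: on $\widetilde N$ the doubled developing map is $\widehat\rho(\gamma)\mathrm{Dev}_{(t\lambda,t\mu)}$, and $\tau_t\widehat\rho_{(t\lambda,t\mu)}(\gamma)\tau_t^{-1}$ converges to $\widehat\rho^{\HP}_{(\lambda,\mu)}(\gamma)$ by Proposition \ref{dhh}, while $\tau_t\mathrm{Dev}_{(t\lambda,t\mu)}$ converges to $\mathrm{Dev}^{\HP}_{(\lambda,\mu)}$ by Theorem \ref{exdev}. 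On $\widetilde{\tau(N)}$ the map is $\widehat\rho(\gamma)\,\mathrm{Dev}_{\tau,(t\lambda,t\mu)}$ where $\mathrm{Dev}_{\tau,(t\lambda,t\mu)}=r_{t,0}\circ\mathrm{Dev}_{(t\lambda,t\mu)}\circ\tau$; since $r_{t,0}$ is the reflection along the support plane of $\partial_+\mathrm{CH}(\Lambda_{\rho_{(t\lambda,t\mu)}})$ at $\mathrm{Dev}_{(t\lambda,t\mu)}(x_0)$, whose rescaling converges to the corresponding spacelike $\HP$-plane, Claim \ref{reflection} gives $\lim_{t\to0^\pm}\tau_t r_{t,0}\tau_t^{-1}=r_0$, the $\HP$-reflection. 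Composing the three convergences and using that $\tau_t$ is linear (so it passes through the reflection as $\tau_t r_{t,0}\mathrm{Dev}=(\tau_t r_{t,0}\tau_t^{-1})(\tau_t\mathrm{Dev})$) yields the convergence of $\tau_t\widehat{\mathrm{Dev}}^0_{(t\lambda,t\mu)}$ to $\widehat{\mathrm{Dev}}^{0,\HP}_{(\lambda,\mu)}$, uniformly on compact sets of $\widetilde{M_0}$.

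For the inductive step I would use the combinatorial description $\widetilde{M_i}=(\Gamma_i\times\widetilde{M_{i-1}})/\Gamma_{i-1}$ and the formula \eqref{formuledevdouble}, namely $\mathrm{dev}^i([\gamma,x])=\widehat\rho(\gamma)\mathrm{dev}^{i-1}(x)$. Rescaling gives
\begin{equation*}
\tau_t\widehat{\mathrm{Dev}}^i_{(t\lambda,t\mu)}([\gamma,x])=\bigl(\tau_t\widehat\rho_{(t\lambda,t\mu)}(\gamma)\tau_t^{-1}\bigr)\bigl(\tau_t\widehat{\mathrm{Dev}}^{i-1}_{(t\lambda,t\mu)}(x)\bigr),
\end{equation*}
so the convergence for $\mathrm{dev}^i$ follows immediately from the convergence for $\mathrm{dev}^{i-1}$ (the inductive hypothesis) together with Proposition \ref{dhh} applied to the new generator $e_i$, whose holonomy is $r_{t,i}r_{t,0}$ and whose rescaling converges by Claim \ref{reflection} since $\tau_t$ applied to the support plane at $\mathrm{Dev}_{(t\lambda,t\mu)}(x_i)$ converges to a spacelike $\HP$-plane (again a consequence of Theorem \ref{exdev} and Propositions \ref{Sx1}, \ref{Sx0}). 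One should check that the convergence is uniform on compact subsets of $\widetilde{M}$: a compact set meets only finitely many translates $\widehat\rho(\gamma)(\text{fundamental domain})$, and on each the estimate above reduces the claim to the finitely many group elements involved, where convergence of holonomies and of $\tau_t\mathrm{Dev}_{(t\lambda,t\mu)}$ on compacta of $\widetilde N$ suffices.

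The main obstacle, and the point that needs the most care, is \emph{compatibility of the reflections with the rescaling}: one must ensure that the support planes used to define $r_{t,i}$ (which are auxiliary choices in the construction of $\widehat{\mathrm{Dev}}_{(t\lambda,t\mu)}$, not canonically determined when $x_i$ lies on a bending locus) can be chosen continuously in $t$ so that $\tau_t r_{t,i}\tau_t^{-1}$ actually converges, and that the limiting reflection is the one appearing in the construction of $\widehat{\mathrm{Dev}}^{\HP}_{(\lambda,\mu)}$. This is exactly the content needed to invoke Claim \ref{reflection} honestly: the hypothesis there is that $\tau_t\mathrm{P}_t$ converges to a \emph{spacelike} plane of $\HP$, and one must rule out that the limit is degenerate. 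Since the base points $x_i$ can be taken in the interior of the components $\Sigma_i$ of $\Sigma\setminus(|\lambda|\cup|\mu|)$, the relevant support planes are the unique ones through $\mathrm{Dev}_{(t\lambda,t\mu)}(x_i)$, and the convergence of $\tau_t\partial_\pm\mathrm{CH}(\Lambda_{\rho_{(t\lambda,t\mu)}})$ to $\partial_\pm\mathrm{CH}(\Lambda_{\rho^{\HP}_{(\lambda,\mu)}})$ from Theorem \ref{Sx0} — together with the fact, used repeatedly in Section \ref{sec5}, that a support plane of the upper boundary is disjoint from one of the lower boundary, forcing non-verticality of the limit — guarantees the limit plane is spacelike. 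Once this is settled, the rest is the bookkeeping of the induction as above, and no new estimates are required.
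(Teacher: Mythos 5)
Your proof is correct and follows essentially the same route as the paper's: the paper also deduces the convergence directly from the inductive formula $\mathrm{dev}^{i}([\gamma,x])=\widehat{\rho}(\gamma)\mathrm{dev}^{i-1}(x)$ together with Proposition \ref{dhh} and Theorem \ref{exdev} (with Claim \ref{reflection} handling the reflections), only stated much more tersely. Your additional verification that the relevant support planes rescale to spacelike planes is consistent with what the paper establishes in Sections \ref{sec5} and \ref{sec7}.
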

\begin{proof}
The proof follows directly by applying Proposition \ref{dhh} on the convergence of $\tau_t\widehat{\rho}_{(t\lambda,t\mu)}\tau_t^{-1}$ and the formula \eqref{formuledevdouble} of $\widehat{\mathrm{dev}}$.
\end{proof}
\begin{figure}[htb]
\centering
\includegraphics[width=.7\textwidth]{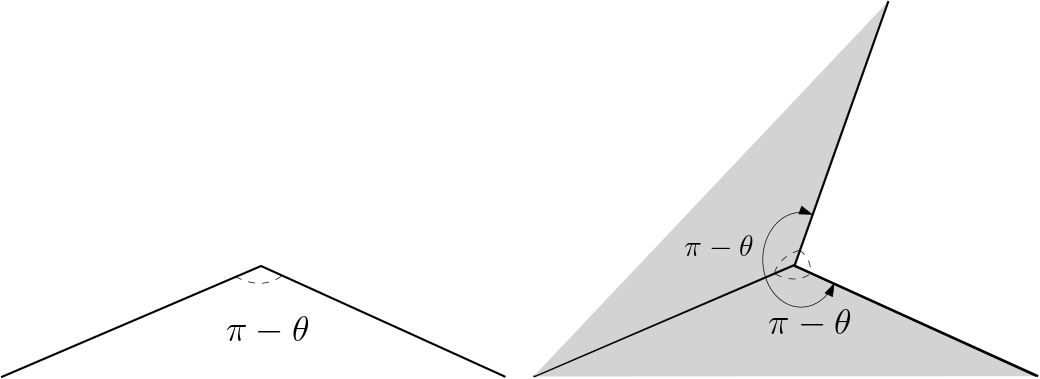}
\caption{A two-dimensional picture illustrating the doubling procedure in hyperbolic space. On the left, two lines with an exterior angle $\theta$ form a wedge of angle $(\pi-\theta)$. On the right, we first glue two copies of the left wedge along an edge to form another wedge with angle $2(\pi-\theta)$ (the grey part). Then, the edges of the grey wedge are glued by a rotation of angle $2(\pi-\theta)$.}\label{gluing}
\end{figure}
\begin{remark}\label{holonomy_of_meridian}
It is worth remarking that the holonomy of a meridian curve $\gamma\in \pi_1(M)$ around a curve $\alpha\times\{*\}$ is a rotation. Indeed, assume that $\alpha$ is in the support of $\lambda$, and let $\mathrm{P}_t$ and $\mathrm{Q}_t$ be two extremal support planes such that $\mathrm{P}_t\cap\mathrm{Q}_t=\alpha_t^+$, where $\alpha_t^+$ denotes the lift of the geodesic representative of $\alpha$ in $\partial_+\mathrm{CH}(\Lambda_{\rho_{(t\lambda,t\mu)}})/\rho_{(t\lambda,t\mu)}(\pi_1(\Sigma))$. Then, the holonomy of $\gamma$ is given by $r_{\mathrm{P}_t}r_{\mathrm{Q}_t}$, where $r_{\mathrm{P}_t}$ and $r_{\mathrm{Q}_t}$ denote the reflections along the planes $\mathrm{P}_t$ and $\mathrm{Q}_t$ respectively. Now, since the angle between $\mathrm{P}_t$ and $\mathrm{Q}_t$ is equal to $\vert t\vert \lambda(\alpha)$, a computation similar to that of the proof of Claim \ref{reflection} (more precisely, see equation \eqref{formulereflec}) shows that $r_{\mathrm{P}_t}r_{\mathrm{Q}_t}$ is a hyperbolic rotation of angle $2(\pi-\vert t\vert\lambda(\alpha))$ if $t>0$, and an Anti-de Sitter rotation of angle $-2\vert t\vert\lambda(\alpha)$ if $t<0$, see Figure \ref{gluing}. Therefore, by Proposition \ref{rotation}, $\tau_t \left(r_{\mathrm{P}_t}r_{\mathrm{Q}_t}\right)\tau_t^{-1}$ converges to a Half-pipe rotation of angle $-2\lambda(\alpha)$, which is the holonomy $\widehat{\rho}^{\HP}_{(\lambda,\mu)}(\gamma)$.

\end{remark}

\subsection{Geometry of cusps}
In this section, we will describe the geometry of the doubled convex core structure $(\widehat{\mathrm{Dev}}_{(t\lambda,t\mu)},\widehat{\rho}_{(t\lambda,t\mu)})$ near the punctures. Before that, let us examine the geometry before doubling the convex core. We will keep the same notations as in the proof of Theorem \ref{exdev}. We identify $\widetilde{\Sigma}$ with $\mathbb{H}^2$ using a complete hyperbolic structure $(\mathrm{dev},\sigma)$. Let $V_i$ be a neighborhood around the $i^{th}$ puncture, and let $C_i$ be a connected component of the lift of $\widetilde{V_i}$ in $\mathbb{H}^2$ such that $\partial C_i$ is a horocycle centered at $p_i$. Recall that $p_i$ is the fixed point of the parabolic isometry $\sigma(\gamma_i)$, where $\gamma_i$ is a loop around the puncture representing $\partial V_i$. Then, $C_t^+=\mathrm{Dev}_{(t\lambda,t\mu)}(\partial C_i\times \{1\} )$ and $C_t^-=\mathrm{Dev}_{(t\lambda,t\mu)}(\partial C_i\times \{0\} )$ are contained in the support planes $\mathrm{P}_t^{+}$ and $\mathrm{P}_t^-$ of $\partial\mathrm{CH}(\Lambda_{\rho_{(t\lambda,t\mu)}})$, respectively. Moreover, these two support planes intersect at a unique point $q_t$, which is also the intersection point between $C_t^+$ and $C_t^-$. Denote by $q\in \partial\HP\setminus{[0,0,0,1]}$ the limit of $\tau_tq_t$ as $t\to 0^{\pm}$. We consider $H_t$ the region in $\X$ defined by
\begin{itemize}
\item If $t>0$
\begin{equation}
H_t=\mathbb{P}\left(\{x\in \mathcal{H}^3 \ \mid \ \langle x,\widetilde{q_t}\rangle_{1,3}>a \}\right),\end{equation}

\item If $t<0$
\begin{equation}H_t=\mathbb{P}\left(\{x\in \mathcal{A}\mathrm{d}\mathcal{S}^3 \ \mid \ \langle x,\widetilde{q_t}\rangle_{2,2}>a \}\right),\end{equation}

\item If $t=0$
\begin{equation}H_0=\mathbb{P}\left(\{x\in \mathcal{HP}^3 \ \mid \ \langle x,\widetilde{q}\rangle_{1,2,0}>a \}\right),\end{equation}
\end{itemize}
where $a$ is a negative real number and $\widetilde{q_t}$, $\widetilde{q}$ are points in $\mathbb{R}^4$ which project to $q_t$ and $q$ respectively (remark that the boundary $\partial H_t$ of $H_t$ is a horosphere in $\X$). We can assume that $\vert a\vert $ is small enough so that $H_t$ does not intersect the bending lines of $\partial\mathrm{CH}(\Lambda_{\rho_{(t\lambda,t\mu)}})$.

Hence, the subgroup of $\rho_{(t\lambda,t\mu)}(\pi_1(\Sigma))$ preserving $H_t$ is generated by the parabolic isometry $\rho_{(t\lambda,t\mu)}(\gamma_i)$. We consider the region obtained by truncating $\mathrm{Dev}_{(t\lambda,t\mu)}(C_i\times[0,1])$ with $H_t$, this region is invariant by the subgroup generated by $\rho_{(t\lambda,t\mu)}(\gamma_i)$ and converges after rescaling by $\tau_t$ to the region obtained by truncating $\mathrm{Dev}_{(\lambda,\mu)}^{\HP} (C_i\times[0,1])$ with $H_0$. (Remark that in the upper half-space model of $\mathbb{H}^3$, such a region is isometric to $\{(x,y,z)\in \mathbb{R}^3 \ \mid z>1,\  \vert x\vert<c\}$ for some $c>0$).

We now analyze the situation after doubling. Recall that $M=\mathcal{D}(N)\setminus L\cong(\Sigma\times\mathbb{S}^1)\setminus L$. The holonomy of a curve close to the $i^{th}$ puncture, representing $\mathbb{S}^1$ (namely $\{y_i\}\times\mathbb{S}^1$, for $y_i\in V_i$), is given by $r_t^-r_t^+$, where $r_t^-$ and $r_t^+$ are reflections in $\mathbb{H}^3$ (resp. in $\ads$) if $t>0$ (resp. if $t<0$) along the planes $\mathrm{P}_t^-$ and $\mathrm{P}_t^+$. Since $\tau_t\mathrm{P}_t^{\pm}$ converges to spacelike planes $\mathrm{P}^{\pm}$, Claim \ref{reflection} implies that $\tau_tr_t^{+}\tau_t^{-1}$ and $\tau_tr_t^-\tau_t^{-1}$ converge to $r_0^+$ and $r_0^-$, where $r_0^+$ and $r_0^-$ denote the Half-pipe reflections along $\mathrm{P}^+$ and $\mathrm{P}^-$, respectively.

On the other hand, $r_t^-r_t^+$ (resp. $r_0^-r_0^+$) preserves the horosphere $H_t$ (resp. $H_0$). This follows from the fact that $r_t^{\pm}$ (resp. $r_0^{\pm}$) fix the intersection point $q_t\in \mathrm{P}_t^+\cap\mathrm{P}_t^-$ (resp. $q_0\in \mathrm{P}^+\cap\mathrm{P}^-$).

Consequently, the subgroup of $\widehat{\rho}_{(t\lambda,t\mu)}(\pi_1(M))$ (resp. $\widehat{\rho}^{\HP}_{(\lambda,\mu)}(\pi_1(M))$) preserving $H_t$ is isomorphic to $\mathbb{Z}^2$. The first generator is given by the parabolic isometry $\widehat{\rho}_{(t\lambda,t\mu)}(\gamma_i)$ (resp. $\widehat{\rho}^{\HP}_{(\lambda,\mu)}(\gamma_i)$), and the second generator is given by $r_t^-r_t^+$ (resp. $r_0^-r_0^+$). To see that these generators are linearly independent in $\mathbb{Z}^2$, it is enough to remark that $\widehat{\rho}_{(t\lambda,t\mu)}(\gamma_i)$ (resp. $\widehat{\rho}^{\HP}_{(\lambda,\mu)}(\gamma_i)$) fixes the support planes $\mathrm{P}_{t}^{\pm}$ (resp. $\mathrm{P}^{\pm}$), while $r_t^-r_t^+$ (resp. $r_0^-r_0^+$) does not fix them. Therefore, the quotient by the group generated by $\widehat{\rho}_{(t\lambda,t\mu)}(\gamma_i)$ and $r_t^-r_t^+$ of the truncation of $\widehat{\mathrm{Dev}}_{(t\lambda,t\mu)}(C_i\times\mathbb{S}^1)$ with the $H_t$ is isometric to a cusp in a hyperbolic manifold (if $t>0$) and in an Anti-de Sitter manifold (if $t<0$). Moreover, it converges to the quotient by the group generated by $\widehat{\rho}^{\HP}_{(\lambda,\mu)}(\gamma_i)$ and $r_0^-r_0^+$ of the region obtained by truncating $\widehat{\mathrm{Dev}}^{\HP}_{(\lambda,\mu)}(C_i\times\mathbb{S}^1)$ with $H_0$, which is isometric to a cusp in a Half-pipe manifold.\\

We now have all the tools to prove the main Theorem \ref{mainthm}:
\begin{proof}[Proof of the main Theorem \ref{mainthm}]
Let $\mathcal{P}_t$ be the real projective structure on $M=(\Sigma\times\mathbb{S}^1)\setminus L$ given by $(\tau_t\widehat{\mathrm{Dev}}_{t\lambda,t\mu},\tau_t\widehat{\rho}_{(t\lambda,\mu)}\tau_t^{-1})$. These structures are conjugate to the hyperbolic (resp. AdS) doubled convex core structure with bending data $(\vert t\vert \lambda,\vert t\vert \mu)$ if $t>0$ (resp. $t<0$). By Proposition \ref{7.8}, $\mathcal{P}_t$ converges to the Half-pipe doubled convex core structure with bending data $(\lambda,\mu)$ that we denote by $\mathcal{P}_0$. Remark \ref{holonomy_of_meridian} shows that the value of the cone angle around the link $L$ is exactly as stated in the statement of the main Theorem. Finally, the discussion above shows the cusped structure near the punctures, this completes the proof.
\end{proof}

\section{Appendix.}\label{9}

\subsection{Technical Lemma}
We provide here the proofs of the following property of elements in $\isom(\mathbb{H}^3) $ and $\isom(\ads)$.
\begin{lemma}\label{rcompos}
For $t>0$, let $x_t$ be a family of point in $\mathbb{H}^3$ which converges to $p_{\infty}$ in $\mathbb{H}^2$. Consider $\mathrm{P}_t$ a family of planes in $\mathbb{H}^3$ containing $x_t$ such that 
     $\tau_tx_t\to x_{\infty}$ and $\mathrm{P}_t\to\mathbb{H}^2$ as $t\to0^+$. Then, there exists a family of isometries $\B_t$ in $\isom(\mathbb{H}^3)$ such that: 
\begin{itemize}
    \item $\B_t(x_t)=p_{\infty}$.
    \item $\B_t(\mathrm{P}_t)=\mathbb{H}^2$.
    \item The family $\B_t$ converges to the identity. Moreover the rescaled limit $\tau_t\B_t\tau_t^{-1}$ converges to an isometry in $\isom(\HP)$.
\end{itemize}
The same statement holds in Anti-de Sitter space for spacelike planes.
\end{lemma}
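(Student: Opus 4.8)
The plan is to realize $\B_t$ as a composition $\B_t=\R_t\circ\mathrm{T}_t$ of two elementary isometries: a transvection $\mathrm{T}_t$ carrying $x_t$ to $p_\infty$, and then a rotation $\R_t$ about a geodesic of $\mathbb{H}^2$ flattening the transported plane $\mathrm{T}_t(\mathrm{P}_t)$ onto $\mathbb{H}^2$. I will write the hyperbolic case only; the Anti-de Sitter case is verbatim the same after replacing $q_1$ by $q_{-1}$ and the elliptic rotation by the hyperbolic rotation about a spacelike geodesic, exactly as in the two displayed cases in the proof of Proposition \ref{rotation}.

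First some reductions. Since $\mathrm{P}_t$ contains $x_t$ and $\tau_tx_t$ converges, the planes $\tau_t\mathrm{P}_t$ meet a bounded region, so after passing to a subsequence $\tau_t\mathrm{P}_t\to\mathrm{P}_\infty$ for some plane $\mathrm{P}_\infty$; in every situation where the lemma is used, $\mathrm{P}_t$ is disjoint from $\mathbb{H}^2$, which forces $\mathrm{P}_\infty$ to be a \emph{spacelike} plane of $\HP$ (a plane through an interior point of $\HP$ containing a fiber must meet $\mathbb{H}^2$), and I will use this. Let $\mathrm{T}_t\in\isom(\mathbb{H}^3)$ be the transvection along the geodesic through $x_t$ and $p_\infty$ sending $x_t$ to $p_\infty$; since $d_{\mathbb{H}^3}(x_t,p_\infty)\to 0$, $\mathrm{T}_t\to\mathrm{Id}$. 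Then $\mathrm{T}_t(\mathrm{P}_t)$ contains $p_\infty\in\mathbb{H}^2$ and converges to $\mathbb{H}^2$; when it differs from $\mathbb{H}^2$ it meets $\mathbb{H}^2$ along a geodesic $L_t\ni p_\infty$, and I set $\R_t$ to be the rotation about $L_t$ through the dihedral angle $\theta_t$ of $\mathrm{T}_t(\mathrm{P}_t)$ with $\mathbb{H}^2$, oriented so that $\R_t(\mathrm{T}_t(\mathrm{P}_t))=\mathbb{H}^2$; set $\B_t=\R_t\circ\mathrm{T}_t$. The first three assertions are then immediate: $\R_t$ fixes $L_t\ni p_\infty$ pointwise, so $\B_t(x_t)=\R_t(p_\infty)=p_\infty$; $\B_t(\mathrm{P}_t)=\mathbb{H}^2$ by construction; and since the geodesics of $\mathbb{H}^2$ through $p_\infty$ form a compact family, after a further subsequence $L_t\to L_0$, while $\theta_t\to 0$ as $\mathrm{T}_t(\mathrm{P}_t)\to\mathbb{H}^2$, so $\R_t\to\mathrm{Id}$ and $\B_t\to\mathrm{Id}$.

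For the rescaled limit I would split $\tau_t\B_t\tau_t^{-1}=(\tau_t\R_t\tau_t^{-1})(\tau_t\mathrm{T}_t\tau_t^{-1})$. For the transvection factor, write $\widetilde{x_t}=\cosh(\epsilon_t)\widetilde{p_\infty}+\sinh(\epsilon_t)u_t$ with $\epsilon_t=d_{\mathbb{H}^3}(x_t,p_\infty)\to 0$ and $u_t$ a $q_1$-unit vector orthogonal to $\widetilde{p_\infty}$; as $\tau_t\widetilde{p_\infty}=\widetilde{p_\infty}$ and $\tau_t\widetilde{x_t}$ converges, the vectors $\epsilon_t\,\tau_tu_t$ converge. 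Writing $\mathrm{T}_t=\exp(\epsilon_tX_t)$ with $X_t$ the infinitesimal transvection determined by $\widetilde{p_\infty}$ and $u_t$, conjugation gives $\tau_t\mathrm{T}_t\tau_t^{-1}=\exp(\epsilon_t\,\tau_tX_t\tau_t^{-1})$, and these hypotheses make $\epsilon_t\,\tau_tX_t\tau_t^{-1}$ converge; hence $\tau_t\mathrm{T}_t\tau_t^{-1}$ converges to an element of $\isom(\HP)$, by the same computation as in Proposition \ref{rotation}. For the rotation factor, $\R_t$ is a rotation about $L_t\subset\mathbb{H}^2$ through $\theta_t$, and the decisive point is that $\theta_t=O(t)$. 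This is where spacelikeness of $\mathrm{P}_\infty$ is used: $\tau_t\mathrm{T}_t(\mathrm{P}_t)=(\tau_t\mathrm{T}_t\tau_t^{-1})(\tau_t\mathrm{P}_t)$ converges to the image of $\mathrm{P}_\infty$ under the half-pipe isometry $\lim_t\tau_t\mathrm{T}_t\tau_t^{-1}$, which is again spacelike since half-pipe isometries preserve the fibers; but conjugating $L_t$ to standard position $\{x_2=x_3=0\}$ by a convergent element of $\isom(\mathbb{H}^2)$ (which commutes with $\tau_t$) turns $\mathrm{T}_t(\mathrm{P}_t)=\{x_3=\tan(\theta_t)x_2\}$ into $\tau_t\mathrm{T}_t(\mathrm{P}_t)=\{x_3=(\tan(\theta_t)/t)x_2\}$, so convergence to a non-vertical plane forces $\theta_t/t$ to stay bounded. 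After a last subsequence $\theta_t/t\to\ell$, and the computation in the proof of Proposition \ref{rotation} — needing only $\sin(\theta_t)/t\to\ell$ and $(1-\cos\theta_t)/t\to 0$ — shows $\tau_t\R_t\tau_t^{-1}$ converges to the half-pipe rotation about $L_0$ of angle $\ell$. Thus $\tau_t\B_t\tau_t^{-1}$ converges to a product of two elements of $\isom(\HP)$.

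The main obstacle is this last step, i.e. controlling the rates at which $x_t\to p_\infty$ and $\mathrm{P}_t\to\mathbb{H}^2$: the bound on $\epsilon_t$ is free of charge (only $\epsilon_t\to 0$ and the convergence of $\epsilon_t\tau_tu_t$ are used, both following from the hypotheses), but $\theta_t=O(t)$ is \emph{not} a consequence of $\mathrm{P}_t\to\mathbb{H}^2$ alone — it is exactly the statement that the rescaled planes $\tau_t\mathrm{P}_t$ subconverge to a non-degenerate plane of $\HP$, which holds in all applications of the lemma because there $\mathrm{P}_t$ is a support plane of one boundary component of $\mathrm{CH}(\Lambda_{\rho_{(t\lambda,t\mu)}})$ disjoint from the support plane $\mathbb{H}^2$ of the other. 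All the subsequences taken along the way are harmless, as every invocation of this lemma already reads "up to a subsequence".
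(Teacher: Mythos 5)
Your proof is correct, and its skeleton is the same as the paper's: put everything in standard position using elements of $\isom(\mathbb{H}^2)$ (which commute with $\tau_t$), then factor $\B_t$ into a ``translation'' piece whose rescaled conjugate converges because $\tau_tx_t$ converges, and a ``rotation'' piece whose rescaled conjugate converges because the dihedral angle is $O(t)$. Your two factors $\R_t\circ\mathrm{T}_t$ simply package the paper's four ($\mathrm{T}_t$ plays the role of $\B_t^2\B_t^1$, $\R_t$ the role of $\B_t^4\B_t^3$), so structurally nothing essential differs.

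The genuinely valuable difference is your handling of the rotation factor, and you are right that this is the crux. The bound $\theta_t=O(t)$ is \emph{not} implied by the stated hypotheses: taking $x_t\equiv p_\infty$ and $\mathrm{P}_t=\{x_3=\tan(\sqrt{t})\,x_2\}$ satisfies every hypothesis, yet $\tau_t\mathrm{P}_t$ converges to the degenerate plane $\{x_2=0\}$, and no family with $\B_t(\mathrm{P}_t)=\mathbb{H}^2$ can have $\tau_t\B_t\tau_t^{-1}$ converging in $\isom(\HP)$, since the limit would have to carry a degenerate plane of $\HP$ to the spacelike plane $\mathbb{H}^2$, which no element of $\isom(\HP)$ does. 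The paper's Step 4 deduces the convergence of $\tau_t\B_t^4\tau_t^{-1}$ from $\alpha_t\to 0$ alone, which is exactly this gap (note also that the displayed matrix for $\B_t^4$ rotates the $(x_1,x_2)$-coordinates, so it commutes with $\tau_t$ but cannot carry $\mathrm{Q}_t^3$ onto $\{x_3=0\}$; with the intended $(x_2,x_3)$-rotation the issue is live). Your fix --- importing the subconvergence of $\tau_t\mathrm{P}_t$ to a spacelike plane of $\HP$, which is exactly what Propositions \ref{hyperbolicsurface} and \ref{adssurface} provide in every application --- is the right one, and that condition really ought to appear as a hypothesis of the lemma. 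One small point to tidy in the Anti-de Sitter case: the geodesic of $\ads$ joining $x_t$ to $p_\infty$ need not be spacelike (in the applications it is essentially the timelike fiber direction), so ``transvection along it'' should be read as the appropriate one-parameter subgroup $\exp(sX_t)$ regardless of causal type, or one can first move horizontally by $\isom(\mathbb{H}^2)$ and then translate along the vertical geodesic, as the paper does in its Steps 1--2.
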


\begin{proof}
Since the action of $\isom(\mathbb{H}^2)$ is transitive on $\mathbb{H}^2$ and $\tau_t$ commutes with $\isom(\mathbb{H}^2)$, then we can assume that $p_{\infty}=[1,0,0,0]$. The construction of $\B_t$ is then divided into four steps.
\begin{itemize}
    \item[1] First, let $\B_t^1$ be an isometry of $\mathbb{H}^2$ such that $\B_t^1(x_t)=[1,0,0,x_t^3]$. Since $x_t$ converges, the family $\B_t^1$ can be chosen convergent. Hence $\tau_t\B_t^1\tau_t^{-1}=\B_t^1$ is also convergent. Thus we can assume that $x_t$ lies on the geodesic $\{x_1=0, x_2=0\}\cap \mathbb{H}^3$.
    
    \item[2]Next, we construct an isometry $\B_t^2$ that maps $y_t:=\B_t^1(x_t)$ to $[1,0,0,0]$. To do this, let $d_t$ be the hyperbolic distance between $[1,0,0,0]$ and $y_t$. Since $y_t$ is in the geodesic $\{x_1=0, x_2=0\}\cap \mathbb{H}^3$ then
    $$y_t=[\cosh(d_t),0,0,\sinh(d_t)].$$
    In particular $x_t^3=\tanh{d_t}$. Define now the isometry
    $$    \B_t^2=\begin{bmatrix}
\cosh(d_t) & 0 & 0& -\sinh(d_t)\\
0 & 1 &0 & 0 \\
0&0&1&0\\
-\sinh(d_t)&0&0&\cosh(d_t)
\end{bmatrix},$$
we have $\B_t^2(y_t)=[1,0,0,0]$. Since $\tau_ty_t$ converges then $\frac{d_t}{t}$ converge, thus by a direct computation one can see that $\tau_t\B_t^2\tau_t^{-1}$ is convergent. We conclude that the isometry $\tau_t\B_t^2\B_t^1\tau_t^{-1}$ converge to an isometry in $\HP$, moreover $\B_t^2\B_t^1(x_t)=[1,0,0,0].$ 

\item[3] We define $\mathrm{Q}_t=\B_t^2\B_t^1(\mathrm{P}_t)$ and $l_t=\mathrm{Q}_t\cap \mathbb{H}^2$. If $\mathrm{Q}_t$ is equal to $\mathbb{H}^2$ then we are done. Otherwise, $l_t$ is a family of geodesics that converges to a geodesic in $\mathbb{H}^2$ (because $\B_t^2\B_t^1$ converges). Thus, we can find an isometry $\B_t^3\in \isom(\mathbb{H}^2)$ such that $\B_t^3(l_t)=\mathbb{H}^3\cap \{x_2=x_3=0\}$. Again, we may assume that $\B_t^3$ is convergent and so $\tau_t\B_t^3\tau_t^{-1}=\B_t^3$ also converges.

\item[4] In the last step, we define $\mathrm{Q}_t^3:=\B_t^{3}(\mathrm{Q}_t)$ and consider $\alpha_t$ the angle between $\mathrm{Q}_t^3$ and $\mathbb{H}^2$. Then the isometry:
$$    \B_t^4=\begin{bmatrix}
1 & 0& 0& 0\\
0 & \cos(\alpha_t)  &\sin(\alpha_t)  & 0 \\
0&-\sin(\alpha_t) &\cos(\alpha_t) &0\\
0&0&0&1
\end{bmatrix}$$
 is a rotation in $\mathbb{H}^3$ which sends $\mathrm{Q}_t^3 $ to $\mathbb{H}^2$. Since $\mathrm{P}_t\to \mathbb{H}^2$ then $\mathrm{Q}_t=\B_t^2\B_t^1(\mathrm{P}_t)$ converges to $\mathbb{H}^2$, this implies that $\alpha_t$ converges to $0$ and so $\tau_t\B_t^4\tau_t^{-1}$ is convergent.
\end{itemize}
 Finally, we define $\B_t=\B_t^4\B_t^3\B_t^2\B_t^1$. By construction, $\B_t(x_t)=p_{\infty}$, $\B_t(\mathrm{P}_t)=\mathbb{H}^2$ and $\B_t$ converges to the identity. Moreover, the families $\tau_t\B_t^i\tau_t^{-1}$ are convergent for $i=1,\cdots,4$. Hence the family $\tau_t\B_t\tau_t^{-1} = \tau_t\B_t^4\tau_t^{-1}\tau_t\B_t^3\tau_t^{-1}\tau_t\B_t^2\tau_t^{-1}\tau_t\B_t^1\tau_t^{-1}$ is also convergent. Thus, we have constructed a family of isometries $\B_t$ satisfying the conditions of the Lemma.
Note that the same proof holds when we take isometry in $\ads$. We need only change the formula of rotation and translation by the analogue formula in $\ads$. Namely we exchange $\sin$ and $\cos$ with $\sinh$ and $\cosh$ respectively. The details are left to the reader. 
\end{proof}

\color{black}
\bibliographystyle{alpha}
\bibliography{sample.bib}

\end{document}